\newtheorem{theorem}{Theorem}
\newtheorem{lemma}[theorem]{Lemma}
\newtheorem{corollary}[theorem]{Corollary}
\theoremstyle{Conjecture}
\newtheorem{conjecture}{Conjecture}
\newtheorem{Remark}{Remark}
\DeclareMathOperator{\ch}{char}
\DeclareMathOperator{\Der}{Der}
\DeclareMathOperator{\Lie}{Lie}
\DeclareMathOperator{\Alg}{Alg}
\DeclareMathOperator{\End}{End}
\DeclareMathOperator{\Ker}{Ker}
\DeclareMathOperator{\Imm}{Im}
\DeclareMathOperator{\wt}{wt}    
\DeclareMathOperator{\swt}{swt}  
\DeclareMathOperator{\Wt}{Wt}    
\DeclareMathOperator{\GKdim}{GKdim}
\DeclareMathOperator{\LGKdim}{\underline{GKdim}}
\DeclareMathOperator{\Gr}{Gr}
\DeclareMathOperator{\gr}{gr}
\renewcommand {\limsup}{\operatorname* {\overline{lim}}}
\DeclareMathOperator{\Dim}{Dim}
\DeclareMathOperator{\LDim}{\underline{Dim}}
\newcommand{\dd}{\partial}
\newcommand{\N}{\mathbb N}            
\newcommand{\NO}{\mathbb N_0}            
\newcommand{\Z}{\mathbb Z}            
\newcommand{\R}{\mathbb R}            
\newcommand{\C}{\mathbb C}            
\begin{document}
\title{Fibonacci Lie algebra revisited}
\author{Victor Petrogradsky}
\address{Department of Mathematics, University of Brasilia, 70910-900 Brasilia DF, Brazil} 
\email{petrogradsky@rambler.ru}
\thanks{The author was partially supported by grant 
Funda\c{c}\~ao de Amparo \`a Pesquisa do Estado de S\~ao Paulo, FAPESP 2022/10889-4.}
\keywords{restricted Lie algebras, homology of Lie algebras, finite presentation, growth, self-similar algebras, nil-algebras, graded algebras}
\subjclass{16P90, 
16N40, 
16S32, 
16E40, 
16W25, 
17B55, 
17B56, 
17B50, 
17B65, 
17B66, 
17B70} 
\begin{abstract}
We describe old and prove new results on properties of the Fibonacci Lie algebra in a self-contained exposition.
First, we study the growth of this algebra in more details. 
So, we show that the polynomial behaviour of the growth function in not uniform.
We establish bounds on the growth of its universal enveloping algebra.
We find bounds on nilpotency indices for elements of the Fibonacci restricted Lie algebra.
We prove that the Fibonacci Lie algebra is not PI. 
Our approach is also based on geometric ideas. 
The Fibonacci Lie algebra is $\mathbb Z^2$-graded and its homogeneous components belong to a strip.
We illustrate the results by computing the initial components and show positions of the homogeneous components in the strip.  
We also discuss properties and conjectures on related associative algebras and (restricted) Poisson algebras.
Second, we prove infiniteness results on homology and Euler characteristic of arbitrary finitely generated graded Lie algebras of subexponential growth. 
In case of the Fibonacci Lie algebra,
we find bounds on the homology groups and the Euler characteristic, and determine respective positions on plane. 
The computation of the initial part of the Euler characteristic of the Fibonacci Lie algebra shows that its behaviour is really chaotic.    
Finally, we formulate results and conjectures on homology groups and infinite presentation.
\end{abstract}
\maketitle

\section{Introduction}

The Grigorchuk group~\cite{Grigorchuk80} and Gupta-Sidki group~\cite{GuptaSidki83} provide interesting examples of self-similar periodic groups. 
The author constructed their natural analogue in the world of restricted Lie algebras~\cite{Pe06}, that we call the {\it Fibonacci Lie algebra}.
The goal of the paper is to present its known and new properties in a uniform and independent exposition.
We also establish infiniteness results on homology and Euler characteristic of finitely generated graded Lie algebras of subexponential growth 
(see Section~\ref{SHsubexp} and Theorem~\ref{THsubexp}),
apply it to the infinite dimensional simple Cartan type Lie algebras, 
and consider respective results for the Fibonacci Lie algebra.

\subsection{Growth of algebras}
By $K$ denote the ground field.
Let $A$ be an algebra generated by a finite set~$X$.
By $\gamma_A(n)=\gamma_A(X,n)$ denote the {\em growth function}, i.e.
$\gamma_A(n)$ is equal to dimension  of the vector space spanned by all  monomials  in $X$ of length not  exceeding  $n$.

Recall that the free finitely generated associative algebras and free Lie (super)algebras have the exponential growth~\cite{Ba,BMPZ}.
But there are intermediate growths faster than any function $\exp(n^\beta)$, $\beta< 1$, but which are still subexponential.
For example, A.~Lichtman found that finitely generated solvable Lie algebras have the subexponential growth~\cite{Licht84}.

In order to describe such growths the author suggested the following scale of functions~\cite{Pe93}.
Consider the sequence of functions of a natural argument
$\Phi^q_\alpha (n)$, $q=1,2,3,\dots$, $n\in \mathbb N$ with the real parameter $\alpha\in\mathbb R^+$:
\begin{align}
 \Phi^1_\alpha(n)&=\alpha,                       &&q=1, \nonumber \\
 \Phi^2_\alpha(n)&=n^\alpha,                     &&q=2, \nonumber\\
 \Phi^3_\alpha(n)&=\exp(n^{\alpha/(\alpha+1)}),      &&q=3, \label{scale1}\\
 \Phi^q_\alpha(n)&=\exp\left(\frac n {(\ln^{(q-3)}n)^{1/\alpha}} \right); \nonumber
                                        &&q=4,5,\dots.
\end{align}
where $\ln^{(1)}n=\ln n$, $\ln^{(q+1)}n=\ln(\ln^{(q)}n)$, $q\ge 1$.

Consider an algebra $A$ generated by a finite set $X$.
We define the {\em  (upper) dimension of level} $q$, $q=1,2,3,\dots$, and {\em the lower dimension of level} $q$ as
\begin{align*}
\Dim^q A&= \inf \{ \alpha\in\mathbb R^+
     \mid\exists N:\ \gamma_A(n)\le \Phi^q_\alpha(n),\ n\ge N\}, & q&\in \mathbb N;\\
\LDim^q A&= \sup \{ \alpha\in\mathbb R^+
     \mid\exists N:\ \gamma_A(n)\ge \Phi^q_\alpha(n),\ n\ge N\}, & q&\in \mathbb N.
\end{align*}

These $q$-dimensions generalize the Gelfand-Kirillov dimensions,
they are designed to study different types of the subexponential growth, their basic properties are as follows.
\begin{lemma}[{\cite{Pe96}}]\label{L1}
The $q$-dimensions have the following properties:
\begin{itemize}
 \item
   the functions $\Phi^q_\alpha(n)$ have the subexponential growth;
 \item
   the $q$-dimensions do not depend on the generating set $X$;
 \item
   $\Dim^q A=\alpha$ means that $\gamma_A(n)$ behaves like $\Phi^q_\alpha (n)$;
 \item
    $\Dim^1A=\dim_K A$ (vector space dimension);
 \item
    $\Dim^2A=\GKdim A$ (the Gelfand-Kirillov dimension);
 \item
    $\LDim^2A=\LGKdim A$ (the lower Gelfand-Kirillov dimension);
 \item
    $\Dim^3A$, $\LDim^3A$ correspond to superdimensions of Borho and Kraft~\cite{BorKra} upto normalization.  
 \item
   Assume that $\Dim^q A=\alpha$, where $0<\alpha<\infty$ and $q\ge 2$; then\\
   $\Dim^b A=\infty$ for all $b<q$ and  $\Dim^b A=0$ for all $b>q$.
\end{itemize}
\end{lemma}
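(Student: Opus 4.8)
The plan is to reduce every assertion to elementary asymptotics of the scale functions $\Phi^q_\alpha$, after which each item becomes short bookkeeping with $\limsup$, $\liminf$ and the monotonicity of the nondecreasing function $\gamma_A$. I would first isolate three properties of the scale itself. \textbf{(a) Subexponentiality:} $\ln\Phi^q_\alpha(n)=o(n)$ — trivial for $q\le 2$, while for $q=3$ the exponent $n^{\alpha/(\alpha+1)}$ has degree $<1$, and for $q\ge 4$ the exponent $n/(\ln^{(q-3)}n)^{1/\alpha}$ is $o(n)$ since $\ln^{(q-3)}n\to\infty$. \textbf{(b) Quasi-homogeneity:} for every $C>1$ and $\varepsilon>0$ there is $N$ with $\Phi^q_\alpha(Cn)\le\Phi^q_{\alpha+\varepsilon}(n)$ for $n\ge N$ — for $q=2$ this is $C^\alpha n^\alpha\le n^{\alpha+\varepsilon}$; for $q=3$ it follows from the strict monotonicity of $t\mapsto t/(t+1)$, so the exponent of $n$ strictly increases when $\alpha\mapsto\alpha+\varepsilon$; for $q\ge 4$ one uses $\ln^{(q-3)}(Cn)\sim\ln^{(q-3)}n\to\infty$ together with $C\le(\ln^{(q-3)}n)^{1/\alpha-1/(\alpha+\varepsilon)}$ for large $n$ (the same estimates, with reversed inequalities, cover the lower dimensions). \textbf{(c) Strict interlevel domination:} for $b<q$, every finite $M$ and every $\delta>0$ one has $\Phi^b_M(n)\le\Phi^q_\delta(n)$ eventually; taking logarithms the appropriate number of times this reduces to the fact that a fixed power of an iterated logarithm is eventually dominated by any positive power of its argument.

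Granting (a)--(c) I would argue as follows. \emph{Independence of the generating set:} if $X,Y$ both generate $A$, each generator in $Y$ is a word of length $\le C$ in $X$, so $\gamma_A(Y,n)\le\gamma_A(X,Cn)$; combined with (b) this gives $\Dim^q_Y A\le\Dim^q_X A+\varepsilon$ for every $\varepsilon>0$, and symmetry yields equality (likewise for $\LDim^q$). \emph{``Behaves like'' and the low levels:} unwinding the infimum, $\Dim^q A=\alpha$ means precisely that for each $\varepsilon>0$ one has $\gamma_A(n)\le\Phi^q_{\alpha+\varepsilon}(n)$ for large $n$ and $\gamma_A(n)>\Phi^q_{\alpha-\varepsilon}(n)$ for infinitely many $n$. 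With $\Phi^1_\alpha\equiv\alpha$ and $\gamma_A$ nondecreasing this gives $\Dim^1 A=\lim_n\gamma_A(n)=\dim_K A$ (equal to $\infty$ exactly when $\gamma_A$ is unbounded). With $\Phi^2_\alpha(n)=n^\alpha$ one reads off directly from the definition that $\Dim^2 A=\limsup(\ln\gamma_A(n))/\ln n=\GKdim A$, and the $\liminf$ version gives $\LDim^2 A=\LGKdim A$. For $q=3$, applying $\ln$ twice turns $\gamma_A(n)\le\exp(n^{\alpha/(\alpha+1)})$ into $\ln\ln\gamma_A(n)\le\frac{\alpha}{\alpha+1}\ln n$, so $\Dim^3 A$ and the Borho--Kraft superdimension $\limsup(\ln\ln\gamma_A(n))/\ln n$ determine each other by the explicit increasing change of variable $s\mapsto s/(1-s)$, which is the asserted normalization (similarly for $\LDim^3$).

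\emph{The jump property:} let $\Dim^q A=\alpha$ with $0<\alpha<\infty$ and $q\ge 2$. If $b>q$, then $\gamma_A(n)\le\Phi^q_{\alpha+1}(n)\le\Phi^b_{\varepsilon}(n)$ eventually for every $\varepsilon>0$ by (c) (iterated if $b>q+1$), hence $\Dim^b A\le\varepsilon$ for all $\varepsilon$, that is $\Dim^b A=0$. If $b<q$, then since $\alpha>0$ we have $\gamma_A(n)>\Phi^q_{\alpha/2}(n)$ for infinitely many $n$, while $\Phi^q_{\alpha/2}(n)\ge\Phi^b_M(n)$ eventually for every finite $M$ again by (c); therefore $\gamma_A(n)>\Phi^b_M(n)$ for infinitely many $n$, so $\Dim^b A\ge M$ for all $M$, i.e.\ $\Dim^b A=\infty$.

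I expect the whole difficulty to be concentrated in establishing (a)--(c) for $q\ge 4$: one must carefully track the constants produced by the rescaling $n\mapsto Cn$ and by passing between levels through the tower of iterated logarithms, and verify that they are absorbed in the relevant limits. Once those asymptotic lemmas are in place, everything else is routine manipulation of $\limsup$/$\liminf$ and of the monotone function $\gamma_A$.
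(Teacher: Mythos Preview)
The paper does not actually prove this lemma: it is quoted from~\cite{Pe96} and stated without proof, so there is nothing to compare your argument against line by line. That said, your sketch is sound and follows exactly the standard route one would take: isolate the three asymptotic facts (a)--(c) about the scale $\Phi^q_\alpha$, then derive each bullet of the lemma by elementary $\limsup/\liminf$ bookkeeping. Your verification of (b) for $q\ge 4$ via $\ln^{(q-3)}(Cn)\sim\ln^{(q-3)}n$ and $C\le(\ln^{(q-3)}n)^{1/\alpha-1/(\alpha+\varepsilon)}$ is the right computation, and the jump argument using (c) in both directions is correctly set up (the ``infinitely often'' clause is exactly what the failure of the $\inf$ condition gives). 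The identification of $\Dim^3$ with the Borho--Kraft superdimension through the monotone change of parameter $s\mapsto s/(1-s)$ is also correct. Nothing is missing; the only labor, as you yourself note, is the careful but routine verification of (a)--(c) across the levels $q\ge 4$.
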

These dimensions were introduced to study the subexponential growth of solvable Lie algebras and groups~\cite{Pe93,Pe96,Pe99int}.

\section{Pivot elements and their relations}
We study so called {\it Fibonacci Lie algebra} $\mathcal L$,
which is a natural analogue of the Grigorchuk group~\cite{Grigorchuk80}.
We study its properties continuing the research~\cite{Pe06,PeSh09,PeSh13fib}.
We start with its definition and elementary properties.

Fix notations. $\N_0:=\N\cup\{0\}$.
Let $X$ be a set in a Lie algebra $L$, then by
$\Lie(X)$ (or $\Lie_p(X)$) denote the (restricted) Lie subalgebra generated by $X$.
Similarly, $\Alg(X)$ denotes the associative subalgebra generated by $X$.
The commutators are left-normed: $[w_1,w_2,\ldots,w_k]:=[\ldots [w_1,w_2],\ldots, w_k]$.
Denote also $[u,x^k]:=[u,x^{k-1},x]$, $k> 2$. In case of a restricted Lie algebra, $\ch K=p$,
the notation $[u,x^p]=[u,x^{[p]}]$ coincides with a commutator with the formal $p$-th power $x^{[p]}$, which will be simply denoted as $x^p$.
For necessary definitions and properties of restricted Lie algebras
we refer the reader to~\cite{JacLie,StrFar,BMPZ}.
\subsection{Pivot elements}\label{SSpivot}
Temporarily, let  $K$ be an {\bf arbitrary field} and consider the polynomial ring $R=K[t_i| i\ge 0 ]$.
Let $\dd_i=\frac {\dd}{\partial t_i}\in \Der R$,  $i\ge 0$, be the respective partial derivations.
Define two derivations:
\begin{align*}
v_1 & :=\dd_1+t_0(\dd_2+t_1(\dd_3+t_2(\dd_4+t_3(\dd_5+t_4(\dd_6+\cdots )))));\\
v_2 & :=\qquad\quad\;\,
\dd_2+t_1(\dd_3+t_2(\dd_4+t_3(\dd_5+t_4(\dd_6+\cdots )))).
\end{align*}
The action on $R$ and arbitrary commutators of such operators are well-defined;
these operators are so called {\em special derivations}, see~\cite{Rad86,PeSh09,PeRaSh,Razmyslov}.
Denote by $\tau:R\to R$ the {\it shift endomorphism}:
$\tau(t_i)=t_{i+1}$ for $i\ge 0$.
Let also $\tau(\dd_i)=\dd_{i+1}$, $i\ge 1$.
It is convenient to write recursively:
\begin{align*}
v_1 & =\dd_1+t_0 \tau(v_1);\\
v_2 & =\tau(v_1).
\end{align*}
Similarly, define
\begin{equation}
\begin{split}
v_i:=\tau^{i-1}(v_1)&=\dd_i+t_{i-1}(\dd_{i+1}+t_i(\dd_{i+2}+ t_{i+1}(\dd_{i+3}+\cdots )))\\
&=\dd_i+t_{i-1} v_{i+1}, \quad\qquad i\ge 1.
\label{vii}
\end{split}
\end{equation}

We refer to $\{v_i\mid i\ge 1\}$ as the {\it pivot elements}.

\subsection{Basic relations}\label{SSbasic}
Commutators of neighbours resemble the Fibonacci relation.
\begin{lemma}\label{Lsosed}
$[v_i,v_{i+1}]=v_{i+2}$ for  $i=1,2,\dots$.
\end{lemma}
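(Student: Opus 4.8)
The plan is to verify the identity $[v_i,v_{i+1}]=v_{i+2}$ directly from the recursive description $v_i=\dd_i+t_{i-1}v_{i+1}$ given in \eqref{vii}. By applying the shift endomorphism $\tau$, it suffices to treat the case $i=1$; that is, to show $[v_1,v_2]=v_3$, since $\tau$ is an endomorphism of the Lie algebra of special derivations and $v_{i}=\tau^{i-1}(v_1)$, so applying $\tau^{i-1}$ to $[v_1,v_2]=v_3$ gives $[v_i,v_{i+1}]=v_{i+2}$. Thus I would phrase the argument for general $i$ but with the understanding that all the real content is in one commutator computation.

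Write $v_1=\dd_1+t_0 v_2$ and $v_2=\dd_2+t_1 v_3$. The key step is to expand $[v_1,v_2]=v_1 v_2 - v_2 v_1$ as operators on $R=K[t_i\mid i\ge 0]$, using the Leibniz rule for the derivation $v_1$ acting on the coefficient $t_1$ and on $v_3$. The crucial observations are: (i) $v_1(t_1)=\dd_1(t_1)+t_0 v_2(t_1)$; here $\dd_1(t_1)=1$ and $v_2(t_1)=0$ because $v_2$ only involves $\dd_j$ with $j\ge 2$, so $v_1(t_1)=1$; (ii) $v_1$ "commutes past" $v_3$ in the appropriate sense, i.e. the bracket $[v_1,v_3]$ vanishes or contributes nothing surviving, because $v_1=\dd_1+t_0 v_2$ and both $\dd_1$ and $t_0 v_2$ involve only variables and derivations with indices far from those appearing in $v_3=\dd_3+t_2 v_4$ (indices $\ge 2$ in the variables, $\ge 3$ in the derivations), except possibly through $t_2$, on which $\dd_1$ and $v_2$ both act trivially. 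Carefully tracking these, the commutator $[v_1,v_2]$ collapses to $v_1(t_1)\cdot v_3 = v_3$. I would organize this as: compute $[v_1,\dd_2]$, compute $[v_1,t_1 v_3]$ via Leibniz, and combine.

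The main obstacle — really the only subtle point — is making the manipulation of these infinite special derivations rigorous: one must be sure that rearranging the operators and invoking the Leibniz rule is legitimate for special derivations (which are not ordinary finite sums), and that the various "cross terms" genuinely vanish rather than merely being formally small. The paper has flagged that these operators are well-defined special derivations in the sense of \cite{Rad86,PeSh09,PeRaSh,Razmyslov}, so I would cite that framework and then note that, when evaluated on any fixed monomial of $R$, only finitely many terms of each $v_i$ are active, which reduces every identity to be checked to a finite computation with ordinary derivations; the identity $[v_1,v_2]=v_3$ then follows because it holds after applying both sides to an arbitrary generator $t_k$ (checking $k=1$ and $k\ne 1$ separately) and both sides are derivations. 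Finally, applying $\tau^{i-1}$ yields the general case, completing the proof.
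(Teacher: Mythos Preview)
Your observation (ii) contains a concrete error: you assert that $v_2$ acts trivially on $t_2$, but since $v_2=\dd_2+t_1v_3$ and $\dd_2(t_2)=1$, in fact $v_2(t_2)=1$. Consequently $[v_1,v_3]$ does \emph{not} vanish; one has $[v_1,v_3]=[\dd_1+t_0v_2,\,v_3]=t_0[v_2,v_3]\ne 0$ (indeed it equals $t_0v_4$). So in your decomposition $[v_1,v_2]=[v_1,\dd_2]+[v_1,t_1v_3]$, the term $t_1[v_1,v_3]=t_0t_1v_4$ is genuinely present, and you have also left $[v_1,\dd_2]$ uncomputed --- it is $-t_0t_1v_4$, not zero. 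Your route can be salvaged because these two contributions cancel, but the argument as written is wrong and the required bookkeeping is heavier than you suggest.

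The paper sidesteps all of this by expanding the \emph{lower}-indexed factor instead: writing $v_i=\dd_i+t_{i-1}v_{i+1}$ one gets
\[
[v_i,v_{i+1}]=[\dd_i+t_{i-1}v_{i+1},\,v_{i+1}]=[\dd_i,v_{i+1}],
\]
since $v_{i+1}$ contains no $\dd_{i-1}$ (so kills $t_{i-1}$) and $[v_{i+1},v_{i+1}]=0$. Then $[\dd_i,v_{i+1}]=[\dd_i,\dd_{i+1}+t_iv_{i+2}]=v_{i+2}$, because $\dd_i(t_i)=1$ and $v_{i+2}$ involves only $t_j$ with $j\ge i+1$. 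This is a one-line computation with no cancellations and no need to reduce to $i=1$ via~$\tau$.
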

\begin{proof} Using~\eqref{vii},
\begin{equation*}
[v_i,v_{i+1}]=[\dd_i+t_{i-1}v_{i+1},v_{i+1}]=[\dd_i, v_{i+1}]=
[\dd_i,\dd_{i+1}+t_iv_{i+2}]=v_{i+2}.\qedhere
\end{equation*}
\end{proof}
We make a convention that a product with a list of increasing indices, like $t_it_{i+1}\cdots t_{j-1}t_j$ is equal to 1 in case $i>j$.
Let us compute commutators  of arbitrary pivot elements.
\begin{lemma}\label{Lcommutators}
Let $K$ be an arbitrary field. Then
$$ [v_i,v_{j}]=t_{i-1}t_{i}\cdots t_{j-3} v_{j+1},\qquad  1\le i<j.$$
\end{lemma}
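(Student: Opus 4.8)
The plan is to induct on the gap $d:=j-i\ge 1$. First I would dispose of the base case $d=1$: here the asserted product $t_{i-1}t_i\cdots t_{j-3}$ has first index $i-1$ exceeding its last index $j-3=i-2$, so by the stated convention it equals $1$, and the claim $[v_i,v_{i+1}]=v_{i+2}=v_{j+1}$ is exactly Lemma~\ref{Lsosed}.

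For the inductive step I would assume $d\ge 2$ and that the identity is already known whenever the index gap equals $d-1$. Using the recursion $v_i=\dd_i+t_{i-1}v_{i+1}$ from~\eqref{vii} together with the derivation identity $[aw,v]=a[w,v]-v(a)\,w$ (valid for $a\in R$ and $w,v\in\Der R$), I would expand
\begin{equation*}
[v_i,v_j]=[\dd_i,v_j]+[t_{i-1}v_{i+1},v_j]=[\dd_i,v_j]+t_{i-1}[v_{i+1},v_j]-v_j(t_{i-1})\,v_{i+1},
\end{equation*}
and then argue that the first and third terms vanish. Iterating~\eqref{vii} shows $v_j=\dd_j+t_{j-1}\dd_{j+1}+t_{j-1}t_j\dd_{j+2}+\cdots$, so every variable $t_k$ occurring in $v_j$ satisfies $k\ge j-1$ and every partial derivation $\dd_k$ occurring in $v_j$ satisfies $k\ge j$. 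Since $d\ge 2$ forces $i<j-1$, the derivation $\dd_i$ kills every coefficient of $v_j$, whence $[\dd_i,v_j]=0$; and since $i-1<j$, the derivation $v_j$ kills $t_{i-1}$, whence $v_j(t_{i-1})=0$. (Well-definedness of these special derivations on $R$ and of all the commutators involved has been recalled above.) This leaves $[v_i,v_j]=t_{i-1}[v_{i+1},v_j]$, and since the pair $(i+1,j)$ has gap $d-1$, the inductive hypothesis gives $[v_{i+1},v_j]=t_i t_{i+1}\cdots t_{j-3}\,v_{j+1}$; multiplying by $t_{i-1}$ closes the step.

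The only delicate point is the bookkeeping: one must track precisely which $t_k$ and which $\dd_k$ can appear in $v_j$ (namely $k\ge j-1$, respectively $k\ge j$), since it is exactly these index inequalities that make the two unwanted cross terms disappear. Beyond that the argument is routine, so I do not anticipate a genuine obstacle; an alternative, non-inductive route would be to read off both sides from the explicit expansions $v_i=\dd_i+\sum_{k\ge 1}t_{i-1}t_i\cdots t_{i+k-2}\,\dd_{i+k}$, but the induction is cleaner.
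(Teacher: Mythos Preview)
Your induction on the gap $d=j-i$ is correct: the base case is Lemma~\ref{Lsosed}, and in the inductive step the index bookkeeping you spell out (only $t_k$ with $k\ge j-1$ and only $\dd_k$ with $k\ge j$ occur in $v_j$) does kill both cross terms, leaving $[v_i,v_j]=t_{i-1}[v_{i+1},v_j]$.

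The paper takes the non-inductive route you mention at the end as an alternative. It writes the finite expansion $v_i=\dd_i+t_{i-1}\dd_{i+1}+\cdots+t_{i-1}\cdots t_{j-3}\dd_{j-1}+t_{i-1}\cdots t_{j-2}v_j$ obtained by iterating~\eqref{vii} exactly $j-i$ times, then computes $[v_i,v_j]$ directly: the $v_j$-term commutes with $v_j$, and after replacing $v_j$ by $\dd_j+t_{j-1}v_{j+1}$ only the last surviving summand $t_{i-1}\cdots t_{j-3}\dd_{j-1}$ acts nontrivially on $t_{j-1}$. Both arguments rest on the same index observations; the paper's version reaches the answer in one shot at the cost of writing out the expansion, while your induction trades that expansion for a recursion and a slightly more explicit justification of why the unwanted terms vanish.
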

\begin{proof} Using~\eqref{vii}, we get
\begin{align*}
v_i&=\dd_i+ t_{i-1}\dd_{i+1}+ t_{i-1}t_{i}\dd_{i+2}+\cdots+ t_{i-1}\cdots t_{j-3}\dd_{j-1}
+ t_{i-1}\cdots t_{j-2}v_j;\\
[v_i,v_j]&=[\dd_i+ t_{i-1}\dd_{i+1}+\cdots+ t_{i-1}\cdots t_{j-3}\dd_{j-1}+ t_{i-1}\cdots t_{j-2}v_j,\  v_j]\\
&=[\dd_i+ t_{i-1}\dd_{i+1}+\cdots+ t_{i-1}\cdots t_{j-3}\dd_{j-1},\ \dd_j+t_{j-1}v_{j+1}]\\
&=t_{i-1}\cdots t_{j-3}\dd_{j-1}(t_{j-1})v_{j+1}=t_{i-1}\cdots t_{j-3}v_{j+1}.
\qedhere
\end{align*}
\end{proof}

\begin{lemma}\label{Laction}
For all $n\ge 1$, $j\ge 0$ we have the action
$$v_n(t_j)= \begin{cases}
                    t_{n-1}t_n\cdots t_{j-2}, &  n<j, \\
                    1,                        &  n=j, \\
                    0,                        &  n>j.
                 \end{cases}
$$
\end{lemma}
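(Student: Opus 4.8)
The plan is to prove the three cases by repeatedly unwinding the recursion $v_m=\partial_m+t_{m-1}v_{m+1}$ from \eqref{vii}, together with the obvious fact $\partial_m(t_j)=\delta_{mj}$. Fixing $j\ge 0$, for every $m\ge 1$ this gives the single identity $v_m(t_j)=\delta_{mj}+t_{m-1}\,v_{m+1}(t_j)$, and reading it in the various regimes of $m$ relative to $j$ will yield everything.

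First I would treat the case $n>j$, showing more generally that $v_m(t_j)=0$ for all $m>j$. Iterating the identity above $k$ times and noting that every Kronecker delta $\delta_{m,j},\delta_{m+1,j},\dots,\delta_{m+k-1,j}$ vanishes because its index exceeds $j$, one obtains $v_m(t_j)=t_{m-1}t_m\cdots t_{m+k-2}\,v_{m+k}(t_j)$ for each $k\ge 1$. Hence the element $v_m(t_j)\in R$ is divisible by the variable $t_{m+k-2}$ for arbitrarily large $k$; since it is a fixed polynomial, involving only finitely many of the $t_i$, it must be $0$. In particular $v_n(t_j)=0$.

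Next, the case $n=j$ (so $j\ge 1$, as $n\ge 1$): directly $v_j(t_j)=\partial_j(t_j)+t_{j-1}v_{j+1}(t_j)=1+t_{j-1}\cdot 0=1$, using the previous case for $v_{j+1}(t_j)$. Finally, the case $n<j$ I would handle by induction on $j-n\ge 1$. For $j=n+1$ we get $v_n(t_{n+1})=0+t_{n-1}v_{n+1}(t_{n+1})=t_{n-1}$ by the $n=j$ case, which matches the single-factor product $t_{n-1}\cdots t_{(n+1)-2}=t_{n-1}$. For $j>n+1$, the identity gives $v_n(t_j)=0+t_{n-1}v_{n+1}(t_j)=t_{n-1}\,(t_n t_{n+1}\cdots t_{j-2})$ by the induction hypothesis applied to $v_{n+1}(t_j)$ (admissible since $j-(n+1)<j-n$ and still $n+1<j$), i.e.\ exactly the claimed monomial $t_{n-1}t_n\cdots t_{j-2}$.

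I do not anticipate a serious obstacle; the only step requiring a moment's care is the vanishing in the case $n>j$, where one passes from ``divisible by $t_{m+k-2}$ for all $k$'' to ``equal to zero'' by invoking that $v_m$ is a genuine, well-defined derivation of $R$ (as recalled after the definition of the pivot elements), so that $v_m(t_j)$ is an honest polynomial in finitely many variables. Everything else is a mechanical application of \eqref{vii} and the empty-product convention fixed just before Lemma~\ref{Lcommutators}.
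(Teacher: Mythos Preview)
Your proof is correct. The paper's own argument is a one-line appeal to the expanded form of~\eqref{vii}: writing $v_n=\partial_n+t_{n-1}\partial_{n+1}+t_{n-1}t_n\partial_{n+2}+\cdots$ and applying this to $t_j$, only the term carrying $\partial_j$ survives, which immediately gives all three cases. Your route is essentially the same computation organized recursively rather than via the explicit infinite sum; the only real extra idea is your handling of $n>j$ by the divisibility-by-arbitrarily-high-$t_i$ argument, which is a clean way to avoid talking about the infinite expansion directly but is not needed if one simply reads off the unique $\partial_j$-term in the sum.
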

\begin{proof} We consider the action of the infinite sum~\eqref{vii} on $t_j$.
\end{proof}

Let $\mathcal L=\Lie(v_1,v_2)$ be the Lie subalgebra of $\Der R$ generated by $v_1,v_2$.
\begin{lemma} Define Lie algebras $L_{(i)}:=\Lie(v_i,v_{i+1})$ generated by $v_i,v_{i+1}$ for $i\ge 1$. Then
\begin{enumerate}
\item $L_{(i)}\cong \mathcal L$, $i\ge 1$.
\item We get an infinite chain of isomorphic subalgebras, inclusions being proper
 $$\mathcal L=L_{(1)}\supset L_{(2)}\supset \cdots  \supset L_{(n)}\supset \cdots $$
\item $\mathcal L$ is infinite dimensional.
\end{enumerate}
\end{lemma}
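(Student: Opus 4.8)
The plan is to prove the three statements in order, using the recursive structure of the pivot elements and the shift endomorphism $\tau$.

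\medskip

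\noindent\textbf{Statement (1): $L_{(i)}\cong\mathcal L$.} The key observation is that $v_i=\tau^{i-1}(v_1)$ and $v_{i+1}=\tau^{i-1}(v_2)$ by~\eqref{vii}. Since $\tau$ is an endomorphism of $R$ compatible with the derivation action (it conjugates $\partial_j$ to $\partial_{j+1}$), it induces a Lie algebra homomorphism on the relevant derivations carrying $v_1\mapsto v_i$ and $v_2\mapsto v_{i+1}$, hence carrying $\mathcal L=\Lie(v_1,v_2)$ onto $L_{(i)}=\Lie(v_i,v_{i+1})$. First I would check that this map is well-defined on $\mathcal L$ as a map of Lie algebras (it respects brackets since $\tau$ conjugates the commutator of special derivations), giving surjectivity onto $L_{(i)}$. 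For injectivity I would argue that $\tau$ is injective on $R$, so the induced map on special derivations is injective as well; alternatively one produces an explicit inverse using that the images $v_i, v_{i+1}$ generate back $v_{i+2}=[v_i,v_{i+1}]$ and so on, and the original generators $v_1,v_2$ cannot be recovered but are not needed — the cleanest route is simply that $\tau$ is a monomorphism of $R$, and a derivation $D$ of $R$ is zero iff $D(t_j)=0$ for all $j$, so $\tau(D)=0$ forces $D=0$.

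\medskip

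\noindent\textbf{Statement (2): the chain and proper inclusions.} The inclusion $L_{(i+1)}\subseteq L_{(i)}$ follows from Lemma~\ref{Lsosed}: $v_{i+1}$ and $v_{i+2}=[v_i,v_{i+1}]$ both lie in $L_{(i)}$, hence $L_{(i+1)}=\Lie(v_{i+1},v_{i+2})\subseteq L_{(i)}$. To see the inclusion is proper, I would exhibit an element of $L_{(i)}$ not in $L_{(i+1)}$ — the natural candidate is $v_i$ itself. Every element of $L_{(i+1)}$ is a Lie-polynomial in $v_{i+1},v_{i+2},\dots$, and by Lemma~\ref{Laction} such elements annihilate $t_{i-1}$ (since $v_n(t_{i-1})=0$ for $n\ge i+1>i-1$, and brackets of such operators still annihilate $t_{i-1}$), whereas $v_i(t_{i-1})=1\ne 0$ by Lemma~\ref{Laction}. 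So $v_i\in L_{(i)}\setminus L_{(i+1)}$, giving proper inclusion. I expect this to be the step requiring the most care: one must justify that the whole subalgebra $L_{(i+1)}$, not merely its generators, kills $t_{i-1}$; this follows because $\{D\in\Der R: D(t_{i-1})=0\}$ is a Lie subalgebra of $\Der R$ (the bracket of two derivations killing $t_{i-1}$ again kills it), and it contains $v_{i+1},v_{i+2}$.

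\medskip

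\noindent\textbf{Statement (3): $\mathcal L$ is infinite-dimensional.} By Lemma~\ref{Lsosed} all pivot elements $v_1,v_2,v_3,\dots$ lie in $\mathcal L$ (inductively $v_{i+2}=[v_i,v_{i+1}]$). It therefore suffices to show the $v_i$ are linearly independent in $\Der R$. This is immediate from Lemma~\ref{Laction}: for any nontrivial finite linear combination $\sum_{i} c_i v_i$ with largest index $N$ among those with $c_i\ne 0$, evaluating on $t_{N-1}$ gives $\sum_i c_i v_i(t_{N-1})=c_N\cdot 1=c_N\ne 0$ (all terms with $i>N-1$, i.e. $i=N$, contribute, and $i=N$ contributes $1$, while $i<N$ — wait, more carefully: $v_i(t_{N-1})$ is $0$ for $i>N-1$ except it equals $1$ for $i=N-1$ and is a monomial for $i<N-1$; so instead evaluate on $t_j$ for $j$ slightly different). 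The clean statement: apply the combination to $t_{N}$; then $v_N(t_N)=1$, $v_i(t_N)=0$ for $i>N$ (none here), and for $i<N$ we get the monomial $t_{i-1}\cdots t_{N-2}$ — these monomials for distinct $i$ are distinct and distinct from the constant $1$, so no cancellation with the $c_N\cdot 1$ term is possible, forcing $c_N=0$, a contradiction. Hence $\dim_K\mathcal L=\infty$. Alternatively, (3) follows formally from (2), since a descending chain of proper inclusions of subalgebras cannot occur in a finite-dimensional algebra.
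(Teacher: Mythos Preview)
Your approach is essentially the paper's: use that $\tau$ is a Lie monomorphism to get (1), exhibit a pivot element not lying in the next subalgebra for (2), and derive (3) from the strictly descending chain (the paper in fact just asserts that Lemma~\ref{Lcommutators} shows $\tau$ is a monomorphism and that $v_{i-1}\notin L_{(i)}$, giving no further detail). Your added justification that derivations killing a fixed $t_j$ form a Lie subalgebra is exactly the missing ingredient the paper's one-line proof leaves implicit, and your linear-independence argument for (3) via evaluation on $t_N$ is correct.

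There is, however, an index slip in your argument for proper inclusion. You claim $v_i(t_{i-1})=1$, but Lemma~\ref{Laction} gives $v_n(t_j)=1$ only when $n=j$; since $i>i-1$ you actually have $v_i(t_{i-1})=0$. So as written the argument collapses: both $L_{(i+1)}$ \emph{and} $v_i$ annihilate $t_{i-1}$, and you have distinguished nothing. The fix is immediate: evaluate on $t_i$ instead of $t_{i-1}$. Then $v_n(t_i)=0$ for all $n\ge i+1$, hence (by your subalgebra observation) all of $L_{(i+1)}$ kills $t_i$, while $v_i(t_i)=1\ne 0$. Equivalently, this is precisely the paper's assertion $v_{i-1}\notin L_{(i)}$ after shifting the index by one.
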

\begin{proof}
Lemma~\ref{Lcommutators} implies that the shift mapping $\tau$ is a monomorphism.
Since $\tau^{i-1}(v_1)=v_i$, $\tau^{i-1}(v_2)=v_{i+1}$,
we get $L_{(i)}= \tau^{i-1} (\mathcal L)$. Also, $v_{i-1}\notin L_{(i)}$.
\end{proof}

\section{Weight functions and $\mathbf Z^2$-gradings}

In this section we introduce weight functions and establish $\mathbb Z^2$-gradings by multidegree in the generators in a general setting,  in particular, the field is arbitrary.
\subsection{Weight functions}
The field $K$ is still arbitrary.
Let us construct a {\it weight function} defined on all variables and respective derivations in a correlated way, namely we assume that
$wt(\dd_i)=-wt(t_i)=a_i$, $i\ge 0$, the latter being arbitrary complex numbers.
We extend this values to finite products (of any type)  of $\dd_i$ and $t_j$.
Observe that a weight function is additive due to the correlation.
Namely, let $a,b$ be monomials as above, then $wt(a\cdot b)=wt([a,b])=wt(a)+wt(b)$.
We say that not a necessarily finite linear combination of monomials is {\it homogeneous} provided
that all monomials have the same weight.
The additivity extends to Lie or associative products of homogeneous elements as well.
We warn that a weight function is not defined for the zero element.

We want all summands in~\eqref{vii} to have the same weight, so we assume that
$$a_i=\wt v_i=\wt \dd_i= \wt t_{i-1}+\wt v_{i+1}=-a_{i-1}+a_{i+1},\qquad i\ge 1.$$
We get the Fibonacci recurrence relation $a_{i+1}=a_{i}+a_{i-1}$, for $i\ge 1$.
Its two basic solutions yield the {\it weight} and {\it superweight} functions:
  \begin{equation}\label{weights}
  \begin{split}
  \wt v_n &=\wt \dd_n =-\wt t_n=\lambda^{n}, \qquad\quad \lambda:=\frac {1+\sqrt 5}2;       \\
  \swt v_n&=\swt \dd_n=-\swt t_n=\bar{\lambda}^{n},\quad\quad \bar\lambda:=\frac{1-\sqrt 5}2;
  \end{split}\qquad\quad n\in\mathbb N.
  \end{equation}
(in~\cite{PeSh09,PeSh13fib} we used a different setting $\swt v_n=\bar{\lambda}^{n-2}$).
We use the following properties without mentioning:
\begin{equation}\label{lambd}
\lambda\cdot \bar \lambda=-1,\qquad \lambda^2=\lambda+1,\qquad \frac 1\lambda =\lambda-1.
\end{equation}
We define the {\it vector weight function} for homogeneous elements:
\begin{equation} \label{Wt}
\Wt(v):=(\wt(v),\swt(v))\in\mathbb R^2. 
\end{equation}
\begin{lemma}
The functions $\wt(*)$, $\swt(*)$, $\Wt(*)$
are additive for products of homogeneous elements of $\mathcal L$.
\end{lemma}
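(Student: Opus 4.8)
The plan is to deduce the lemma from the additivity of $\wt$ and $\swt$ on monomials in the $\dd_i$ and $t_j$ — already recorded above — together with the observation that $\mathcal L$ is a linear span of homogeneous elements. Since $\mathcal L$ is a Lie algebra the relevant ``product'' is the bracket, but the same termwise argument applies verbatim to the associative composition in $\Der R$ and to products in the enveloping algebra, because all these operations are bilinear and additive on monomials.

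First I would check that the pivot elements themselves are homogeneous. Expanding the recursion~\eqref{vii} as $v_i=\dd_i+t_{i-1}\dd_{i+1}+t_{i-1}t_i\dd_{i+2}+\cdots$, the $k$-th summand ($k\ge 1$) is $t_{i-1}t_i\cdots t_{i+k-2}\dd_{i+k}$, of weight $a_{i+k}-(a_{i-1}+a_i+\cdots+a_{i+k-2})$, which collapses to $a_i$ by repeated use of the Fibonacci recurrence $a_{n+1}=a_n+a_{n-1}$; this is in fact precisely the design condition imposed on the $a_i$. Hence each $v_i$ is homogeneous with $\wt v_i=\lambda^{i}$, $\swt v_i=\bar\lambda^{\,i}$, i.e. $\Wt v_i=(\lambda^{i},\bar\lambda^{\,i})$. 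The only subtlety here is that the $v_i$ are infinite sums of monomials, so one must invoke that they are special derivations, with well-defined action on $R$ and well-defined mutual commutators (as cited from~\cite{Rad86,PeSh09,PeRaSh,Razmyslov}); this is the single place where the bookkeeping with infinite sums needs care, and it is the main, though minor, obstacle.

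Next I would show that homogeneity is preserved by brackets, so that $\mathcal L=\Lie(v_1,v_2)$ is spanned by homogeneous elements. If $a,b\in\Der R$ are homogeneous, written as termwise-computable (possibly infinite) linear combinations of monomials, all monomials of $a$ having weight $\wt a$ and all monomials of $b$ having weight $\wt b$, then $[a,b]=\sum_{\alpha,\beta}[a_\alpha,b_\beta]$ is computed termwise, and by additivity of $\wt$ on monomials every nonzero $[a_\alpha,b_\beta]$ is a linear combination of monomials of weight $\wt a+\wt b$. Thus $[a,b]$, when nonzero, is again homogeneous with $\wt[a,b]=\wt a+\wt b$, and likewise $\swt[a,b]=\swt a+\swt b$. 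Starting from the homogeneous generators $v_1,v_2$ and inducting on bracket length, every iterated commutator — hence every element of $\mathcal L$ — is homogeneous, so $\mathcal L=\bigoplus_{\gamma}\mathcal L_\gamma$, the sum over $\gamma$ in the subgroup of $\mathbb R^2$ generated by $\Wt v_1$ and $\Wt v_2$.

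Finally I would conclude: for homogeneous $u,w\in\mathcal L$ with $[u,w]\neq 0$, the termwise computation above yields $\wt[u,w]=\wt u+\wt w$ and $\swt[u,w]=\swt u+\swt w$, whence $\Wt[u,w]=\Wt u+\Wt w$ componentwise, which is the asserted additivity (the hypothesis $[u,w]\neq 0$ being implicit, since no weight is assigned to the zero element).
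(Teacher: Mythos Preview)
Your proposal is correct and follows the same line as the paper, which in fact states the lemma without an explicit proof, treating it as immediate from the preceding remarks that a weight function is additive on monomials and that ``the additivity extends to Lie or associative products of homogeneous elements as well.'' Your argument simply makes those remarks precise---verifying that each $v_i$ is homogeneous by the Fibonacci design condition, and then propagating homogeneity and weight-additivity through brackets termwise---so there is nothing to add or correct.
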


\subsection{$\mathbb Z^2$-gradings}
\begin{lemma} \label{Lhomogeneous}
Consider the Lie algebra $\mathcal L=\Lie(v_1,v_2)\subset \Der R$, the field being arbitrary.
  We have a $\mathbb Z^2$-grading $\mathcal L=\mathop{\oplus}\limits_{a,b\ge 0} \mathcal L_{a,b}$,
  where $\mathcal L_{a,b}$ is spanned by monomials in the generators
  with $a$ factors $v_1$ and $b$ factors $v_2$.
\end{lemma}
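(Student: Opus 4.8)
The plan is to identify the claimed decomposition with the multidegree grading carried over from the free Lie algebra on two generators, and then to use the two weight functions of~\eqref{weights} to certify that the sum is direct. First I would note that, since $\mathcal L=\Lie(v_1,v_2)$, as a vector space $\mathcal L$ is spanned by $v_1,v_2$ together with all iterated left-normed commutators $[v_{i_1},v_{i_2},\dots,v_{i_k}]$ with each $i_j\in\{1,2\}$; such a commutator is a monomial in the generators, so by definition it lies in $\mathcal L_{a,b}$ where $a$ (resp.\ $b$) counts the factors $v_1$ (resp.\ $v_2$). Hence $\mathcal L=\sum_{a,b\ge 0}\mathcal L_{a,b}$. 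Rewriting a bracket of two such commutator-monomials by anticommutativity and the Jacobi identity produces again a linear combination of commutator-monomials, with the counts of $v_1$'s and $v_2$'s simply added; therefore $[\mathcal L_{a,b},\mathcal L_{c,d}]\subseteq\mathcal L_{a+c,b+d}$, and the decomposition respects the bracket.

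It remains to show that the sum $\sum_{a,b\ge 0}\mathcal L_{a,b}$ is direct, and this is where the weight functions enter. By additivity of $\wt,\swt$ and~\eqref{weights}, every nonzero element of $\mathcal L_{a,b}$ is a homogeneous special derivation with $\Wt=(a\lambda+b\lambda^2,\ a\bar\lambda+b\bar\lambda^2)$. The linear map $(a,b)\mapsto(a\lambda+b\lambda^2,\ a\bar\lambda+b\bar\lambda^2)$ has determinant $\lambda\bar\lambda^2-\lambda^2\bar\lambda=\lambda\bar\lambda(\bar\lambda-\lambda)=-(\bar\lambda-\lambda)=\sqrt 5\neq 0$ (using $\lambda\bar\lambda=-1$), so distinct bidegrees $(a,b)$ have distinct vector weights. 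Thus it is enough to know that homogeneous special derivations of pairwise distinct weights are linearly independent. For this I would use that $R=K[t_i\mid i\ge 0]$ is graded by $\wt$ (each monomial in the $t_i$ is homogeneous), that a weight-homogeneous derivation $D$ of weight $w$ sends a weight-$u$ homogeneous polynomial to a weight-$(u+w)$ one, and hence that a vanishing combination $\sum_\nu D_\nu$ of homogeneous derivations of distinct weights, evaluated on any homogeneous $f\in R$, gives $\sum_\nu D_\nu(f)=0$ with summands of pairwise distinct weights, forcing each $D_\nu(f)=0$ and so each $D_\nu=0$. Therefore $\mathcal L=\mathop{\oplus}\limits_{a,b\ge 0}\mathcal L_{a,b}$ is a $\mathbb Z^2$-grading.

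The only delicate point — the ``hard part'', modest as it is here — is exactly the directness of the sum: a priori a monomial of bidegree $(a,b)$ might be a combination of monomials of other bidegrees, since the multidegree grading of a free Lie algebra need not descend to a quotient. What rescues us is the presence of \emph{two} independent weight functions, equivalently the injective numerical invariant $(a,b)\mapsto\Wt$ exhibited above; phrased on the free side, the kernel of the natural surjection $\Lie\langle x_1,x_2\rangle\twoheadrightarrow\mathcal L$ is spanned by weight-homogeneous elements and the bidegree of each is recoverable from its $\Wt$, so the kernel is $\mathbb Z^2$-homogeneous and the grading passes to $\mathcal L$.
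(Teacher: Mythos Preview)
Your argument is correct and follows essentially the same route as the paper: show that distinct bidegrees $(a,b)$ yield distinct vector weights $\Wt$ (the paper simply notes that $\Wt(v_1),\Wt(v_2)$ are linearly independent in $\mathbb R^2$; you compute the determinant), and then use the weight-homogeneous structure of $\Der R$ to conclude linear independence of the $\mathcal L_{a,b}$. The paper phrases this last step as the elements being ``expressed via different sets of homogeneous basis elements of $\Der R$'' rather than via evaluation on homogeneous $f\in R$, but the content is the same.
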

\begin{proof}
By construction~\eqref{weights} and~\eqref{Wt},
$\Wt(v_1)=(\lambda,\bar \lambda)$, $\Wt(v_2)=(\lambda^2, \bar\lambda^2)$,
they are linearly independent vectors of $\mathbb R^2$.
Let $w\in \mathcal L_{a,b}$, then by additivity, $\Wt(w)=a\Wt(v_1)+b\Wt(v_2)$.
Consider a pair of integers $(a_1,b_1)\ne (a,b)$ and $w_1\in \mathcal L_{a_1,b_1}$, we have $\Wt(w)\ne \Wt(w_1)$. 
Thus, expressing $w,w_1$ as (probably infinite) linear combinations of monomials (i.e. products of $\dd_i$ and $t_j$)
both elements are expressed via different sets of homogeneous basis elements of $\Der R$.
\end{proof}
In case of a homogeneous element $x\in\mathcal L_{a,b}$ above, define its {\it multidegree} $\Gr(x):=(a,b)\in\Z^2$.
\begin{corollary}\label{Chomogeneous}
Let $\mathcal L=\Lie(v_1,v_2)\subset \Der R$, the field being arbitrary.
  We have a $\mathbb Z$-grading $\mathcal L=\mathop{\oplus}\limits_{n\ge 1} \mathcal L_{n}$,
  where $\mathcal L_{n}$ is spanned by monomials of total degree $n$
  in the generators $\{v_1,v_2\}$.
\end{corollary}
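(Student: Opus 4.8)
\subsection*{Proof proposal}

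The plan is to obtain the $\mathbb Z$-grading by coarsening the $\mathbb Z^2$-grading of Lemma~\ref{Lhomogeneous} along the total-degree homomorphism $\mathbb Z^2\to\mathbb Z$, $(a,b)\mapsto a+b$. Concretely, I would set
\[
\mathcal L_n:=\bigoplus_{a+b=n}\mathcal L_{a,b},\qquad n\ge 1,
\]
where the sum runs over pairs $a,b\ge 0$ with $a+b=n$. For each fixed $n$ only finitely many such pairs occur, so this is an honest finite direct sum inside $\mathcal L$, and by the description in Lemma~\ref{Lhomogeneous} the subspace $\mathcal L_n$ is spanned precisely by the monomials in $v_1,v_2$ using $a$ factors $v_1$ and $b$ factors $v_2$ with $a+b=n$, i.e. by all monomials of total degree $n$.

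Then I would verify the three properties that make this a grading. Directness of $\mathcal L=\bigoplus_{n\ge 1}\mathcal L_n$ is inherited immediately from the directness of the $\mathbb Z^2$-decomposition, since distinct values of $n$ collect disjoint families of the $\mathcal L_{a,b}$. Exhaustion: every homogeneous component $\mathcal L_{a,b}$ occurring in Lemma~\ref{Lhomogeneous} has $a,b\ge 0$, hence $a+b\ge 0$, and moreover $\mathcal L_{0,0}=0$ because $\mathcal L=\Lie(v_1,v_2)$ is generated by the two elements $v_1,v_2$, each of total degree~$1$, so every nonzero monomial in the generators has total degree at least~$1$; thus the sum really starts at $n=1$ and recovers all of $\mathcal L$. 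Compatibility with the bracket: from $[\mathcal L_{a,b},\mathcal L_{a',b'}]\subseteq\mathcal L_{a+a',\,b+b'}$ one gets at once $[\mathcal L_m,\mathcal L_n]\subseteq\mathcal L_{m+n}$.

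There is essentially no obstacle here: the statement is a formal consequence of Lemma~\ref{Lhomogeneous}. The only point deserving a word is why the grading is indexed by $n\ge 1$ rather than $n\ge 0$, i.e. the vanishing of $\mathcal L_{0,0}$; this is exactly the observation that the algebra is generated in degree~$1$ and contains no scalars (the $v_i$ being derivations without constant term, their iterated brackets never produce a nonzero element of degree~$0$). Everything else is bookkeeping.
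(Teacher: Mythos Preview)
Your proposal is correct and matches the paper's approach: the corollary is stated without proof in the paper, since it follows immediately from Lemma~\ref{Lhomogeneous} by coarsening the $\mathbb Z^2$-grading along the total-degree map $(a,b)\mapsto a+b$, exactly as you do. Your additional remarks on why the grading starts at $n\ge 1$ are accurate and more explicit than the paper itself.
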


\begin{corollary}\label{Cgrading}
The associative algebra generated by these elements $A:=\Alg(v_1,v_2)\subset \End(R)$, the universal 
enveloping algebra $U(\mathcal L)$, and the Fibonacci (restricted) Lie  algebra $\mathcal L$  (defined below)
are similarly $\mathbb Z^2$ and $\mathbb Z$-graded. 
\end{corollary}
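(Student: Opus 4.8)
The plan is to reduce all three assertions to the mechanism already at work in the proof of Lemma~\ref{Lhomogeneous}. The algebra $\End(R)$, and in particular $\Der R$, has a basis consisting of the monomials in the $\dd_i$ and $t_j$, each homogeneous for the vector weight $\Wt$; the weight function is additive for associative products and for formal $p$-th powers, not only for Lie brackets; and $\Wt(v_1)=(\lambda,\bar\lambda)$, $\Wt(v_2)=(\lambda^2,\bar\lambda^2)$ are linearly independent over $\R$, so that the assignment $(a,b)\mapsto a\,\Wt(v_1)+b\,\Wt(v_2)$ is injective. Consequently any expression built from $v_1$ and $v_2$ --- an associative word, an iterated Lie bracket, or a restricted $p$-th power of such --- in which $v_1$ occurs exactly $a$ times and $v_2$ exactly $b$ times is $\Wt$-homogeneous, and two such expressions lie in distinct homogeneous components of $\End(R)$ whenever the pairs $(a,b)$ differ.

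First I would handle $A=\Alg(v_1,v_2)$: letting $A_{a,b}$ be the linear span of all products of the generators containing $a$ factors $v_1$ and $b$ factors $v_2$, the observation above gives $A=\oplus_{a,b\ge 0}A_{a,b}$ with $A_{a,b}A_{c,d}\subseteq A_{a+c,b+d}$, and the coarsening that collects the summands with $a+b=n$ is the $\Z$-grading by total degree, exactly as for $\mathcal L$ in Corollary~\ref{Chomogeneous}. The same computation performed inside $\Der R$, together with the remark that the restricted $p$-th power of a $\Wt$-homogeneous $x$ with $\Gr(x)=(a,b)$ is $\Wt$-homogeneous of weight $p\,\Wt(x)=(pa)\Wt(v_1)+(pb)\Wt(v_2)$ and hence has multidegree $(pa,pb)$, shows that the Fibonacci restricted Lie algebra $\Lie_p(v_1,v_2)$ is $\Z^2$-graded by multidegree and $\Z$-graded by total degree, refining Lemma~\ref{Lhomogeneous}.

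For $U(\mathcal L)$ there is no ambient graded algebra to sit inside, so here I would instead use the standard fact that the enveloping algebra of a Lie algebra graded by an abelian monoid inherits the grading. Writing $U(\mathcal L)=T(\mathcal L)/I$ with $I$ the two-sided ideal generated by the elements $x\otimes y-y\otimes x-[x,y]$, one checks that for homogeneous $x\in\mathcal L_{a,b}$ and $y\in\mathcal L_{c,d}$ all of $x\otimes y$, $y\otimes x$ and $[x,y]$ are homogeneous of multidegree $(a+c,b+d)$ --- the tensor monomials by the grading that $\mathcal L=\oplus_{a,b\ge 0}\mathcal L_{a,b}$ induces on $T(\mathcal L)$, the bracket by Lemma~\ref{Lhomogeneous} --- so $I$ is a $\Z^2$-homogeneous ideal, $U(\mathcal L)$ is $\Z^2$-graded, and the total-degree coarsening gives the $\Z$-grading; if one wants the restricted enveloping algebra instead, the additional defining elements $x^p-x^{[p]}$ are homogeneous of multidegree $p\cdot\Gr(x)$ for homogeneous $x$, so they too generate a homogeneous ideal. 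The only point that requires care --- and thus the main obstacle --- is precisely this last case: the grading of $U(\mathcal L)$ must be produced intrinsically from the homogeneity of the defining relations, and must not be pulled back along the surjection $U(\mathcal L)\twoheadrightarrow A$, which would grade only the image and not $U(\mathcal L)$ itself.
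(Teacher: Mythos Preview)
Your proposal is correct and is a careful elaboration of what the paper leaves implicit: the corollary is stated without proof in the paper, as an immediate consequence of the additivity of the weight function for associative products and $p$-th powers (not just Lie brackets) together with the linear independence of $\Wt(v_1)$ and $\Wt(v_2)$ established in Lemma~\ref{Lhomogeneous}. Your separate handling of $U(\mathcal L)$ via the homogeneity of the defining ideal of $T(\mathcal L)$ is the standard and correct way to treat that case, and your warning not to pull the grading back along $U(\mathcal L)\twoheadrightarrow A$ is well taken.
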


\section{Bases of Fibonacci Lie algebra $\mathcal L$ and Fibonacci restricted Lie algebra $\mathbf L$} 

From now on we assume that $\ch K=p=2$ and $R=K[t_i| i\ge 0]/(t_i^p| i\ge 0)$, the truncated polynomial ring,
$\dd_i$ its partial derivations, and $R_1\subset R$ the ideal of polynomials without constant term.
We draw attention that now we deal with derivations of the {\it truncated} polynomial ring which is sill  denoted by the same letter $R$.

The goal of this section is to determine standard monomial basis of the appearing Lie algebra,
the related restricted Lie algebra, and show that this Lie algebra does not satisfy a polynomial identity.

\subsection{Fibonacci Lie algebra $\mathcal L$, and its basis}
Define {\it standard monomials} of {\it length} $n:$
\begin{equation}\label{standard}
\begin{split}
W_n& :=\{ t_0^{\alpha_0}\cdots t_{n-4}^{\alpha_{n-4}}v_n \mid \alpha_i\in\{0,1\} \},\qquad n\ge 1;\\
W&:=\mathop{\cup}\limits_{n\ge 1} W_n.
\end{split}
\end{equation}
In particular, $W_i=\{v_i\}$ for $i=1,2,3$.
We write the factor above as $t_0^* \cdots t_{n-4}^*$ for brevity, by
$*$ denoting possible powers $\alpha_i\in\{0,1\}$, this factor we call the {\it tail}. 
For $n=1,2,3$ the tail is equal to 1.
Below, by $r_{n-4}$ we denote also a tail with an additional scalar factor, including zero.
\begin{theorem}
Let $p=2$ and $\mathcal L:=\Lie(v_1,v_2)\subset \Der R$,
the Lie algebra generated by $v_1,v_2$ (i.e. we use the Lie bracket only).
We call $\mathcal L$ the {\bf Fibonacci Lie algebra}.
Then the set $W$~\eqref{standard} is a basis of $\mathcal L$.
\end{theorem}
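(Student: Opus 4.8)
The plan is to show that $W$ both spans $\mathcal L$ and is linearly independent, the latter being essentially free from the structure of $\Der R$. The linear independence is the easy half: each standard monomial $t_0^{\alpha_0}\cdots t_{n-4}^{\alpha_{n-4}}v_n$ has leading term (in the expansion $v_n=\dd_n+t_{n-1}v_{n+1}$ of~\eqref{vii}) equal to $t_0^{\alpha_0}\cdots t_{n-4}^{\alpha_{n-4}}\dd_n$, and distinct standard monomials produce distinct such terms; since the $t_0^{\beta_0}\cdots\dd_m$ are part of a basis of $\Der R$, no nontrivial linear combination of elements of $W$ can vanish. One can package this cleanly using the $\mathbb{Z}^2$-grading of Lemma~\ref{Lhomogeneous}: I would first check that $\Gr(t_0^{\alpha_0}\cdots t_{n-4}^{\alpha_{n-4}}v_n)$ determines $n$ and the multiset of exponents, so independence reduces to the trivial case of distinct basis vectors of $\Der R$ within each graded component.

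The spanning statement is the substantive part. Here I would argue that $\mathrm{span}_K W$ is closed under the Lie bracket and contains $v_1,v_2$, hence contains $\mathcal L=\Lie(v_1,v_2)$; the reverse inclusion $W\subset\mathcal L$ needs the standard monomials to actually lie in $\mathcal L$, which I will get simultaneously from the same bracket computation by building them up inductively from $v_1,v_2$. Concretely, I would compute $[t_0^{\alpha_0}\cdots t_{i-4}^{\alpha_{i-4}}v_i,\ t_0^{\beta_0}\cdots t_{j-4}^{\beta_{j-4}}v_j]$ for $i\le j$ using Lemma~\ref{Lcommutators} (which gives $[v_i,v_j]=t_{i-1}\cdots t_{j-3}v_{j+1}$) together with Lemma~\ref{Laction} (which gives $v_n(t_\ell)$, needed because the derivations hit the tails). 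Expanding the bracket by the Leibniz rule produces a sum of terms each of the form (tail)$\cdot v_m$ for various $m$; the crucial point, using $p=2$ so that $t_\ell^2=0$, is that every tail appearing is again a product of distinct $t_\ell$ with $\ell\le m-4$, i.e. every term is a scalar multiple of a standard monomial (or zero). Keeping track of which $v_m$ can occur — from $[v_i,v_j]$ one gets $v_{j+1}$, and the derivations $\dd_i$ hidden inside the left tail acting on the right tail can lower indices but only down to monomials still of standard shape — is exactly the bookkeeping that the convention about increasing index products and the $r_{n-4}$ notation are set up to handle.

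The main obstacle I anticipate is precisely this closure-under-bracket verification: one must show that when a derivation coming from the tail (or from $\dd_i$ inside $v_i$) differentiates the tail $t_0^{\beta_0}\cdots t_{j-4}^{\beta_{j-4}}$ of the other monomial, via Lemma~\ref{Laction} one never generates a variable $t_\ell$ with $\ell$ too large relative to the resulting pivot index, nor a repeated variable (the latter being killed by $t_\ell^2=0$). This is a finite but fiddly case analysis on the relative sizes of $i$ and $j$ and on which factor of which tail gets hit. I would organize it as: (i) the "top" term $[v_i,v_j]$-type contribution, handled by Lemma~\ref{Lcommutators}; (ii) contributions where the left operator differentiates the right tail, controlled by Lemma~\ref{Laction}; (iii) the symmetric contributions; and then observe each is a standard monomial. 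Once closure is established, $\mathrm{span}_K W=\mathcal L$ and combined with independence we are done; the $\mathbb{Z}^2$-grading also guarantees the sum $\sum_n \mathrm{span}_K W_n$ is direct, so no cancellation between different lengths can occur.
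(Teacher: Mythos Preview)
Your approach matches the paper's: it too shows $W\subset\mathcal L$ by induction (via the explicit formulas $W_{n+1}=[v_{n-1},W_n]\cup[v_{n-2},W_n]$) and then verifies bracket-closure of the span of $W$ by exactly the Leibniz expansion you describe, using Lemmas~\ref{Lcommutators} and~\ref{Laction}. One simplification you are missing: the tails are pure polynomials and contain no derivations, and for $n<m$ the ``symmetric'' term $v_m(r_{n-4})$ vanishes by Lemma~\ref{Laction}, so your case (iii) is zero and the bracket collapses immediately to two terms $r''_{m-4}v_m+r'''_{m-3}v_{m+1}$.
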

\begin{proof}
Let us check by induction on $n$ that $W_n$ belong to $\mathcal L$.
As the base of induction we have $W_i=\{v_i\}\subset \mathcal L$ for $i=1,2,3$.
Let $n\ge 3$ and assume that $W_i\subset \mathcal L$ for $i\le n$.
We use the commutation relations of Lemma~\ref{Lcommutators} (by * denote all possible powers below) and obtain
\begin{equation}\label{descrW}
\begin{split}
[v_{n-1},W_n]&=[v_{n-1}, t_0^* \cdots t_{n-4}^* v_n]
  =\{ t_0^* \cdots t_{n-4}^* [v_{n-1},v_n]\} =\{ t_0^*\cdots t_{n-4}^* v_{n+1}\}\subset \mathcal L;\\
[v_{n-2},W_n]&=[v_{n-2}, t_0^* \cdots t_{n-4}^* v_n]
=\{ t_0^*\cdots t_{n-4}^* [v_{n-2},v_n]\} =\{ t_0^*\cdots t_{n-4}^*t_{n-3} v_{n+1} \}\subset \mathcal L.
\end{split}
\end{equation}
These formulas yield the induction step, namely that $W_{n+1}\subset \mathcal L$.

Using Lemma~\ref{Lcommutators} and Lemma~\ref{Laction},
a product of two standard monomials of lengths $n<m$ is expressed via standard monomials:
\begin{equation}\label{prod-stand}
\begin{split}
[r_{n-4}v_n, r'_{m-4}v_m]&=r_{n-4}v_n(r_{m-4}')v_m+r_{n-4}r'_{m-4}[v_n,v_m]\\
&=r_{n-4}v_n(r_{m-4}') v_m+ r_{n-4}r'_{m-4}t_{n-1}\cdots t_{m-3}v_{m+1}\\
&=r''_{m-4}v_m+r''_{m-3}v_{m+1}. 
\end{split} 
\end{equation}
\end{proof}

\begin{corollary}\label{CW} We have descriptions of the sets $W_n$:
\begin{enumerate}
\item $W_{n+1}=[v_{n-1},W_n]\cup [v_{n-2},W_n]$,\ $n\ge 3$ (all right hand side monomials are different).
\item
$W_n=\{[v_{n-2,n-3},\ldots,v_{3,2},v_{2,1},v_{3}]\}$, for $n\ge 3$; these are $2^{n-3}$ different elements,
where $v_{i+1,i}$ denotes either $v_{i+1}$ or $v_{i}$ and we consider all such right-normed products.
\item $|W_n|=2^{n-3}$, for $n\ge 3$.
\end{enumerate}
\end{corollary}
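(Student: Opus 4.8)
\noindent\textit{Proof proposal.} The plan is to read everything off from equation~\eqref{descrW} in the proof of the preceding theorem, together with the fact, just established, that $W$ is a basis of $\mathcal L$; in particular distinct standard monomials are linearly independent in $\Der R$ and hence pairwise distinct.

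I would first prove~(1). Fix $n\ge 3$ and a standard monomial $w=t_0^{\alpha_0}\cdots t_{n-4}^{\alpha_{n-4}}v_n\in W_n$. By Lemma~\ref{Laction} both $v_{n-1}$ and $v_{n-2}$ annihilate every variable $t_j$ with $j\le n-4$, so by the Leibniz rule the operators $[v_{n-1},\,\cdot\,]$ and $[v_{n-2},\,\cdot\,]$ pass through the tail of $w$ and act only on its last factor; Lemma~\ref{Lsosed} and Lemma~\ref{Lcommutators} then give, exactly as in~\eqref{descrW},
\begin{align*}
[v_{n-1},w]&=t_0^{\alpha_0}\cdots t_{n-4}^{\alpha_{n-4}}\,v_{n+1},\\
[v_{n-2},w]&=t_0^{\alpha_0}\cdots t_{n-4}^{\alpha_{n-4}}\,t_{n-3}\,v_{n+1}.
\end{align*}
Thus $[v_{n-1},W_n]$ consists of the elements of $W_{n+1}$ in which $t_{n-3}$ does not occur, and $[v_{n-2},W_n]$ of those in which $t_{n-3}$ occurs to the first power; the two subsets are disjoint and their union is all of $W_{n+1}$. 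Inside each subset distinctness is clear, since multiplication by the fixed monomial $v_{n+1}$ (resp.\ $t_{n-3}v_{n+1}$) is injective on the $2^{n-3}$ distinct tails indexing $W_n$. This is~(1), the two halves contributing $2^{n-3}+2^{n-3}=2^{n-2}=|W_{n+1}|$ pairwise distinct standard monomials.

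Part~(2) then follows by induction on $n$, iterating~(1) down to $W_3=\{v_3\}$: the base case $n=3$ is the empty right-normed product, namely $v_3$ itself, a single element; and in the step part~(1) prepends to each $[v_{n-2,n-3},\dots,v_{2,1},v_3]\in W_n$ either $v_{n-1}$ or $v_{n-2}$, yielding $[v_{n-1,n-2},v_{n-2,n-3},\dots,v_{2,1},v_3]$, which is exactly the claimed shape of the elements of $W_{n+1}$, and by~(1) these $2\cdot 2^{n-3}=2^{(n+1)-3}$ monomials are pairwise distinct. Part~(3) is immediate from~(2), and is in fact already visible from the definition~\eqref{standard}, where the exponent vector $(\alpha_0,\dots,\alpha_{n-4})$ ranges over $\{0,1\}^{n-3}$.

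I do not expect a genuine obstacle here; this is a bookkeeping corollary of the theorem's proof. The only point requiring a little care is the assertion in~(1) that the right-hand side monomials are all distinct and together exhaust $W_{n+1}$, which is settled by tracking the exponent of the single new variable $t_{n-3}$, as above.
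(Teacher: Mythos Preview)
Your proof is correct and follows essentially the same route as the paper: part~(1) is read off from~\eqref{descrW}, part~(2) is the obvious induction, and part~(3) comes from either~(2) or directly from~\eqref{standard}. Your write-up simply makes explicit the bookkeeping (tracking the exponent of $t_{n-3}$ to see disjointness and exhaustion) that the paper leaves implicit.
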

\begin{proof} 1) follows from~\eqref{descrW}, 2) follows by induction, and 3) follows from~\eqref{standard}.
\end{proof}

\subsection{Fibonacci restricted Lie algebra $\mathbf L$, and its basis}
Let $p=2$, and $\{v_i\mid i\ge 1\}$ the pivot elements as above,
and $\mathbf L=\Lie_p(v_1,v_2)$ the restricted Lie algebra generated by $v_1,v_2\in \Der R$ with
respect to the $p$th power in $\End R$.
We call $\mathbf L$ the {\bf Fibonacci restricted Lie algebra}.
\begin{lemma} \label{Lsquares}
$v_i^2=t_{i-1}v_{i+2}$ for all $i\ge 1$.
\end{lemma}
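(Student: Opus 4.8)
The plan is to compute $v_i^2$ as an element of $\End R$ directly from the recursive description $v_i = \dd_i + t_{i-1}v_{i+1}$ given in~\eqref{vii}, exploiting the fact that in characteristic $2$ the formal $p$-th power is $v_i^{[2]} = v_i \circ v_i$. Writing $v_i = \dd_i + t_{i-1}v_{i+1}$ and squaring, one gets
\begin{equation*}
v_i^2 = \dd_i^2 + \dd_i\circ(t_{i-1}v_{i+1}) + (t_{i-1}v_{i+1})\circ\dd_i + (t_{i-1}v_{i+1})^2.
\end{equation*}
First I would dispose of the easy terms: $\dd_i^2 = 0$ on the truncated ring $R = K[t_i]/(t_i^2)$ since $p=2$, and $\dd_i$ does not involve $t_{i-1}$ nor does $v_{i+1}$ (which only involves $t_j,\dd_j$ for $j\ge i$, hence in particular not $\dd_i$), so the two cross terms combine: using the Leibniz rule, $\dd_i\circ(t_{i-1}v_{i+1}) = t_{i-1}\,\dd_i\circ v_{i+1}$ because $\dd_i(t_{i-1})=0$, and this cancels against $(t_{i-1}v_{i+1})\circ\dd_i = t_{i-1}v_{i+1}\circ\dd_i$ modulo a commutator, i.e. their sum is $t_{i-1}[\dd_i,v_{i+1}]$. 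But $[\dd_i,v_{i+1}]=0$ since $v_{i+1}$ involves no $t_{i-1}$ (the variables appearing in $v_{i+1}$ are $t_i,t_{i+1},\dots$), so the cross terms vanish altogether.

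It remains to evaluate $(t_{i-1}v_{i+1})^2$. Here I would again use that we are squaring an operator of the form (function)$\times$(derivation): $(t_{i-1}v_{i+1})\circ(t_{i-1}v_{i+1}) = t_{i-1}\,v_{i+1}(t_{i-1})\,v_{i+1} + t_{i-1}^2\,v_{i+1}^2$. The second summand is $0$ because $t_{i-1}^2 = 0$ in $R$. For the first summand, Lemma~\ref{Laction} with $n=i+1$, $j=i-1$ gives $v_{i+1}(t_{i-1}) = 0$ since $n>j$ — wait, that would kill everything, so instead the relevant evaluation is $v_{i+1}(t_{i-1})$: by Lemma~\ref{Laction}, $v_{i+1}(t_{i-1}) = 0$ as $i+1 > i-1$. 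This forces me to recheck the bookkeeping: the surviving contribution must come from the term where the \emph{inner} derivation $v_{i+1}$ hits the coefficient $t_{i-1}$ of the \emph{outer} factor, which is exactly $v_{i+1}(t_{i-1})$ and is indeed zero. So the genuine surviving term has to be traced more carefully — one should instead expand $v_{i+1} = \dd_{i+1} + t_i v_{i+2}$ inside $t_{i-1}v_{i+1}$ one more level, giving $v_i = \dd_i + t_{i-1}\dd_{i+1} + t_{i-1}t_i v_{i+2}$, and square this three-term expression; the only nonzero contribution after killing squares of nilpotents and cancelling cross-derivative commutators will be $(t_{i-1}\dd_{i+1})\circ(t_{i-1}\dd_{i+1})$-type terms, and the cross term $t_{i-1}\dd_{i+1}$ against $t_{i-1}t_iv_{i+2}$, which produces $t_{i-1}\dd_{i+1}(t_{i-1}t_i)\,v_{i+2} + \cdots = t_{i-1}\,t_{i-1}\,v_{i+2} = t_{i-1}v_{i+2}$ modulo terms with $t_{i-1}^2$.

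The main obstacle, therefore, is not any deep idea but the careful normal-ordering of the composition of special derivations in characteristic $2$: one must organize the $p$-power so that every term containing a repeated factor $t_j^2$ is discarded (using $p=2$), every bare derivation-square $\dd_j^2$ is discarded, and every pair of the form $f\dd_j \circ g\dd_k + g\dd_k\circ f\dd_j$ is replaced by $f\,\dd_j(g)\,\dd_k + g\,\dd_k(f)\,\dd_j$, tracking which coefficient functions the various $\dd_j$ actually act on. A clean way to present this is to verify it first for small $i$ (or by applying both sides to the basis monomials $t_j$ using Lemma~\ref{Laction}) and then give the general recursive computation; applying both operators to an arbitrary $t_j$ and checking equality, together with the fact that a derivation of $R$ is determined by its values on the $t_j$, would in fact sidestep most of the composition bookkeeping and is the route I would ultimately take in the writeup. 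One then concludes $v_i^2 = t_{i-1}v_{i+2}$ for all $i\ge 1$, which in particular shows that the $p$-th powers of pivot elements are again (scalar multiples of tails times) pivot elements, consistent with the standard monomial basis $W$ and needed for describing the restricted algebra $\mathbf L$.
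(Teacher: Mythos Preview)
Your overall setup is the same as the paper's, but there is a genuine error in the middle that sends you on an unnecessary detour. After writing
\[
v_i^2=\dd_i^2+\dd_i\circ(t_{i-1}v_{i+1})+(t_{i-1}v_{i+1})\circ\dd_i+(t_{i-1}v_{i+1})^2
\]
you correctly reduce the two cross terms to $t_{i-1}[\dd_i,v_{i+1}]$, but then you assert $[\dd_i,v_{i+1}]=0$ ``since $v_{i+1}$ involves no $t_{i-1}$''. That is the mistake: $\dd_i$ differentiates with respect to $t_i$, not $t_{i-1}$, and $v_{i+1}=\dd_{i+1}+t_i v_{i+2}$ \emph{does} contain $t_i$. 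Hence
\[
[\dd_i,v_{i+1}]=[\dd_i,\dd_{i+1}+t_i v_{i+2}]=v_{i+2},
\]
and the cross terms contribute exactly $t_{i-1}v_{i+2}$, which is the whole answer. Your computation that $(t_{i-1}v_{i+1})^2=0$ is in fact correct (both $t_{i-1}^2=0$ and $v_{i+1}(t_{i-1})=0$), so there is nothing further to extract from that term; your subsequent attempt to expand one more level and find the answer in a term like $t_{i-1}\dd_{i+1}(t_{i-1}t_i)v_{i+2}$ also fails, since $\dd_{i+1}(t_{i-1}t_i)=0$.

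The paper's proof is exactly your intended computation done cleanly: it invokes the characteristic-$2$ identity $(a+b)^2=a^2+b^2+[a,b]$ with $a=\dd_i$, $b=t_{i-1}v_{i+1}$, kills $\dd_i^2$ and $t_{i-1}^2v_{i+1}^2$, and evaluates $[\dd_i,t_{i-1}v_{i+1}]=t_{i-1}[\dd_i,v_{i+1}]=t_{i-1}v_{i+2}$. Once you correct the single indexing slip, your argument coincides with this and there is no need for the three-term expansion or for the action-on-$t_j$ workaround.
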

\begin{proof} In case $p=2$, an associative algebra satisfies the identity:
\begin{equation}\label{square2}
(a+b)^2=a^2+b^2+[a,b].
\end{equation}
Using~\eqref{vii},
\begin{align*}
v_i^2&=(\dd_i+t_{i-1}v_{i+1})^2=\dd_i^2+t_{i-1}^2v_{i+1}^2+[\dd_i, t_{i-1}v_{i+1}]\\
&=t_{i-1}[\dd_i, v_{i+1}]= t_{i-1}[\dd_i, \dd_{i+1}+t_iv_{i+2}]=t_{i-1}v_{i+2}.
\qedhere
\end{align*}
\end{proof}

We add {\it squares of the pivot elements} given by Lemma~\ref{Lsquares}:
\begin{equation}\label{basisLL}
\begin{split}
\tilde W_n:&=
   \left\{
     \begin{array}{lr}
       W_n, &  n=1,2; \\
       W_n\cup \{t_{n-3}v_n\}, & n\ge 3;
     \end{array}
   \right.\\
\tilde W:&=\mathop{\cup}\limits_{n\ge 1} \tilde W_n.
\end{split}
\end{equation}
\begin{lemma}\label{LbasisLL}
Let $p=2$ and $\mathbf L=\Lie_p(v_1,v_2)\subset \Der R$,
the Fibonacci restricted Lie algebra. 
Then the set $\tilde W$ above~\eqref{basisLL} is a basis of $\mathbf L$.
\end{lemma}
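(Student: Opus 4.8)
The plan is to show that $\tilde W$ spans $\mathbf L$ and that its elements are linearly independent. Linear independence is easy: the elements of $W$ are already linearly independent as elements of $\Der R$ (they form a basis of $\mathcal L$ by the preceding theorem), and each newly added element $t_{n-3}v_n$ lies in the homogeneous component $\mathcal L$-span spanned by the monomial $t_{n-3}v_n$ of $\Der R$, which does not occur in any standard monomial in $W$ (the tail of a length-$n$ standard monomial uses only $t_0,\dots,t_{n-4}$). Hence all of $\tilde W$ consists of distinct monomial-supported homogeneous elements of $\Der R$, and independence follows. So the real content is spanning: I must show that $\mathrm{span}(\tilde W)$ is closed under the Lie bracket \emph{and} under the restricted $p$-power map.

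For closure under bracket, I would argue that $\mathrm{span}(\tilde W) = \mathcal L + \mathrm{span}\{t_{n-3}v_n \mid n\ge 3\}$, and since $\mathcal L$ is already bracket-closed, it suffices to bracket a new generator $t_{n-3}v_n = v_n^2$ (using Lemma~\ref{Lsquares}) against an arbitrary standard monomial $r_{m-4}v_m$ and check the result lands back in $\mathrm{span}(\tilde W)$. Using the derivation/Leibniz identity together with Lemma~\ref{Lcommutators} and Lemma~\ref{Laction} — exactly as in the computation~\eqref{prod-stand} in the previous proof — the bracket $[t_{n-3}v_n, r_{m-4}v_m]$ expands into a sum of terms each of the form (tail)$\cdot v_k$ with the tail a product of $t_i$'s; I then invoke Lemma~\ref{Laction} to see which variables can appear and rewrite each term as a standard monomial, a square monomial $t_{k-3}v_k$, or zero (in characteristic $2$, $t_i^2=0$). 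The case analysis splits on whether $n<m$, $n=m$, or $n>m$, and on whether the index $n-3$ of the extra variable collides with the tail of the other factor; but this is a bounded, mechanical verification parallel to~\eqref{descrW} and~\eqref{prod-stand}.

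For closure under the $p$-power, since $p=2$ I use the identity~\eqref{square2}: for $x=\sum_{j} c_j m_j$ a finite sum of monomial basis elements $m_j\in\tilde W$, one has $x^2 = \sum_j c_j^2 m_j^2 + \sum_{i<j} c_i c_j [m_i,m_j]$. The cross terms are brackets, already handled by the previous step. So it remains to compute $m^2$ for each $m\in\tilde W$. For $m=r_{n-4}v_n$ a standard monomial, $m^2 = r_{n-4}^2 v_n^2 + [\text{terms from } (\sum)^2]$; but $r_{n-4}^2=0$ in the truncated ring when the tail is nontrivial, and when the tail is trivial $m=v_n$ and $m^2=t_{n-1}v_{n+1}\cdot$(shift)$= t_{n-1}v_{n+2}$ by Lemma~\ref{Lsquares} — wait, more carefully $v_n^2 = t_{n-1}v_{n+2}$, which is precisely an added square generator $t_{(n+2)-3}v_{n+2}\in\tilde W_{n+2}$. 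For $m=t_{n-3}v_n$ an added square, $m^2 = t_{n-3}^2 v_n^2 + \cdots = 0 + (\text{commutator terms})$, again reducible. Thus every $p$-power of a basis element, hence of an arbitrary element, lies in $\mathrm{span}(\tilde W)$ modulo already-handled brackets.

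The main obstacle is bookkeeping in the bracket computation: one must confirm that brackets $[t_{n-3}v_n, r_{m-4}v_m]$ never produce a monomial outside $\tilde W$ — in particular that the only "square-type" monomials $t_{k-3}v_k$ that can arise are exactly the ones we added, and that no monomial with a \emph{longer} or differently-placed tail survives. This is where Lemma~\ref{Laction} does the heavy lifting: $v_n(t_j)$ is a product of consecutive variables $t_{n-1}\cdots t_{j-2}$, so applying $v_n$ to the tail of $r_{m-4}$ can only produce variables with indices in a controlled range, and combined with the truncation $t_i^2=0$ the surviving monomials are forced into standard form. Once this is checked, spanning follows by the same induction-on-length scheme as in the proof that $W$ is a basis of $\mathcal L$, now carrying the extra square generators along. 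I expect the write-up to mirror~\eqref{descrW}–\eqref{prod-stand} closely, with one or two extra lines for the $n=m$ and collision subcases.
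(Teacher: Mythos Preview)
Your plan is correct in outline, but it takes a longer road than the paper does. The paper's proof is two lines: it invokes the standard fact (cited from~\cite{Ba,StrFar}) that the restricted hull of a Lie subalgebra $\mathcal L$ with basis $W$ is spanned by $\{w^{[p^s]}: w\in W,\ s\ge 0\}$, and then simply computes these powers. For $w=r_{n-4}v_n\in W$ with nontrivial tail one has $w^2=r_{n-4}^2\,v_n^2=0$ (because $v_n$ annihilates $r_{n-4}$ and $r_{n-4}^2=0$ in the truncated ring), while for the pivot elements $v_n^2=t_{n-1}v_{n+2}$ by Lemma~\ref{Lsquares} and $v_n^4=(t_{n-1}v_{n+2})^2=0$. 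No separate bracket-closure argument is needed: the cited structural result already absorbs it.

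Your direct verification that $\mathrm{span}(\tilde W)$ is a restricted Lie subalgebra is a legitimate alternative that avoids the citation, but two parts of your sketch can be sharply shortened. First, the bracket case analysis is unnecessary once you recall that $t_{n-3}v_n=v_{n-2}^{[2]}$ and that in any restricted Lie algebra $[x^{[p]},y]=(\mathrm{ad}\,x)^p(y)$; every bracket involving a square generator therefore already lies in $\mathcal L=\mathrm{span}(W)$, and no ``square-type'' monomial $t_{k-3}v_k$ can appear as an output of a commutator. Second, in your $p$-power step the expression ``$m^2=r_{n-4}^2v_n^2+[\text{terms from }(\sum)^2]$'' is mis-stated: $m=r_{n-4}v_n$ is a single operator, not a sum, and since $v_n$ kills $r_{n-4}$ one has exactly $(r_{n-4}v_n)^2=r_{n-4}^2v_n^2$, which vanishes unless the tail is trivial. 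With those two simplifications your argument collapses to essentially the paper's.
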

\begin{proof}
By  properties of restricted Lie algebras, in order to obtain the restricted hull $\mathbf L$
of the Lie algebra $\mathcal L$, it is sufficient to add $p^s$-powers for all $s\ge 1$ of elements of its basis $W$~\cite{Ba,StrFar}.
These squares are nontrivial only in case of the pivot elements, the latter given by Lemma~\ref{Lsquares}.
\end{proof}

\subsection{Fibonacci Lie algebra  is not PI}
\begin{theorem}
Let $p=2$ and $\mathcal L=\Lie(v_1,v_2)\subset \Der R$, the Fibonacci Lie algebra.
Then $\mathcal L$ does not satisfy a nontrivial Lie identity.
\end{theorem}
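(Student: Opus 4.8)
The plan is to prove the stronger assertion that $\mathcal{L}$ satisfies no nontrivial multilinear Lie identity; this is equivalent to being non-PI, since a nontrivial identity over an infinite field multilinearizes, and if $K$ is finite one first replaces $\mathcal{L}$ by $\mathcal{L}\otimes_K\overline{K}=\Lie_{\overline{K}}(v_1,v_2)$, which has the same standard-monomial basis $W$ and the same multiplication rules, hence is again PI. So suppose
\[
f(x_1,\dots,x_d)=\sum_{\sigma}c_\sigma\,[x_1,x_{\sigma(2)},\dots,x_{\sigma(d)}]=0
\]
holds identically on $\mathcal{L}$, where $f$ is written in left-normed (Specht) form, $\sigma$ runs over the permutations of $\{2,\dots,d\}$, and not all $c_\sigma$ vanish; the goal is to reach a contradiction.

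I would evaluate $f$ on carefully chosen elements of $\mathcal{L}$ and, using that $W$ is a basis, read off that every $c_\sigma=0$. The relations of Lemma~\ref{Lsosed} and Lemma~\ref{Lcommutators}, the action formula of Lemma~\ref{Laction}, and the multiplication rule~\eqref{prod-stand} make such evaluations completely explicit. For rapidly growing indices $n_1<\dots<n_d$ and the substitution $x_i\mapsto v_{n_i}$, one computes that each bracket $[v_{n_1},v_{n_{\sigma(2)}},\dots,v_{n_{\sigma(d)}}]$ collapses---via the relations $t_j^2=0$---to $\pm$ a single standard monomial of length $n_d+1$: bracketing on the right by a pivot whose index exceeds all those used so far appends a fresh block of $t$'s to the tail and raises the length, while bracketing by a pivot of smaller index merely deletes one $t$; so the set of variables absent from the tail records the left-to-right maxima of the sequence $(n_1,n_{\sigma(2)},\dots,n_{\sigma(d)})$, and this already forces many relations among the $c_\sigma$. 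The trouble is that pivot substitutions have spurious symmetries---two successive brackets by pivots of index below the current length commute---so nonzero Lie elements such as $[[x_1,x_d],[x_2,x_3],x_4,\dots,x_{d-1}]$ lie in the common kernel of all of them. To break this I would enlarge the family, using non-pivot standard monomials $x_i\mapsto t_{s_i}v_{n_i}$ (and short sums of such) with the auxiliary indices $s_i$ and the $n_i$ all widely separated; the nontrivial tails make the offending brackets noncommuting, and running over a sufficiently rich such family the evaluations $f(\dots)$ span enough of $\mathcal{L}$ to force every $c_\sigma=0$.

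The main obstacle is exactly this last point: one needs an inductive normal form for $[w_1,w_{\sigma(2)},\dots,w_{\sigma(d)}]$ in the basis $W$ (using Corollary~\ref{CW} and~\eqref{prod-stand}) that keeps track of enough of the shape of the tail to predict which $t_j^2=0$ cancellations occur, followed by a linear-algebra step showing that the resulting vectors, as $\sigma$ and the substitution vary, span a subspace of $\mathcal{L}$ whose annihilator inside the space of multilinear Lie polynomials of degree $d$ is trivial. Equivalently, one may run the whole argument through the finite-dimensional quotients $\mathcal{L}/I_n$, where $I_n$ is the span of all $W_k$ with $k>n$ (an ideal of $\mathcal{L}$ by~\eqref{prod-stand}), and show these finite-dimensional Lie algebras admit no common nontrivial identity; the computation is the same, only organized differently.
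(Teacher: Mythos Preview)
Your proposal is a plan, not a proof: you explicitly flag the ``main obstacle'' at the end and leave it unresolved. The linear-algebra step you describe---showing that your family of substitutions has trivial annihilator in the space of degree-$d$ multilinear Lie polynomials---is the entire content, and you have not carried it out. Moreover, your intermediate description of the bracket action is not accurate: by~\eqref{prod-stand}, bracketing a standard monomial $r_{m-4}v_m$ on the right by a pivot $v_n$ with $n<m$ produces \emph{two} terms (one of length $m$ and one of length $m+1$), not a single ``deletion of one $t$'', so the bookkeeping you sketch for the tails would need substantial repair before it could support any induction.

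The paper's argument is far more direct and avoids all of this. One does not need to separate every permutation; it suffices to isolate a single one. Normalize the identity so that $\alpha_e=1$, and substitute $X_0:=v_1$, $X_i:=v_{2i}$ for $1\le i\le n$. By repeated use of Lemma~\ref{Lsosed}, the term for the identity permutation collapses to the bare pivot $v_{2n+1}$, with empty tail. For any nonidentity $\pi$, let $s$ be the first position where $\pi(s)=q>s$; then the inner bracket $[v_{2s-1},v_{2q}]=t_{2s-2}\cdots t_{2q-3}v_{2q+1}$ introduces the factor $t_{2s-2}$, and since all remaining arguments $v_{2\pi(j)}$ have index $\ge 2s$, this factor can never be removed (Lemma~\ref{Lcommutators} and Lemma~\ref{Laction}). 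Hence every nonidentity term lies in the span of standard monomials containing some $t$, while the identity term is the basis element $v_{2n+1}$ itself; these cannot cancel, contradicting $\alpha_e=1$. A single well-chosen substitution already does the job---there is no need for a family of substitutions, non-pivot monomials, or any spanning argument.
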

\begin{proof}
By way of contradiction suppose that $\mathcal L$ is PI.
We may suppose that $\mathcal L$ satisfies a nontrivial multilinear polynomial identity as follows~\cite{Ba}  
\begin{equation}\label{mon_pi}
\sum_{\pi\in S_n} \alpha_\pi [X_0, X_{\pi(1)},\ldots,X_{\pi(n)}]\equiv 0,\qquad \alpha_\pi\in K,\ \alpha_e=1.
\end{equation}
Substitute $X_0:=v_1$, $X_i:=v_{2i}$ for $i=1,\ldots,n$.
Using Lemma~\ref{Lsosed}, the term for the identity permutation yields 
\begin{align*}
[v_1,v_2,v_4,v_6,\ldots, v_{2n}]=[v_3,v_4,v_6,\ldots,  v_{2n}] =[v_5,v_6,\ldots, v_{2n}]=\cdots=v_{2n+1}.
\end{align*}
Consider a permutation $\pi$ such that $\pi(1)=k>1$.
By Lemma~\ref{Lcommutators},
the inner product yields $[v_1,v_{2k}]=t_0\cdots t_{2k-3}v_{2k+1}$.
By commutator formula (Lemma~\ref{Lcommutators}), the obtained factor $t_0$ cannot disappear after subsequent products with
the remaining elements $\{v_{2},\ldots,\widehat{v_{2k}},\ldots, v_{2n}\}$.
Hence, products for all such $\pi$ have the factor $t_0$.

Consider permutations with $\pi(1)=1$, the inner product yields $[v_1,v_2]=v_3$, we get monomials
$$
\alpha_\pi[v_3,v_{2\pi(2)},v_{2\pi(3)},\ldots,   v_{2\pi(n)}  ].
$$
Similarly, let $\pi(2)=k> 2$, such products contain
$[v_3,v_{2k}]=t_2\cdots t_{2k-3}v_{2k+1}$ and $t_2$
cannot disappear after subsequent products with the remaining $\{v_4,\ldots,\widehat{v_{2k}},\ldots, v_{2n}\}$.

Now consider a general nonidentity permutation $\pi$. There exists $s\in \{1,\ldots, n-1\}$ such that
$\pi(i)=i$ for $i=1,\dots,s-1$ but $\pi(s)=q>s$. Similarly, consider respective monomial in~\eqref{mon_pi}
\begin{align*}
[v_1,v_2,v_4,\ldots,v_{2(s-2)},v_{2(s-1)},v_{2q}, v_{2\pi(s+1)},\ldots,v_{2\pi(n)} ]\\
=[v_{2s-1},v_{2q}, v_{2\pi(s+1)},\ldots,v_{2\pi(n)} ]\\
=[t_{2s-2}\cdots t_{2q-3}v_{2q+1},v_{2\pi(s+1)},\ldots,v_{2\pi(n)}  ]
\end{align*}
Since $q>s$ we get the factor $t_{2s-2}$ that cannot disappear by further products with\\ $\{v_{2s},\ldots, \widehat{v_{2q}},\ldots, v_{2n} \}$.
We get a contradiction because the term for the identity permutation cannot cancel with anything.
\end{proof}

\subsection{Estimates on the weight function}
The following result shows that the weight function $\wt(*)$ separates and stratifies the basis sets $W_n$ on plane (see also below).
The respective borders of these sets are shown as the red lines on Fig.~\ref{Fig1}, Fig~\ref{Fig2}, and Fig~\ref{Fig3}.
\begin{lemma}\label{LwtW}
$\lambda^{n-1}<\lambda^{n-1}+\lambda \le \wt(W_n)\le \lambda^n$ for $n\ge 1$.
\end{lemma}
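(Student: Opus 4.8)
The plan is to compute $\wt(W_n)$ directly from the explicit description of standard monomials. Recall a standard monomial of length $n\ge 4$ has the form $t_0^{\alpha_0}\cdots t_{n-4}^{\alpha_{n-4}}v_n$ with $\alpha_i\in\{0,1\}$; for $n=1,2,3$ it is just $v_n$. By additivity of $\wt(*)$ and the values $\wt(t_i)=-\lambda^i$, $\wt(v_n)=\lambda^n$ from~\eqref{weights}, we get
$$\wt\bigl(t_0^{\alpha_0}\cdots t_{n-4}^{\alpha_{n-4}}v_n\bigr)=\lambda^n-\sum_{i=0}^{n-4}\alpha_i\lambda^i.$$
Thus the upper bound $\wt(W_n)\le\lambda^n$ is attained exactly when all $\alpha_i=0$ (the monomial $v_n$), and more generally $\wt$ is maximized by taking as few tail factors as possible and minimized by taking all of them. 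This reduces the lemma to the two arithmetic facts: (a) $\sum_{i=0}^{n-4}\lambda^i\le\lambda^n-\lambda^{n-1}-\lambda$, i.e. $\lambda^{n-1}+\lambda\le\wt(W_n)$ on the low end, and (b) the strict inequality $\lambda^{n-1}<\lambda^{n-1}+\lambda$, which is trivial since $\lambda>0$.

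For the quantitative part I would use the geometric-series identity together with $\lambda^2=\lambda+1$, equivalently $\lambda-1=1/\lambda$ from~\eqref{lambd}. Since $\lambda>1$,
$$\sum_{i=0}^{n-4}\lambda^i=\frac{\lambda^{n-3}-1}{\lambda-1}=\lambda(\lambda^{n-3}-1)=\lambda^{n-2}-\lambda,$$
using $1/(\lambda-1)=\lambda$. Hence the minimal weight among elements of $W_n$ (for $n\ge 4$) is
$$\lambda^n-(\lambda^{n-2}-\lambda)=\lambda^{n-2}(\lambda^2-1)+\lambda=\lambda^{n-2}\cdot\lambda+\lambda=\lambda^{n-1}+\lambda,$$
again invoking $\lambda^2-1=\lambda$. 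This is exactly the claimed lower bound, and it is attained, so the inequality $\lambda^{n-1}+\lambda\le\wt(W_n)$ is sharp. The upper bound $\wt(W_n)\le\lambda^n$ is immediate from the display above since each $\alpha_i\ge 0$.

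Finally I would dispose of the small cases $n=1,2,3$ separately, where $W_n=\{v_n\}$ and $\wt(W_n)=\{\lambda^n\}$: one checks directly that $\lambda^{n-1}<\lambda^{n-1}+\lambda\le\lambda^n$ holds, the last inequality being $\lambda\le\lambda^n-\lambda^{n-1}=\lambda^{n-1}(\lambda-1)=\lambda^{n-2}$, i.e. $\lambda^{n-2}\ge\lambda$, valid for $n\ge 3$. For $n=1,2$ one verifies $\lambda^{n-1}+\lambda\le\lambda^n$ by hand ($n=1$: $1+\lambda\le\lambda$ is false, so actually the set $W_n$ must be re-examined — note $\wt(W_1)=\{\lambda\}$ and the claim reads $1<1+\lambda\le\lambda$, which fails, so the intended range is $n\ge 2$ or the bounds are meant for the set not the single value; I would state the lemma precisely for $n$ where it holds and check $n=2,3$ explicitly). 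The only mild obstacle is this bookkeeping at the bottom of the range and making sure the extremal tail configurations are correctly identified; the heart of the argument is the one-line geometric-series computation using $\lambda^2=\lambda+1$.
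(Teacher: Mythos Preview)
Your argument is essentially the paper's own proof: write $\wt(t_0^{\alpha_0}\cdots t_{n-4}^{\alpha_{n-4}}v_n)=\lambda^n-\sum_{i=0}^{n-4}\alpha_i\lambda^i$, observe the upper bound is immediate since $\wt(t_i)<0$, and for the lower bound sum the full geometric series and simplify via $\lambda^2-1=\lambda$ (equivalently $1/(\lambda-1)=\lambda$) to get $\lambda^n-(\lambda^{n-2}-\lambda)=\lambda^{n-1}+\lambda$. Your observation that the stated middle inequality $\lambda^{n-1}+\lambda\le\wt(W_n)$ actually fails for $n=1,2$ (since then $\lambda^{n-1}+\lambda>\lambda^n=\wt(v_n)$) is correct and is a point the paper glosses over; only the weaker bound $\lambda^{n-1}<\wt(W_n)\le\lambda^n$ holds for all $n\ge 1$, and that is all that is used downstream.
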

\begin{proof}
By~\eqref{standard}, since $\wt(t_i)<0$, the upper bound is trivial.
Since $\wt(t_i^*)\ge \wt(t_i)=-\lambda^i$, and $\lambda^2-\lambda=1$, we get
\begin{align*}
&\wt(t_0^*\cdots t_{n-4}^*v_n)\ge \wt(v_n)-1-\lambda-\cdots- \lambda^{n-4}
=\lambda^n-\frac{\lambda^{n-3}-1}{\lambda-1}\\
&\quad =\lambda^n-\frac{\lambda^{n-2}-\lambda}{\lambda^2-\lambda}=\lambda^n-\lambda^{n-2}+\lambda=
\lambda^{n-2}(\lambda^2-1)+\lambda
=\lambda^{n-1}+\lambda>\lambda^{n-1}.
\qedhere
\end{align*}
\end{proof}
\begin{corollary} Both estimates above are exact.
\end{corollary}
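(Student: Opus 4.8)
The statement to prove is the Corollary that both estimates in Lemma~\ref{LwtW} are exact, i.e. the lower bound $\lambda^{n-1}+\lambda$ and the upper bound $\lambda^n$ are both attained by some standard monomial in $W_n$ for each $n$ (at least for $n$ large enough that the tail is nonempty).

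\medskip

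The plan is to exhibit, for each $n$, explicit elements of $W_n$ realizing each of the two bounds, using the description of $W_n$ in~\eqref{standard} as the set of monomials $t_0^{\alpha_0}\cdots t_{n-4}^{\alpha_{n-4}}v_n$ with $\alpha_i\in\{0,1\}$, together with additivity of $\wt(*)$ and the values $\wt(v_n)=\lambda^n$, $\wt(t_i)=-\lambda^i$.

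\medskip

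First I would handle the upper bound $\lambda^n$. Taking all $\alpha_i=0$ gives the monomial $v_n\in W_n$ itself, and $\wt(v_n)=\lambda^n$ by~\eqref{weights}. So the upper bound is attained (trivially, and for every $n\ge 1$, since $v_n\in W_n$ always). Next, for the lower bound, the computation in the proof of Lemma~\ref{LwtW} is an equality when every $\alpha_i=1$: the element $t_0t_1\cdots t_{n-4}v_n\in W_n$ has weight exactly $\wt(v_n)-(1+\lambda+\cdots+\lambda^{n-4})$, and the telescoping identity $\lambda^n-\frac{\lambda^{n-3}-1}{\lambda-1}=\lambda^{n-1}+\lambda$ carried out in that proof shows this equals $\lambda^{n-1}+\lambda$. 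Thus for $n\ge 4$ the lower bound is attained by the full-tail monomial; for $n=1,2,3$ the tail is empty, $W_n=\{v_n\}$, and the two bounds coincide with $\wt(v_n)=\lambda^n$ anyway (one checks $\lambda^{n-1}+\lambda=\lambda^n$ fails in general, so strictly speaking for $n\le 3$ the "lower bound" $\lambda^{n-1}+\lambda$ in the lemma is not tight, but the inequality chain still holds as $\lambda^{n-1}+\lambda\le\lambda^n$ there by~\eqref{lambd}; the assertion of exactness is understood for the generic range $n\ge 4$).

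\medskip

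There is essentially no obstacle here: the corollary is purely a matter of pointing out that the two extremal choices of the exponent vector $(\alpha_0,\dots,\alpha_{n-4})\in\{0,1\}^{n-3}$ — all zeros and all ones — saturate the two inequalities, which is exactly how the bounds were derived. The only point requiring a word of care is the small-$n$ boundary ($n\le 3$, empty tail), which I would dispose of in one sentence as above. So the writeup is a two-line proof: "Taking the tail equal to $1$ gives $v_n\in W_n$ with $\wt(v_n)=\lambda^n$, the upper bound; taking the full tail $t_0\cdots t_{n-4}$ turns every inequality in the proof of Lemma~\ref{LwtW} into an equality, giving $\wt(t_0\cdots t_{n-4}v_n)=\lambda^{n-1}+\lambda$, the lower bound."
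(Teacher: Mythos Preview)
Your proposal is correct and matches the paper's (implicit) reasoning: the corollary is stated without proof there, and the intended argument is exactly what you give --- the empty tail $v_n$ realizes the upper bound $\lambda^n$, while the full tail $t_0t_1\cdots t_{n-4}v_n$ turns every inequality in the proof of Lemma~\ref{LwtW} into an equality, realizing $\lambda^{n-1}+\lambda$. One small correction to your side remark: the claim that $\lambda^{n-1}+\lambda\le\lambda^n$ for $n\le 3$ is false for $n=1,2$ (e.g.\ $1+\lambda=\lambda^2>\lambda$), so the lemma's lower bound is itself only meaningful for $n\ge 3$; this does not affect your main argument.
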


\section{Nillity of the Fibonacci restricted Lie algebra $\mathbf L$}
\subsection{Nillity}
The following fact is an analogue of the periodicity of the Grigorchuk and Gupta-Sidki groups.
Here we present the original and a new proof, that yields new bound on nillity indices of elements.
We say that a restricted Lie algebra $L$ has a {\it nil $p$-mapping} provided that for any $x\in L$ there exists a number $n(x)\in\N$
such that $x^{[p^{n(x)}]}=0$. 
\begin{theorem}
Let $\ch K=2$, and consider the Fibonacci restricted Lie algebra $\mathbf L=\Lie_p(v_1,v_2)$.
Then   $\mathbf L$ has a nil $p$-mapping.
\end{theorem}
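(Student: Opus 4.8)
The plan is to show that for every homogeneous element $x \in \mathbf{L}$ of sufficiently large length, $x^{[2]} = 0$, and then use additivity of squaring modulo lower-order contributions to reduce an arbitrary element to a finite-dimensional situation. By the $\mathbb{Z}$-grading of Corollary~\ref{Chomogeneous} (extended to $\mathbf{L}$ by Corollary~\ref{Cgrading}) and the basis description of Lemma~\ref{LbasisLL}, a homogeneous element of length $n \geq 3$ is a linear combination of the standard monomials $t_0^* \cdots t_{n-4}^* v_n$ together with $t_{n-3} v_n$; all of these have the shape $r_{n-4} v_n$ or $r_{n-3} v_n$ where $r$ is a tail (an element of $R$ supported on variables $t_0, \ldots, t_{n-3}$). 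So the first step is to compute $(r \cdot v_n)^{[2]}$ for such a tail $r$.

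First I would use the characteristic-$2$ identity~\eqref{square2}: for $w = r v_n$ with $r \in R$ (so $r$ commutes with itself and $r^2 = 0$ once $r$ lies in the ideal $R_1$, since all variables are $2$-truncated — more precisely any product of distinct $t_i$'s squares to zero), we have $w^{[2]} = (r v_n)^2 = r v_n(r) v_n + r^2 v_n^2$ inside $\operatorname{End} R$. The term $r^2 v_n^2$ vanishes when $r$ is a nonconstant monomial in distinct $t_i$, and by Lemma~\ref{Lsquares} equals $r^2 t_{n-1} v_{n+2}$ otherwise; by Lemma~\ref{Laction}, $v_n(r)$ only involves variables $t_{j}$ with $j \geq n-1$ whereas $r$ only involves $t_i$ with $i \leq n-3$, so $r v_n(r)$ is a tail times a product involving $t_{n-1}, \ldots$, hence $(rv_n)^{[2]}$ is again of the form $r' v_{n+2}$ plus possibly $r'' v_{n+2}$, a combination of standard monomials of length $n+2$. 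The key point to extract is that squaring \emph{strictly increases the length}: $w^{[2]}$ is homogeneous (in the $\mathbb{Z}$-grading it sits in degree $2\deg w$) and supported on longer standard monomials than $w$.

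The main obstacle, and the heart of the argument, is to turn "squaring increases length" into "squaring is eventually zero." The naive hope — that iterating $w \mapsto w^{[2]}$ pushes the length to infinity and the element simply disappears — is false, because $\mathbf{L}$ is infinite-dimensional and an element can have unbounded length. The real mechanism must be that the \emph{weight} controls things: by Lemma~\ref{LwtW}, $\wt(W_n) \in [\lambda^{n-1}+\lambda, \lambda^n]$, these strips are essentially disjoint as $n$ grows, and $\wt(w^{[2]}) = 2\wt(w)$. If $w$ is a sum of standard monomials of lengths in $\{n_0, n_0+1\}$ (as a homogeneous element of fixed $\mathbb{Z}^2$-degree is — two consecutive lengths at most, by Corollary~\ref{CW}), then $w^{[2]}$ has weight $2\wt(w) \approx 2\lambda^{n_0}$, which by Lemma~\ref{LwtW} forces its length to be about $n_0 + 3$ (since $\lambda^{n_0+3} = (\lambda+1)\lambda^{n_0+1} > 2\lambda^{n_0+1}$ but $2\lambda^{n_0} $ is comparable to $\lambda^{n_0+2} = (\lambda+1)\lambda^{n_0}$...). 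Iterating, the length roughly multiplies, the weight doubles at each step, but crucially the \emph{tail factors accumulate}: each squaring multiplies by a product $v_n(r)$ of new variables, and once the tail of $w^{[2^k]}$ is forced (by the weight bound and the bound $|\text{tail indices}| \leq n-3$) to contain a repeated variable $t_i$, the square vanishes. So the plan for this step is: track the "leading" standard monomial (highest weight) in $w^{[2^k]}$, show its tail grows with $k$ while its length also grows, and show the arithmetic forces the tail to be "saturated" — hit a repeated index and kill the term — after finitely many steps; handle the lower-weight monomials by downward induction on weight within the (finitely many) lengths involved, or by noting $w^{[2^k]}$ eventually lands in a finite-dimensional graded piece killed by a further square.

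Finally, for a general (non-homogeneous) $x \in \mathbf{L}$, write $x = x_{n_1} + \cdots + x_{n_r}$ as a finite sum of $\mathbb{Z}$-homogeneous components; using~\eqref{square2} repeatedly, $x^{[2^k]}$ is a sum of $2^k$-fold squares of the $x_{n_i}$ plus commutator correction terms, all of which are themselves homogeneous elements whose length is driven up by the weight-doubling argument, so one picks $k$ large enough that every summand of $x^{[2^k]}$ has vanished. The cleanest packaging is to prove first the homogeneous case as a lemma with an explicit bound $n(w)$ depending on $\wt(w)$ (this also feeds the "bounds on nilpotency indices" promised in the abstract), then deduce the general statement by this finite-sum reduction. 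The one genuinely delicate calculation is the weight/length bookkeeping in the homogeneous case; everything else is formal manipulation with~\eqref{square2}, Lemma~\ref{Lsquares}, Lemma~\ref{Lcommutators}, and Lemma~\ref{Laction}.
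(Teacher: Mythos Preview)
Your central intuition --- that squaring doubles the weight while the length index $n$ (the subscript in $W_n$) grows too slowly to keep up, because $2<\lambda^2$ --- is exactly the mechanism the paper uses. But the way you package it has two genuine gaps. First, you conflate the $\mathbb Z$-degree of Corollary~\ref{Chomogeneous} with the length $n$ of $W_n$; these are different (elements of a single $W_n$ already spread across many total degrees), so your opening sentence ``a homogeneous element of length $n$'' is ill-posed. Second, and more seriously, your reduction from a general element to its homogeneous pieces does not close: expanding $(x_{n_1}+\cdots+x_{n_r})^{[2^k]}$ via~\eqref{square2} produces an exploding family of iterated commutators whose weights are \emph{sums}, not doubles, of the original weights, and you give no argument bounding them. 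The ``tail-saturation'' story is also a red herring: for a sum supported in a single length the vanishing is immediate ($w^{[4]}=0$ since the only surviving square is $c_0^2 v_n^2=c_0^2t_{n-1}v_{n+2}$, which then squares to zero), so nothing accumulates; the whole difficulty lives in the cross terms between \emph{different} lengths, precisely what your reduction tries to push aside.

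The paper avoids both problems by never passing to homogeneous pieces. It writes an arbitrary $a\in\mathbf L$ as $a=\sum_{j=n}^{m} r_{j-3}v_j$ and tracks only the interval $[n,m]$ of lengths appearing. One direct computation with~\eqref{square2}, Lemma~\ref{Lcommutators} and Lemma~\ref{Lsquares} shows that squaring shifts the \emph{lower} endpoint by $2$: $a^{2}=\sum_{i\ge n+2} r'_{i-3}v_i$, hence $a^{2^N}=\sum_{i\ge n+2N} r''_{i-3}v_i$. If $a^{2^N}\ne0$, Lemma~\ref{LwtW} gives $\wt(a^{2^N})>\lambda^{n+2N-1}$ on the one hand, while additivity gives $\wt(a^{2^N})\le 2^N\lambda^{m}$ on the other; together $(\lambda^2/2)^N<\lambda^{m-n+1}$, which bounds $N$. (A dual version tracks the upper endpoint and shows it grows by only $1$ per squaring.) This is the same weight-versus-length idea you identified, but executed on the full element so that the troublesome cross terms are absorbed into the single structural fact ``minimum length increases by $2$'' rather than handled term-by-term.
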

\begin{proof}
{\bf New proof} (bounds from below).
By Lemma~\ref{LbasisLL}, $\mathbf L$ is spanned by standard monomials and squares of pivot elements.
We write any $a\in\mathbf L$ as
\begin{equation}\label{summochka2}
a=\sum_{j=n}^{m} r_{j-3} v_j,\qquad   r_j\in R.
\end{equation}
Denote  $a_1:=r_{n-3} v_{n}+ r_{n-2} v_{n+1}$, $a_2:=\sum_{j=n+2}^{m} r_{j-3} v_j$.
So, $a=a_1+a_2$.
Using~\eqref{square2}, Lemma~\ref{Lcommutators}, Lemma~\ref{Lsquares}, and computations~\eqref{prod-stand}, we get 
\begin{align*}
a^2&=(r_{n-3} v_{n}+ r_{n-2} v_{n+1}+a_2)^2\\
&=r^2_{n-3} v^2_{n}+ r^2_{n-2} v^2_{n+1}+r_{n-3}r_{n-2}[v_n,v_{n+1}]+ [a_1,a_2]+a_2^2\\
&=r^2_{n-3}t_{n-1}v_{n+2}+ r^2_{n-2}t_{n}v_{n+3}+  r_{n-3}r_{n-2}v_{n+2}+ \sum_{i=n+3}^{m'}\tilde r_{i-3} v_i\\
&=\sum_{i=n+2}^{m'} r'_{i-3} v_i,\quad r'_i\in R,
\end{align*}
indeed, the extreme case in computing of $[a_1,a_2]$ above is
\begin{align*}
[r_{n-3}v_n, r_{n-1}v_{n+2}]&=r_{n-3}v_n (r_{n-1})v_{n+2}+ r_{n-3}r_{n-1} [v_n , v_{n+2}]\\
&=r'_{n-1}t_{n-1}v_{n+3}=r''_{n-1}v_{n+3}. 
\end{align*}

Thus, we have a presentation like~\eqref{summochka2}, where the lower index shifted by 2.
By iteration, we get
\begin{align} \label{summochka3}
a^{2^N}=\sum_{i=n+2N}^{m''} r''_{i-3} v_i, \quad r''_i\in  R,\qquad N\ge 0.
\end{align}
Assume that $a^{2^N}\ne 0 $. By the lower bound of Lemma~\ref{LwtW},
\begin{equation*}
\wt (a^{2^N})\ge \min_{i\ge n+2N}\wt ( r''_{i-3} v_i)> \lambda^{n+2N-1}.
\end{equation*}
On the other hand, by property of the weight function and~\eqref{summochka2}
\begin{equation*}
\wt (a^{2^N})\le 2^N\wt a\le 2^N \wt (r_{m-3} v_{m})\le 2^N \lambda^m.
\end{equation*}
These two bounds yield
\begin{equation}\label{est_low}
N< C( m-n+1), \qquad C:=\ln_{\lambda^2/2} \lambda\approx 1,787.
\end{equation}
{\bf Old proof}~\cite{Pe06} (bounds from above). 
By Lemma~\ref{LbasisLL}, $\mathbf L$ is spanned by standard monomials and squares of pivot elements.
So, we write any $a\in\mathbf L$ roughly as
\begin{equation}\label{summochka}
a=\sum_{n=1}^{s-1} r_{n-3} v_n +h_{s-3}v_s,\qquad r_i\in R,\ h_{s-3}\in R_1,
\end{equation}
where we assume that the senior tail has no scalar terms
(i.e. $h_{s-3}\in R_1$, it may be zero).
Compute $a^2$ using~\eqref{square2}.
Mutual commutators of terms~\eqref{summochka} are computed similar to~\eqref{prod-stand} using Lemma~\ref{Lcommutators} and Lemma~\ref{Laction}:
\begin{align*}
[r_{n-3} v_n, r_{m-3} v_m]
  &=r_{n-3}v_n(r_{m-3}) v_m+ r_{n-3}r_{m-3} [v_n,v_{m}]\\
  &=r'_{m-3} v_m + r''_{m-3} t_{n-1}\cdots t_{m-3}v_{m+1} \\
  &=r'_{m-3}v_m+ r'''_{m-3} v_{m+1}, \qquad\qquad\qquad n<m< s;\\
[r_{n-3} v_n, h_{s-3} v_s]
 &=r_{n-3}v_n(h_{s-3})v_s + r_{n-3}h_{s-3}[v_n,v_{s}]\\
 &=r'_{s-3}v_s+ h'_{s-3}t_{n-1}\cdots t_{s-3}v_{s+1} \\
 &=r'_{s-3}v_s+ h''_{s-3}v_{s+1},\qquad h''_{s-3}\in R_1, \qquad\quad n<s,
\end{align*}
where $r_*^*\in R$.  Squares of terms~\eqref{summochka} are
\begin{align*}
(r_{n-3} v_n)^2 &= r_{n-3}^2 t_{n-1}v_{n+2}= h'_{n-1}v_{n+2},\qquad h'_{n-1}\in R_1,\qquad \ n<s;\\
(h_{s-3} v_s)^2&=h_{s-3}^2 v_s^2 =0,
\end{align*}
because $h_{s-3}\in R_1$.
In case $n=s-1$, the first line above yields $h'_{s-2}v_{s+1}$.
Combining all these cases, we get a presentation of the same form as~\eqref{summochka}
\begin{equation*}
a^2=\sum_{n=1}^{s} \bar r_{n-3} v_n +\bar h_{s-2}v_{s+1},\qquad \bar r_i\in R,\ \bar h_{s-2}\in R_1.
\end{equation*}
Repeating the process, we get
\begin{equation}\label{summochka4}
a^{2^m}\!\!=\!\!\sum_{n=1}^{s+m-1}\!\! \tilde r_{n-3} v_n +\tilde h_{s+m-3}v_{s+m},\quad
\tilde r_i \in R,\ \tilde h_{s+m-3} \in R_1, \quad m\ge 1.
\end{equation}
Assume that $a^{2^m}\ne 0$.
Since $\wt(\tilde h_{s+m-3})<0$, by Lemma~\ref{LwtW}, we obtain $\wt(a^{2^m})< \wt (v_{s+m})= \lambda^{s+m} $,  
the inequality denoting the bound on weights of all homogeneous summands.
Weights of the generators of the algebra $\mathbf L$ are $\wt(v_1)=1$, $\wt(v_2)=\lambda>1$.
By additivity of the weight function, $\wt (a)\ge 1$ and $\wt (a^{2^m})\ge 2^m$, the inequality being valid for all homogeneous summands.
We obtain $2^m< \lambda^{m+s}$.  Hence
\begin{equation}\label{est_up}
m< C_1 s,\qquad C_1:=\log_{2/\lambda } (\lambda)\approx 2,27.\qedhere   
\end{equation}
\end{proof}

\subsection{Estimate on Nilpotency indices}
Two different proofs above combined yield an estimate better than both estimates~\eqref{est_low} and \eqref{est_up}  alone.
\begin{corollary}
Let $0\ne a\in\mathbf L$ be written via its basis as
$ a=\sum\limits_{j=n}^{m} r_{j-3} v_j$,  $r_j\in R$. Then
$$a^{2^{m-n+2}}=0.$$
\end{corollary}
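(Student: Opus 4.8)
The plan is to combine the two estimates already obtained in the proof of the preceding theorem, since the statement asserts precisely the bound $a^{2^{m-n+2}}=0$ for $a=\sum_{j=n}^m r_{j-3}v_j$. First I would recall the lower-index-shift phenomenon from the new proof: squaring pushes the smallest index up by $2$, so that $a^{2^N}=\sum_{i\ge n+2N} r''_{i-3}v_i$ by~\eqref{summochka3}; and the upper-index control from the old proof: iterating~\eqref{summochka4} the ``senior tail'' $\tilde h$ lies in $R_1$ (no scalar term), and — crucially — the top index grows only by $1$ per squaring, so after $m'$ squarings the top index is at most $m+m'$.

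Second, I would merge these: write $a=a_1+a_2$ as in the new proof and argue that a single squaring both raises the bottom index by (at least) $2$ and raises the top index by (at most) $1$, while the senior coefficient lands in $R_1$. Thus after $N$ squarings we have $a^{2^N}=\sum_{i} r_{i-3}v_i$ with the sum running over $n+2N\le i\le m+N$, and the $v_{m+N}$-coefficient in $R_1$; in particular the sum is empty as soon as $n+2N>m+N$, i.e. $N>m-n$, equivalently $N\ge m-n+1$. Hence $a^{2^{m-n+1}}$ has all its nonzero homogeneous pieces supported on indices $i$ with $i=m+N$ only (the single top term), with coefficient in $R_1$; one more squaring kills that term via $(h_{s-3}v_s)^2=h_{s-3}^2v_s^2$ and $h_{s-3}^2\in R_1^2$... actually the cleaner route is: once only the top term $h_{m+N-3}v_{m+N}$ with $h\in R_1$ survives, its square is $h^2 t_{m+N-1} v_{m+N+2}$, but squaring again now gives bottom index exceeding top index so we are forced to $0$. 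So $a^{2^{m-n+2}}=0$. I would present this by invoking~\eqref{summochka3} and~\eqref{summochka4} directly rather than redoing the term-by-term bookkeeping.

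Concretely, the argument runs: by~\eqref{summochka3}, if $a^{2^N}\ne 0$ then its minimal index is $\ge n+2N$; by~\eqref{summochka4} (applied with $s=m$, noting $h_{m-3}v_m$ may fail to have $h\in R_1$ but after one squaring~\eqref{summochka4}'s form is attained, shifting $s$ by $1$), if $a^{2^N}\ne 0$ then its maximal index is $\le m+N-1+1 = m+N$ for $N\ge 1$, and in fact $\le m+N$ throughout. A nonzero element must have minimal index $\le$ maximal index, so $a^{2^N}\ne 0$ forces $n+2N\le m+N$, i.e. $N\le m-n$. Therefore $a^{2^{m-n+1}}=0$ already — and a fortiori $a^{2^{m-n+2}}=0$. (If one wants to be careful about the initial off-by-one in the old proof's normalization, the weaker bound $m-n+2$ in the statement absorbs it.)

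The main obstacle I expect is reconciling the two indexing conventions: the new proof writes $a$ with a genuine two-sided range $\sum_{j=n}^m$ and tracks only the bottom, whereas the old proof normalizes the top term to have coefficient in $R_1$ and tracks the growth of the top by $+1$ per squaring. One must check that performing \emph{one} squaring puts any $a$ into the normalized form of~\eqref{summochka} (with the new $s$ equal to the old $m$ or $m+1$) so that~\eqref{summochka4} applies with the correct shift, and simultaneously that the bottom-index bound of~\eqref{summochka3} is unaffected by this renormalization. Once the bookkeeping on both ends is aligned, the inequality $n+2N\le m+N$ is immediate and the corollary follows; the extra ``$+1$'' over the sharp bound in the statement is exactly the safety margin for this renormalization step.
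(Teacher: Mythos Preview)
Your approach is the same as the paper's: combine the lower-index shift from~\eqref{summochka3} with the upper-index control from~\eqref{summochka4}. However, your index bookkeeping is off by one in a way that matters.

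The top coefficient $r_{m-3}$ of $a$ need not lie in $R_1$, so you cannot apply~\eqref{summochka4} with $s=m$. The paper's clean fix is to pad: write $a=\sum_{j=n}^{m} r_{j-3}v_j + h_{m-2}v_{m+1}$ with $h_{m-2}=0\in R_1$, so that~\eqref{summochka} holds with $s=m+1$. Then~\eqref{summochka4} gives top index $(m+1)+N=m+N+1$ (with that coefficient in $R_1$), not $m+N$ as you claim. Combining with~\eqref{summochka3} yields
\[
a^{2^N}=\sum_{i=n+2N}^{m+N} r''_{i-3}v_i \;+\; h''_{m+N-2}\,v_{m+N+1},\qquad h''\in R_1.
\]
At $N=m-n+1$ the sum is empty ($n+2N=2m-n+2>2m-n+1=m+N$), leaving only $h''v_{m+N+1}$; since $h''\in R_1$ and $\operatorname{char}K=2$ in the truncated ring, $(h''v)^2=(h'')^2v^2=0$. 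Hence $a^{2^{m-n+2}}=0$.

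Your stronger intermediate assertion $a^{2^{m-n+1}}=0$ is actually false: take $a=v_n$ (so $m=n$); then $a^{2}=t_{n-1}v_{n+2}\ne 0$ while $m-n+1=1$. So the ``$+1$ safety margin'' is not slack from normalization --- it is genuinely needed, and the paper's bound $m-n+2$ is exactly right (and conjectured optimal). Also, in your ``cleaner route'' you compute $(hv)^2=h^2 t\,v'$ and then propose to square again; but $h^2=0$ already, so that step is unnecessary and would overshoot the exponent.
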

\begin{proof}
Putting $h_{m-2}=0$, we write $a$ as~\eqref{summochka}
\begin{equation}
a=\sum_{j=n}^{m} r_{j-3} v_j+ h_{m-2}v_{m+1},\qquad h_{m-2}\in R_1.
\end{equation}
We compute combining~\eqref{summochka3} and~\eqref{summochka4}
$$
a^{2^N}=\sum_{i=n+2N}^{m+N} r''_{i-3} v_i + h''_{m+N-2}v_{m+N+1}, \quad r''_i\in  R,\quad h''_{m+N-2}\in R_1,\qquad N\ge 0.
$$
If $N=m-n+1$, then the sum above is trivial, and we have the last term only. Its square equals zero.
\end{proof}

\begin{conjecture}
This bound on nilpotency indices is optimal, provided by elements
$$a=v_{n}+v_{n+1}+\cdots+ v_{m}.$$
\end{conjecture}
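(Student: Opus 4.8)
The plan is to prove the non-trivial inequality $a^{2^{m-n+1}}\neq 0$ for $a=v_n+v_{n+1}+\cdots+v_m$; together with the upper bound $a^{2^{m-n+2}}=0$ of the Corollary this yields the asserted optimality. (Applying the shift monomorphism $\tau^{1-n}$, which merely relabels indices and hence commutes with brackets and with squaring in $\End R$, one may even assume $n=1$, so the claim reads $(v_1+\cdots+v_k)^{2^k}\neq 0$ for all $k\ge 1$; I keep general $n$.)

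The idea is to follow one leading monomial of $a^{2^N}$ through the iterated squaring $a^{2^{N+1}}=(a^{2^N})^2$, detecting it by the weight $\wt(*)$. Write $a^{2^N}=\sum_j g^{(N)}_j v_j$ in the basis $\tilde W$. By~\eqref{prod-stand} and~\eqref{summochka3}--\eqref{summochka4} only the pivots with $n+2N\le j\le m+1+N$ occur, and Lemmas~\ref{Lsquares} and~\ref{Lcommutators} show that the top coefficient $g:=g^{(N)}_{m+1+N}$ has no constant term and uses only variables of index $<m+N$; hence $(g\,v_{m+1+N})^2=0$ in $R$, so the top pivot of $a^{2^{N+1}}$ is only $v_{m+2+N}$, and by Lemma~\ref{Lsosed} $[\,t_{m-1}v_{m+1+N},\,v_{m+N}\,]=t_{m-1}v_{m+2+N}$, so the leading monomial reproduces itself one step later. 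Accordingly I would prove by induction on $N$, $1\le N\le m-n+1$, the statement: $t_{m-1}v_{m+1+N}$ occurs in $a^{2^N}$ with coefficient $1$ and is its unique $\wt$-maximal monomial; $g^{(N)}_{m+1+N}$ contains no standalone variable $t_i$ with $i<m-1$; and, when $N\le m-n$, $v_{m+N}$ occurs tail-free, i.e.\ the constant term of $g^{(N)}_{m+N}$ equals $1$. The base case $N=1$ is immediate: $a^2=\sum_{i=n}^m v_i^2+\sum_{n\le i<j\le m}[v_i,v_j]$ is explicit by Lemmas~\ref{Lsquares} and~\ref{Lcommutators}, with top term $t_{m-1}v_{m+2}$.

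In the inductive step one expands $a^{2^{N+1}}=\sum_u u^2+\sum_{u<w}[u,w]$ over the monomials of $a^{2^N}$. The $t_{m-1}$-coefficient of $g^{(N+1)}_{m+2+N}$ gets its only contribution from the $[v_{m+N},v_{m+1+N}]=v_{m+2+N}$ part of $[\,g^{(N)}_{m+N}v_{m+N},\,g^{(N)}_{m+1+N}v_{m+1+N}\,]$, namely the polynomial $g^{(N)}_{m+N}\,g^{(N)}_{m+1+N}$, whose $t_{m-1}$-coefficient equals $1$ because $g^{(N)}_{m+N}$ has constant term $1$ while $g^{(N)}_{m+1+N}$ has none and no standalone $t_i$ with $i<m-1$; every other bracket or square contributes to $g^{(N+1)}_{m+2+N}$ a term carrying a forced factor $t_i$ with $i\ge m-1$, so it cannot affect the $t_{m-1}$-coefficient, and, being of strictly smaller weight, cannot upset maximality (the pivots $<m+1+N$ are handled by $\wt(t_{m-1}v_{m+2+N})=\lambda^{m+2+N}-\lambda^{m-1}>\lambda^{m+1+N}$, which reduces to $\lambda^{m+N}>\lambda^{m-1}$). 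The same computation shows $g^{(N+1)}_{m+2+N}$ still has no standalone $t_i$ with $i<m-1$, no constant term, and leading monomial $t_{m-1}$. It remains to keep the tail-free condition alive: a tail-free $v_{m+N+1}$ in $a^{2^{N+1}}$ is produced by $[v_{m+N-1},v_{m+N}]=v_{m+N+1}$, available once $v_{m+N-1}$ and $v_{m+N}$ are tail-free in $a^{2^N}$; the guaranteed window $\{n+2N,\dots,m+N\}$ of tail-free pivots shrinks by one at each step (its bottom rises by $2$, its top by $1$), so it is non-empty for $N\le m-n$, always contains $v_{m+N}$, and for $N\le m-n-1$ also contains $v_{m+N-1}$, which is just enough — at the last step $N=m-n\to m-n+1$ one only needs the leading monomial $t_{m-1}v_{2m-n+2}$ of $a^{2^{m-n+1}}$, with coefficient $1$, giving $a^{2^{m-n+1}}\neq 0$.

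The main obstacle is the characteristic-$2$ parity bookkeeping: one must show that every monomial being tracked — $t_{m-1}v_{m+1+N}$ and the relevant tail-free $v_j$'s — occurs an \emph{odd} number of times in $(a^{2^N})^2$, i.e.\ enumerate \emph{all} pairs $u,w$ of monomials of $a^{2^N}$ whose bracket, or square, lands on it, and check the count is odd. For the leading monomial the argument above isolates a single source, but the tail-free pivots admit several potential sources (for instance the action term of $[\,t_p v_{m+1+N},\,v_p\,]$ whenever $g^{(N)}_{m+1+N}$ has a standalone $t_p$ and $v_p$ is tail-free), so one needs an inductive description of the ``upper part'' of $a^{2^N}$ precise enough to control all of them uniformly in $N$. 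The cases $m-n\le 2$ can be carried out by hand and already exhibit the pattern, which makes this plausible. As an alternative one may try to bypass the Lie-theoretic expansion altogether by evaluating the operator $a^{2^{m-n+1}}$ on a well-chosen polynomial in $R$ (say $t_M$ with $M\gg m$, using Lemma~\ref{Laction} and the Leibniz rule) and exhibiting a surviving monomial directly, but this meets the same cancellation difficulty.
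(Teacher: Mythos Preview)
The statement you are trying to prove is labelled a \emph{Conjecture} in the paper; the paper offers no proof of it. There is therefore nothing to compare your argument against --- you are attempting to settle an open problem stated by the author, not to reproduce an existing argument.

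As for your attack itself: the strategy of tracking the weight-maximal monomial $t_{m-1}v_{m+1+N}$ through successive squarings is natural, and your base case and weight estimates (e.g.\ $\lambda^{m+2+N}-\lambda^{m-1}>\lambda^{m+1+N}$) are correct. The difficulty, which you correctly identify yourself, is that the inductive hypothesis you formulate is not self-sustaining. To propagate ``$v_{m+N}$ occurs tail-free with coefficient~$1$'' from step $N$ to step $N{+}1$ you need the \emph{constant term} of $g^{(N+1)}_{m+N+1}$ to equal~$1$, and this constant term receives contributions not only from $[v_{m+N-1},v_{m+N}]=v_{m+N+1}$ but also from action terms $r_i\,v_i(g^{(N)}_{m+N+1})\,v_{m+N+1}$ whenever $g^{(N)}_{m+N+1}$ carries a standalone $t_i$ and $v_i$ is tail-free in $a^{2^N}$. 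Your invariant ``no standalone $t_i$ with $i<m-1$'' does not exclude standalone variables with index $\ge m-1$, and such variables do appear: already the square $(g^{(N)}_{m+N}v_{m+N})^2=t_{m+N-1}v_{m+N+2}$ injects a standalone $t_{m+N-1}$ into $g^{(N+1)}_{m+N+2}$. So the induction as stated leaks, and strengthening it enough to control all parity contributions in characteristic~$2$ is exactly the missing idea. Until that bookkeeping is carried out in full, the argument remains a heuristic; if you can close it, you will have proved the author's conjecture.
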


\begin{Remark}
Another approach to prove nillity was suggested by I.~Shestakov and E.~Zelmanov~\cite{ShZe08}.
This approach was further developed and applied to another examples in~\cite{Bartholdi10,Pe17,Pe20clover,Pe20flies}.
\end{Remark}

\section{Growth} 

In this section, we show that the Fibonacci Lie algebra  and its restricted hull have a slow polynomial growth.
Moreover, we describe the growth in more details than in~\cite{PeSh09,PeSh13fib}.
We show that the growth is somewhat uneven, which can be also observed on Fig.~\ref{Fig2} and Fig.~\ref{Fig3}.
We also specify the intermediate growth of respective enveloping algebras despite the uneven growth of the Lie algebra $\mathcal L$.
\subsection{Growth of $\mathcal L$ and $\mathbf L$}
\begin{theorem} \label{TgrowthP}
Let $\ch K=2$, $\mathcal L=\Lie(v_1,v_2)$, $\mathbf L=\Lie_p(v_1,v_2)$,  and $\lambda=(1+\sqrt 5)/2$.
These algebras have the following Gelfand-Kirillov dimensions (coinciding with $\Dim^2\mathbf L$, $\LDim^2 \mathbf L$, see Lemma~\ref{L1}):
$$\GKdim \mathbf L=\LGKdim \mathbf L= \GKdim \mathcal L=\LGKdim \mathcal L=\log_\lambda  2\approx 1.44042.$$
\end{theorem}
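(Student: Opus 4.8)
The plan is to compute the growth function $\gamma_{\mathcal L}(n) = \dim \bigoplus_{k\le n} \mathcal L_k$ directly from the monomial basis $W$ obtained above, and then estimate $\GKdim$ via the standard formula $\GKdim \mathcal L = \limsup_{n\to\infty} \log \gamma_{\mathcal L}(n)/\log n$. By Corollary~\ref{CW}(3) we already know $|W_n| = 2^{n-3}$ for $n\ge 3$, so counting basis monomials by length alone gives exponential growth and is useless; the key point is that a \emph{word} of length $n$ in the generators $v_1,v_2$ does not produce a basis monomial of length $n$, but one of much greater length. Indeed, by Lemma~\ref{Lsosed} a single bracket $[v_i,v_{i+1}] = v_{i+2}$ turns two letters into one pivot of index $i+2$, so short words reach pivots of high index, and a basis element $t_0^*\cdots t_{n-4}^* v_n$ of length $n$ requires roughly $\lambda^n$ letters to be produced (since $\wt v_n = \lambda^n$ and each generator has weight $\ge 1$, the additivity of $\wt$ forces a word of length $N$ to have weight $\le N$, hence by Lemma~\ref{LwtW} only basis monomials with $\lambda^{n-1} < \wt(W_n) \le N$ can appear, i.e. $n \lesssim \log_\lambda N$).

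Concretely, I would argue as follows. \textbf{Upper bound.} Let $\gamma(N)$ be the dimension of the span of monomials in $v_1,v_2$ of length $\le N$. Any such monomial lies in $\bigoplus_{k\le N}\mathcal L_k$, and by the additivity of $\wt(*)$ together with $\wt(v_1)=1$, $\wt(v_2)=\lambda$, any length-$\le N$ product has $\wt \le \lambda N$ (actually $\le$ a linear function of $N$). By Lemma~\ref{LwtW}, a standard monomial from $W_n$ has weight $> \lambda^{n-1}$, so only sets $W_n$ with $\lambda^{n-1} < \lambda N$ contribute, i.e. $n \le \log_\lambda N + O(1)$. Therefore
\begin{equation*}
\gamma(N) \le \sum_{n \le \log_\lambda N + O(1)} |W_n| \le \sum_{n\le \log_\lambda N + O(1)} 2^{n-3} \le \text{const}\cdot 2^{\log_\lambda N} = \text{const}\cdot N^{\log_\lambda 2},
\end{equation*}
which gives $\GKdim \mathcal L \le \log_\lambda 2$. \textbf{Lower bound.} Here I would exhibit, for each $n$, a word in $v_1,v_2$ of length $O(\lambda^n) = O(N)$ whose expansion uses all (or a positive fraction of) the $2^{n-3}$ monomials in $W_n$, so that $\gamma(N) \ge \text{const}\cdot 2^n \ge \text{const}\cdot N^{\log_\lambda 2}$. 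By Corollary~\ref{CW}(1)--(2), the $2^{n-3}$ elements of $W_n$ are obtained as right-normed products $[v_{n-2,n-3},\ldots,v_{2,1},v_3]$; replacing each pivot $v_{i+1}$ or $v_i$ by its standard length-$O(\lambda^i)$ word in $v_1,v_2$ and multiplying, one can produce a single bracketed expression of total length $\sum_i O(\lambda^i) = O(\lambda^n) = O(N)$ that expands (after moving scalars and using Lemma~\ref{Lcommutators}, Lemma~\ref{Laction}) to a linear combination of distinct elements of $W_n$; distinctness is guaranteed because the $W_n$ are part of a basis. One must check that the expansion does not collapse — this is where Lemma~\ref{Lcommutators} is essential, since it shows the tails $t_{i-1}\cdots t_{j-3}$ are never cancelled. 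This yields $\LGKdim \mathcal L \ge \log_\lambda 2$, and since $\LGKdim \le \GKdim$ always, all four quantities coincide.

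Finally, to pass from $\mathcal L$ to $\mathbf L$: by Lemma~\ref{LbasisLL}, $\tilde W = W \cup \{t_{n-3}v_n : n\ge 3\}$, so $\mathbf L$ differs from $\mathcal L$ by only one extra basis element per degree, whence $\gamma_{\mathbf L}(N) \le \gamma_{\mathcal L}(N) + (\text{const}\cdot\log N) \le 2\gamma_{\mathcal L}(N)$ for large $N$, and trivially $\gamma_{\mathbf L}(N)\ge \gamma_{\mathcal L}(N)$; this forces $\GKdim \mathbf L = \GKdim \mathcal L$ and likewise for the lower dimensions. The identification $\Dim^2 = \GKdim$, $\LDim^2 = \LGKdim$ is quoted from Lemma~\ref{L1}. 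The main obstacle I anticipate is the lower bound: one must show the chosen family of bracketed words of length $O(N)$ genuinely spans a space of dimension $\ge \text{const}\cdot 2^{\log_\lambda N}$, i.e. that enough of the $2^{n-3}$ basis monomials in $W_n$ survive. The cleanest route is to reverse the process — take each basis monomial $w\in W_n$, which by Corollary~\ref{CW}(2) is a right-normed commutator in the pivots, substitute for each pivot its defining word in $v_1, v_2$, and verify via the multiplication rule~\eqref{prod-stand} (which is ``triangular'': products of shorter standard monomials produce only longer ones) that the leading term is $w$ itself, so the substitution matrix is unitriangular and hence invertible.
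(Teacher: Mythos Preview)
Your upper bound and the passage from $\mathcal L$ to $\mathbf L$ are correct and match the paper's argument closely.

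Your lower bound, however, is far more complicated than necessary, and the ``obstacle'' you anticipate does not exist. You propose to realize each $w\in W_n$ by an explicit bracketed word in $v_1,v_2$, then worry about cancellation and invoke a unitriangular argument. None of this is needed. Recall that $\mathcal L$ is $\mathbb N$-graded (Corollary~\ref{Chomogeneous}), so $\gamma_{\mathcal L}(N)=\dim\bigoplus_{k\le N}\mathcal L_k$ exactly. Every basis element $w\in W_n$ already lies in a single homogeneous component $\mathcal L_{a,b}$, hence in $\mathcal L_{a+b}$; its total degree is $a+b$. From $\wt(w)=a\lambda+b\lambda^2\le\lambda^n$ (Lemma~\ref{LwtW}) and $a\lambda+b\lambda^2\ge(a+b)\lambda$ you get immediately $a+b\le\lambda^{n-1}$. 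Thus all of $W_1\cup\cdots\cup W_n$ sits inside $\bigoplus_{k\le\lambda^{n-1}}\mathcal L_k$, and since $W$ is a basis these $\approx 2^{n}$ elements are linearly independent, giving $\gamma_{\mathcal L}(\lambda^{n-1})\ge 2^{n-3}$. There is nothing to ``collapse'': linear independence is already established, and the degree bound is a one-line consequence of the weight inequality.

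The paper streamlines this further by introducing an auxiliary \emph{weight growth function} $\tilde\gamma_{\mathcal L}(x)=\#\{w\in W:\wt(w)\le x\}$, for which both the upper and lower bounds follow symmetrically and immediately from the two inequalities of Lemma~\ref{LwtW}; it then sandwiches the ordinary growth function via $\tilde\gamma_{\mathcal L}(\lambda n)\le\gamma_{\mathcal L}(n)\le\tilde\gamma_{\mathcal L}(\lambda^2 n)$ (this is exactly the observation that the generator weights lie in $[\lambda,\lambda^2]$). Your approach is equivalent but asymmetric: you use the weight function for the upper bound and then abandon it for the lower bound, which is where the unnecessary complication enters.
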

\begin{proof}
Consider a real number $x\ge 1$.
In order to specify the growth we use a {\it weight growth function}
$\tilde \gamma_{\mathcal L}(x):=\dim_K \langle w\mid w\in W,\ \wt(w)\le x\rangle_K$, $x\ge 0$.
Let $w\in W$ be of length $n$ of weight at most $x$.
Using Lemma~\ref{LwtW}, we get $\min \{x,\lambda^n \}\ge \wt(w)> \lambda^{n-1}$.
Hence,  $n\le n_0:=\lceil\log_\lambda x\rceil$.
By Corollary~\eqref{CW}, $|W_n|=2^{n-3}$ for all $n\ge 3$.
We evaluate
$\tilde \gamma_{\mathcal L}(x)$ by the number of standard monomials of length at most $n_0< 1+\log_\lambda x$
\begin{align*}
\tilde \gamma_{\mathcal L}(x)&\le |W_1\cup\cdots\cup W_{n_0}|=1+1+1+2+\cdots +2^{n_0-3}
=1+2^{n_0-2}\\
&< 1+2^{\log_\lambda x-1}=1+\frac 12 x^{\log_\lambda 2}.
\end{align*}
Using the same $x,n_0$ and  Lemma~\ref{LwtW},
$\wt (W_{n_0-1})\le \lambda^{n_0-1}=\lambda^{\lceil \log_\lambda x\rceil -1} < \lambda^{\log_\lambda x} = x$.
We get a lower bound
\begin{align*}
\tilde \gamma_{\mathcal L}(x)&\ge  |W_1\cup\cdots\cup W_{n_0-1}|=1+1+1+2+\cdots +2^{n_0-4}\\
&>2^{n_0-3} \ge 2^{\log_\lambda x-3}=\frac 18 x^{\log_\lambda 2}.
\end{align*}
These bounds yield the required Gelfand-Kirillov dimensions of $\mathcal L$.

In case of the restricted Lie algebra $\mathbf L$, consider a square $u:=t_{n-3}v_n=v_{n-2}^2$ of weight at most $x$.
Then $x\ge \wt(u)=2\wt(v_{n-2})=2\lambda^{n-2}$.
So, $n\le n_1(x):=2+ \lceil\log_\lambda (x/2)\rceil$.
We obtain bounds
$\tilde \gamma_{\mathcal L}(x) \le \tilde \gamma_{\mathbf L}(x)\le \tilde \gamma_{\mathcal L}(x)+n_1(x)$,
yielding the same upper and lower Gelfand-Kirillov dimensions.
\end{proof}

\begin{corollary} \label{Cbounds}
Computations above yield the following polynomial bounds for $\mathcal L$
\begin{enumerate}
\item $\displaystyle \frac 18 n^{\log_\lambda 2}\le  \tilde \gamma_{\mathcal L}(n)\le 1+\frac 12 n^{\log_\lambda 2} $, for $n\ge 1$ (weight growth function);
\item $\displaystyle\frac {\lambda^{\log_\lambda 2}}8 n^{\log_\lambda 2}\le  \gamma_{\mathcal L}(n)\le 1+\frac {\lambda^{2\log_\lambda 2}  }2 n^{\log_\lambda 2} $ for $n\ge 1$ (regular growth function).
\end{enumerate}
\end{corollary}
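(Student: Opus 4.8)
The plan is to extract the two pairs of inequalities directly from the estimates already assembled in the proof of Theorem~\ref{TgrowthP}, tightening the bookkeeping where necessary so that the bounds hold for all $n\ge 1$ rather than merely asymptotically. First I would treat the weight growth function $\tilde\gamma_{\mathcal L}(n)$. From the proof of Theorem~\ref{TgrowthP} we already have, for any real $x\ge 1$, the chain
$$
\tfrac18 x^{\log_\lambda 2} < 2^{n_0-3}\le \tilde\gamma_{\mathcal L}(x)\le 1+2^{n_0-2} < 1+\tfrac12 x^{\log_\lambda 2},
$$
where $n_0=\lceil\log_\lambda x\rceil$. Specializing $x=n\in\mathbb N$ gives part~(1). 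The only point needing a remark is the range: for $n=1,2$ one checks the claimed inequalities by hand against $|W_1\cup\cdots|=1,2$ respectively, and for $n\ge 3$ the displayed chain applies verbatim. So part~(1) is essentially a restatement.

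For part~(2) the idea is to compare the regular growth function $\gamma_{\mathcal L}(n)$ — counting standard monomials of \emph{length} at most $n$ — with the weight growth function, using Lemma~\ref{LwtW}, which says $\lambda^{m-1}<\wt(W_m)\le\lambda^m$. A standard monomial of length $m$ has weight at most $\lambda^m$, so every monomial of length $\le n$ has weight $\le\lambda^n$; hence $\gamma_{\mathcal L}(n)\le\tilde\gamma_{\mathcal L}(\lambda^n)\le 1+\tfrac12(\lambda^n)^{\log_\lambda 2}=1+\tfrac12\lambda^{n\log_\lambda 2}\cdot$(nothing), but I must be careful: I want an upper bound of the form $1+\tfrac{\lambda^{2\log_\lambda 2}}2 n^{\log_\lambda 2}$, \emph{not} exponential in $n$. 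The correct comparison is the reverse one: a monomial of weight $\le x$ has length $\le n_0=\lceil\log_\lambda x\rceil\le 1+\log_\lambda x$, so monomials of length $\le n$ are among those of weight $\le \lambda^{n+1}$ at worst — again too lossy. Instead I would go directly: by Lemma~\ref{LwtW}, $W_m$ has a monomial of weight $\le\lambda^m$ and all of $W_m$ has weight $>\lambda^{m-1}$, so the monomials of length exactly $m$ all lie in the weight window $(\lambda^{m-1},\lambda^m]$. Thus $\gamma_{\mathcal L}(n)=|W_1\cup\cdots\cup W_n|$ and $\tilde\gamma_{\mathcal L}(\lambda^{n-1})\le\gamma_{\mathcal L}(n)\le\tilde\gamma_{\mathcal L}(\lambda^n)$. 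Plugging $x=\lambda^{n-1}$ into the lower bound of part~(1) gives $\gamma_{\mathcal L}(n)>\tfrac18(\lambda^{n-1})^{\log_\lambda 2}=\tfrac{\lambda^{\log_\lambda 2}}{8}n^{\log_\lambda 2}/\lambda^{?}$ — wait, $(\lambda^{n-1})^{\log_\lambda 2}=(\lambda^{\log_\lambda 2})^{n-1}=2^{n-1}$, which is \emph{not} of the form $c\,n^{\log_\lambda 2}$ either. The resolution is that $n^{\log_\lambda 2}$ in the statement really means $\gamma_{\mathcal L}$ as a function of the \emph{weight variable disguised as} $n$; re-reading, $\gamma_{\mathcal L}(n)$ here grows like $n^{\log_\lambda 2}$ only if $n$ is the weight. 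So the honest reading of part~(2) is: substitute $x=\lambda^n$ (resp.\ $x=\lambda^{n-1}$) is wrong — rather one writes $\gamma_{\mathcal L}(n)$ directly in terms of $|W_1\cup\cdots\cup W_n|=1+2^{n-2}$ for $n\ge 3$, and then re-expresses $2^{n-2}=\tfrac14(\lambda^{\log_\lambda 2})^{n}=\tfrac14(\lambda^n)^{\log_\lambda 2}$, i.e.\ as a power of $\lambda^n$, and since $\lambda^{n-1}<\wt(W_n)\le\lambda^n$ one bounds $\lambda^n$ between $\wt(W_n)$ and $\lambda\,\wt(W_n)$; substituting and writing $n$ for the weight yields exactly the constants $\lambda^{\log_\lambda 2}/8$ and $\lambda^{2\log_\lambda 2}/2$ in part~(2).

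The main obstacle, then, is purely notational rather than mathematical: one must be careful that in part~(1) the argument $n$ is the \emph{weight} bound, whereas in part~(2) it is the \emph{length} bound, and the passage between them costs exactly one factor of $\lambda^{\log_\lambda 2}=2$ (up and down) coming from the two-sided estimate $\lambda^{n-1}<\wt(W_n)\le\lambda^n$ of Lemma~\ref{LwtW}. Once this identification is made explicit, both displays in part~(2) follow from the corresponding displays in part~(1) by replacing $x$ with the weight of a length-$n$ monomial and absorbing the resulting powers of $\lambda$ into the constants; the small values $n=1,2$ are again checked directly. I would present part~(1) in one line, then part~(2) in two lines with the factor-of-$2$ bookkeeping spelled out, and close.
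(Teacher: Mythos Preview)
Your treatment of part~(1) is fine and matches the paper: the inequalities are exactly those displayed in the proof of Theorem~\ref{TgrowthP}, specialized to integer~$n$.

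Part~(2), however, has a genuine confusion that never gets resolved. You are conflating three different parameters: the index~$m$ of the set $W_m$ (what the paper calls the \emph{length} of a standard monomial), the \emph{degree} of a monomial in the generators $v_1,v_2$ (the argument of the regular growth function $\gamma_{\mathcal L}$), and the \emph{weight} (the argument of $\tilde\gamma_{\mathcal L}$). These are all different: for instance $v_m\in W_m$ has degree equal to the Fibonacci number $F_m$, not~$m$, so your equation $\gamma_{\mathcal L}(n)=|W_1\cup\cdots\cup W_n|$ is false --- the right-hand side equals $1+2^{n-2}$ and grows exponentially in~$n$, while $\gamma_{\mathcal L}(n)$ grows polynomially. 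Your subsequent attempt to ``write $n$ for the weight'' does not repair this, because in part~(2) the variable~$n$ is the degree, not the weight.

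The paper's argument avoids the length parameter entirely and compares degree to weight directly. A homogeneous element of multidegree $(a,b)$ has degree $a+b$ and weight $a\lambda+b\lambda^2$; since $\lambda\le\lambda,\lambda^2\le\lambda^2$, every monomial of degree~$d$ has weight in the interval $[\lambda d,\lambda^2 d]$. This immediately gives the sandwich
\[
\tilde\gamma_{\mathcal L}(\lambda n)\;\le\;\gamma_{\mathcal L}(n)\;\le\;\tilde\gamma_{\mathcal L}(\lambda^2 n),
\]
and substituting $x=\lambda n$ and $x=\lambda^2 n$ into the bounds of part~(1) produces exactly the constants $\lambda^{\log_\lambda 2}/8$ and $\lambda^{2\log_\lambda 2}/2$. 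This is the step your plan is missing.
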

\begin{proof}
The bounds of the first claim were established  in Theorem~\ref{TgrowthP}.
The second claim follows from  bounds
$\tilde \gamma_{\mathcal L} (\lambda n)\le \gamma_{\mathcal L} (n)\le \tilde\gamma_{\mathcal L} (\lambda^2 n) $ for all $n\ge 1$.
To check this bound, by assigning weights one to the generators $v_1,v_2$, we get the regular growth function.
If the generators have weights $(\lambda,\lambda^2)$ then we get the weight growth function.
Replacing weights by either $(\lambda,\lambda)$ or $(\lambda^2,\lambda^2)$ we get the bounds.
\end{proof}

\subsection{Nonuniform polynomial growth of $\mathcal L$}
Let us show that the growth function $\gamma_{\mathcal L }(n)$ behaves unevenly.
Namely, introduce the function that counts monomials of length exactly $n$, namely set
$s_{\mathcal L }(n):=\gamma_{\mathcal L }(n)-\gamma_{\mathcal L }(n-1)$ (before we used notation $\lambda_{\mathcal L}(n)$ in~\cite{PeSh09,PeSh13fib}).
The following fact can be observed by counting monomials on two parallel lines $x+y=\mathrm{const}$ in Fig.~\ref{Fig1},
the first passing through $v_n$, the second one step up (they are not parallel to red lines).

\begin{lemma}\cite[Corollary 6.5]{PeSh13fib}\label{Lnonuniform}
$s_{\mathcal L }(F_n)=s_{\mathcal L }(F_n+1)= 2$ for  $n \ge  4$, where $F_n$ is the $n$th Fibonacci number.
\end{lemma}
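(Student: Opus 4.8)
The plan is to count, for each length $n$, the standard monomials of exactly that length using the basis $W$ and the weight estimates of Lemma~\ref{LwtW}. Recall $s_{\mathcal L}(n)=\gamma_{\mathcal L}(n)-\gamma_{\mathcal L}(n-1)$ counts the basis elements of $W$ whose minimal-length expression in $v_1,v_2$ has length exactly $n$. Here length must be interpreted as the word length in the generators, not the index $n$ of the tail monomial $t_0^*\cdots t_{n-4}^*v_n$; so the key preliminary step is to determine, for a standard monomial $w=t_0^{\alpha_0}\cdots t_{k-4}^{\alpha_k-4}v_k$, what its length is in terms of $(\alpha_0,\dots,\alpha_{k-4})$ and $k$. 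From the recursive construction in Corollary~\ref{CW}(1), each step $W_k\to W_{k+1}$ comes from bracketing with $v_{k-1}$ (which does not create a new tail factor) or with $v_{k-2}$ (which creates the tail factor $t_{k-3}$); both operations increase word length by $1$. Hence a standard monomial of index $k$ built by $j$ "short" steps and $k-3-j$ "long" steps (so it has $k-3-j$ tail factors) has word length $k$, independent of which steps were short or long — wait, that would make $s_{\mathcal L}$ constant on each level; the correct accounting is that length equals $3+(\text{number of bracketing steps})=k$, so actually all of $W_k$ sits in length $k$. I would therefore re-express everything via the weight function instead.

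The cleaner route, which is what the hint about "two parallel lines $x+y=\mathrm{const}$" suggests, is to pass to the $\mathbb Z^2$-grading and the pair $\Wt=(\wt,\swt)$. Since $\wt+\swt$ of a monomial in $v_1,v_2$ with $a$ copies of $v_1$ and $b$ of $v_2$ equals $a(\lambda+\bar\lambda)+b(\lambda^2+\bar\lambda^2)=a+3b$ (using $\lambda+\bar\lambda=1$, $\lambda^2+\bar\lambda^2=3$), the line $x+y=c$ collects exactly those bihomogeneous components $\mathcal L_{a,b}$ with $a+3b=c$, and word length is $n=a+b$. So I would instead fix the word length $n=a+b$ and ask how $\dim\bigoplus_{a+b=n}\mathcal L_{a,b}$ grows; $s_{\mathcal L}(n)$ is the number of standard monomials of word length exactly $n$, and by the basis $W$ together with Corollary~\ref{CW}(3), which says $|W_k|=2^{k-3}$, I need to know how the word length of a standard monomial relates to its index $k$. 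The first real step is thus: \textbf{establish that a standard monomial of index $k$ has word length exactly $k$}, proved by induction using $[v_{k-1},W_k]\cup[v_{k-2},W_k]=W_{k+1}$ and the observation that $v_{k-1},v_{k-2}\in\mathcal L$ have word lengths $k-1,k-2$, but that the right-normed bracket presentation in Corollary~\ref{CW}(2) shows length $=k$. Then $s_{\mathcal L}(k)=|W_k|=2^{k-3}$ would give exponential growth, contradicting Theorem~\ref{TgrowthP}; so in fact the generators $v_{k-1},v_{k-2}$ are themselves long words, and the right-normed expression $[v_{n-2,n-3},\dots,v_3]$ is \emph{not} of minimal length. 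This is the crux.

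So here is the approach I would actually commit to. Define, for a basis element $w\in W$ of index $k$, its length $\ell(w)$ as the minimal word length over all expressions in $v_1,v_2$. Using Lemma~\ref{Lsosed} ($[v_i,v_{i+1}]=v_{i+2}$), we get $\ell(v_k)=F_{k}$ up to indexing shift: indeed $v_1,v_2$ have length $1$, $v_3=[v_1,v_2]$ has length $2$, $v_4=[v_2,v_3]$ has length $3$, $v_5=[v_3,v_4]$ has length $5$, and in general $\ell(v_k)\le \ell(v_{k-1})+\ell(v_{k-2})$, so $\ell(v_k)\le F_{k-1}$ in a suitable convention ($F_1=F_2=1$). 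The matching lower bound $\ell(v_k)\ge F_{k-1}$ should follow from the weight bound $\wt(v_k)=\lambda^k$ and the fact that any length-$m$ monomial has weight at most $\lambda^m$ (each generator has weight $\le\lambda^2$, but more carefully $\wt\le\lambda\cdot(\text{number of }v_2\text{'s})+\cdots$); actually the clean bound is that a word of length $m$ has $\swt\le \bar\lambda^{?}$ forcing things — I would use $\wt(w)\le \lambda^2 m$ is too weak, so instead use that the only way to reach index $k$ is via the Fibonacci-type bracketing, giving $\ell(v_k)=F_{k-1}$. Then a standard monomial $t_0^*\cdots t_{k-4}^*v_k$ of index $k$, produced from $v_k$ by $j$ applications of $\mathrm{ad}(v_{k-1-i})$ type operations, has length $F_{k-1}$ plus contributions; the upshot is that \emph{all} of $W_k$ has word length in the range governed by Lemma~\ref{LwtW}, and when $n=F_{\nu}$ or $n=F_\nu+1$, exactly the two elements $v_\nu$-descended monomials fall in. \textbf{The main obstacle} is making the length computation precise: showing $\ell(v_k)=F_{k-1}$ exactly (lower bound is the hard direction, needing that no shorter expression exists — this is where one invokes the weight function $\wt(v_k)=\lambda^k$ and a lemma that a length-$m$ bracket monomial in $v_1,v_2$ has weight $\le\lambda^{m+1}$ or so, forcing $m\ge k-1$) and then pinning down which standard monomials have length exactly $F_n$ versus $F_n+1$. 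Once $\ell$ is understood on all of $W$, counting $\{w\in W:\ell(w)=F_n\}$ and $\{w\in W:\ell(w)=F_n+1\}$ and checking each has exactly $2$ elements for $n\ge 4$ is a finite bookkeeping step, reading off Fig.~\ref{Fig1}: the two monomials are $v_{n+?}$ and one adjacent tail-monomial, reflecting that at the "Fibonacci frontier" only the pivot element and its immediate $t$-twisted companion achieve that exact length. I would present the argument as: (1) compute $\ell(v_k)=F_{k-1}$; (2) compute $\ell$ on general standard monomials; (3) identify the level sets $\ell^{-1}(F_n)$ and $\ell^{-1}(F_n+1)$; (4) conclude $s_{\mathcal L}(F_n)=s_{\mathcal L}(F_n+1)=2$, citing \cite{PeSh13fib} for the original version.
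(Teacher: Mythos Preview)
Your final plan (1)--(4) points in the right direction, but you are overcomplicating the central step and creating a phantom obstacle.

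The main gap is your worry about proving the \emph{lower} bound $\ell(v_k)\ge F_k$ (your index is off by one; in the convention $F_1=F_2=1$ one has $\deg v_k=F_k$). There is no minimality issue at all: by Lemma~\ref{Lhomogeneous} the algebra $\mathcal L$ is honestly $\mathbb Z^2$-graded by multidegree in $v_1,v_2$, and every standard monomial $w\in W$ is bihomogeneous. Hence $w$ has a \emph{single} well-defined total degree $a+b$, and ``minimal word length'' equals that degree automatically. So step~(1) is immediate, and you can discard the weight-based lower-bound argument entirely.

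For step~(2) you do not need to compute $\Gr(t_i)$. Corollary~\ref{CW}(2) already expresses each $w\in W_k$ (for $k\ge 3$) as a right-normed bracket $[v_{\epsilon_{k-3}},\ldots,v_{\epsilon_1},v_3]$ with $\epsilon_j\in\{j{+}1,\,j{+}2\}$, so
\[
\deg w \;=\; F_3 + \sum_{j=1}^{k-3} F_{\epsilon_j},
\qquad F_{\epsilon_j}\in\{F_{j+1},F_{j+2}\}.
\]
Note $F_{\epsilon_1}\in\{F_2,F_3\}=\{1,2\}$ while for $j=1$ the paper's indexing gives $v_{2,1}$, both of degree~$1$; for $j\ge 2$ the two choices $F_{j}<F_{j+1}$ are distinct. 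Summing, the degrees of $W_k$ range from $F_{k-1}+1$ to $F_k$, the maximum $F_k$ is attained exactly when every $\epsilon_j$ with $j\ge 2$ takes the large value (leaving $2$ choices for $j=1$), and the minimum $F_{k-1}+1$ likewise in exactly $2$ ways. Since the degree ranges $[F_{k-1}{+}1,F_k]$ for different $k$ are disjoint, this finishes steps (3)--(4): $s_{\mathcal L}(F_n)=2$ comes from $W_n$, and $s_{\mathcal L}(F_n{+}1)=2$ from $W_{n+1}$.

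This is exactly what the paper's hint about the two lines $x+y=\text{const}$ passing through $v_n$ and one step up means: in multidegree coordinates $(x,y)=(a,b)$ the line $x+y=N$ is the degree-$N$ slice, and on the lines $N=F_n$ and $N=F_n{+}1$ the support of $\mathcal L$ meets exactly two lattice points each. Your detour through $\wt+\swt=a+3b$ mixes up the multidegree and weight coordinate systems; the paper's $x+y$ is simply $a+b$.
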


The nonuniform polynomial behaviour is shown also by the following.
Namely, there is no unique constant instead of $1/2$ and $1/8$ in Claim 1 of Corollary~\ref{Cbounds}.
\begin{theorem}\label{Lnonuniform2}
Does not exist the limit for the weight growth function
$$\displaystyle \not\!\exists \lim\limits_{n\to \infty }\frac {\tilde\gamma_{\mathcal L}(n)}{n^{\log_\lambda 2}}.$$
\end{theorem}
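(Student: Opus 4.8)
The plan is to exploit the fact that the standard monomial sets $W_n$ are sharply stratified by weight (Lemma~\ref{LwtW}), so that $\tilde\gamma_{\mathcal L}(x)$ is essentially a partial sum of the sequence $|W_n|=2^{n-3}$, and then to evaluate this partial sum along two cleverly chosen subsequences of arguments $x$ for which the answer has two \emph{different} normalized limits. Concretely, recall from the proof of Theorem~\ref{TgrowthP} that if $n_0=n_0(x)=\lceil\log_\lambda x\rceil$ then $\tilde\gamma_{\mathcal L}(x)$ counts monomials of length at most roughly $n_0$, and $|W_1\cup\cdots\cup W_k|=1+2^{k-2}$ for $k\ge 3$. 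Thus $\tilde\gamma_{\mathcal L}(x)$ is, up to bounded additive error, equal to $2^{\,\ell(x)-2}$ where $\ell(x)$ is the largest length $n$ such that \emph{every} monomial of $W_n$ has weight $\le x$, i.e. such that $\lambda^n\le x$ (using the upper bound $\wt(W_n)\le\lambda^n$), together with possibly a partial contribution from $W_{\ell(x)+1}$ coming from those tails that lower the weight below $x$. The ratio $\tilde\gamma_{\mathcal L}(x)/x^{\log_\lambda 2}$ then behaves like $2^{\ell(x)-2}/x^{\log_\lambda 2}$, and since $x^{\log_\lambda 2}=2^{\log_\lambda x}$, this is $\tfrac14\cdot 2^{\,\ell(x)-\log_\lambda x}$, a function of the \emph{fractional part} of $\log_\lambda x$ (plus lower order terms), which manifestly does not converge.

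First I would make the lower and upper estimates from Theorem~\ref{TgrowthP} sharp enough to pin down this oscillation. For the lower bound, pick $x=x_k:=\lambda^k$ for integer $k\to\infty$; then all of $W_1,\ldots,W_k$ have weight $\le\lambda^k=x_k$ by Lemma~\ref{LwtW}, so $\tilde\gamma_{\mathcal L}(x_k)\ge 1+2^{k-2}$, whence
\begin{equation*}
\frac{\tilde\gamma_{\mathcal L}(x_k)}{x_k^{\log_\lambda 2}}\ge\frac{1+2^{k-2}}{2^{k}}\longrightarrow\frac14.
\end{equation*}
For the other subsequence, pick arguments just below a jump, $x=y_k:=\lambda^{k}-\varepsilon$ for small fixed $\varepsilon>0$ (or, more cleanly, $y_k:=\lambda^{k-1}+\lambda$, the minimum weight occurring in $W_k$ by Lemma~\ref{LwtW}, minus a hair): then $W_k$ contributes essentially nothing (only those monomials of $W_k$ with weight $\le y_k$, which is an exponentially smaller count, or zero), so $\tilde\gamma_{\mathcal L}(y_k)\le 1+2^{k-3}+(\text{small})$, and hence
\begin{equation*}
\limsup_{k\to\infty}\frac{\tilde\gamma_{\mathcal L}(y_k)}{y_k^{\log_\lambda 2}}\le\lim_{k\to\infty}\frac{1+2^{k-3}}{(\lambda^{k-1}+\lambda-\varepsilon)^{\log_\lambda 2}}=\frac18.
\end{equation*}
Since $\tfrac14\ne\tfrac18$, two subsequences of $n\to\infty$ give different limits of $\tilde\gamma_{\mathcal L}(n)/n^{\log_\lambda 2}$, so the limit does not exist. (To stay within integer arguments $n\in\mathbb N$ as in the statement, one replaces $\lambda^k$ and $\lambda^{k-1}+\lambda$ by the nearest integers and absorbs the $O(1)$ discrepancy in the weights, which only perturbs the ratios by $o(1)$; alternatively one invokes the comparison $\tilde\gamma_{\mathcal L}(\lambda n)\le\gamma_{\mathcal L}(n)\le\tilde\gamma_{\mathcal L}(\lambda^2 n)$ from Corollary~\ref{Cbounds} freely since the weight growth function is what the statement names.)

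The main obstacle is the bookkeeping of the \emph{partial} set $W_{\ell(x)+1}$: for a generic $x$, some monomials $t_0^*\cdots t_{n-4}^*v_n$ of length $n=\ell(x)+1$ do have weight $\le x$ because their tails are long enough to pull the weight down from $\lambda^n$ into $[\lambda^{n-1}+\lambda,\,x]$. To run the argument I only need that along the two chosen subsequences this partial contribution is controlled: at $x_k=\lambda^k$ the next set $W_{k+1}$ has minimum weight $\lambda^{k}+\lambda>\lambda^k=x_k$, so it contributes \emph{zero} — this is exactly why $\lambda^k$ is the right choice and is the one real idea in the proof; and at $y_k$ chosen just below the bottom of $W_k$'s weight range, $W_k$ itself contributes zero. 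So in fact I never need to understand the partial sets in general — I just need to locate, via Lemma~\ref{LwtW}, values of $x$ at which a whole $W_n$ is either fully counted or fully excluded, and the gap $\lambda^n$ versus $\lambda^{n-1}+\lambda$ between consecutive "thresholds" is precisely what produces the factor-of-two oscillation $\tfrac14$ versus $\tfrac18$ and hence the nonexistence of the limit.
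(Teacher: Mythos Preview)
Your argument has a genuine error. Along your second subsequence $y_k:=\lambda^{k-1}+\lambda-\varepsilon$ (just below the minimum weight of $W_k$), the count $\tilde\gamma_{\mathcal L}(y_k)=1+2^{k-3}$ is correct, but $y_k\sim\lambda^{k-1}$ as $k\to\infty$, so $y_k^{\log_\lambda 2}\sim 2^{k-1}$ and the ratio tends to $\tfrac14$, not $\tfrac18$. (Your alternative choice $y_k=\lambda^k-\varepsilon$ fares even worse: for small $\varepsilon$ almost all of $W_k$ has weight $\le y_k$, contrary to your claim that ``$W_k$ contributes essentially nothing''.) Thus both of your subsequences give the \emph{same} limit $\tfrac14$, and your argument collapses.

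The deeper reason the strategy fails is that precisely at the thresholds where a full block $W_n$ is either entirely in or entirely out, the normalized ratio is always asymptotically $\tfrac14$: at $x=\lambda^n$ one has $\tilde\gamma_{\mathcal L}(x)=1+2^{n-2}$ and $x^{\log_\lambda 2}=2^n$. The oscillation you want to detect lives \emph{inside} the interval $(\lambda^n,\lambda^{n+1}]$ and comes from the uneven distribution of weights within $W_{n+1}$; it cannot be seen without analyzing a partially included block. The paper does exactly this: it takes $y=\lambda^n+\lambda^{n-2}$ and, using the decomposition $W_{n+1}=[v_{n-1},W_n]\cup[v_{n-2},W_n]$, shows that already more than half of $W_{n+1}$ (all of $[v_{n-2},W_n]$ plus a further $2^{n-5}$ monomials of $[v_{n-1},W_n]$) has weight $\le y$, which forces the ratio at $y$ to exceed $C/4$ with $C\approx 1.0197>1$. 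So the ``main obstacle'' you identified and then hoped to sidestep is in fact unavoidable.
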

\begin{proof} Set $x=x(n):=\lambda^n $ for some $n\in \N$. Then by Lemma~\ref{LwtW} and Corollary~\ref{CW}
\begin{align}\nonumber
\tilde \gamma_{\mathcal L}(x)&= |W_1\cup\cdots\cup W_{n}|=1+1+1+2+\cdots +2^{n-3}\\
&=1+2^{n-2}= 1+2^{\log_\lambda x-2}=1+\frac 14 x^{\log_\lambda 2}.
\label{e1}
\end{align}

On the other hand, consider $y=y(n):=\lambda^n+\lambda^{n-2}<\lambda^n+\lambda^{n-1}=\lambda^{n+1}$, $n\in\N$.
By Lemma~\ref{LwtW}, the weight growth function at $y$
counts all monomials $W_1\cup\cdots\cup W_n$ and some monomials of the set $W_{n+1}=[v_{n-1},W_n]\cup [v_{n-2},W_n]$ (see Corollary~\ref{CW}).
(On Fig.~\ref{Fig1} the border line for such monomials is the dashed blue line shown and dividing the set $W_9$.)
Since $\wt([v_{n-2},W_n])\le \lambda^{n-2}+\lambda^n=y$ we count all monomials $[v_{n-2},W_n]$.
By Corollary~\ref{CW}, there are $|[v_{n-2},W_n]|=|W_n|=2^{n-3}$ such monomials.
Also, by~\eqref{descrW} we get monomials $W_{n+1}':=\{t_0^*\cdots t_{n-6}^*t_{n-5}t_{n-4} v_{n+1}\}\subset [v_{n-1},W_n]$.
These monomial are as required because $\wt(W_{n+1}')\le \lambda^{n+1}-\lambda^{n-4}-\lambda^{n-5}=\lambda^{n}+\lambda^{n-1}-\lambda^{n-3}=\lambda^{n}+\lambda^{n-2}=y$.
We obtain $|W_{n+1}'|=2^{n-5}$ monomials. Thus we have estimates
\begin{align*}
\tilde \gamma_{\mathcal L}(y)&\ge |W_1\cup\cdots\cup W_{n}|+|[v_{n-2},W_n]|+|W_{n+1}'| \\
&=1+1+1+2+\cdots +2^{n-3}+2^{n-3}+2^{n-5}\\
&=1+2^{n-2}+2^{n-3}+2^{n-5}>13\cdot 2^{n-5}.
\end{align*}
Since $y=(\lambda^2+1)\lambda^{n-2}$, we get $n=2+\log_\lambda y-\log_{\lambda}(\lambda^2+1)$, this value we substitute above
\begin{equation}\label{e2}
\tilde \gamma_{\mathcal L}(y)> 13\cdot 2^{-3 +\log_\lambda y-\log_{\lambda}(\lambda^2+1)}
=\frac {C} 4 y^{\log_{\lambda} 2},\quad C:= \frac {13}{2^{1+\log_\lambda(\lambda^2+1)}}\approx 1,0197.
\end{equation}
Two sequences $x(n)$ and $y(n)$, $n\in\N$, and estimates~\eqref{e1},~\eqref{e2} yield the claim.
\end{proof}
\begin{Remark}
On Fig.~\ref{Fig2} and Fig~\ref{Fig3} one can observe that monomials of $\mathcal L$ are more concentrated in the middle of the sets $W_n$, 
while vicinities of the pivot elements $v_m$  are less occupied, which is shown by Lemma~\ref{Lnonuniform} as well.
So, this nonuniform distribution of monomials influences the function $\tilde\gamma_{\mathcal L}(n)$ counting the total number of all monomials till the weight $n$,
see also Theorem~\ref{Lnonuniform2}.  
\end{Remark}

More computations and geometric observations are needed to establish an analogous fact.
\begin{conjecture}
Does not exist the limit for the regular growth function $\not\!\exists \lim\limits_{n\to \infty } \gamma_{\mathcal L}(n)/n^{\log_\lambda 2}$ as well.
\end{conjecture}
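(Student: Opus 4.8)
The plan is to imitate the proof of Theorem~\ref{Lnonuniform2}, but now using the total degree in the generators $v_1,v_2$ in place of the weight function, and to exhibit two sequences of integers along which $\gamma_{\mathcal L}(n)/n^{\log_\lambda 2}$ tends to two different constants. By Corollary~\ref{Chomogeneous} one has $\gamma_{\mathcal L}(n)=\sum_{k\le n}\dim\mathcal L_k$, and since $W$ of~\eqref{standard} is a basis of $\mathcal L$ consisting of homogeneous elements, $\gamma_{\mathcal L}(n)$ equals the number of standard monomials of total degree at most $n$. From $[v_i,v_{i+1}]=v_{i+2}$ (Lemma~\ref{Lsosed}) one gets $\Gr(v_n)=(F_{n-2},F_{n-1})$, so $\deg v_n=F_n$, and from $v_i^2=t_{i-1}v_{i+2}$ (Lemma~\ref{Lsquares}) one gets $\Gr(t_j)=(-F_{j-2},-F_{j-1})$, so $\deg t_j=-F_j$ (in particular $\deg t_0=0$, the factor $t_0^{*}$ only doubling multiplicities). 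Hence
\begin{equation*}
\deg\bigl(t_0^{\alpha_0}\cdots t_{n-4}^{\alpha_{n-4}}v_n\bigr)=F_n-\sum_{j=0}^{n-4}\alpha_jF_j,\qquad \alpha_j\in\{0,1\},
\end{equation*}
and, using $\sum_{j=0}^{m}F_j=F_{m+2}-1$, the total degrees of the monomials in $W_n$ lie in $\{F_{n-1}+1,\dots,F_n\}$, these intervals being disjoint for consecutive $n$.

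First take $n=F_\ell$. Since the least degree occurring in $W_{\ell+1}$ is $F_\ell+1$, exactly the standard monomials of $W_1\cup\cdots\cup W_\ell$ are counted, so $\gamma_{\mathcal L}(F_\ell)=1+1+1+2+\cdots+2^{\ell-3}=2^{\ell-2}+1$; since $F_\ell\sim\lambda^\ell/\sqrt5$ and $\lambda^{\log_\lambda 2}=2$,
\begin{equation*}
\frac{\gamma_{\mathcal L}(F_\ell)}{F_\ell^{\log_\lambda 2}}\longrightarrow\frac{(\sqrt5)^{\log_\lambda 2}}{4}\approx 0.797\qquad(\ell\to\infty).
\end{equation*}
Next take $n=F_\ell+F_{\ell-2}$, which is strictly between $F_\ell$ and $F_{\ell+1}$. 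Besides $W_1\cup\cdots\cup W_\ell$ one now also counts the monomials $w=t_0^{\alpha_0}\cdots t_{\ell-3}^{\alpha_{\ell-3}}v_{\ell+1}\in W_{\ell+1}$ with $\deg w\le n$, i.e.\ with $\sum_j\alpha_jF_j\ge F_{\ell+1}-F_\ell-F_{\ell-2}=F_{\ell-3}$; this holds for all $w$ with $\alpha_{\ell-3}=1$ ($2^{\ell-3}$ of them) and, disjointly, for all $w$ with $\alpha_{\ell-3}=0$, $\alpha_{\ell-4}=\alpha_{\ell-5}=1$ (a further $2^{\ell-5}$, for $\ell$ large). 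Hence $\gamma_{\mathcal L}(F_\ell+F_{\ell-2})\ge 2^{\ell-2}+1+2^{\ell-3}+2^{\ell-5}>13\cdot 2^{\ell-5}$, and since $F_\ell+F_{\ell-2}\sim(\lambda^2+1)\lambda^{\ell-2}/\sqrt5$,
\begin{equation*}
\liminf_{\ell\to\infty}\frac{\gamma_{\mathcal L}(F_\ell+F_{\ell-2})}{(F_\ell+F_{\ell-2})^{\log_\lambda 2}}\ \ge\ \frac{13(\sqrt5)^{\log_\lambda 2}}{8(\lambda^2+1)^{\log_\lambda 2}}=\frac{(\sqrt5)^{\log_\lambda 2}}{4}\cdot\frac{13}{2^{1+\log_\lambda(\lambda^2+1)}}\approx 0.812,
\end{equation*}
which strictly exceeds the constant obtained along $n=F_\ell$. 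Therefore $\limsup>\liminf$ and the limit does not exist.

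The only genuinely new ingredient beyond Theorem~\ref{Lnonuniform2} is the combinatorics: translating the length $n$ of a standard monomial (the index of its top pivot element) into total degree via $\deg v_n=F_n$, $\deg t_j=-F_j$, and bounding from below the Fibonacci (Zeckendorf-type) subset count $\#\{\alpha\in\{0,1\}^{\ell-2}:\sum_j\alpha_jF_j\ge F_{\ell-3}\}$; as in Theorem~\ref{Lnonuniform2} one needs only that the choices $\alpha_{\ell-3}=1$ and $\alpha_{\ell-4}=\alpha_{\ell-5}=1$ already meet the threshold, so I expect no hard obstacle here. A consistency check: the correction factor $13/2^{1+\log_\lambda(\lambda^2+1)}\approx 1.0197$ above is exactly the constant $C$ of Theorem~\ref{Lnonuniform2}, as it should be since $F_j$ and $\lambda^j/\sqrt5$ differ only by the bounded quantity $\bar\lambda^j/\sqrt5$. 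What this argument does \emph{not} yield is the exact values of $\limsup$ and $\liminf$ of $\gamma_{\mathcal L}(n)/n^{\log_\lambda 2}$; determining them would require the full distribution of degrees inside each $W_n$ --- the precise profile of the homogeneous components along the strip visible on Fig.~\ref{Fig2} and Fig.~\ref{Fig3}, the nonuniformity already recorded in Lemma~\ref{Lnonuniform}.
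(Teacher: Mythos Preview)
The statement you are addressing is stated in the paper as a \emph{conjecture}, not a theorem; the paper offers no proof, only the remark that ``more computations and geometric observations are needed to establish an analogous fact.'' Your argument, if correct, therefore resolves the conjecture.

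And your argument is correct. It is the exact analogue of the paper's proof of Theorem~\ref{Lnonuniform2}, with weights replaced by total degree: from $\Gr(v_n)=(F_{n-2},F_{n-1})$ and $\Gr(t_j)=(-F_{j-2},-F_{j-1})$ one gets $\deg v_n=F_n$, $\deg t_j=-F_j$ (in particular $\deg t_0=0$), so that the degree range of $W_n$ is precisely $\{F_{n-1}+1,\dots,F_n\}$, playing the role of the weight interval $(\lambda^{n-1},\lambda^n]$ of Lemma~\ref{LwtW}. Your two probe sequences $n=F_\ell$ and $n=F_\ell+F_{\ell-2}$ are the integer analogues of $x(n)=\lambda^n$ and $y(n)=\lambda^n+\lambda^{n-2}$, and your lower count in $W_{\ell+1}$ via the cases $\alpha_{\ell-3}=1$ and $\alpha_{\ell-3}=0,\ \alpha_{\ell-4}=\alpha_{\ell-5}=1$ corresponds exactly to the sets $[v_{\ell-2},W_\ell]$ and $W'_{\ell+1}$ used in the paper, giving the same surplus factor $C=13/2^{1+\log_\lambda(\lambda^2+1)}\approx 1.0197>1$. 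The passage from $F_\ell$ to $\lambda^\ell/\sqrt5$ introduces only a $o(1)$ error, so the two subsequential limits differ by the factor $C$, and the limit fails to exist.

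Two minor stylistic points. First, the sentence ``so I expect no hard obstacle here'' undersells what you have done: the bound is fully established, not merely anticipated. Second, you might state explicitly that along $n=F_\ell$ the ratio has an \emph{exact} limit (not just a bound), which is what pins down $\liminf$ from above; this is implicit in your ``$\longrightarrow$'' but worth emphasizing, since the other sequence only gives a lower bound on $\limsup$.
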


\subsection{Intermediate Growth of restricted and universal enveloping algebras}
By Theorem~\ref{TgrowthP},  $\Dim^2\mathbf L=\GKdim{\mathbf L}=\lambda$.
We can describe the intermediate growth
of the universal enveloping algebra $U(\mathbf L)$ using  \cite[Proposition 1]{Pe96} as follows
\begin{equation}\label{grUupper}
\lambda-1\le \Dim^3 U({\mathbf L})\le \lambda.
\end{equation}
The upper bound~\eqref{grUupper} is equivalently rewritten as
\begin{equation}\label{boundU}
\begin{split}
\gamma_U(n)&\le \Phi^3_{\alpha+o(1)}(n)=\exp(n^{\theta+o(1)}),\quad  n\to\infty,\qquad \text{where}\\
\theta&:=\frac{\lambda}{\lambda+1}= \frac {\log_\lambda 2}{\log_\lambda 2+1}=\log_{2\lambda}2=\log_{\sqrt 5+1}2\approx 0.5902.
\end{split}
\end{equation}

The lack of a precise  value is caused by the nonuniform behaviour of the growth function $s_{\mathcal L} (n)$ (see Lemma~\eqref{Lnonuniform})
and we cannot apply~\cite{Pe96} to specify the growth of universal algebras explicitly.

Now we explicitly determine the growth of universal enveloping algebras despite the uneven growth of the Lie algebra $\mathcal L$.
\begin{theorem} \label{TgrowthU}
Let $\ch K=2$, $\mathcal L=\Lie(v_1,v_2)$, $\mathbf L=\Lie_p(v_1,v_2)$, and $\lambda=(1+\sqrt 5)/2$.
Let $U$ denote any of three algebras:
universal enveloping algebras $U(\mathcal L)$, $U(\mathbf L)$,
or the restricted enveloping algebra $u(\mathbf L)$.
Then
$$\LDim^3 U =\Dim^3 U =\log_\lambda  2\approx 1.44042.$$
\end{theorem}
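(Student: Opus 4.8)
The strategy is to compute the growth function $\gamma_U(n)$ of the enveloping algebra from the known basis of the Lie algebra via the Poincar\'e--Birkhoff--Witt theorem, converting the polynomial growth of $\mathcal L$ (resp. $\mathbf L$) into the level-$3$ growth of $U$ by the standard partition-counting mechanism underlying the scale~\eqref{scale1}. First I would recall that by PBW a basis of $U(\mathbf L)$ consists of ordered products $u_{i_1}\cdots u_{i_k}$ of basis elements $\{u_i\}$ of $\mathbf L$, arranged in a fixed order of nondecreasing weight; for the restricted hull $u(\mathbf L)$ the exponents of pivot squares are additionally bounded by $p-1=1$, which only changes constants. Hence $\gamma_U(n)$, measured with the weight grading (weights one on the generators, extended additively), is essentially the number of multisets of basis monomials of $\mathbf L$ whose total weight is at most $n$. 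By Corollary~\ref{Cbounds} the number of basis elements of $\mathbf L$ of weight $\le x$ grows like $c\,x^{\log_\lambda 2}$; feeding a weight sequence with counting function $\sim c x^{d}$, $d=\log_\lambda 2$, into a generating-function/Karamata-type estimate gives $\log \gamma_U(n)\sim c' n^{d/(d+1)}$, i.e. exactly $\Phi^3_\alpha(n)=\exp(n^{\alpha/(\alpha+1)})$ with $\alpha=d=\log_\lambda 2$. This pins down both $\Dim^3 U$ and $\LDim^3 U$ to $\log_\lambda 2$.

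The key steps in order: (1) Fix the weight grading on $U$ and reduce, via PBW (and the truncation bound for $u(\mathbf L)$), the estimate of $\gamma_U(n)$ to counting weighted partitions into parts drawn from the weight-multiset of a homogeneous basis of $\mathbf L$. (2) Invoke the two-sided polynomial bounds $c_1 x^{d}\le \tilde\gamma_{\mathcal L}(x)\le c_2 x^d$ (Corollary~\ref{Cbounds}, and its restricted-algebra analogue noted in the proof of Theorem~\ref{TgrowthP}), so the part-counting function is squeezed between constant multiples of $x^d$. (3) Apply the abstract partition asymptotics: if $p(x)$ counts available parts of weight $\le x$ and $p(x)\asymp x^d$ with $d>0$, then the number $P(n)$ of weighted partitions of total weight $\le n$ satisfies $\ln P(n)\asymp n^{d/(d+1)}$; for the upper bound one optimizes $\prod(1-q^{w_i})^{-1}$ at a suitable $q=e^{-s}$ and for the lower bound one exhibits enough partitions using parts of weight $\le n^{1/(d+1)}$. (4) Translate back: $d/(d+1)=\log_\lambda 2/(\log_\lambda 2+1)=\theta$ from~\eqref{boundU}, and reinterpret $\ln\gamma_U(n)\asymp n^\theta$ as $\Dim^3U=\LDim^3U=\log_\lambda 2$ by the very definition of the level-$3$ scale $\Phi^3_\alpha$. (5) Finally observe that the regular grading (weights one on generators of $\mathcal L$, $\mathbf L$) differs from the weight grading only by the bounded rescaling $\lambda n\le{}$weight${}\le\lambda^2 n$ used in Corollary~\ref{Cbounds}, which leaves the level-$3$ dimension unchanged; and that passing from $U(\mathbf L)$ to $U(\mathcal L)$ or to $u(\mathbf L)$ changes the part set only by the pivot squares $t_{n-3}v_n$ and by truncation, both negligible for the $\asymp x^d$ estimate.

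\textbf{Main obstacle.} The delicate point is step (3): establishing the sharp two-sided asymptotic $\ln P(n)\asymp n^{d/(d+1)}$ for partitions with a noninteger ``dimension'' $d=\log_\lambda 2$ and, more importantly, doing it cleanly enough that the \emph{lower} bound $\LDim^3U\ge\log_\lambda 2$ comes out with the correct exponent rather than something smaller. Unlike the upper bound~\eqref{grUupper}, which the excerpt already obtains from \cite{Pe96}, the lower bound cannot tolerate the nonuniform fluctuation of $s_{\mathcal L}(n)$ (Lemma~\ref{Lnonuniform}); the resolution is precisely that the level-$3$ dimension is insensitive to such fluctuations because it only sees $\ln\gamma_U(n)$ up to $1+o(1)$, so the constant-factor squeeze $c_1x^d\le\tilde\gamma_{\mathcal L}(x)\le c_2x^d$ is already enough — one never needs a genuine limit for $\tilde\gamma_{\mathcal L}(x)/x^d$. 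Making this robustness explicit, via the Mellin-transform/saddle-point estimate for $\sum\ln(1-e^{-sw_i})^{-1}$ where the counting measure of the $w_i$ is only known up to bounded multiplicative error, is the technical heart of the argument; everything else is bookkeeping with PBW and the grading rescalings.
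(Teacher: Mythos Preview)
Your proposal is correct and would succeed, but the paper's route for the lower bound is much more elementary than the partition-asymptotics/Mellin-transform argument you identify as the ``technical heart.'' The paper does not try to count \emph{all} PBW monomials of weight at most $x$; it merely exhibits enough of them. Concretely, it fixes a single layer $W_n=\{w_1,\dots,w_N\}$ of the monomial basis of $\mathcal L$, with $N=|W_n|=2^{n-3}$ and $\wt(w_i)\le\lambda^n$, and considers the $2^N$ ordered products $w_1^{\epsilon_1}\cdots w_N^{\epsilon_N}$, $\epsilon_i\in\{0,1\}$. Each has weight at most $\lambda^n 2^{n-3}$; setting $n_0:=[\log_{2\lambda}(8x)]$ forces all of them into weight $\le x$, and a two-line computation gives
\[
\tilde\gamma_U(x)\ge 2^{2^{n_0-3}}=\exp\bigl(C\,x^{\theta}\bigr),\qquad \theta=\log_{2\lambda}2.
\]
Since these products are square-free, they lie in the PBW basis of $U(\mathcal L)$, $U(\mathbf L)$, and $u(\mathbf L)$ simultaneously.

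So the ``main obstacle'' you isolate --- a Karamata/Meinardus-type estimate robust to the nonuniform fluctuation of $s_{\mathcal L}(n)$ --- is simply bypassed: the argument never uses the two-sided squeeze of Corollary~\ref{Cbounds}, only the exact count $|W_n|=2^{n-3}$ and the crude bound $\wt(W_n)\le\lambda^n$. Your abstract approach is closer in spirit to the machinery of~\cite{Pe96} that yields the upper bound~\eqref{grUupper}, and it would work; but the paper's hands-on construction is shorter and needs no analysis beyond a logarithm.
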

\begin{proof}
We have the upper bound by~\eqref{grUupper}.
Since the PBW-monomials of $U(\mathcal L)$ and $u(\mathbf L)$ form a subset of standard monomials of $U(\mathbf L)$, 
we have the upper bound on the growth of all three enveloping algebras.

Let us prove the lower bound on growth of any of our three enveloping algebras.
Fix $x> 0$. Consider $n\in \N$.
Denote the set of elements~\eqref{standard} as $T_n=:\{w_1,\dots, w_N\}$, where $N:=|T_n|=2^{n-3}$ by Corollary~\ref{CW}.
We take all ordered products $w=w_1^*w_2^*\cdots w_N^*$, for all $*\in\{0,1\}$.
Then $\wt w\le \lambda^n|T_n|=\lambda^n 2^{n-3}$.
Now we want their weights to be bounded, namely $\lambda^n 2^{n-3}\le x$.
To this end it is sufficient to set $n_0:=[\log_{2\lambda}(8x)]$.
These ordered products belong to the PBW-bases for our enveloping algebras. Then, the following bound proves the required lower estimate (we use~\eqref{boundU})
\begin{align*}
\tilde \gamma_U(x)&> 2^{|T_{n_0}|}=\exp(\ln 2\cdot 2^{n_0-3})=\exp (\ln 2 \cdot 2^{[\log_{2\lambda}(8x)]-3})\\
&\ge \exp (\ln 2 \cdot 2^{\log_{2\lambda}(8x)-4}) 
=\exp \big(2^{-4}\ln 2 \cdot (8x)^{\log_{2\lambda}2}\big)\\
&=\exp \Big(2^{-4}8^{\log_{2\lambda}2}\ln 2  \cdot x^{\theta}\Big),\qquad\text{where}\quad \theta=\log_{2\lambda}2=\frac{\lambda}{\lambda+1}.
\qedhere
\end{align*}
\end{proof}
\begin{corollary} \label{Cinter}
In notations of~\eqref{scale1} and Lemma~\ref{L1}, the statement of Theorem is equivalently written as
$$
\gamma_U(n)=\exp\big( n^{\theta+o(1)} \big ), \quad n\to \infty; \qquad \text{where}\quad
\theta=\frac{\lambda}{\lambda+1}=\log_{\sqrt 5+1}2 \approx 0.5902.
$$
\end{corollary}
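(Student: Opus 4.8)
The plan is to deduce Corollary~\ref{Cinter} from Theorem~\ref{TgrowthU} by a purely formal translation between the scale of $q$-dimensions of Section~1 and the asymptotic shorthand $\exp(n^{\theta+o(1)})$. Set $\alpha:=\log_\lambda 2$, so Theorem~\ref{TgrowthU} reads $\Dim^3 U=\LDim^3 U=\alpha$. By~\eqref{scale1} one has $\Phi^3_\beta(n)=\exp\!\big(n^{\beta/(\beta+1)}\big)$, and $\beta\mapsto\beta/(\beta+1)$ is continuous and strictly increasing on $\mathbb R^+$; moreover $\alpha/(\alpha+1)=\theta$, which by~\eqref{lambd} equals $\log_{2\lambda}2=\log_{\sqrt5+1}2$, matching the constant in the statement. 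First I would record the precise meaning of the right-hand side: $\gamma_U(n)=\exp(n^{\theta+o(1)})$ stands for the two-sided claim that for every $\varepsilon>0$ there is $N=N(\varepsilon)$ with
$$\exp\!\big(n^{\theta-\varepsilon}\big)\ \le\ \gamma_U(n)\ \le\ \exp\!\big(n^{\theta+\varepsilon}\big),\qquad n\ge N.$$

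For the implication Theorem~\ref{TgrowthU}~$\Rightarrow$~Corollary~\ref{Cinter}, I would use the definitions of $\Dim^3$ and $\LDim^3$: given $\delta>0$, eventually $\gamma_U(n)\le \Phi^3_{\alpha+\delta}(n)=\exp\!\big(n^{(\alpha+\delta)/(\alpha+\delta+1)}\big)$ and $\gamma_U(n)\ge \Phi^3_{\alpha-\delta}(n)=\exp\!\big(n^{(\alpha-\delta)/(\alpha-\delta+1)}\big)$. For a prescribed $\varepsilon>0$, continuity of $\beta\mapsto\beta/(\beta+1)$ at $\beta=\alpha$ lets me pick $\delta$ small enough that $(\alpha+\delta)/(\alpha+\delta+1)<\theta+\varepsilon$ and $(\alpha-\delta)/(\alpha-\delta+1)>\theta-\varepsilon$, which yields the displayed inequalities, hence $\gamma_U(n)=\exp(n^{\theta+o(1)})$.

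For the converse, assume $\gamma_U(n)=\exp(n^{\theta+o(1)})$ and fix $\delta>0$. Since $\beta\mapsto\beta/(\beta+1)$ is strictly increasing, $\theta'':=(\alpha+\delta)/(\alpha+\delta+1)>\theta$, so with $\varepsilon:=(\theta''-\theta)/2$ one gets eventually $\gamma_U(n)\le\exp(n^{\theta+\varepsilon})\le\exp(n^{\theta''})=\Phi^3_{\alpha+\delta}(n)$, whence $\Dim^3 U\le\alpha+\delta$; letting $\delta\to0$ gives $\Dim^3 U\le\alpha$. Symmetrically $(\alpha-\delta)/(\alpha-\delta+1)<\theta$ forces $\gamma_U(n)\ge\Phi^3_{\alpha-\delta}(n)$ eventually, so $\LDim^3 U\ge\alpha$. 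Combined with the general inequality $\LDim^3 U\le\Dim^3 U$ — which holds because two opposite one-sided bounds $\Phi^3_{\alpha_1}(n)\le\gamma_U(n)\le\Phi^3_{\alpha_2}(n)$ with $\alpha_1>\alpha_2$ would force $n^{\alpha_1/(\alpha_1+1)}\le n^{\alpha_2/(\alpha_2+1)}$ for large $n$, a contradiction — we recover $\Dim^3 U=\LDim^3 U=\alpha$, i.e. Theorem~\ref{TgrowthU}. The only point requiring care is keeping the two continuous substitutions $\delta\leftrightarrow\varepsilon$ consistent; there is no genuine analytic obstacle, and the argument can be shortened by invoking the interpretation of $\Dim^3$, $\LDim^3$ furnished by Lemma~\ref{L1} together with the identity $\Phi^3_\alpha(n)=\exp(n^\theta)$.
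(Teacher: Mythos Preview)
Your argument is correct and is precisely the unpacking that the paper leaves implicit: the Corollary carries no proof in the text because it is meant as an immediate reformulation of Theorem~\ref{TgrowthU} via the definition of $\Phi^3_\beta$ in~\eqref{scale1}, and you have simply spelled out the $\varepsilon$--$\delta$ translation through the monotone bijection $\beta\mapsto\beta/(\beta+1)$. Your identification $\alpha/(\alpha+1)=\log_{2\lambda}2=\log_{\sqrt5+1}2$ with $\alpha=\log_\lambda 2$ is the right one (the displayed ``$\theta=\lambda/(\lambda+1)$'' in the statement is a slip inherited from the line before~\eqref{grUupper}; numerically $\lambda/(\lambda+1)\approx0.618\neq0.5902$).
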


\section{Geometric ideas}
\label{Sgeometry}

In this section we develop geometric ideas and their applications introduced  in~\cite{PeSh09,PeSh13fib}.
\subsection{Estimates on superweight function}
The next result yields the strip on Fig~\ref{Fig1}, see below. 
\begin{lemma} \label{Leta}
Consider $w\in \tilde W$, i.e. a basis monomial of $\mathbf L=\Lie_p(v_1,v_2)$. Then
$$ -\lambda <\swt w< 1. $$
\end{lemma}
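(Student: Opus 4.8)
The plan is to run the same kind of induction used for the weight estimate in Lemma~\ref{LwtW}, but now for the superweight function $\swt(*)$, keeping careful track of signs because $\bar\lambda=(1-\sqrt5)/2$ is negative and $|\bar\lambda|<1$. Recall from~\eqref{weights} that $\swt v_n=\bar\lambda^{\,n}$ and $\swt t_n=-\bar\lambda^{\,n}$, so a standard monomial $w=t_0^{\alpha_0}\cdots t_{n-4}^{\alpha_{n-4}}v_n\in W_n$ (with $\alpha_i\in\{0,1\}$) has
$$
\swt w=\bar\lambda^{\,n}-\sum_{i=0}^{n-4}\alpha_i\bar\lambda^{\,i}.
$$
First I would establish the required two-sided bound for the sets $W_n$ by analysing the extreme cases of this sum, exactly as in Lemma~\ref{LwtW}, and then add the easy check for the extra basis elements $t_{n-3}v_n=v_{n-2}^2$ of $\tilde W$ (for which $\swt=\bar\lambda^{\,n}-\bar\lambda^{\,n-3}=\bar\lambda^{\,n-3}(\bar\lambda^3-1)$, which lies comfortably inside the target interval since $|\bar\lambda|<1$).

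The key step is to control $\sum_{i=0}^{n-4}\alpha_i\bar\lambda^{\,i}$. Since the $\bar\lambda^{\,i}$ alternate in sign, the sum is maximized by taking $\alpha_i=1$ for even $i$ and $\alpha_i=0$ for odd $i$, and minimized by the opposite choice; in each case the resulting partial geometric series in $\bar\lambda^2$ telescopes using $\bar\lambda^2=\bar\lambda+1$ and $1/\bar\lambda=\bar\lambda-1$ (the analogues of~\eqref{lambd}), just as in the proof of Lemma~\ref{LwtW}. Plugging the two extreme values back into $\swt w=\bar\lambda^{\,n}-(\text{sum})$ should give, after simplification, that $\swt w$ always lies strictly between the bound attained "from below" and the bound attained "from above"; one then checks these two explicit expressions are $>-\lambda$ and $<1$ respectively, uniformly in $n\ge 1$, again using $\bar\lambda^2=\bar\lambda+1$ and $|\bar\lambda|<1$ so that the tail contributions are geometrically small. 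The small values $n=1,2,3$ (tail equal to $1$, so $\swt w=\bar\lambda^{\,n}\in\{\bar\lambda,\bar\lambda^2,\bar\lambda^3\}$) should be verified directly since the general formula degenerates there.

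The main obstacle I anticipate is purely bookkeeping: because of the alternating signs, one must be scrupulous about which choice of the $\alpha_i$ realizes the supremum and which the infimum of $\swt$, and about whether the inequalities are strict. Unlike the weight case, where dropping any $t_i$ factor strictly increased $\wt$, here dropping a factor can move $\swt$ in either direction, so the "extreme monomial" is not simply $v_n$ itself; it is a specific alternating pattern of tail exponents. Once that pattern is identified and the two telescoping sums are evaluated in closed form, the remaining inequalities $-\lambda<\swt w<1$ reduce to elementary estimates on powers of $\bar\lambda$, so I expect no genuine difficulty beyond getting the signs and endpoints exactly right. It would also be worth noting (as the statement of the next figure suggests) that these bounds are sharp, approached along the two alternating tail patterns as $n\to\infty$.
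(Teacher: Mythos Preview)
Your proposal is correct and is essentially the same argument as the paper's: identify the extremal tail patterns (include $t_i$ for even $i$ to get the lower bound on $\swt$, include $t_i$ for odd $i$ to get the upper bound), sum the resulting geometric series in $\bar\lambda^2$ (the paper rewrites $\bar\lambda=-1/\lambda$ and works with $\lambda$ instead), and then handle the squares of the pivot elements separately. The only cosmetic difference is that the paper treats the squares via $\swt v_i^2=2\bar\lambda^{\,i}$ and just checks $i=1,2$, which is slightly slicker than your $\bar\lambda^{\,n-3}(\bar\lambda^3-1)$ formulation.
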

\begin{proof}
Consider a standard monomial~\eqref{standard} $w=t_0^*\cdots t_{n-4}^*v_n$, $n\ge 0$.
Using \eqref{weights} and \eqref{lambd}, we have
$\swt t_i=-\swt v_i=-{\bar \lambda}^{i}=-(-1/\lambda)^i$, $i\ge 0$.
The lower bound is given by monomials having $t_i$ with even indices only
\begin{equation*}
\swt w\ge-\frac 1 {\lambda^{n}}+\! \sum _{i= 0}^ {2i\le n-4} \frac {-1}{\lambda^{2i}}
> \sum_{i\ge 0} \frac {-1}{\lambda^{2i}}= \frac {-1}{1-1/\lambda^2}=\frac {-\lambda^2}{\lambda^2-1}=\frac {-\lambda^2}\lambda=-\lambda.
\end{equation*}
The upper bound is given by monomials having $t_i$ with odd indices only
\begin{equation*}
\swt w\le \frac 1{\lambda^{n}} +\!\!  \sum _{i= 0}^ {2i+1\le n-4} \frac 1{\lambda^{2i+1}}
< \sum_{i\ge 0} \frac {\lambda^{-1}}{\lambda^{2i}}= \frac {\lambda^{-1}}{1-1/\lambda^2}
=\frac {\lambda}{\lambda^2-1}=\frac {\lambda}\lambda=1.
\end{equation*}
By Lemma~\ref{LbasisLL}, it remains to consider squares of the pivot elements $w=v_i^2=t_{i-1}v_{i+2}$, $i\ge 1$.
Then $\swt w=\swt v_i^2=2\bar\lambda^i   =2 (-1/\lambda)^i$.
It is sufficient to see that $\swt v_1^2=-2/\lambda=-2(\lambda-1)\approx -1,236>-\lambda$
and $\swt v_2^2=2/\lambda^2<1$.
\end{proof}
\begin{corollary}
There are basis elements of $\mathcal L=\Lie(v_1,v_2)$ with superweights arbitrarily close to $-\lambda$ and $1$.
\end{corollary}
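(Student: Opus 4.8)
The plan is to exhibit explicit families of standard monomials whose superweights approach the two endpoints, essentially by making the bounds in the proof of Lemma~\ref{Leta} sharp. First I would recall from the proof of Lemma~\ref{Leta} that for a standard monomial $w=t_0^{\alpha_0}\cdots t_{n-4}^{\alpha_{n-4}}v_n\in W_n$ we have
$$\swt w=\bar\lambda^n-\sum_{i=0}^{n-4}\alpha_i\bar\lambda^i=\lambda^{-n}(-1)^n-\sum_{i=0}^{n-4}\alpha_i(-1/\lambda)^i,$$
so the lower bound $-\lambda$ is approached by choosing the tail to carry all even-index variables, and the upper bound $1$ by carrying all odd-index variables.

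For the lower endpoint, I would fix $n$ and take $w_n\in W_n$ with $\alpha_i=1$ for all even $i\le n-4$ and $\alpha_i=0$ for odd $i$. Such a monomial lies in $\mathcal L$ by Corollary~\ref{CW} (it is one of the $2^{n-3}$ standard monomials of length $n$). Then
$$\swt w_n=\bar\lambda^n-\sum_{0\le 2i\le n-4}\lambda^{-2i}\xrightarrow[n\to\infty]{}-\sum_{i\ge 0}\lambda^{-2i}=\frac{-\lambda^2}{\lambda^2-1}=-\lambda,$$
using $\bar\lambda^n\to 0$ and the geometric series computed in the proof of Lemma~\ref{Leta}. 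For the upper endpoint, I would symmetrically take $w_n'\in W_n$ with $\alpha_i=1$ for all odd $i\le n-4$ and $\alpha_i=0$ for even $i$, giving
$$\swt w_n'=\bar\lambda^n+\sum_{0\le 2i+1\le n-4}\lambda^{-(2i+1)}\xrightarrow[n\to\infty]{}\sum_{i\ge 0}\lambda^{-(2i+1)}=\frac{\lambda}{\lambda^2-1}=1.$$
Thus $\swt w_n\to-\lambda$ and $\swt w_n'\to 1$ with all $w_n,w_n'$ basis elements of $\mathcal L$, which is the claim.

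There is no real obstacle here: the content is entirely contained in the proof of Lemma~\ref{Leta}, and the only point to check is that the extremal monomials genuinely belong to $\mathcal L$ (not merely to the restricted hull $\mathbf L$) and that their number and admissible exponent patterns $\alpha_i\in\{0,1\}$ are as described — both of which follow from \eqref{standard} and Corollary~\ref{CW}. The mild care needed is to note that for small $n$ the tail may be empty, so one should simply take $n$ large enough that the prescribed even- or odd-index variables actually occur in the range $0\le i\le n-4$; this does not affect the limits since the tail fills out an arbitrarily long initial segment as $n\to\infty$.
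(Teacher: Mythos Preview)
Your proposal is correct and follows exactly the paper's approach: the paper's proof is the single sentence ``By computations above, the standard monomials having $t_i$s with all odd or all even indices are as required,'' and you have simply spelled out those computations from the proof of Lemma~\ref{Leta} explicitly, verifying that the partial geometric sums converge to $-\lambda$ and $1$ respectively.
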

\begin{proof}
By computations above,  the standard monomials having $t_i$s with all odd or all even indices  are as required.
\end{proof}

\subsection{Weight coordinates on plane}
By Lemma~\ref{Lhomogeneous} we have a $\Z^2$-grading.
Consider $w\in {\mathcal L}_{a,b}$, where $(a,b)\in \N_0^2$,
i.e. $w$ is a linear combination of commutators having $a$ factors $v_1$ and $b$ factors $v_2$.
By additivity of the weight functions, $\Wt w= a\Wt v_1+b\Wt v_2$.
By~\eqref{Wt} and~\eqref{lambd}, the components yield
\begin{align*}
\wt w&=a\wt v_1+b\wt v_2\quad =a\lambda+b\lambda^2;\\
\swt w&=a\swt v_1+b\swt v_2=a\bar \lambda+b\bar \lambda^2=-a/\lambda+b/\lambda^2.
\end{align*}
This observation motivates us  to introduce the {\it weight coordinates} $(\xi,\eta)$ on plane $\R^2$:
\begin{equation}\label{newcoord}
\left\{
\begin{split}
\xi  &:=x\lambda+y\lambda^2;\\
\eta &:=-x/\lambda+y/\lambda^2;
\end{split}\right.
\qquad\quad (x,y)\in \R^2.
\end{equation}
Let $w\in {\mathcal L}_{a,b}$ be a monomial, which we put on plane using its integer multidegree coordinates $(x,y):=\Gr(w)=(a,b)\in\Z^2\subset \R^2$.
By our construction, we have also the weight coordinates $(\xi,\eta)=\Wt w=(\wt w,\swt w)$.
\begin{lemma}
These axis $\xi,\eta$ on plane $\R^2$ are orthogonal.
\end{lemma}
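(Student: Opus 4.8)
The plan is to compute the standard dot product of the two coordinate vectors obtained by reading off the coefficients of $x$ and $y$ in the linear forms $\xi$ and $\eta$ from~\eqref{newcoord}, and check that it vanishes. The $\xi$-axis (i.e.\ the line $\eta=0$) has direction vector proportional to $(\lambda,\lambda^2)$, coming from the map $(x,y)\mapsto \xi$; equivalently, the gradient of $\xi$ is $(\lambda,\lambda^2)$. Similarly the gradient of $\eta$ is $(-1/\lambda,1/\lambda^2)$. Two coordinate axes are orthogonal in the usual Euclidean structure on $\R^2$ precisely when these two gradient vectors are orthogonal, so it suffices to verify
\[
(\lambda,\lambda^2)\cdot(-1/\lambda,1/\lambda^2)=0.
\]

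The computation is immediate: the left-hand side equals $-\lambda/\lambda+\lambda^2/\lambda^2=-1+1=0$, using only that $\lambda\ne 0$. Thus the gradient vectors of $\xi$ and $\eta$ are orthogonal, hence the level lines $\{\xi=\text{const}\}$ and $\{\eta=\text{const}\}$ meet at right angles, and in particular the two new coordinate axes are orthogonal. One may also phrase this via the matrix
\[
M=\begin{pmatrix}\lambda & \lambda^2\\ -1/\lambda & 1/\lambda^2\end{pmatrix}
\]
transforming $(x,y)$ to $(\xi,\eta)$: the rows of $M$ are orthogonal, so $MM^{T}$ is diagonal, which is exactly the assertion that the $\xi$- and $\eta$-axes are perpendicular (the diagonal entries $\lambda^2+\lambda^4$ and $1/\lambda^2+1/\lambda^4$ record the scaling along each axis).

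There is essentially no obstacle here; the only subtlety is being precise about what ``the axes are orthogonal'' means, namely that we fix the ambient Euclidean inner product on $\R^2$ (the one in which the original $(x,y)$-axes are orthonormal) and then check orthogonality of the directions of the new axes, which are the rows (or, dually, appropriate columns) of the transition matrix. Everything else reduces to the one-line identity $\lambda\cdot(1/\lambda)=1$ together with $\lambda^2/\lambda^2=1$, both contained in~\eqref{lambd}.
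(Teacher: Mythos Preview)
Your proof is correct and essentially the same as the paper's: the paper finds the axis directions directly as the lines $y=\lambda x$ and $y=-x/\lambda$ and notes that their slopes multiply to $-1$, while you verify that the gradients (the rows of the transition matrix $M$) are orthogonal---in the plane these are equivalent, since the axis directions are $90^\circ$ rotations of the gradients and rotation preserves angles. One small wording issue: the sentence identifying the direction of the $\xi$-axis with $\nabla\xi$ is not a priori justified (the direction of $\{\eta=0\}$ is perpendicular to $\nabla\eta$, not parallel to $\nabla\xi$), but your subsequent argument via level lines and $MM^T$ being diagonal handles this correctly.
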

\begin{proof}
The axis $\xi$ is given by points with $\eta=0$, i.e. $y=\lambda x$.
The axis $\eta$ is given by $\xi=0$, i.e. $y=-x/\lambda$.
These lines are orthogonal.
\end{proof}

\subsection{Strip for Fibonacci Lie algebras $\mathcal L$ and $\mathbf L$}
\begin{theorem}\label{Tstrip}
Let $p=2$ and consider the Fibonacci restricted Lie algebra $\mathbf L=\Lie_p(v_1,v_2)$.
Put its basis monomials $w\in \mathbf L_{a,b}$ on plane using
the multidegree coordinates $(x,y):=\Gr(w)=(a,b)\in\Z^2\subset \R^2$. These points lie in the following strip:
$$\lambda x -\lambda^3<  y< \lambda x +\lambda^2 .$$
\end{theorem}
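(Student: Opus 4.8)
The plan is to reduce the statement to the bound on the superweight function already proved in Lemma~\ref{Leta}, carried over to the multidegree coordinates by the weight‑coordinate formulas of~\eqref{newcoord}.

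First I would recall that by Corollary~\ref{Cgrading} the restricted Lie algebra $\mathbf L=\bigoplus_{a,b\ge 0}\mathbf L_{a,b}$ is $\Z^2$‑graded, so that every basis monomial $w\in\tilde W$ is homogeneous, lying in a single component $\mathbf L_{a,b}$, and we put it on the plane at $(x,y):=\Gr(w)=(a,b)$. Here one should note that the superweight/multidegree dictionary from the subsection ``Weight coordinates on plane'' applies to all of $\mathbf L$, not merely to $\mathcal L$: additivity of $\swt$ for associative products of homogeneous elements also covers the squares of pivot elements, since $v_i^{[2]}=v_i\cdot v_i$ in $\End R$ forces $\swt(v_i^{[2]})=2\swt(v_i)$, consistently with $\Gr(v_i^{[2]})=2\,\Gr(v_i)$. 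Hence for every $w\in\tilde W$ one has $\swt w=a\bar\lambda+b\bar\lambda^{2}=-x/\lambda+y/\lambda^{2}$, using $\bar\lambda=-1/\lambda$ and $\bar\lambda^{2}=1/\lambda^{2}$ from~\eqref{lambd}.

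Next I would invoke Lemma~\ref{Leta}: for any basis monomial $w$ of $\mathbf L$ one has $-\lambda<\swt w<1$. Substituting $\swt w=-x/\lambda+y/\lambda^{2}$ and multiplying the double inequality through by $\lambda^{2}>0$ gives $-\lambda^{3}<-\lambda x+y<\lambda^{2}$, which rearranges to exactly $\lambda x-\lambda^{3}<y<\lambda x+\lambda^{2}$, as claimed.

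Since the whole argument is merely a linear change of coordinates applied to Lemma~\ref{Leta}, there is no real obstacle; the only point deserving a word of care is the validity of the superweight formula on all of $\mathbf L$, in particular on the $p$‑th powers $v_i^{2}=t_{i-1}v_{i+2}$, which is immediate from additivity of $\swt$ as noted above. One may also remark that neither inequality is attained, but both are approached — by the standard monomials carrying only even‑indexed (resp.\ only odd‑indexed) variables $t_i$ — exactly as in the corollary following Lemma~\ref{Leta}.
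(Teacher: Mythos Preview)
Your proof is correct and follows exactly the same route as the paper: invoke the superweight bound $-\lambda<\swt w<1$ from Lemma~\ref{Leta}, express $\swt w=-x/\lambda+y/\lambda^{2}$ via~\eqref{newcoord}, and multiply through by $\lambda^{2}$. The extra care you take in justifying the superweight formula on the $p$-th powers $v_i^{2}$ is a nice touch, but the argument is otherwise identical to the paper's one-line proof.
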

\begin{proof}
By Lemma~\ref{Leta} and~\eqref{newcoord} we have
$ -\lambda<\eta=\swt w=-x/\lambda+y/\lambda^2<1$.
\end{proof}
\begin{corollary} \label{CstWn}
The standard monomials $W_n$  of fixed length $n$, $n\ge 1$,  belong to different rectangles that
compose the strip above. Namely, in terms of the weight coordinates we have (see Fig.~\ref{Fig1}, Fig.~\ref{Fig2}, and Fig.~\ref{Fig3})
\begin{equation*}
\left\{
\begin{split}
\lambda^{n-1} &< \xi=\wt w\le \lambda^n,\\
-\lambda &<  \eta= \swt w< 1,
\end{split}\right.
\qquad w\in W_n,\ n\ge 1.
\end{equation*}
\end{corollary}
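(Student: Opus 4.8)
The statement is a direct synthesis of the two estimates already proved, once one recalls how the weight coordinates were set up. Each $w\in W_n$ is a standard monomial $t_0^{\alpha_0}\cdots t_{n-4}^{\alpha_{n-4}}v_n$ with $\alpha_i\in\{0,1\}$, hence $w\in\tilde W_n\subseteq\tilde W$, the basis of $\mathbf L$ from~\eqref{basisLL}. Moreover, by Lemma~\ref{Lhomogeneous} such a $w$ is homogeneous, say $w\in\mathcal L_{a,b}$, and placing it on the plane at its multidegree $(x,y):=\Gr(w)=(a,b)$ we have, by the additivity computation recorded just before~\eqref{newcoord}, that its weight coordinates $(\xi,\eta)$ defined by~\eqref{newcoord} are exactly $(\xi,\eta)=(\wt w,\swt w)$.

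First I would handle the $\xi$-coordinate. By Lemma~\ref{LwtW} we have $\lambda^{n-1}<\lambda^{n-1}+\lambda\le\wt(W_n)\le\lambda^n$ for every $n\ge 1$; discarding the middle term gives $\lambda^{n-1}<\xi=\wt w\le\lambda^n$ for all $w\in W_n$, which is the first line of the asserted system. Next I would handle the $\eta$-coordinate: since $W_n\subseteq\tilde W$, Lemma~\ref{Leta} applies verbatim and yields $-\lambda<\eta=\swt w<1$, the second line of the system. Combining the two bounds for a single $w\in W_n$ gives the displayed rectangle.

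Finally, for the ``different rectangles composing the strip'' assertion, I would note that the $\xi$-ranges $(\lambda^{n-1},\lambda^n]$ for $n\ge 1$ are pairwise disjoint and, together with the initial segment, exhaust the positive $\xi$-axis, while the common $\eta$-range $(-\lambda,1)$ is exactly the strip width from Theorem~\ref{Tstrip}; hence the sets $W_n$ sit in successive vertical slices of that strip, as drawn in Fig.~\ref{Fig1}, Fig.~\ref{Fig2}, Fig.~\ref{Fig3}.

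\textbf{Main obstacle.} There is essentially none: the corollary is pure bookkeeping built on Lemma~\ref{LwtW} and Lemma~\ref{Leta}. The only points that need a moment's care are the inclusion $W_n\subseteq\tilde W$ (so that Lemma~\ref{Leta}, stated for basis monomials of $\mathbf L$, is legitimately applied to the basis monomials of $\mathcal L$) and the identification $\xi=\wt w$, $\eta=\swt w$ for a monomial placed at its multidegree, which is precisely the additivity observation preceding~\eqref{newcoord}.
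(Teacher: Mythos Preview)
Your proposal is correct and matches the paper's own proof, which simply states that the inequalities follow directly from Lemma~\ref{LwtW} and Lemma~\ref{Leta}. Your extra care about the inclusion $W_n\subseteq\tilde W$ and the identification $(\xi,\eta)=(\wt w,\swt w)$ is more explicit than the paper but not a different approach.
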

\begin{proof}
Alternatively, the inequalities follow from Lemma~\ref{LwtW} and Lemma~\ref{Leta} directly.
\end{proof}

\begin{theorem}\label{Tsum2}
The Fibonacci (restricted) Lie algebra is a direct sum of two locally nilpotent subalgebras:
$$ \mathcal L=\mathcal L_- \oplus \mathcal L_+,\qquad \mathbf L=\mathbf L_- \oplus \mathbf L_+. $$
\end{theorem}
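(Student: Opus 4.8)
The plan is to split each algebra along the $\eta$-axis of the weight coordinates, i.e.\ by the sign of the superweight. Using the $\Z^2$-grading of Lemma~\ref{Lhomogeneous} and Corollary~\ref{Cgrading}, set
\[
\mathcal L_+:=\bigoplus_{(a,b)\in\NO^2,\ b>\lambda a}\mathcal L_{a,b},\qquad
\mathcal L_-:=\bigoplus_{(a,b)\in\NO^2,\ b<\lambda a}\mathcal L_{a,b},
\]
and define $\mathbf L_\pm$ inside $\mathbf L$ by the same multidegree conditions. Since $\lambda$ is irrational, no lattice point $(a,b)\ne(0,0)$ lies on the line $b=\lambda a$ (the $\xi$-axis in multidegree coordinates), and the grading has no $(0,0)$-component; hence $\mathcal L=\mathcal L_-\oplus\mathcal L_+$ and $\mathbf L=\mathbf L_-\oplus\mathbf L_+$ as vector spaces. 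The half-plane condition $b>\lambda a$ (resp.\ $b<\lambda a$) is stable under coordinatewise addition of multidegrees, and $\Gr$ is additive under the bracket and satisfies $\Gr(u^{[2]})=2\Gr(u)$; therefore $\mathcal L_\pm$ and $\mathbf L_\pm$ are (restricted) subalgebras. Recalling that a homogeneous $w\in\mathcal L_{a,b}$ has $\swt w=-a/\lambda+b/\lambda^2$, we see that $\mathcal L_+$ collects exactly the components of positive superweight (above the $\xi$-axis of the strip) and $\mathcal L_-$ those of negative superweight.

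It remains to prove local nilpotence; I will do $\mathcal L_+$, the other three cases being symmetric. Let $M\subseteq\mathcal L_+$ be generated by finitely many elements. Replacing these by their homogeneous components --- finitely many, and again lying in the graded subspace $\mathcal L_+$ --- we may assume the generators $x_1,\dots,x_k$ are homogeneous with $\swt x_i>0$; put $\varepsilon:=\min_i\swt x_i>0$. By additivity of $\swt$, a nonzero iterated bracket $[x_{i_1},\dots,x_{i_m}]$ is homogeneous of superweight $\sum_j\swt x_{i_j}\ge m\varepsilon$. On the other hand, being a linear combination of basis monomials from $\tilde W$ of one fixed multidegree, it has superweight $<1$ by Lemma~\ref{Leta}. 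Hence $m<1/\varepsilon$, so every bracket of length at least $1/\varepsilon$ in the $x_i$ vanishes and $M$ is nilpotent. For $\mathcal L_-$ one argues identically using the other bound of Lemma~\ref{Leta}: with $\delta:=\min_i(-\swt x_i)>0$, a nonzero bracket of length $m$ has superweight $\le-m\delta>-\lambda$, so $m<\lambda/\delta$.

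For the restricted algebras $\mathbf L_\pm$ the same estimate goes through once we note $\swt(u^{[2]})=2\swt u$, so applying the $p$-map never decreases the absolute value of the superweight. Thus in a finitely generated restricted subalgebra of $\mathbf L_+$ with homogeneous generators $x_1,\dots,x_k$ and $\varepsilon=\min_i\swt x_i$, an easy induction on the number of bracket and $p$-power operations shows every nonzero homogeneous element has superweight $\ge\varepsilon$; consequently every bracket of length at least $1/\varepsilon$ vanishes, and the subalgebra is nilpotent. I do not anticipate a genuine obstacle here: the argument is bookkeeping on the bounded coordinate $\swt$, and the only point requiring a little care is the passage to homogeneous generators, which is what lets one read off the superweight of a nonzero element from the common multidegree of its constituent basis monomials and then invoke Lemma~\ref{Leta}.
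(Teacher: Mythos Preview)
Your proof is correct and follows essentially the same route as the paper: split by the sign of the superweight (equivalently, the half-planes $b\gtrless\lambda a$), use irrationality of $\lambda$ to get the direct sum, and deduce local nilpotence from additivity of $\swt$ together with the bound of Lemma~\ref{Leta}. You are slightly more explicit than the paper about first passing to homogeneous generators and about handling the $p$-map in the restricted case, but the underlying idea is identical.
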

\begin{proof}
We use construction of the $\Z^2$-grading in Lemma~\ref{Lhomogeneous}.
Let $0\ne x\in \mathbf L_{(a,b)}$, $a,b\in \Z$; then $\swt x=a\swt v_1+b\swt v_2=   a\bar \lambda +b\bar\lambda^2\ne 0$ because $\bar\lambda $ is irrational.
Let $\mathbf L_+$, $\mathbf L_-$ be spanned by all basis monomials of positive and negative superweights, respectively.
We get a decomposition into a direct sum of subspaces that are subalgebras because the superweight function is additive.

The nillity of subalgebras easily follows by geometric arguments. Indeed, consider $L:=\Lie_p(u_1,\dots,u_k)$, where $u_i\in \mathbf L_+$.
Let $\mu:=\min \{\swt u_i\mid i=1,\ldots, k\}$.
We have $\mu>0$, set $N:=\lceil1/\mu\rceil$. 
By the additivity of the superweight function, $\swt (L^N)\ge 1$.
Using the upper estimate of Lemma~\ref{Leta} we conclude that $L^{N}=0$.
\end{proof}
\begin{corollary}
Similar decompositions into direct sums of two subalgebras are also valid in the setting of an arbitrary characteristic
and for associative enveloping algebras, universal, or restricted enveloping algebras. 
\end{corollary}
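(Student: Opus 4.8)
The plan is to rerun the proof of Theorem~\ref{Tsum2}, but first to pin down exactly which of its ingredients are used. It rests on two things only: the $\mathbb Z^2$-grading by multidegree in $v_1,v_2$, and the facts that the superweight is additive under multiplication and that $\bar\lambda$ is irrational. Neither of these uses $\ch K=2$, nor even the Lie (as opposed to associative) structure. Indeed, Lemma~\ref{Lhomogeneous} and Corollary~\ref{Cgrading} give the $\mathbb Z^2$-grading over an arbitrary field — the proof there only invokes the $\mathbb R$-linear independence of $\Wt v_1=(\lambda,\bar\lambda)$ and $\Wt v_2=(\lambda^2,\bar\lambda^2)$ — and the same extends, when $\ch K=p$, to $\mathbf L$, $U(\mathbf L)$ and $u(\mathbf L)$. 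So I would begin with $B=\bigoplus_{a,b\ge 0}B_{a,b}$, where $B$ is any of $\mathcal L$, $\mathbf L$, $A=\Alg(v_1,v_2)$, $U(\mathcal L)$, $U(\mathbf L)$, $u(\mathbf L)$, the degree-$(0,0)$ piece being $0$ in the Lie cases and consisting of scalars in the associative ones.

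The remaining steps are short. On each homogeneous component the superweight is the constant $\swt B_{a,b}=a\bar\lambda+b\bar\lambda^{2}=(a+b)\bar\lambda+b$, which is additive when homogeneous elements are multiplied, since multidegrees add. Irrationality of $\bar\lambda$ forces this constant to be nonzero whenever $(a,b)\ne(0,0)$, so every homogeneous component apart from the scalars has a strict sign of superweight. I then set
$$
B_{+}:=\bigoplus_{a\bar\lambda+b\bar\lambda^{2}>0}B_{a,b},\qquad
B_{-}:=\bigoplus_{a\bar\lambda+b\bar\lambda^{2}<0}B_{a,b},
$$
and, in the unital cases, absorb the scalars into $B_{+}$; then $B=B_{+}\oplus B_{-}$ as vector spaces by construction. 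Finally, if $u\in B_{a,b}$ and $w\in B_{c,d}$ both have positive superweight, then $uw$ (or $[u,w]$) lies in $B_{a+c,b+d}$ with $(a+c)\bar\lambda+(b+d)\bar\lambda^{2}=(a\bar\lambda+b\bar\lambda^{2})+(c\bar\lambda+d\bar\lambda^{2})>0$, and $1\cdot B_{+}\subseteq B_{+}$; the identical computation with reversed inequalities handles $B_{-}$. Hence $B_{\pm}$ are subalgebras and $B=B_{+}\oplus B_{-}$ is the asserted decomposition.

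I do not expect a genuine obstacle for the direct-sum statement itself; the only care needed is the bookkeeping of the degree-$(0,0)$ unit in the associative/enveloping cases and the routine remark that the grading of Lemma~\ref{Lhomogeneous} is characteristic-free. What genuinely changes — and the reason the corollary only asserts that $B_{\pm}$ are subalgebras, rather than locally nilpotent ones, outside the setting of Theorem~\ref{Tsum2} — is local nilpotency: that argument used the strip bound $-\lambda<\swt w<1$ of Lemma~\ref{Leta}, valid for the basis of $\mathbf L$ only when $\ch K=2$. In characteristic $\ne 2$ iterated Lie brackets such as $[v_2,[v_2,\dots,[v_2,v_4]\dots]]$ accumulate unbounded powers of $t_1$ and hence have unbounded superweight, while every enveloping algebra anyway contains polynomial subalgebras $K[x]$; so in those cases $B_{\pm}$ are truly not locally nilpotent. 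When $\ch K=2$ and $B=\mathcal L$ or $\mathbf L$, however, the geometric argument of Theorem~\ref{Tsum2} applies verbatim and the two summands are again locally nilpotent.
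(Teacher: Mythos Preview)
Your proof is correct and follows exactly the route the paper intends: the corollary is stated without proof precisely because the superweight-sign argument of Theorem~\ref{Tsum2} carries over verbatim once one has the characteristic-free $\mathbb Z^2$-grading of Corollary~\ref{Cgrading}, and your careful discussion of why local nilpotency is dropped is on the mark. Your handling of the unit by absorbing scalars into $B_+$ is fine; the paper's later Theorem~\ref{TpropA}(6) instead passes to the augmentation ideal $u_0(\mathbf L)$ to sidestep this bookkeeping, but either convention yields the bare direct-sum-of-subalgebras claim asserted here.
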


\subsection{On structure of Fibonacci Lie algebra $\mathcal L$}
We naturally extend the shift $\tau$ (see subsection~\ref{SSpivot}) on the standard monomials $W$.
Denote $W_{\le n}:=\cup_{i=1}^n W_i$ for all $n\ge 1$.
\begin{lemma}\label{LWrecc}
We have recursive relations for the standard monomials in terms of disjoint unions
\begin{align}
W_{\le n+1}&=\{v_1\}\bigcup \tau(W_{\le n}) \bigcup t_0\tau(W_{\le n})\setminus \{t_0v_2, t_0v_3\},\qquad n\ge 2;\nonumber\\
W&=\{v_1\}\bigcup \tau(W) \bigcup t_0\tau(W)\setminus \{t_0v_2, t_0v_3\}\label{decomp}  \\
&=\{v_1\}\bigcup \tau(W) \bigcup t_0\tau(W  {\setminus} \{v_1, v_2\}).\nonumber
\end{align}
\end{lemma}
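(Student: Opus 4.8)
The plan is to prove the recursive decomposition~\eqref{decomp} by unwinding the definition of the standard monomials~\eqref{standard} together with the defining recursion $v_i=\tau^{i-1}(v_1)$ for the pivot elements. First I would observe that the shift endomorphism $\tau$ sends the standard monomial $t_0^{\alpha_0}\cdots t_{n-4}^{\alpha_{n-4}}v_n$ of length $n$ to $t_1^{\alpha_0}\cdots t_{n-3}^{\alpha_{n-4}}v_{n+1}$, i.e.\ to a standard monomial of length $n+1$ whose tail involves only variables $t_1,\dots,t_{n-3}$ (no factor $t_0$), and, conversely, every standard monomial of length $m\ge 2$ whose tail does not involve $t_0$ is of this form. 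Thus $\tau(W_{\le n})$ accounts for all standard monomials of length between $2$ and $n+1$ with no $t_0$ in the tail; the extra singleton $\{v_1\}$ accounts for the unique length-one monomial, which is not in the image of $\tau$.

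Next I would treat the monomials that do contain a factor $t_0$. A standard monomial $w=t_0^{\alpha_0}\cdots t_{m-4}^{\alpha_{m-4}}v_m$ with $\alpha_0=1$ can be written as $t_0\cdot(t_1^{\alpha_1}\cdots t_{m-4}^{\alpha_{m-4}}v_m)$, and the second factor is exactly $\tau$ of the standard monomial $t_0^{\alpha_1}\cdots t_{m-5}^{\alpha_{m-4}}v_{m-1}$ of length $m-1\ge 1$. Hence the set of standard monomials with $t_0$ present in the tail equals $t_0\tau(W)$ \emph{except} that the putative preimages $v_1,v_2\in W$ would yield $t_0\tau(v_1)=t_0v_2$ and $t_0\tau(v_2)=t_0v_3$, which are \emph{not} standard monomials: by~\eqref{standard} the monomials $v_2=t_0^*v_2$ of length $2$ and $v_3=t_0^*t_{-1}^*v_3$ of length $3$ have trivial tail (for $n=2,3$ the tail is $1$), so a factor $t_0$ is forbidden there. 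This gives the set $t_0\tau(W)\setminus\{t_0v_2,t_0v_3\}$, or equivalently $t_0\tau(W\setminus\{v_1,v_2\})$. The three pieces $\{v_1\}$, $\tau(W)$, $t_0\tau(W)\setminus\{t_0v_2,t_0v_3\}$ are pairwise disjoint since the first has length one, the second never contains $t_0$, and the third always contains $t_0$; and together they exhaust $W$ because every standard monomial either has length one, or has length $\ge 2$ with $t_0$ absent, or has $t_0$ present. The statement for $W_{\le n+1}$ follows by the same bookkeeping restricted to lengths $\le n+1$, noting that $\tau$ raises length by exactly one.

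The only genuinely delicate point, which I would be careful to state explicitly, is the \textbf{boundary behaviour of the tails}: one must keep track of the fact that for $n=1,2,3$ the tail is by convention equal to $1$, so that $t_0v_2$ and $t_0v_3$ are spurious, while $t_0v_4=t_0^{1}t_{n-4}^{\alpha}\cdots v_4\big|_{n=4}$ (here the tail $t_0^*$ does admit the factor $t_0$) is genuinely standard and is correctly produced as $t_0\tau(v_3)$. Getting the index arithmetic right on the tail — that $\tau$ shifts $t_0^*\cdots t_{n-4}^*$ to $t_1^*\cdots t_{n-3}^*$ and that prepending $t_0$ to a length-$(m-1)$ tail $t_1^*\cdots t_{m-4}^*$ lands inside the length-$m$ pattern $t_0^*\cdots t_{m-4}^*$ — is the crux; everything else is a disjointness-and-exhaustion argument. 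I expect no essential obstacle beyond this careful accounting of the low-length exceptional cases.
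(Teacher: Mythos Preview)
Your argument is correct and is precisely the unpacking of the paper's one-line proof, which simply reads ``Follows from the structure of the standard monomials~\eqref{standard}.'' You have supplied the bookkeeping the paper omits --- in particular the identification of $\tau(W_n)$ with the $t_0$-free part of $W_{n+1}$ and the explanation of why $t_0v_2,\,t_0v_3$ must be excised --- so the approaches coincide.
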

\begin{proof} Follows from the structure of the standard monomials~\eqref{standard}.
\end{proof}

Let $A\subset \mathcal L$ be linear span of standard monomials~\eqref{standard} $W$ that contain $t_0$.
\begin{theorem}\label{T_L_decomp}
Let $p=2$, consider the Fibonacci Lie algebra $\mathcal L=\Lie(v_1,v_2)$.
\begin{enumerate}
\item
We have a semidirect product decomposition
\begin{equation}\label{abelian}
\mathcal L=\langle v_1\rangle _K \rightthreetimes ( L_{(2)} \rightthreetimes A),
\end{equation}
where $L_{(2)}=\Lie(v_2,v_3)=\tau(\mathcal L)\cong \mathcal L$ is a subalgebra,
$A:=\big \langle t_0 \tau(W)\setminus \{t_0v_2, t_0v_3\}\big \rangle _K$ an abelian ideal.
\item $\mathcal L/A\cong \Lie(\dd_1,v_2)\cong \langle v_1\rangle _K \rightthreetimes L_{(2)}$.
\end{enumerate}
\end{theorem}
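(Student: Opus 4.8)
The plan is to prove the two claims of Theorem~\ref{T_L_decomp} directly from the recursive description of the standard monomials in Lemma~\ref{LWrecc} and the product rule~\eqref{prod-stand}. First I would set up notation: write $V:=\langle v_1\rangle_K$, recall $L_{(2)}=\tau(\mathcal L)=\Lie(v_2,v_3)$, and let $A=\langle t_0\tau(W)\setminus\{t_0v_2,t_0v_3\}\rangle_K$. By the disjoint-union decomposition~\eqref{decomp}, $W=\{v_1\}\sqcup\tau(W)\sqcup(t_0\tau(W)\setminus\{t_0v_2,t_0v_3\})$, so as vector spaces $\mathcal L=V\oplus L_{(2)}\oplus A$; the content is that $A$ is an abelian ideal and $L_{(2)}\ltimes A$ is an ideal, i.e. the bracket respects the claimed filtration $A\trianglelefteq L_{(2)}\ltimes A\trianglelefteq\mathcal L$.

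Next I would verify the closure relations one bracket at a time. The key computation is~\eqref{prod-stand}: $[r_{n-4}v_n,r'_{m-4}v_m]=r''_{m-4}v_m+r''_{m-3}v_{m+1}$ for $n<m$, together with Lemma~\ref{Laction} which controls the action of $v_n$ on a tail. The point is that both output monomials are "of length $\ge m$", and, more importantly, a tail factor $t_0$ present in either input survives in every output monomial (this is exactly the non-cancellation observation used in the proof that $\mathcal L$ is not PI, applied here: $v_k$ with $k\ge 2$ neither differentiates $t_0$ nor can produce $t_0$ from Lemma~\ref{Lcommutators} since indices of the created tail start at $t_{k-1}$ with $k-1\ge 1$). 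Hence: (a) $[A,A]=0$ because bracketing two monomials each containing $t_0$ would force a $t_0^2=0$ (we are in the truncated ring, $p=2$) — more carefully, one input contributes $t_0$ via differentiation only if the other monomial's tail is acted on, but $v_n(t_0)=0$ for $n\ge1$, so the $t_0$ from both factors multiply and vanish; (b) $[L_{(2)}\oplus A,\,L_{(2)}\oplus A]\subseteq L_{(2)}\oplus A$ and in fact $[\mathcal L,A]\subseteq A$, since any monomial obtained from a factor containing $t_0$ still contains $t_0$ and lies in $A$ (one must also check the few exceptional monomials $t_0v_2,t_0v_3$ never arise as outputs, which is immediate from~\eqref{prod-stand} producing $v_m,v_{m+1}$ with $m\ge 3$ whenever the shorter factor has length $\ge 2$, and from the base cases); (c) $[v_1,L_{(2)}]\subseteq L_{(2)}\oplus A$, which follows from $[v_1,v_i]=t_0\cdots t_{i-3}v_{i+1}\in A$ for $i\ge 2$ — so $V$ normalizes $L_{(2)}\oplus A$ modulo landing in $A$, giving the outer semidirect factor. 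Assembling (a)--(c) yields the semidirect product decomposition~\eqref{abelian}.

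For part (2), I would observe that $A$ is the linear span of all standard monomials divisible by $t_0$, so $\mathcal L/A$ is spanned by the images of $\{v_1\}\cup\tau(W)$; since the bracket of any element of $\tau(W)$ with anything lands back in $\langle\tau(W)\rangle_K\oplus A$, and $[v_1,v_i]\in A$, the quotient is $\langle\bar v_1\rangle_K\ltimes \overline{L_{(2)}}$ with $\bar v_1$ central modulo... — more precisely $[\bar v_1,L_{(2)}]=0$ in $\mathcal L/A$, so the quotient is the (now split) extension $\langle v_1\rangle_K\rightthreetimes L_{(2)}$. To identify this with $\Lie(\dd_1,v_2)$: replacing $v_1=\dd_1+t_0\tau(v_1)$ by its "$t_0=0$ part" $\dd_1$ kills exactly the tails starting with $t_0$, i.e. the ideal $A$; one checks $\dd_1$ and $v_2=\tau(v_1)$ generate a Lie algebra isomorphic to $\mathcal L/A$ via the map sending the image of $v_1$ to $\dd_1$ and each $\tau^{i-1}(v_1)$ to $v_i$, which is well-defined and bijective on standard-monomial bases because $\dd_1$ commutes with all $v_i$, $i\ge 2$ (as $v_i(t_0)=0$, $i\ge1$, and $[\dd_1,v_i]=0$ by Lemma~\ref{Lcommutators}-type reasoning since $v_i$ for $i\ge 2$ has no $t_0$). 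The main obstacle is the careful bookkeeping in step two: one must be sure that no bracket of two non-$A$ monomials (both starting from $v_i$, $v_j$ with $i,j\ge 2$, or one of them $v_1$) accidentally produces a monomial \emph{without} $t_0$ that was supposed to be in $L_{(2)}$ but actually equals one of the excluded monomials, and conversely that $[A,\mathcal L]$ never escapes $A$ — both reduce to the survival/non-creation of the $t_0$-factor, exactly the non-cancellation lemma already exploited in the PI argument, so once that is stated cleanly the verification is routine.
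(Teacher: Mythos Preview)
Your approach to part~(1) is essentially the paper's: the vector-space splitting comes from~\eqref{decomp}, and the key fact that $A$ is an abelian ideal is exactly the observation that no $\dd_0$ occurs in any pivot element, so a factor $t_0$ in a tail can neither be differentiated away nor squared. Your bookkeeping in~(a)--(c) is sound apart from one slip: $[v_1,v_2]=v_3\in L_{(2)}$, not $A$ (the convention on empty products makes the tail trivial when $i=2$), but this does not affect the inclusion $[v_1,L_{(2)}]\subseteq L_{(2)}\oplus A$ you actually need.

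Part~(2), however, contains a genuine error. You assert that $[\bar v_1,L_{(2)}]=0$ in $\mathcal L/A$ and that ``$\dd_1$ commutes with all $v_i$, $i\ge 2$''. Both are false: $[\dd_1,v_2]=[\dd_1,\dd_2+t_1v_3]=v_3\ne 0$, and correspondingly $[v_1,v_2]=v_3\notin A$. The semidirect product $\langle v_1\rangle_K\rightthreetimes L_{(2)}$ is \emph{not} a direct product; $v_1$ acts nontrivially. The correct mechanism, which the paper uses, is that $v_1=\dd_1+t_0v_2$, so for any $w\in W$ one has $[v_1,w]-[\dd_1,w]=[t_0v_2,w]\in A$ (the bracket again carries the factor $t_0$). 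Hence the adjoint actions of $v_1$ and $\dd_1$ on $\mathcal L$ agree modulo $A$, and the assignment $\bar v_1\mapsto \dd_1$, $\bar v_i\mapsto v_i$ ($i\ge 2$) gives the isomorphism $\mathcal L/A\cong\Lie(\dd_1,v_2)$. Your confusion seems to stem from mixing up $t_0$ and $t_1$: elements of $L_{(2)}$ avoid $t_0$ but certainly involve $t_1$, and $\dd_1$ sees $t_1$.
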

\begin{proof}
Factors $t_0$ cannot disappear via commutators because all pivot elements do not contain $\dd_0$.
Hence, $A$ is an abelian ideal in $\mathcal L$.
The semidirect product decomposition follows from~\eqref{decomp} and basic relations (subsection~\ref{SSbasic}).
Recall that, $v_1=\dd_1+t_0v_2$, hence commutators of $v_1$ and $\dd_1$ with the basis elements $W$ coincide modulo $A$.
Observe also that $[\dd_1,v_2]=[\dd_1,\dd_2+t_1v_3]=v_3$, so $\Lie(\dd_1,v_2)\cong \langle v_1\rangle _K \rightthreetimes L_{(2)}$.
Thus, we get the second claim.
\end{proof}

Let us show that monomials of the abelian ideal $A$ belong to the lower part of the strip.
\begin{lemma}
Monomials for the abelian ideal $A\triangleleft\mathcal L$  are between the $\xi$-axis and the lower line of the strip (Fig.~\ref{Fig1}).
Namely, consider a standard basis element $w\in W\cap A$, then
$$-\lambda <\swt w  <0. $$
\end{lemma}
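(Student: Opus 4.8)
The plan is to bound $\swt w$ from above and below separately for a standard monomial $w\in W\cap A$, i.e. a monomial $w=t_0^{\alpha_0}t_1^{\alpha_1}\cdots t_{n-4}^{\alpha_{n-4}}v_n$ with $\alpha_0=1$ (this is what ``contains $t_0$'' means, and note $n\ge 4$ since for $n\le 3$ the tail is trivial). The upper bound $\swt w<0$ is the genuinely new content here; the lower bound $\swt w>-\lambda$ is already contained in Lemma~\ref{Leta}, so I would simply quote it. For the upper bound, the key point is that the presence of the factor $t_0$ contributes $\swt t_0=-\bar\lambda^{0}=-1$ to the superweight, and this negative contribution dominates everything the remaining factors $t_1,\dots,t_{n-4}$ and $v_n$ can produce in the positive direction.

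First I would write $\swt w=\bar\lambda^{n}+\sum_{i=0}^{n-4}\alpha_i(-\bar\lambda^{i})$, using $\swt v_n=\bar\lambda^n$ and $\swt t_i=-\bar\lambda^i=-(-1/\lambda)^i$. Separating the forced term $i=0$, we get $\swt w=-1+\bar\lambda^n+\sum_{i=1}^{n-4}\alpha_i(-1)^{i+1}\lambda^{-i}$. Now I would estimate the remaining sum from above exactly as in the proof of Lemma~\ref{Leta}: the largest possible value is obtained by taking $\alpha_i=1$ for odd $i$ and $\alpha_i=0$ for even $i$, together with the term $\bar\lambda^n$; bounding this by the full geometric series gives
\begin{equation*}
\bar\lambda^n+\sum_{i=1}^{n-4}\alpha_i(-1)^{i+1}\lambda^{-i}< \sum_{j\ge 0}\frac{\lambda^{-1}}{\lambda^{2j}}=\frac{\lambda^{-1}}{1-\lambda^{-2}}=\frac{\lambda}{\lambda^2-1}=1,
\end{equation*}
using $\lambda^2-1=\lambda$ from~\eqref{lambd}. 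Therefore $\swt w<-1+1=0$, which is the desired upper bound.

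For the lower bound one may either invoke Lemma~\ref{Leta} directly (every $w\in W\cap A$ is in particular a standard monomial, hence a basis monomial of $\mathbf L$, so $\swt w>-\lambda$), or redo the one-line estimate: $\swt w\ge -1-\sum_{i\ge 2,\ i\text{ even}}\lambda^{-i}>-1-\sum_{j\ge 1}\lambda^{-2j}=-1-\frac{\lambda^{-2}}{1-\lambda^{-2}}=-1-\frac{1}{\lambda^2-1}=-1-\frac1\lambda=-\lambda$, again using $\lambda^2-1=\lambda$ and $1/\lambda=\lambda-1$. Combining the two bounds gives $-\lambda<\swt w<0$. I do not expect any real obstacle: the only thing to be careful about is correctly isolating the guaranteed $t_0$ factor and recognizing that it shifts the upper bound of Lemma~\ref{Leta} down by exactly $1$ while leaving the lower bound essentially unchanged; the rest is the same geometric-series bookkeeping already used for Lemma~\ref{Leta}. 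Geometrically, the statement just says that in weight coordinates the ideal $A$ sits in the half-strip between the $\xi$-axis ($\eta=0$) and the lower boundary line $\eta=-\lambda$, which matches Fig.~\ref{Fig1}.
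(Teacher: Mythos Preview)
Your proposal is correct and follows essentially the same approach as the paper: the paper also notes that the lower bound is immediate from Lemma~\ref{Leta} and obtains the upper bound by isolating the forced contribution $\swt t_0=-1$ and bounding the remaining terms (odd-index $t_i$'s and $v_n$) by the geometric series $\sum_{j\ge 0}\lambda^{-(2j+1)}=1$. One cosmetic slip in your optional redo of the lower bound: the two sums $\sum_{i\ge 2,\ i\text{ even}}\lambda^{-i}$ and $\sum_{j\ge 1}\lambda^{-2j}$ are identical, so the chain should read $\swt w>-1-\sum_{j\ge 1}\lambda^{-2j}=-\lambda$ directly (the strict inequality coming from the finiteness of the actual sum), but this does not affect the argument since you correctly point to Lemma~\ref{Leta} as the primary justification.
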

\begin{proof}
By construction of  $A$, $w=t_0t_1^*\cdots t_{n-4}^* v_n\in A$, $n\ge 4$.
The lower bound is trivial.
Recall that $\swt(t_i)=-\swt(v_i)=-(-1/\lambda)^i$, $i\ge 0$.
We get the upper bound considering variables with odd indices, and using~\eqref{lambd}
\begin{align*}
\swt (w)&\le -1+\sum_{1\le 2j+1\le n-4 }\frac 1{\lambda^{2j+1} }+ \frac 1{\lambda^n}
<-1+\sum_{j\ge 0 }\frac 1{\lambda^{2j+1} }\\
&=-1+\frac {1/\lambda}{1-1/\lambda^2}=-1+\frac {\lambda}{\lambda^2-1}=-1+\frac {\lambda}{\lambda}=0.
\qedhere
\end{align*}
\end{proof}

\subsection{Almost self-similarity and related just infinite self-similar Lie algebra}

L.~Bartholdi introduced  the notion of self-similarity for Lie algebras~\cite{Bartholdi10}. 
A Lie algebra $L$ is called {\it self-similar} if it affords a homomorphism into the semidirect product 
$$\psi: L \to (R\otimes L ) \leftthreetimes \Der R ,$$
where $R$ is a commutative algebra and $\Der R$ its Lie algebra of derivations naturally acting on $R$. 
The notion of self-similarity for Lie algebras and a related notion of virtual endomorphism was further studied in~\cite{F-K-S}.

Now we discuss properties of the Lie algebra related to the Fibonacci Lie algebra $\tilde {\mathcal L}:=\Lie(\dd_1,v_2)\cong {\mathcal L}/A$, see Claim (2) of Theorem~\ref{T_L_decomp}.
\begin{lemma} \label{Lself}
$\tilde {\mathcal L}$ is self-similar.
\end{lemma}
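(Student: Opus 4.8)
The plan is to exhibit the self-similarity homomorphism $\psi$ explicitly, using the recursive structure of the pivot elements encoded in~\eqref{vii}. The key observation is that $\tilde{\mathcal L}=\Lie(\dd_1,v_2)$, and by~\eqref{vii} we have $v_2=\dd_2+t_1v_3=\dd_2+t_1\tau(v_2)$, while $\dd_1$ acts on $R$ as a derivation that kills $t_1,t_2,\dots$ and sends $t_0\mapsto 1$. The idea is to take the commutative algebra in Bartholdi's definition to be (a suitable quotient or version of) $R$ itself, or rather the subalgebra $K[t_0]$ together with the shift identification $R\cong K[t_0]\otimes \tau(R)$, and to send the two generators to elements of $(R\otimes \tilde{\mathcal L})\leftthreetimes \Der R$ that reproduce the relations of $\tilde{\mathcal L}$.

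First I would set up the target: identify $R=K[t_i\mid i\ge 0]/(t_i^2)$ with $K[t_0]/(t_0^2)\otimes \tau(R)$, where $\tau(R)=K[t_i\mid i\ge 1]/(t_i^2)$ is canonically isomorphic to $R$ via $\tau$. Under this identification, $\tilde{\mathcal L}=\Lie(\dd_1,v_2)$ should map into $(R\otimes\tilde{\mathcal L})\leftthreetimes\Der R$ by sending $\dd_1\mapsto \dd_{t_0}$ (a pure derivation of $K[t_0]/(t_0^2)$, with trivial $\tilde{\mathcal L}$-component, since $\dd_1$ differentiates only the $t_0$-slot once we recall $[\dd_1,v_2]=v_3=\tau(v_2)$) and $v_2\mapsto 1\otimes \tau(v_2)+(\text{correction})$. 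The precise correction comes from $v_2=\dd_2+t_1v_3$: after applying $\tau^{-1}$ to the "inner" copy, $v_2$ looks like $v_1=\dd_1+t_0v_2$ shifted, so the recursion $v_i=\dd_i+t_{i-1}v_{i+1}$ is exactly the self-similar rewriting rule. I would then verify that the two images satisfy the defining relation that makes $\psi$ a Lie homomorphism — concretely, that the bracket of the images equals the image of $v_3$ (or whatever follows from Lemma~\ref{Lsosed} and the relations of subsection~\ref{SSbasic}).

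The main steps in order: (1) fix the isomorphism $R\cong (K[t_0]/(t_0^2))\otimes R$ via $\tau$; (2) define $\psi$ on generators, with $\psi(\dd_1)$ landing in $\Der R$ and $\psi(v_2)$ landing in $(R\otimes\tilde{\mathcal L})\leftthreetimes\Der R$ with a $\dd_0$-component plus a $t_0\otimes(\text{generator})$ term mimicking $v_1=\dd_1+t_0v_2$; (3) check $\psi$ respects the bracket by a direct computation using Lemma~\ref{Lsosed}, Lemma~\ref{Lcommutators} and Lemma~\ref{Laction}, i.e. that the two generators' images bracket correctly and, inductively, that $\psi$ is consistent with the standard-monomial structure from Corollary~\ref{CW}; (4) conclude $\psi$ is a well-defined Lie homomorphism, hence $\tilde{\mathcal L}$ is self-similar in Bartholdi's sense. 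I expect the main obstacle to be step (3): pinning down the exact $\tilde{\mathcal L}$-valued correction term in $\psi(v_2)$ so that the shift endomorphism $\tau$ on $R$ is intertwined with the natural $\Der R$-action, and then checking that no relations are violated — this requires carefully tracking how the "head" derivation $\dd_i$ and the "tail" $t_{i-1}v_{i+1}$ in~\eqref{vii} split across the tensor factor, using that $A$ has been quotiented out so that the $t_0$-ambiguities collapse exactly as in Claim (2) of Theorem~\ref{T_L_decomp}.
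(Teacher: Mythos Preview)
Your overall strategy matches the paper's: define $\psi$ on the two generators using the recursion~\eqref{vii} and then identify the shifted copy of $\tilde{\mathcal L}$ with $\tilde{\mathcal L}$ via $\tau$. However, there is a concrete index error that would make your verification in step~(3) fail. You write that ``$\dd_1$ acts on $R$ as a derivation that kills $t_1,t_2,\dots$ and sends $t_0\mapsto 1$''; in fact $\dd_1=\partial/\partial t_1$ kills $t_0,t_2,t_3,\dots$ and sends $t_1\mapsto 1$. Consequently $\tilde{\mathcal L}=\Lie(\dd_1,v_2)$ does not involve $t_0$ at all (recall $v_2=\dd_2+t_1v_3$), and the variable to peel off is $t_1$, not $t_0$. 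If you set $\psi(\dd_1)=\dd_{t_0}$ and give $\psi(v_2)$ a ``$\dd_0$-component plus a $t_0\otimes(\text{generator})$ term'', the bracket check cannot succeed: $[\dd_1,v_2]=v_3$ comes from $\dd_1(t_1)=1$, and there is no $t_0$ in $v_2$ for your $\dd_{t_0}$ to hit.

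Once the index is corrected, your construction collapses to exactly the paper's. The paper takes the commutative algebra to be the \emph{single-variable} ring $X:=K[t]/(t^2)$ (with $t$ playing the role of $t_1$), sets $\psi(\dd_1):=\partial\in\Der X$ and $\psi(v_2):=1\otimes\tau(\dd_1)+t\otimes\tau(v_2)\in X\otimes\tau(\tilde{\mathcal L})$, and then identifies $\tau(\tilde{\mathcal L})$ with $\tilde{\mathcal L}$. No elaborate bracket verification via Lemma~\ref{Lcommutators} or Corollary~\ref{CW} is needed; the map is visibly the restriction of the tautological identification of $\Der R$ with derivations of $X\otimes\tau(R)$, so it is automatically a Lie homomorphism. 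Using the full $R$ as your commutative algebra is unnecessary overhead---one variable suffices.
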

\begin{proof}
Set $X: = K[t]/ (t^2)$, respective derivative denote as $\partial$. 
We consider $\tilde {\mathcal L}=\Lie(\dd_1,v_2)$ as a subalgebra of $\Der R$ for the latter we have the shift endomorphism $\tau$.
Define a homomorphism
\begin{align*}
&\psi : \tilde {\mathcal L}   \to (X \otimes \tau(\tilde {\mathcal L})) \leftthreetimes \Der X\qquad \text{by setting}\\
&\psi(\partial_1) :=\partial,\\
&\psi(v_2)=\psi(\partial_2+t_1v_3) :=1\otimes \partial_2 +t\otimes v_3=1\otimes \tau(\partial_1) +t\otimes \tau(v_2).
\end{align*}
It remains to identify $\tau(\tilde {\mathcal L})$ with $\tilde {\mathcal L}$.
\end{proof}
This Lie algebra  $\tilde {\mathcal L}$ was constructed as a self-similar one by L.~Bartholdi~\cite{Bartholdi10}.
It also appeared in~\cite{PeSh13fib} with another interesting property.
\begin{theorem}[{\cite[Theorem 7.8]{PeSh13fib}}] \label{Tjust}
The Lie algebra $\tilde {\mathcal L}= \Lie(\dd_1,v_2)$ is {\it just infinite dimensional}, i.e. its nontrivial ideals are of finite codimension.
\end{theorem}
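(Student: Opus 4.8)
The plan is to show that any nonzero ideal $I\triangleleft\tilde{\mathcal L}=\Lie(\dd_1,v_2)$ contains $\tau^{k}(\tilde{\mathcal L})$ for some $k$, which has finite codimension since the homogeneous components $\tilde{\mathcal L}_n$ are finite-dimensional and, by Corollary~\ref{CstWn} and Theorem~\ref{Tstrip}, confined to a strip so that $\tilde{\mathcal L}/\tau^k(\tilde{\mathcal L})$ is spanned by finitely many standard monomials of bounded length. First I would fix a nonzero $a\in I$ and write it via the standard monomial basis inherited from $\mathcal L/A$ (the images of $W$ not involving $t_0$), say $a=\sum_{j=n}^{m} r_{j-3}v_j$ with leading term of length $m$. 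The idea is to ``push $a$ up'' using the pivot elements: bracketing with suitable $v_i$ raises indices and, crucially, can be used to strip off tail factors $t_i$ one at a time, eventually isolating a pure pivot element $v_N\in I$ for some large $N$ (this is exactly the mechanism visible in Lemma~\ref{Lcommutators} and in the computations~\eqref{prod-stand}, where $[v_i,v_j]=t_{i-1}\cdots t_{j-3}v_{j+1}$ and $v_i$ acting on a tail variable kills or lowers it via Lemma~\ref{Laction}).

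Next, once $v_N\in I$ for some $N$, I would show $\tau^{N-1}(\tilde{\mathcal L})=\Lie(v_N,v_{N+1})\subseteq I$. By Lemma~\ref{Lsosed}, $[v_N,v_{N+1}]=v_{N+2}$; I need $v_{N+1}\in I$ as well, which I would obtain by bracketing $v_N$ against an element of $\tilde{\mathcal L}$ that produces $v_{N+1}$ modulo lower-tail corrections that themselves lie in $I$ — e.g. using that $[v_{N-1},v_N]=v_{N+1}$ if we can get $v_{N-1}$, or iterating the tail-stripping argument to produce two consecutive pivots. Having $v_N,v_{N+1}\in I$, the subalgebra they generate is all of $\tau^{N-1}(\tilde{\mathcal L})\cong\tilde{\mathcal L}$, and then $I\supseteq\tau^{N-1}(\tilde{\mathcal L})$. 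Finiteness of the codimension then follows: $\tilde{\mathcal L}=\bigoplus_{n\ge1}\tilde{\mathcal L}_n$ with $\dim\tilde{\mathcal L}_n\le|W_n|=2^{n-3}$ finite, and by the strip estimate (Corollary~\ref{CstWn}) a standard monomial of weight $\le\lambda^{N}$ has length $\le N$, so only finitely many lengths contribute before we are inside $\tau^{N-1}(\tilde{\mathcal L})$; hence $\dim\tilde{\mathcal L}/\tau^{N-1}(\tilde{\mathcal L})<\infty$.

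The main obstacle I anticipate is the tail-stripping step: showing that from an \emph{arbitrary} nonzero $a$ — whose leading homogeneous component may be a complicated linear combination $\sum r_{m-3}v_m$ with several monomials — one can, by a finite sequence of brackets with pivot elements, arrive at a single pure pivot $v_N$. One has to control the interaction between the ``derivation part'' $v_n(r'_{m-4})$ and the ``commutator part'' $r r'[v_n,v_m]$ in~\eqref{prod-stand}, making sure cancellations do not destroy progress; the natural device is to bracket with $v_i$ for $i$ just below the top index so that (by Lemma~\ref{Laction}) the action differentiates away the highest-degree tail variable present, while simultaneously using the weight/superweight grading to track that the ``new leading term'' stays a single monomial. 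Arguing by induction on, say, the number of tail factors in the leading term (or on a suitable lexicographic weight), bracketing repeatedly with the appropriate $v_i$'s should reduce this to the pure-pivot case; the bookkeeping is delicate but each individual step is routine given Lemmas~\ref{Lsosed}, \ref{Lcommutators}, and~\ref{Laction}. Finally, I would note that since $\tilde{\mathcal L}$ is infinite-dimensional (it surjects onto $\langle v_1\rangle_K\rightthreetimes L_{(2)}$, itself infinite-dimensional), ``just infinite'' is nonvacuous.
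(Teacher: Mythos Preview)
First, note that the present paper does not prove Theorem~\ref{Tjust}; it is quoted from~\cite{PeSh13fib}. So there is no proof here to compare against, and the assessment below concerns the internal soundness of your sketch.

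Your overall strategy is the natural one, but the second step contains a genuine error. You claim that once $v_N,v_{N+1}\in I$, ``the subalgebra they generate is all of $\tau^{N-1}(\tilde{\mathcal L})\cong\tilde{\mathcal L}$'' and that this has finite codimension. Both assertions fail. The subalgebra $\Lie(v_N,v_{N+1})$ is $L_{(N)}=\tau^{N-1}(\mathcal L)\cong\mathcal L$, not $\tilde{\mathcal L}$; the shift of $\tilde{\mathcal L}=\Lie(\dd_1,v_2)$ would be $\Lie(\dd_N,v_{N+1})$, and $\dd_N\notin\tilde{\mathcal L}$ for $N\ge2$, so $\tau^{N-1}(\tilde{\mathcal L})$ is not even a subalgebra of $\tilde{\mathcal L}$. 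More damagingly, $L_{(N)}$ has \emph{infinite} codimension in $\tilde{\mathcal L}$: the basis monomials of $\tilde{\mathcal L}$ with head $v_m$ are the $t_1^{*}\cdots t_{m-4}^{*}v_m$ (there are $2^{m-4}$ of them for $m\ge5$), whereas those lying in $L_{(N)}$ are only the $t_{N-1}^{*}\cdots t_{m-4}^{*}v_m$. Thus containing the \emph{subalgebra} generated by two consecutive pivots does not finish the argument.

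What is actually needed is that the \emph{ideal} of $\tilde{\mathcal L}$ generated by $v_N$ has finite codimension. Concretely, one must show that bracketing $v_N$ and its descendants in $I$ against arbitrary basis elements $r_{i-4}v_i\in\tilde{\mathcal L}$ --- not merely against pivot elements --- eventually produces every monomial $t_1^{*}\cdots t_{m-4}^{*}v_m$ for all $m\ge M$. The inductive step ``$\tau(W_m)\subset I\Rightarrow\tau(W_{m+1})\subset I$'' does follow from Corollary~\ref{CW}(1), but establishing the base case --- getting \emph{all} of some $\tau(W_M)$ into $I$ starting from the single element $v_N$ --- requires several rounds of bracketing to manufacture the missing tail variables $t_{N-3},t_{N-2},\ldots$, and this is precisely the combinatorial core that your sketch omits. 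Your acknowledged ``main obstacle'' (stripping tails to reach a pivot from an arbitrary element) is also real, but the error above would block the proof even after that step succeeds.
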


\begin{lemma}
We have an "almost self-similarity" structure  on $\mathcal L$:
$$ \mathcal L\oplus \langle t_0v_2, t_0v_3\rangle_K
=\langle v_1\rangle _K \rightthreetimes ( K[t_0]/(t_0^2)) \otimes L_{(2)},  $$
where  $L_{(2)}=\Lie(v_2,v_3)=\tau(\mathcal L)\cong\mathcal L$, and the action is $v_1(t_0)=0$,
$v_1(v_2)=v_3$, $v_1(v_3)=t_0v_4=t_0[v_2,v_3]$.
\end{lemma}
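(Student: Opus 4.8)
The plan is to verify directly that the right-hand side is a well-defined Lie algebra containing $\mathcal{L}$ as a subspace of codimension $2$, and that the two descriptions of the bracket agree. First I would set $S:=K[t_0]/(t_0^2)$ and introduce the vector space $M:=\langle v_1\rangle_K \rightthreetimes (S\otimes L_{(2)})$, where $L_{(2)}=\Lie(v_2,v_3)=\tau(\mathcal{L})$ is the shifted copy of the Fibonacci Lie algebra. On $S\otimes L_{(2)}$ put the obvious $S$-linear Lie bracket coming from $L_{(2)}$ (using $t_0^2=0$), and extend to $M$ by declaring $[v_1, s\otimes u]:=s\otimes \theta(u)$, where $\theta$ is the derivation of $L_{(2)}$ prescribed by the rule stated: $v_1(t_0)=0$ forces $S$-linearity, and on generators $\theta(v_2)=v_3$, $\theta(v_3)=t_0 v_4 = t_0[v_2,v_3]$. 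I would first check that $\theta$ extends consistently to all of $S\otimes L_{(2)}$, i.e. that this prescription is a derivation; this is immediate once one observes that the formulas $v_1(v_i)=[v_1,v_i]$ for $i\ge 2$ are exactly the commutator formulas of Lemma~\ref{Lcommutators} read modulo the ideal generated by $t_0^2$, so $\theta$ is nothing but the restriction of $\operatorname{ad} v_1$ inside $\Der R$ followed by the truncation $t_0\mapsto t_0$, $t_0^2\mapsto 0$. The Jacobi identity on $M$ then reduces to the Jacobi identity on $L_{(2)}\subset\Der R$ plus the derivation property of $\theta$, both of which hold.

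Next I would exhibit the identification with $\mathcal L\oplus\langle t_0v_2, t_0 v_3\rangle_K$ as vector spaces. By Lemma~\ref{LWrecc} (equation~\eqref{decomp}), the standard basis $W$ of $\mathcal L$ decomposes as $\{v_1\}\sqcup\tau(W)\sqcup\bigl(t_0\tau(W)\setminus\{t_0v_2,t_0v_3\}\bigr)$, so adjoining the two missing monomials $t_0 v_2, t_0 v_3$ yields precisely the basis $\{v_1\}\sqcup\tau(W)\sqcup t_0\tau(W)$, which is exactly a basis of $\langle v_1\rangle_K\oplus (1\otimes L_{(2)})\oplus(t_0\otimes L_{(2)}) = \langle v_1\rangle_K\oplus(S\otimes L_{(2)})$ under the dictionary $1\otimes u\leftrightarrow u$, $t_0\otimes u\leftrightarrow t_0 u$ for $u$ a standard monomial of $L_{(2)}=\tau(\mathcal L)$. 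So as graded vector spaces the two sides coincide.

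It remains to check that the bracket on $M$ restricts, under this identification, to the genuine bracket of derivations of $R$ on the subspace $\mathcal L\oplus\langle t_0 v_2, t_0 v_3\rangle$. For brackets not involving $v_1$ this is clear: inside $\Der R$ a product $[t_0 u, t_0 u']$ involves $t_0^2=0$ — but note $t_0^2$ is zero in the \emph{truncated} ring $R$ (here $p=2$), consistent with $S=K[t_0]/(t_0^2)$ — and $[u, t_0 u']= t_0[u,u']$ since the pivot elements $v_i$, $i\ge 2$, annihilate $t_0$ by Lemma~\ref{Laction}; this matches the $S$-linear bracket on $S\otimes L_{(2)}$. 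For brackets with $v_1$: using $v_1=\partial_1+t_0 v_2$ and $\partial_1(t_0)=0$, one gets $[v_1, t_0^\varepsilon u]= t_0^\varepsilon[v_1,u] = t_0^\varepsilon\,\theta(u)$ for $\varepsilon\in\{0,1\}$ and $u\in L_{(2)}$, which is exactly the action defined on $M$. The only subtlety — and the step I expect to be the main point requiring care — is that on the subspace $\mathcal L$ alone the bracket is \emph{not} closed (applying $\operatorname{ad}v_1$ to $t_0 v_3\in t_0\tau(W)$ produces $t_0 v_4$, which is fine, but the issue is whether $\operatorname{ad}v_1$ can send a genuine element of $\mathcal L$ into the two-dimensional complement $\langle t_0v_2,t_0 v_3\rangle$; it cannot, since those monomials arise only as $[\partial_1, v_2]$-type artifacts absent from $W$), whereas on the enlarged space $\mathcal L\oplus\langle t_0v_2,t_0v_3\rangle$ it is closed precisely because the enlarged basis is $\{v_1\}\sqcup t_0^{\varepsilon}\tau(W)$ and $\theta$ preserves $L_{(2)}$. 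Verifying this closure and the three displayed structure relations $v_1(t_0)=0$, $v_1(v_2)=v_3$, $v_1(v_3)=t_0[v_2,v_3]$ against Lemma~\ref{Lsosed} and Lemma~\ref{Lcommutators} completes the proof.
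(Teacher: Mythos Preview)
Your proposal is correct and follows the same approach as the paper, which simply says the result ``follows from decompositions of Lemma~\ref{LWrecc} and the proof of Theorem~\ref{T_L_decomp}''; you have spelled out in detail exactly those two ingredients --- the basis decomposition $W=\{v_1\}\sqcup\tau(W)\sqcup t_0\tau(W)\setminus\{t_0v_2,t_0v_3\}$ and the semidirect-product bracket computations. One small notational slip: your $\theta$ is not a derivation of $L_{(2)}$ into itself (since $\theta(v_3)=t_0v_4\notin L_{(2)}$) but rather the derivation $\operatorname{ad}v_1$ of $S\otimes L_{(2)}$; once you phrase it that way, your formula $[v_1,s\otimes u]=s\otimes\theta(u)$ should simply read $[v_1,-]=\operatorname{ad}v_1$ acting on $S\otimes L_{(2)}$ inside $\Der R$, which is what you verify anyway.
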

\begin{proof}
Follows from decompositions of Lemma~\ref{LWrecc} and the proof of Theorem~\ref{T_L_decomp}.
\end{proof}

\subsection{Generating functions}
Using gradings of Lemma~\ref{Lhomogeneous} and Corollary~\ref{Chomogeneous},
we can define the {\em Hilbert series} in one and two variables for all our algebras
\begin{align*}
\mathcal H(\mathcal L,x,y) &:=\sum_{a+b\ge 1} \dim \mathcal L_{a,b} x^a y^b;\\
\mathcal H(\mathcal L,t) &:=\sum_{n\ge 1} \dim \mathcal L_{n} t^n=\mathcal H(\mathcal L,t,t).
\end{align*}
Similarly, we define the Hilbert series for multihomogeneous sets of monomials.
Sometimes, we shall omit variables and simply write $\mathcal H(\mathcal L)$.
\begin{theorem}\label{TreccLLL}
Let $p=2$ and $\mathcal L=\Lie(v_1,v_2)$.
Then the Hilbert series satisfy functional equations
\begin{align}
\mathcal H(W_{\le 2},x,y)&=x+y;\nonumber\\
\mathcal H(W_{\le n+1},x,y)&=\mathcal H(W_{\le n},y,xy)(1+x/y)-x^2,\qquad n\ge 2;\label{wn_recursion}\\
\mathcal H(\mathcal L,x,y)&=\mathcal H(\mathcal L,y,xy)(1+x/y)-x^2.\nonumber
\end{align}
\end{theorem}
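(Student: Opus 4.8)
The plan is to derive the functional equation for $\mathcal H(\mathcal L,x,y)$ directly from the recursive description of the standard monomials in Lemma~\ref{LWrecc}, keeping careful track of how the shift $\tau$ acts on multidegrees. The key observation is that $\tau$ sends $v_1\mapsto v_2$ and $v_2\mapsto v_3=[v_1,v_2]$, so on a monomial with multidegree $(a,b)$ (meaning $a$ factors of $v_1$ and $b$ factors of $v_2$), applying $\tau$ produces a monomial in $v_2,v_3$ which, when re-expressed in the generators $v_1,v_2$ via $v_3=[v_1,v_2]$, has $a$ factors $v_1$ and $a+b$ factors $v_2$; that is, $\tau$ acts on multidegree by $(a,b)\mapsto(a,a+b)$. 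At the level of Hilbert series this is the substitution $x\mapsto y$, $y\mapsto xy$, which is exactly why the right-hand side involves $\mathcal H(\mathcal L,y,xy)$.

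First I would record the substitution rule: if $S$ is any multihomogeneous set of monomials in $v_1,v_2$, then $\mathcal H(\tau(S),x,y)=\mathcal H(S,y,xy)$, because a monomial of multidegree $(a,b)$ in $S$ contributes $x^ay^b$ to $\mathcal H(S)$ and contributes $x^a y^{a+b}$ after shifting, and $y^a(xy)^b = x^b y^{a+b}$ — wait, one must be careful: it is the monomial's multidegree $(a,b)$ that maps to $(a,a+b)$, so I should check that $\mathcal H(S,y,xy)=\sum y^a(xy)^b=\sum x^b y^{a+b}$, which counts multidegree $(b,a+b)$; reconciling this with $(a,a+b)$ just amounts to tracking which variable plays which role, and the stated equation fixes the convention. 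Next, $t_0$ has multidegree $(0,0)$ but is \emph{not} a monomial in the $v_i$ alone; however $t_0\tau(w)$ for a standard monomial $w$ of length $\ge 1$ has multidegree differing from $\tau(w)$'s by exactly the multidegree of $t_0$ \emph{as it appears in the standard form}, and the relations $v_i^2=t_{i-1}v_{i+2}$ and $[v_i,v_j]=t_{i-1}\cdots t_{j-3}v_{j+1}$ show that within $\mathcal L$ the factor $t_0$ always comes paired with the "missing" Fibonacci degree — concretely $t_0 v_{n+1}$ has the same multidegree as $[v_{n-1},v_n]$ would have were it a product, and chasing the bookkeeping in~\eqref{standard} shows that multiplication by $t_0$ on $\tau(W)$ corresponds to multiplying the Hilbert series by $x/y$. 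So $\mathcal H(t_0\tau(W),x,y)=(x/y)\,\mathcal H(\tau(W),x,y)=(x/y)\,\mathcal H(W,y,xy)$.

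Then I would simply apply the generating-function functor to the disjoint-union decomposition~\eqref{decomp}, $W=\{v_1\}\sqcup\tau(W)\sqcup\bigl(t_0\tau(W)\setminus\{t_0v_2,t_0v_3\}\bigr)$: the contribution of $\{v_1\}$ is $x$, the contribution of $\tau(W)$ is $\mathcal H(W,y,xy)$, and the contribution of $t_0\tau(W)\setminus\{t_0v_2,t_0v_3\}$ is $(x/y)\mathcal H(W,y,xy)$ minus the two removed terms $t_0v_2$ and $t_0v_3$. One must check that $t_0v_2$ and $t_0v_3$ carry multidegrees summing to $x$, so that $x + (x/y)\mathcal H(W,y,xy) - x = (x/y)\mathcal H(W,y,xy)$ and the leftover $x$ exactly cancels, leaving total $\mathcal H(W,y,xy)(1+x/y)-x^2$ — here I need $t_0v_2$ to have multidegree $(1,1)$ and $t_0v_3$ multidegree $(1,2)$? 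No: $t_0v_2 = v_1^2$ up to lower order (since $v_1^2=t_0v_2$ by Lemma~\ref{Lsquares} with $i=1$), so in $\mathcal L$ over characteristic $2$ the element $t_0v_2$ has multidegree $(2,0)$, contributing $x^2$, and $t_0v_3=[v_1,v_1,v_2]$-type, i.e. $v_1^2\cdot(\text{shift})$... I would verify directly from $v_1^2=t_0v_2$ and $[v_1,v_2]=v_3$, $[v_1,v_3]=t_0v_4$ that the two excluded monomials together account for the $-x^2$ correction and the cancellation of one copy of $x$. The identical computation with $W$ replaced by $W_{\le n}$ and the first line of Lemma~\ref{LWrecc} gives the finite recursion and its base case.

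\textbf{The main obstacle} will be the multidegree bookkeeping for the factor $t_0$ and the two excluded monomials: $t_0$ is not itself a word in $v_1,v_2$, so its "multidegree" is only meaningful through the identities $v_i^2=t_{i-1}v_{i+2}$ and the commutator formula of Lemma~\ref{Lcommutators}, and one has to be scrupulous that the substitution $x\mapsto y,\ y\mapsto xy$ together with the prefactor $1+x/y$ correctly encodes both the shift $\tau$ and the optional extra $t_0$, and that the $-x^2$ is precisely the cost of deleting $\{t_0v_2,t_0v_3\}$ while also absorbing the stray $\{v_1\}$ term's overlap. Once the convention in the statement is pinned down (it tells us $v_1\leftrightarrow x$, $v_2\leftrightarrow y$, and $\tau$ acts as $x\mapsto y$, $y\mapsto xy$), everything else is a direct transcription of~\eqref{decomp}, and the three displayed equations follow in one line each; the $W_{\le n}$ version and the $\mathcal L$ version are literally the same computation since $\dim\mathcal L_{a,b}$ equals the number of standard monomials of multidegree $(a,b)$ by the basis theorem.
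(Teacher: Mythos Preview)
Your approach is exactly the paper's: compute the multidegrees of $t_0$, $t_0v_2$, $t_0v_3$, identify the substitution induced by $\tau$, and then read off the functional equation directly from the decomposition of Lemma~\ref{LWrecc}. The structure is correct and nothing essential is missing.

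However, two concrete bookkeeping errors in your sketch would, if uncorrected, prevent the computation from closing. First, the action of $\tau$ on multidegrees is $(a,b)\mapsto(b,a+b)$, not $(a,a+b)$: a monomial with $a$ factors $v_1$ and $b$ factors $v_2$ is sent to one with $a$ factors $v_2$ and $b$ factors $v_3=[v_1,v_2]$, hence $b$ factors $v_1$ and $a+b$ factors $v_2$. This is precisely what the substitution $\mathcal H(S,y,xy)=\sum y^a(xy)^b=\sum x^by^{a+b}$ records, so your inconsistency was in the first step, not the second. Second, Lemma~\ref{Lsquares} gives $v_1^2=t_0v_3$, not $t_0v_2$; hence $\Gr(t_0v_3)=(2,0)$ contributes $x^2$, while $\Gr(t_0)=\Gr(t_0v_3)-\Gr(v_3)=(2,0)-(1,1)=(1,-1)$ so that $\Gr(t_0v_2)=(1,0)$ contributes $x$. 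With these values the decomposition yields
\[
\mathcal H(W,x,y)=x+\mathcal H(W,y,xy)+\tfrac{x}{y}\,\mathcal H(W,y,xy)-x-x^2
=\mathcal H(W,y,xy)\Bigl(1+\tfrac{x}{y}\Bigr)-x^2,
\]
the $x$ from $\{v_1\}$ cancelling the $x$ from $t_0v_2$, exactly as you anticipated. The $W_{\le n}$ case is identical.
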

\begin{proof}
Remark that $\Gr(t_0v_3)=\Gr(v_1^2)=2\Gr(v_1)=(2,0)$, so $\Gr(t_0)=(2,0)-\Gr(v_3)=(1,-1)$.
So, $\Gr(t_0v_2)=(1,-1)+(0,1)=(1,0)=\Gr (v_1)$.
Then $\mathcal H(t_0)=x/y$, $\mathcal H(t_0v_3)=x^2$, and $\mathcal H (t_0v_2)=\mathcal H(v_1)=x$.

Observe that $\tau$ yields the substitution $x:=y$ and $y:=xy$ into the right hand side.
Now, the result follows from the recursive relations of Lemma~\ref{LWrecc}.
\end{proof}

\begin{corollary}\label{CHrecursive}
By construction, $\mathcal H(W_{\le n},x,y)$ is the initial segment of the series $\mathcal H(\mathcal L,x,y)$ corresponding to the standard monomials of length at most $n$.
Starting with $\mathcal H(W_{\le 2},x,y)$ and iterating~\eqref{wn_recursion} we easily compute initial coefficients of $\mathcal H(\mathcal L,x,y)$.
The results of computations are shown on Fig.~\ref{Fig1}, Fig.~\ref{Fig2}, and Fig.~\ref{Fig3}.  
\end{corollary}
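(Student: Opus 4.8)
The plan is to deduce everything from two already-established facts: the set $W=\bigcup_{n\ge 1}W_n$ is a basis of $\mathcal L$, and on each homogeneous component $\mathcal L_{a,b}$ the weight takes the single value $\wt w=a\lambda+b\lambda^2$ coming from the weight-coordinate formula. Combining the latter with Lemma~\ref{LwtW}, in the form $\lambda^{n-1}<\wt w\le\lambda^n$ for $w\in W_n$ (which for $n\le 3$ is just the equality $\wt v_n=\lambda^n$, the tail being $1$), gives a \emph{length--multidegree dictionary}: if $w\in W_n$ and $\Gr w=(a,b)$, then $\lambda^{n-1}<a\lambda+b\lambda^2\le\lambda^n$, and since the half-open intervals $(\lambda^{n-1},\lambda^n]$ partition $(1,\infty)$, the length $n=n(a,b)$ is a function of the multidegree alone.

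First I would record the consequences of this dictionary. Every standard monomial of multidegree $(a,b)$ lies in the single set $W_{n(a,b)}$, so
\[
\dim\mathcal L_{a,b}=\#\{w\in W_{n(a,b)}:\Gr w=(a,b)\}\le|W_{n(a,b)}|=2^{n(a,b)-3}<\infty ,
\]
which shows that $\mathcal H(\mathcal L,x,y)=\sum_{w\in W}x^{a}y^{b}$, with $(a,b)=\Gr w$, is a well-defined formal power series (supported, by Theorem~\ref{Tstrip}, inside the strip). Moreover $W_{\le n}$ is by definition the set of standard monomials of length at most $n$ and $W_{\le n}\subseteq W$, so $\mathcal H(W_{\le n},x,y)$ is literally a partial sum of $\mathcal H(\mathcal L,x,y)$; by the dictionary the coefficient of $x^ay^b$ in $\mathcal H(W_{\le n},x,y)$ equals $0$ for $n<n(a,b)$ and equals $\dim\mathcal L_{a,b}$ for $n\ge n(a,b)$. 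Hence $\mathcal H(W_{\le n},x,y)$ is exactly the initial segment of $\mathcal H(\mathcal L,x,y)$ cut out by the length-$\le n$ standard monomials, and $\mathcal H(W_{\le n},x,y)\to\mathcal H(\mathcal L,x,y)$ coefficientwise as $n\to\infty$.

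It then remains to run the recursion: starting from $\mathcal H(W_{\le 2},x,y)=x+y$ and iterating the functional equation~\eqref{wn_recursion} of Theorem~\ref{TreccLLL} produces $\mathcal H(W_{\le n},x,y)$ for every $n$, and by the stabilization just proved each coefficient of $\mathcal H(\mathcal L,x,y)$ is obtained after finitely many steps; the crude bound $n(a,b)<3+\log_\lambda(a+b)$ (from $\lambda^{n(a,b)-1}<\wt w\le(a+b)\lambda^2$) makes this effective and yields the tables plotted in Fig.~\ref{Fig1}--Fig.~\ref{Fig3}. Since the statement is in essence bookkeeping, there is no real obstacle; the one point needing care is the well-definedness and stabilization of coefficients, and this is governed entirely by the fact that the length of a standard monomial is pinned down by its multidegree through the weight, i.e. by Lemma~\ref{LwtW} together with $\wt w=a\lambda+b\lambda^2$ (the finitely many small cases $W_1,W_2,W_3$ being checked by hand).
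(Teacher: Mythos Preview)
Your argument is correct. The paper itself offers no separate proof of this corollary---it is stated as self-evident (``by construction'') immediately after Theorem~\ref{TreccLLL}---so what you have done is make explicit the justification the paper leaves implicit. Your key step, the \emph{length--multidegree dictionary} (that $\Gr w=(a,b)$ forces $\wt w=a\lambda+b\lambda^2$, and then Lemma~\ref{LwtW} pins down the unique $n$ with $\lambda^{n-1}<\wt w\le\lambda^n$), is exactly the mechanism behind the paper's ``by construction'', and it cleanly yields both the finite-dimensionality of each $\mathcal L_{a,b}$ and the coefficientwise stabilization $\mathcal H(W_{\le n})\to\mathcal H(\mathcal L)$. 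The effective bound $n(a,b)<3+\log_\lambda(a+b)$ is a nice touch the paper does not spell out.
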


\begin{conjecture}\label{ConWN}
Monomials $W_N$ belong to two families of parallel lines, where $N\ge 1$ is fixed (see Fig.~\ref{Fig2}).
\end{conjecture}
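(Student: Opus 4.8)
The plan is to describe the positions of the monomials $W_N$ in the weight coordinates $(\xi,\eta)$ and show that they sit on two families of parallel lines; equivalently, in the multidegree coordinates $(x,y) = \Gr(w) = (a,b)$, the lattice points $\Gr(W_N)$ lie on finitely many lines of two prescribed slopes. First I would analyze the structure of a standard monomial $w = t_0^{\alpha_0}\cdots t_{n-4}^{\alpha_{n-4}} v_n \in W_n$ and compute its multidegree $\Gr(w) \in \Z^2$ from the known multidegrees $\Gr(v_i)$ and $\Gr(t_i)$. From the proof of Theorem~\ref{TreccLLL} we have $\Gr(t_0) = (1,-1)$; iterating the shift $\tau$ (which acts on multidegrees by $(a,b)\mapsto(b,a+b)$, since $\tau$ sends $x\mapsto y$, $y\mapsto xy$) gives $\Gr(t_i)$ and $\Gr(v_i)$ as pairs of consecutive Fibonacci-type combinations. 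Concretely $\Gr(v_n)$ and $\Gr(t_{i})$ are governed by the Fibonacci recurrence, so the total multidegree of $w$ is $\Gr(v_n) + \sum_i \alpha_i \Gr(t_i)$, a fixed vector plus a $\{0,1\}$-combination of the vectors $\Gr(t_0),\dots,\Gr(t_{n-4})$.

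The key step is then to observe that the direction vectors $\Gr(t_i)$, although there are $n-3$ of them, point in only \emph{two} asymptotic directions when projected appropriately — or more precisely, that the displacement $\Gr(w) - \Gr(v_n)$ always lands on one of two lines. I would compute $\Wt(w) = (\wt w, \swt w)$ instead: by additivity $\wt(t_i) = -\lambda^i$ and $\swt(t_i) = -\bar\lambda^i$, so in the $(\xi,\eta)$ plane the vector contributed by turning on $t_i$ is $(-\lambda^i, -\bar\lambda^i)$, i.e.\ it lies along the line through the origin of slope $\bar\lambda^i/\lambda^i = (\bar\lambda/\lambda)^i = (-1/\lambda^2)^i$. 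The point is that these contributions split according to the parity of $i$: even-index $t_i$ contribute along one line (slope $\lambda^{-2i}$, all with $\eta<0$, $\xi<0$) and odd-index $t_i$ along another (slope $-\lambda^{-2i-1}$). I would need to show that in fact the relevant structure is cleaner than this: that after fixing $n$, the $2^{n-3}$ points actually fall on two families of parallel lines in the $(x,y)$-plane. I expect the honest statement is that the extreme monomials (all even tails, all odd tails) determine two boundary lines and the rest interpolate — so the precise combinatorial claim needs the computed figures (Fig.~\ref{Fig2}) to pin down exactly which two slopes occur and that every monomial lies on a line of one of those two slopes.

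The main obstacle will be making the ``two families of parallel lines'' statement precise and then proving it rather than merely observing it on the computed data: a priori the $2^{n-3}$ lattice points $\Gr(W_n)$ could lie on many lines. The resolution I would pursue is to exploit the recursive structure from Lemma~\ref{LWrecc}, $W_{\le n+1} = \{v_1\} \cup \tau(W_{\le n}) \cup t_0\tau(W_{\le n}) \setminus \{t_0v_2,t_0v_3\}$: if $W_n$ lies on two families of parallel lines with certain slopes, then $\tau(W_n)$ lies on two families with the $\tau$-transformed slopes, and multiplying by $t_0$ (translation by $\Gr(t_0)=(1,-1)$) preserves the slopes. One then checks that the $\tau$-action on slopes has a two-cycle (or fixes the pair of slopes setwise), so the two families are stable under the recursion; the base case $W_3 = \{v_3\}$ is a single point. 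The delicate part is verifying that $\tau$ genuinely permutes the two slope-directions rather than producing new ones — this should follow from the fact that the two natural directions are the eigendirections of the $\tau$-matrix $\left(\begin{smallmatrix}0&1\\1&1\end{smallmatrix}\right)$, namely the $\lambda$- and $\bar\lambda$-eigenlines, which is exactly why the weight coordinates $\xi,\eta$ were introduced. I would present this as the conceptual heart, leaving the numerical identification of the slopes to the accompanying computation and figure.
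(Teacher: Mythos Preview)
The statement you are attempting to prove is labeled as a \emph{Conjecture} in the paper; the paper offers no proof, only the empirical evidence of Fig.~\ref{Fig2}. So there is nothing to compare your argument against --- your task here is not to reconstruct the paper's proof but to settle an open question, and your write-up should be read in that light.

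As a proof, what you have written has a genuine gap. Your inductive scheme rests on the claim that the linear map $\tau$ (with matrix $\left(\begin{smallmatrix}0&1\\1&1\end{smallmatrix}\right)$ on multidegrees) ``permutes the two slope-directions.'' But $\tau$ fixes only its two eigendirections (the $\xi$- and $\eta$-axes); any other slope is sent to a third, new slope. So either the two families are exactly the eigendirections --- in which case the statement is vacuous, since every point lies on a horizontal and a vertical line in weight coordinates --- or your induction does not close up. You never verify which case you are in, and you acknowledge leaving ``the numerical identification of the slopes'' to the figure.

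There is a cleaner recursion you are overlooking. By Corollary~\ref{CW}(1), $\Gr(W_{n+1})$ is the union of two \emph{translates} of $\Gr(W_n)$, by $\Gr(v_{n-1})$ and $\Gr(v_{n-2})$; translations preserve all slopes, so no appeal to $\tau$ or eigendirections is needed for slope-preservation. Iterating, $\Gr(W_n)$ is (up to a shift) the Minkowski sum $\{0,\Gr(t_0)\}+\{0,\Gr(t_1)\}+\cdots+\{0,\Gr(t_{n-4})\}$, exactly as your first paragraph computes. But now the real difficulty surfaces: the vectors $\Gr(t_i)$, with $\Wt(t_i)=(-\lambda^i,-\bar\lambda^i)$, have pairwise \emph{distinct} directions (slopes $(\bar\lambda/\lambda)^i=(-\lambda^{-2})^i$ in weight coordinates). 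A Minkowski sum of $n{-}3$ segments in general position produces a zonotope whose vertices do not lie on two families of lines. So the conjecture, if true, must reflect some arithmetic coincidence among Fibonacci vectors that your argument does not touch; and until the phrase ``two families of parallel lines'' is made precise (how many lines in each family? exactly or approximately?), it is not clear what is even being claimed.
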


\section{Properties of  related algebras}
\subsection{Properties of associative hulls}
\begin{theorem} [{\cite{PeSh09,PeSh13fib}}] \label{TpropA}
Let $\ch K=p\in \{2,3\}$,  and $R:=K[t_i| i\ge 0]/(t_i^p| i\ge 0)$ the truncated polynomial ring.
Consider the Lie algebra $\mathcal L:=\Lie(v_1,v_2)$,
the restricted Lie algebra $\mathbf L:=\Lie_p(v_1,v_2)$, and
the associative hull $\mathbf A:=\Alg(v_1,v_2)\subset \End R$, all generated by $v_1,v_2$.
Set $\lambda:=(1+\sqrt 5)/2$. Then
\begin{enumerate}
 \item
  $\GKdim \mathbf L=\LGKdim \mathbf L= \GKdim \mathcal L=\LGKdim \mathcal L=\log_\lambda p$.
 \item
  $\GKdim \mathbf A=\LGKdim \mathbf A= 2 \log_\lambda p$.
 \item
  $\mathbf L$ has a nil-$p$-mapping.
 \item We put homogeneous components of the algebras on plane using the multidegree coordinates.
  Then the respective points of the lattice $\mathbb Z^2$ for $\mathcal L$, $\mathbf L$ and $\mathbf A $ lie in strips.
  The border lines for these stirps are determined by points of fixed superweight values.
 \item
  The points for PBW-monomials of the restricted enveloping algebra $u(\mathbf L)$ are bounded by a parabola-like curve,
  which in the weight coordinates $(\xi,\eta)$ is written as~\cite[Theorem~5.1]{PeSh09}:
\begin{equation*}
|\eta|< C \xi^{\theta},\qquad \theta:=\frac{\lambda}{\lambda+1}=\log_{1+\sqrt 5}2\approx 0,5902,\quad C>0.
\end{equation*}
  \item Using geometric observations similar to that of Theorem~\ref{Tsum2},
  $\mathbf L$, $\mathbf A$, and the augmentation ideal of the restricted enveloping algebra $\mathbf u:=u_0(\mathbf L)$
  are direct sums of two locally nilpotent subalgebras
  $$\mathbf L=\mathbf L_+\oplus \mathbf L_-,\quad \mathbf A=\mathbf A_+\oplus \mathbf A_-,\quad \mathbf u=\mathbf u_+\oplus \mathbf u_-.$$
\end{enumerate}
\end{theorem}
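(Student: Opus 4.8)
The plan is to prove Theorem~\ref{TpropA} by reducing each of its six claims to results already established in the $p=2$ case (or stated in the cited work \cite{PeSh09,PeSh13fib}), and pointing out which arguments go through verbatim for $p=3$ and which require the genuinely new input $p$ in place of $2$. First I would set up the analogue of Lemma~\ref{Lsquares} in characteristic $p=3$: compute $v_i^p=v_i^{[p]}$ using the fact that in a commutative-derivation situation $(\dd_i+t_{i-1}v_{i+1})^{[p]}$ expands via Jacobson's formula, and check that only one surviving term remains, of the shape $(\text{monomial in }t)\cdot v_{i+p}$ (for $p=3$ one gets something like $v_i^{[3]}=t_{i-1}^2\,t_i\,v_{i+3}$ up to a scalar, after the lower-degree contributions cancel because $\dd_i^{[p]}=0$ and cross terms vanish by the recursion $[v_i,v_{i+1}]=v_{i+2}$). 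This is the technical heart of the generalization and I expect it to be the main obstacle, since Jacobson's formula produces $p-1$ a priori nonzero correction terms $s_k(\dd_i,t_{i-1}v_{i+1})$ and one must verify each is either zero or absorbed into the single surviving monomial; the cleanest route is induction on the index together with the commutator formula of Lemma~\ref{Lcommutators}, which itself holds over an arbitrary field.

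Granting that $p$-power computation, claims (1)--(3) follow by the same machinery used above for $p=2$. For (1), the standard monomials are now $W_n=\{t_0^{\alpha_0}\cdots t_{n-4}^{\alpha_{n-4}}v_n\mid \alpha_i\in\{0,\dots,p-1\}\}$, so $|W_n|=p^{n-3}$ for $n\ge 3$, and the weight estimate of Lemma~\ref{LwtW} becomes $\lambda^{n-1}+c_p\le \wt(W_n)\le \lambda^n$ with $\wt(t_i^\ast)\ge (p-1)\wt(t_i)=-(p-1)\lambda^i$; carrying this through the proof of Theorem~\ref{TgrowthP} replaces $\log_\lambda 2$ by $\log_\lambda p$. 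Claim (2) on $\mathbf A$ follows because the associative hull is spanned, up to lower terms, by products $r\,\dd_i$ with $r$ a tail and $\dd_i$ a partial derivation, so its homogeneous component of weight $\le x$ is bounded above and below by products of two standard-monomial-type factors, doubling the exponent to $2\log_\lambda p$. Claim (3), the nil $p$-mapping, uses exactly the weight-comparison argument in the "New proof" and "Old proof" above: iterating $a\mapsto a^{[p]}$ shifts the bottom pivot index up by $p$ while multiplying the weight by $p$, and since $\lambda^p>p$ the two bounds $\wt(a^{[p^N]})>\lambda^{n+pN-1}$ and $\wt(a^{[p^N]})\le p^N\lambda^m$ are incompatible for large $N$, giving $N<C_p(m-n+1)$.

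For claims (4)--(6) I would invoke the geometric framework of Section~\ref{Sgeometry}, all of whose constructions (weight and superweight functions, the $\Z^2$-grading, weight coordinates $(\xi,\eta)$, orthogonality of the axes) are set up over an arbitrary field and do not see $p$ at all. The superweight estimate of Lemma~\ref{Leta} generalizes to $-\tfrac{p-1}{1}\cdot\tfrac{1}{\lambda^2-1}\cdot\lambda^2<\swt w<\tfrac{(p-1)\lambda^{-1}}{1-\lambda^{-2}}$, i.e. a strip of the form $-c_p<\swt w<c_p'$ with explicit constants coming from the geometric series with ratio $\lambda^{-2}$ and numerator $(p-1)$; one also checks the finitely many $p$-power monomials $v_i^{[p]}$ lie in this strip, exactly as in the $p=2$ proof. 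This yields (4) for $\mathcal L$, $\mathbf L$ and (via the two-factor description) $\mathbf A$. Claim (5) is quoted directly from \cite[Theorem~5.1]{PeSh09}; the exponent $\theta=\lambda/(\lambda+1)$ is independent of $p$ because it reflects the shape $\xi\sim\lambda^n$, $\eta\sim$ (bounded $\times$ number of monomials $\sim p^n$) and the mismatch between $\lambda^n$ and $p^n$ only rescales $\xi$ logarithmically. Finally, claim (6) is the associative/enveloping analogue of Theorem~\ref{Tsum2}: split each algebra into the span of monomials of positive versus negative superweight, note this is a vector-space decomposition into two subalgebras by additivity of $\swt$, and for local nilpotency observe that any finitely generated subalgebra of, say, $\mathbf A_+$ has all generator superweights bounded below by some $\mu>0$, so products of length $N:=\lceil c_p'/\mu\rceil$ have superweight exceeding the upper strip bound $c_p'$ and therefore vanish. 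I would present (4)--(6) briskly, since they are word-for-word the $p=2$ arguments with $2$ replaced by $p$, and concentrate the exposition on the Jacobson-formula computation of $v_i^{[p]}$, which is the only place where a new idea (rather than a substitution) is needed.
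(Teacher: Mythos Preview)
The paper does not prove Theorem~\ref{TpropA}; it is stated as a citation of \cite{PeSh09,PeSh13fib} and no argument is supplied in the present text. Your proposal is therefore not comparable to an in-paper proof but is instead a reconstruction, and as such it follows exactly the strategy of the cited references and of the $p=2$ material in this paper: redo Lemma~\ref{Lsquares} via Jacobson's formula for $p=3$, rebuild the standard monomial basis with tails $t_i^{\alpha_i}$, $\alpha_i\in\{0,\dots,p-1\}$, rerun the weight and superweight estimates with the factor $(p-1)$ in the geometric series, and transplant Theorems~\ref{TgrowthP}, \ref{Tstrip}, \ref{Tsum2} verbatim. That outline is correct, and you rightly flag the $p$-th power $v_i^{[p]}$ as the one genuinely new computation.

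There is one substantive gap. Your claim in part~(5) that the exponent $\theta$ is independent of $p$ is not right, and the heuristic ``the mismatch between $\lambda^n$ and $p^n$ only rescales $\xi$ logarithmically'' does not justify it. Running the parabola estimate with $|W_n|\sim p^{\,n}$ gives, for the extremal PBW monomials, $\xi\sim (p\lambda)^n$ and $|\eta|\lesssim p^{\,n}$, hence $|\eta|\lesssim \xi^{\theta}$ with
\[
\theta=\log_{p\lambda}p=\frac{\log_\lambda p}{\log_\lambda p+1},
\]
which does depend on $p$ (numerically $\theta\approx 0.590$ for $p=2$ and $\theta\approx 0.695$ for $p=3$). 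The formula $\theta=\frac{\lambda}{\lambda+1}$ in the theorem statement is a notational artefact: in that passage ``$\lambda$'' stands for $\GKdim\mathbf L=\log_\lambda p$, not for the golden ratio, as one sees by comparing with the derivation in~\eqref{boundU}. So in your write-up you should compute $\theta$ for each $p$ rather than asserting a single $p$-independent value.
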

\begin{conjecture}
Remark that $\theta$ for the parabola-like curve above is the same as $\theta$ for the intermediate growth of the universal enveloping algebra in  Corollary~\ref{Cinter} and~\eqref{boundU}.
It seems that such an equality should be valid for many other examples of fractal restricted Lie algebras and Lie superalgebras mentioned below.
\end{conjecture}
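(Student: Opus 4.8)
The plan is to prove the conjectured coincidence by showing that \emph{both} exponents are forced to equal $\theta=d/(d+1)$, where $d:=\GKdim L$ is computed in a weight grading, and then to isolate the ``fractality'' hypotheses under which the argument used here for $\mathcal L$ and $\mathbf L$ runs verbatim. For the Fibonacci algebra the equality is already a theorem: Corollary~\ref{Cinter} gives the growth exponent $\theta=\log_{\sqrt5+1}2=\frac{d}{d+1}$ with $d=\log_\lambda 2$, while item~(5) of Theorem~\ref{TpropA} (from \cite{PeSh09}) gives the parabola exponent $\frac{\lambda}{\lambda+1}=\log_{1+\sqrt5}2$, the same number; so the real content is that $d/(d+1)$ in these two roles is dictated by structure rather than by the individual algebra.

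First I would fix the class: a $\Z^2$-graded (restricted) Lie (super)algebra $L$ carrying two multiplicatively independent additive weight functions $\xi=\wt$ and $\eta=\swt$ with (i)~$\xi>0$ on nonzero homogeneous components and $\xi\asymp\mu^{\ell}$, where $\ell$ is the length of a monomial and $\mu>1$ is fixed (geometric spread of weights, cf.~Lemma~\ref{LwtW}); (ii)~the homogeneous components confined to a strip $|\eta|<C_0$ (cf.~Lemma~\ref{Leta}, Theorem~\ref{Tstrip}); (iii)~$\GKdim L=\LGKdim L=d$ in the weight grading, equivalently $\dim L_{\le x}\asymp x^{d}$; and (iv)~a \emph{spread} hypothesis, namely that a positive proportion of the basis, weighted by $\xi$, has $|\eta|$ bounded away from $0$. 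For $\mathcal L$ and $\mathbf L$ items (i)--(iii) are Lemma~\ref{LwtW}, Lemma~\ref{Leta}, and Theorem~\ref{TgrowthP}; (iv) is the point that needs the distributional analysis discussed at the end; and the author's other fractal (restricted) Lie and Lie super algebras should fit the same pattern.

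The technical heart is a \emph{weight budget} lemma: writing $N(z)$ for the number of basis monomials of $L$ of weight $\le z$ and $S(z):=\sum_{\xi(w_i)\le z}\xi(w_i)$, assumptions (i) and (iii) give $N(z)\asymp z^{d}$, and summing the geometric contributions over lengths gives $S(z)\asymp z^{d+1}$. Both halves of the conjecture then come from the same balancing. For the growth of an enveloping algebra $U$ of $L$: its PBW-monomials of weight $\le x$ built from basis vectors of weight $\le z$, each used at most once, number at least $2^{N(z)}$; taking $z$ maximal with $S(z)\le x$, so $z\asymp x^{1/(d+1)}$, gives $\gamma_U(x)\ge\exp\!\bigl(cx^{d/(d+1)}\bigr)$, and the matching upper bound (from \cite{Pe96} plus a direct argument in the spirit of Theorem~\ref{TgrowthU}) yields $\gamma_U(x)=\exp\!\bigl(x^{\theta+o(1)}\bigr)$ with $\theta=d/(d+1)$. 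For the parabola: any PBW-monomial $w=\prod_{i\in S}w_i$ satisfies $|\eta(w)|\le C_0|S|$, while the $w_i$ being distinct forces $\xi(w)=\sum_{i\in S}\xi(w_i)\gtrsim S(z)$ for the $z$ with $N(z)\asymp|S|$; hence $|S|\lesssim\xi(w)^{d/(d+1)}$ and $|\eta(w)|<C\,\xi(w)^{\theta}$ with the \emph{same} $\theta=d/(d+1)$, the sharpness of this bound (the region actually reaching order $\xi^{\theta}$) following from hypothesis~(iv) by taking all basis vectors of weight $\le z\asymp x^{1/(d+1)}$ on one side of the strip.

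The main obstacle is twofold. Conceptually, the non-uniformity of the Fibonacci growth --- Lemma~\ref{Lnonuniform} and Theorem~\ref{Lnonuniform2} --- means that \cite{Pe96} alone does not pin $\Dim^3 U$ exactly, so the growth side is sharp only up to an $o(1)$ in the exponent while the parabola carries a genuine constant $C$; the two $\theta$'s therefore coincide at the level of exponents, and matching them for each new example still calls for an \emph{ad hoc} analogue of Theorem~\ref{TgrowthU}. Technically, the delicate point is hypothesis~(iv): one must check, in each fractal example, that a fixed positive fraction of the basis (measured by weight) lies strictly inside the strip --- precisely the kind of fine distribution of monomials that for $\mathcal L$ is visible only through explicit computation, as the figures and Lemma~\ref{Lnonuniform} illustrate. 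Extracting a clean structural criterion that implies (iv) --- for instance reading it off the self-similar recursion $\mathcal H(\mathcal L,x,y)=\mathcal H(\mathcal L,y,xy)(1+x/y)-x^2$ and its analogues for the other examples --- is, I expect, where the real work lies.
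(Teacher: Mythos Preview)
The statement you are trying to prove is labelled \emph{Conjecture} in the paper, and the paper offers no proof whatsoever: the first sentence is a bare numerical remark (both occurrences of $\theta$ are literally the same expression $\frac{\lambda}{\lambda+1}=\log_{1+\sqrt5}2$, already computed in \eqref{boundU} and in Theorem~\ref{TpropA}(5)), and the second sentence is an explicit open-ended speculation about ``many other examples''. There is therefore no ``paper's own proof'' to compare against; the author is recording an observation and a hope, not a theorem.

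What you have written is not a proof either, and to your credit you say so: hypothesis~(iv) is flagged as the crux, the growth side is admitted to be exact only up to $o(1)$ in the exponent, and the last paragraph concedes that extracting a structural criterion ``is where the real work lies''. Your weight-budget heuristic is sound and does explain, at the level of exponents, why a strip bound $|\eta|<C_0$ together with $N(z)\asymp z^d$ forces both the parabola exponent and the $\Dim^3$ exponent to equal $d/(d+1)$: the argument that $|S|$ distinct basis vectors must carry total $\xi$-weight $\gtrsim |S|^{(d+1)/d}$, hence $|\eta|\le C_0|S|\lesssim\xi^{d/(d+1)}$, is correct and is essentially the mechanism behind \cite[Theorem~5.1]{PeSh09}. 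But this remains a template, not a theorem: the conjecture as stated is about an unspecified class of ``fractal restricted Lie algebras and Lie superalgebras'', your hypotheses (i)--(iv) are not verified for any example beyond $\mathcal L$ itself, and (iv) in particular is precisely the kind of fine distributional fact that the paper's own figures and Lemma~\ref{Lnonuniform} show to be delicate even here. So the honest summary is: the paper proves nothing about this conjecture, and neither do you --- you have outlined a plausible programme and correctly identified its obstacles.
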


The lower central series, the derived series
for $\mathcal L$, $\mathbf L$, and $\tilde{\mathcal L}$ in case $p=2$ are described in~\cite{PeSh13fib}.
\medskip

In case of $p=0$, a little is known about $L=\Lie(v_1,v_2)\subset \Der R$.
The growth of $L$ is definitely greater than the polynomial growth (unpublished),
also $L$ is $\mathbb Z^2$-graded by the multidegree in the generators (Corollary~\ref{Cgrading}).
\begin{conjecture}
Let $\ch K=0$ and $R=K[t_i| i\ge 0 ]$. Determine the growth of the Lie algebra $L=\Lie(v_1,v_2)\subset \Der R$.
\end{conjecture}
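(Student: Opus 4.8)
\medskip
\noindent\textbf{Proof proposal.}
The plan is to transfer the machinery developed for $p=2$ to characteristic zero: produce a ``standard monomial'' linear basis of $L$ inside $\Der R$, turn its recursive structure into a functional equation for the Hilbert series $\mathcal H(L,x,y)=\sum_{a,b}\dim L_{a,b}\,x^a y^b$, and read off the asymptotics of $\gamma_L(n)$. By Corollary~\ref{Cgrading} the $\Z^2$-grading by multidegree in $v_1,v_2$ is available over any field, and since $L$ is finitely generated each component $L_{a,b}$ is finite-dimensional, so $\mathcal H(L,x,y)$ and the growth function are well defined. One first records the ambient multidegrees by the additivity argument of Lemma~\ref{Lhomogeneous}: $\Gr(v_n)=(F_{n-2},F_{n-1})$, $\Gr(t_j)=-(F_{j-2},F_{j-1})$ for $j\ge 1$ (with $F_{-1}=1$), and $\Gr(t_0)=(1,-1)$, exactly as in the proof of Theorem~\ref{TreccLLL}.

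The first real task is to determine a spanning set. Just as in the proof that $W$ is a basis of $\mathcal L$, the Leibniz rule together with Lemmas~\ref{Lsosed}--\ref{Laction} shows that every bracket of elements $r v_k$ (with $r$ a monomial in the $t_i$) is again a combination of such elements, and that in a term ending in $v_k$ only $t_0,\dots,t_{k-4}$ can occur. The genuinely new feature in characteristic zero is that the restriction $\alpha_i\in\{0,1\}$ on tail exponents -- which for $p=2$ is forced by $t_i^2=0$ -- is gone: brackets with \emph{non-adjacent} pivots manufacture higher powers, for instance $[v_1,t_0 v_4]=t_0^2 t_1 v_5$ and $[v_1,t_1 v_5]=v_5+t_0 t_1^2 t_2 v_6$. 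Hence the standard monomials now form a set strictly larger than $W$, and pinning it down exactly -- equivalently, computing $\dim L_{a,b}$ -- is the crux. I would look for an inductive description in which the new level is built from the old one by $w\mapsto[v_{n-1},w]$ and $w\mapsto[v_{n-2},w]$ (exactly as for $p=2$) together with operations $w\mapsto[v_i,w]$ for $i\le n-3$ supplying the higher powers, prove that these suffice, verify linear independence by comparing leading monomials in $\Der R$, and keep track of the cancellations (like the $v_5$ above) in a consistent normal form.

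Granting such a basis, the recursion of Lemma~\ref{LWrecc} and Theorem~\ref{TreccLLL} should have an analogue in which the bounded factor $1+x/y$ (one may append $t_0$ at most once) is replaced by a geometric-type contribution $\sum_{c_0\ge0}(x/y)^{c_0}=y/(y-x)$ reflecting the now-unbounded exponent of $t_0$, plus further terms for the remaining variables; one would then restrict the resulting functional equation to the diagonal $x=y=t$ to obtain $\mathcal H(L,t)=\sum_n\dim L_n\,t^n$ and hence $\gamma_L(n)$. Since $L$ is a quotient of the free Lie algebra on two generators, $\dim L_n$ is at most the Witt number $\frac1n\sum_{d\mid n}\mu(d)2^{n/d}=O(2^n/n)$, so the growth is at most exponential; together with the fact noted in the text that it exceeds every polynomial, the answer is an intermediate growth. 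I would conjecture it to be subexponential, plausibly of type $\exp(n^\theta)$ for some $\theta=\theta(\lambda)\in(0,1)$, in analogy with the intermediate growths of the enveloping algebras in Corollary~\ref{Cinter}; but fixing $\theta$, or even deciding subexponential versus exponential, is exactly what the problem demands.

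The main obstacle is that first step. For $p=2$ the truncation collapses everything to the clean recursion of Lemma~\ref{LWrecc}; in characteristic zero the interaction between the Leibniz action $v_i(r)$, which by $v_i(t_j)=t_{i-1}t_i\cdots t_{j-2}$ spreads a single tail into several tails carrying higher low-index powers, and the commutator tails $t_{i-1}\cdots t_{k-3}$ makes the admissible exponent set intricate, and it is not clear a priori whether $\dim L_{a,b}$ grows polynomially, subexponentially or exponentially in $a+b$. A sensible first milestone is to iterate the bracket operations on a computer to high order (as was done for $p=2$, cf. Corollary~\ref{CHrecursive}), fit the growth, and only then attempt matching bounds.
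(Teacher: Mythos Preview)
The statement you are addressing is a \emph{Conjecture} in the paper, not a theorem: the paper explicitly poses ``Determine the growth of $L=\Lie(v_1,v_2)$ in characteristic zero'' as an open problem and offers no proof, only the remark that the growth is strictly greater than polynomial. There is therefore nothing in the paper to compare your proposal against.

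What you have written is, appropriately, a research outline rather than a proof, and you are candid about this (``fixing $\theta$, or even deciding subexponential versus exponential, is exactly what the problem demands''; ``The main obstacle is that first step''). The preliminary computations you record --- the multidegrees $\Gr(v_n)$, $\Gr(t_j)$, and the sample brackets $[v_1,t_0v_4]=t_0^2t_1v_5$, $[v_1,t_1v_5]=v_5+t_0t_1^2t_2v_6$ --- are correct and do illustrate why the $p=2$ basis $W$ no longer suffices. But the proposal does not settle the question: you have not produced a basis of $L$ in characteristic zero, you have not established the promised functional equation for $\mathcal H(L,x,y)$, and consequently you have no actual bounds on $\gamma_L(n)$ beyond the trivial free-Lie upper bound and the paper's unproved lower bound. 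The suggestion to replace $1+x/y$ by $y/(y-x)$ is heuristic at best, since the admissible tail exponents are coupled across variables and not freely chosen. As a plan of attack it is sensible; as a proof it is, by your own account, incomplete, and the paper leaves the problem open.
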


\subsection{Poisson algebras}
A $K$-vector space $A$ is a {\it Poisson algebra} if it has two $K$-bilinear operations:
\begin{enumerate}
\item
$A$ is an associative commutative algebra with unit whose multiplication is denoted by $a\cdot b$, $a, b\in A$.
\item
$A$
is a Lie algebra whose product is denoted by the {\it Poisson bracket} $\{a, b\}$, where $a, b\in A$.
\item these two operations are related by the {\it Leibnitz rule}:
\begin{equation*}
\{a\cdot b, c\}=a\cdot\{b, c\}+\{a, c\}\cdot b,\qquad  a, b, c \in A.
\end{equation*}
\end{enumerate}
Let $L$ be a Lie algebra, $\{U_n| n\ge 0\}$ the natural filtration (i.e. by degree in $L$) of its universal enveloping algebra $U(L)$.
Consider the respective graded space supplied with the induced product $S(L):=\gr U(L)=\mathop{\oplus}\limits_{n=0}^\infty U_{n}/U_{n+1}$ (see~\cite{Dixmier}).
Define also a Poisson bracket by setting $\{v,w\}:=[v,w]$, $v,w\in L$,
and extending to the whole of $S(L)$ by linearity and the Leibnitz rule.
Then $S(L)$ ia a polynomial algebra which is turned into a Poisson algebra, called the {\it symmetric algebra} of $L$.
Let $L(X)$ be the free Lie algebra generated by a set $X$, then $S(L(X))$ is the free Poisson algebra~\cite{Shestakov93}.

More generally, let $A$ be an associative (super)algebra with unity generated by a set $X$.
Denote by $A^{(n,X)}$ the linear space of all monomials in $X$ of length at most $n$, where $n\ge 0$, and $A^{(0,X)}=\langle 1\rangle _K$.
Then $\{A^{(n,X)}\mid n\ge 0\}$ is a filtration and the associated graded algebra $\gr A$ is (super)commutative.
As above, $\gr A$ has a structure of a Poisson (super)algebra as well.

So in~\cite{PeSh18FracPJ}, a fractal Lie superalgebra $L$ was introduced and studied along with its associative hull $A$.
The filtration of $A$ determined by degree in the generating set yields a Poisson superalgebra $\gr A$.
This Poisson superalgebra and related Jordan superalgebras were studied in details in~\cite{PeSh18FracPJ}.

\subsection{Restricted Poisson algebras}
Now consider that $\ch K=p>0$. In case $p>2$  {\it restricted Poisson algebras} were defined in~\cite{BezKal07} and further studied in~\cite{BaoYeZhang17}.  
Roughly speaking, one adds a formal $p$th power under some naturally arising axioms. 
The notion of the restricted Poisson algebra can also be extended to the case $p=2$,
such algebras were defined and studied in~\cite{PeSh18FracPJ}.
In notations above, the associated graded space arising from the filtration by degree in the generating set of an associative algebra
has a structure of a restricted Poisson algebra. 

\begin{Remark}
The correct analogue of the Grigorchuk group~\cite{Grigorchuk80}
and Gupta-Sidki group~\cite{GuptaSidki83} in the world of associative algebras is not known jet.
We hope that the notion of the restricted Poisson algebra can help to resolve the problem
of nillity of the associative hull of the Fibonacci (restricted) Lie algebra as follows.
\end{Remark}

\begin{conjecture}
Let $p=2$ and $\mathbf A=\Alg(v_1,v_2)$,
the associative hull of the Fibonacci Lie algebra $\mathcal L=\Lie(v_1,v_2)\subset \Der R$. Then
\begin{enumerate}
\item
The restricted Poisson algebra $\gr \mathbf A$  arising from the filtration of $\mathbf A$ be degree in the generators is not nil.
The validity of this claim would immediately imply the following one.
\item
$\mathbf A$ is not nil.
\end{enumerate}
\end{conjecture}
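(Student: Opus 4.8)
The plan is to deduce~(2) from~(1) by a symbol argument, and then to attack~(1) through the faithful action of $\mathbf A$ on $R$. For the reduction, suppose $0\ne\bar a\in\gr\mathbf A$ is homogeneous of filtration degree $d$, represented by $a\in\mathbf A$, and that the powers $\bar a,\bar a^{\,2},\bar a^{\,3},\dots$ --- or, more generally, the iterated restricted powers of $\bar a$ in the restricted Poisson algebra $\gr\mathbf A$ --- are all nonzero. Then the image of $a^n$ in the degree-$nd$ component of $\gr\mathbf A$ equals $\bar a^{\,n}\ne 0$, so $a^n\ne 0$ for every $n$; hence $a$ is not nilpotent and $\mathbf A$ is not nil. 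So it is enough to produce a single homogeneous element of $\gr\mathbf A$ whose powers never vanish, equivalently (via the representation) a single non-nilpotent operator $a\in\mathbf A\subset\End R$.

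For~(1) I would work on two fronts. Structurally, $\gr\mathbf A$ is a quotient of the polynomial algebra $S(\mathbf L)=\gr U(\mathbf L)$ by the restricted Poisson ideal spanned by the symbols of the kernel of the surjection $U(\mathbf L)\twoheadrightarrow\mathbf A$; this kernel reflects the truncation relations $t_i^2=0$ and is controlled, in principle, by the explicit commutator and square formulas of Lemma~\ref{Lcommutators} and Lemma~\ref{Lsquares}. Here one must reckon with a strong collapsing phenomenon: by Lemma~\ref{Laction}, every basis monomial $w=r\,v_n\in\tilde W$ of~\eqref{basisLL} has $v_n(r)=0$ and $r^2=0$, so $w^2\in\tilde W\cup\{0\}$ --- the square of a length-one monomial again has filtration degree $\le1$, not $2$ --- and in characteristic $2$ it follows that the square of every positive-degree homogeneous element of $\gr\mathbf A$ vanishes; thus a ``polynomial generator'' of $\gr\mathbf A$ cannot be drawn from the Lie part at all, and non-nilpotence must be carried either by genuinely higher products or by the restricted power operation. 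In parallel, on the representation side, I would test concrete non-Lie operators --- $v_1v_2$, longer alternating words, or combinations whose weight coordinates lie on the extreme superweight lines $\eta=-\lambda$, $\eta=1$ of Lemma~\ref{Leta} --- and try to verify $a^N(f_N)\ne 0$ for a sequence $f_N\in R$ chosen so that the action spreads towards variables $t_j$ of unboundedly large index. The tool is the persistence of $t_k$-factors already used in the proof that $\mathcal L$ is not PI and in both nillity proofs: once a factor $t_k$ occurs in an expression, no subsequent product with pivots $v_m$, $m>k$, can delete it (Lemma~\ref{Lcommutators}, Lemma~\ref{Laction}).

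The main obstacle is precisely that $\mathbf L$ is nil and this nillity permeates $\gr\mathbf A$, killing every obvious candidate. A successful element must have iterates that escape to infinity along the pivot scale yet never vanish simultaneously, and these iterates are pinned between the linear weight growth $\wt(a^N)=N\,\wt a$ and the spreading of the support --- exactly the tension that, for Lie elements $x$, forces $x^{2^N}=0$ via the inequalities~\eqref{est_low} and~\eqref{est_up}. Dodging that tension with a cleverly chosen non-Lie element, for which the support spreads slowly enough relative to $\wt a$ that some monomial always survives, is where essentially all the work lies; I expect it to require new structural input --- most plausibly a self-similar decomposition of $\mathbf A$ (or of $\gr\mathbf A$) together with a contracting-type estimate in the spirit of Bartholdi and of Shestakov--Zelmanov, carried out for the associative / restricted Poisson setting rather than for the restricted Lie algebra $\mathbf L$ alone.
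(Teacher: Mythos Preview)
This statement is a \emph{conjecture} in the paper, not a theorem: the paper offers no proof and explicitly leaves both claims open. So there is no ``paper's own proof'' to compare against.

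Your reduction of (2) to (1) is correct and is exactly what the paper means by ``would immediately imply'': if $\bar a\in\gr_d\mathbf A$ has all powers nonzero, then for any lift $a\in\mathbf A^{(d)}\setminus\mathbf A^{(d-1)}$ the symbol of $a^n$ in $\gr_{nd}\mathbf A$ is $\bar a^{\,n}\ne 0$, so $a^n\ne 0$. Nothing more is needed for that part.

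For (1), however, you do not give a proof --- you outline two lines of attack, correctly flag the tension with the nillity of $\mathbf L$, and then say outright that you ``expect it to require new structural input.'' That is an honest assessment, but it means the proposal is not a proof; it is a research plan. This matches the status in the paper: open.

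One technical caution about your ``collapsing'' paragraph. The filtration in question is by total degree in the generators $v_1,v_2$, so a basis monomial $w\in\tilde W$ typically has filtration degree much larger than $1$ (for instance $v_3$ has degree $2$, $v_4$ degree $3$, and so on along the Fibonacci numbers). Your assertion that ``the square of a length-one monomial again has filtration degree $\le 1$'' conflates the PBW length with the generator degree; in fact $w^2$, when nonzero, lands in filtration degree $2\deg(w)$ and there is no reason its symbol must vanish in $\gr\mathbf A$. So the blanket claim that ``the square of every positive-degree homogeneous element of $\gr\mathbf A$ vanishes'' is not justified as stated. This does not affect the overall status of your write-up --- you were not relying on that claim for a proof --- but it would need to be repaired or removed before this paragraph could be used as part of an actual argument.
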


\subsection{Similar constructions of restricted Lie algebras and Lie superalgebras}
In case of an arbitrary prime characteristic, Shestakov and Zelmanov
constructed an example of a 2-generated restricted Lie algebra with a nil $p$-mapping~\cite{ShZe08}.
An example of a $p$-generated  nil restricted Lie algebra $L$, characteristic $p$ being arbitrary, was studied in~\cite{PeShZe10}.
These infinite dimensional restricted Lie algebras and their restricted enveloping algebras as well, have
different decompositions into direct sums of two locally nilpotent subalgebras~\cite{PeShZe10} (similar to Theorem~\ref{TpropA}).
Unlike the Fibonacci restricted Lie algebra, in the latter
examples, elements of Lie algebras are  linear combinations of monomials;
to work with such linear combinations is sometimes an essential technical difficulty, see e.g.~\cite{ShZe08,PeShZe10,Pe20flies}.

A systematic approach to construct (restricted) Lie (super)algebras
having {\it good monomial bases} was developed due to the second Lie superalgebra introduced in~\cite{Pe16}.
Lie superalgebras over fields of arbitrary characteristic constructed in~\cite{Pe16,PeOtto}
can be viewed as natural analogues of the Grigorchuk and Gupta-Sidki groups in the class of Lie superalgebras.
This idea was extended to restricted Lie algebras, and now we have examples of nil restricted Lie algebras for any prime
with clear monomial bases and slow polynomial growth, including a quasi-linear growth~\cite{Pe17,Pe20clover}.
Also, the author constructed examples of nil restricted Lie algebras of intermediate oscillating growth in~\cite{Pe20flies}.
Next, along with a specially constructed just infinite fractal Lie superalgebra over an arbitrary field,
fractal  Poisson and Jordan superalgebras are constructed, supplying
analogues of the Grigorchuk and Gupta-Sidki groups in respective classes of algebras~\cite{PeSh18FracPJ}.

\section{Homology and Euler characteristic for Lie algebras of subexponential growth} 
\label{SHsubexp}

In this section we establish general results on homology and respective Euler characteristic of
infinite dimensional Lie algebras of subexponential growth (Theorem~\ref{TE} and Theorem~\ref{THsubexp}, the latter we call the infiniteness statement on homology) 
and consider applications of the results above to the Fibonacci Lie algebra (Theorem~\ref{Thomology} and Theorem~\ref{Tparaboloid}).
On homology of infinite dimensional Lie algebras see. e.g.~\cite{Hanlon96,KangKim99,Mill22}. 
\subsection{Homology of Lie algebras}
Let $L$ be a Lie algebra over a field $K$.
Consider $K$-spaces of $n$-dimensional {\it chains} formed by the exterior powers $C_n(L):=\Lambda^n(L)$, $n\ge 0$,
and the {\it Chevalley-Eilenberg complex}
\begin{equation*}
\cdots \rightarrow\Lambda^{n+1}(L) \overset{d_{n+1}}{\longrightarrow}   \Lambda^n(L) \overset{d_n }{\longrightarrow}
\Lambda^{n-1}(L){\rightarrow}\ \cdots\ 
\overset{d_{2}}{\longrightarrow} \Lambda^{1}(L) \overset{d_1}{\longrightarrow} \Lambda^{0}(L)\overset{d_0}{\longrightarrow} 0,
\end{equation*}
where the {\it border differentials} $d_n: \Lambda^n(L)\rightarrow \Lambda^{n-1}(L)$ are defined by
\begin{equation*}
d_n(x_1\wedge \cdots \wedge x_n)=\sum_{s<t}(-1)^{s+t+1}[x_s,x_t ]
\wedge  x_1\wedge \cdots \widehat{x}_s \cdots \widehat {x}_t \cdots \wedge x_n,
\qquad x_i\in L,\quad n\ge 2,
\end{equation*}
and $d_1=d_0=0$.
It is well-known that the differentials are well-defined and $d_n\circ d_{n+1}=0$, hence $\Imm d_{n+1}\subset \Ker d_n $.
One defines the {\it homology groups with trivial coefficients} as $H_n(L):=\Ker d_n/\Imm d_{n+1}$ for $n\ge 0$.
One verifies that, $H_0(L)=K$, $H_1(L)=\Ker d_1/\Imm d_2\cong L/[L,L]$.

\subsection{Euler characteristic}
Assume that $L$ is a $\NO^k{\setminus} \{(0,\ldots,0)\}$-graded Lie algebra, where $k\ge 1$,  
$$
L=\mathop{\oplus}\limits_{n_1,\ldots, n_k\ge 0} L_{n_1\ldots n_k}
=\mathop{\oplus}\limits_{n=1}^\infty \bigg( \mathop{\oplus}\limits_{n_1+\cdots+n_k= n} L_{n_1\ldots n_k}   \bigg).
$$
We assume that all components of the grading are finite dimensional and define the generating functions in $k$ variables and one variable
\begin{align*}
&\mathcal H(L,t_1,\ldots,t_k)  :=\sum_{n_1,\ldots, n_k\ge 0} \dim L_{n_1\ldots n_k} t_1^{n_1}\cdots t_k^{n_k};\\
&\mathcal H(L,t)  :=\sum_{n=1}^\infty \bigg(\sum_{n_1+\cdots+n_k=n} \dim L_{n_1\ldots n_k}\bigg ) t^{n}=\mathcal H(L,t,\ldots,t).
\end{align*}
In both cases we sometimes omit the variables. Observe that the chains and  homology groups naturally inherit the gradings and we can define respective generating functions. 

\begin{Remark}
Using approach of~\cite{Pe02},
the arguments of this section can be further extended to the case of $G$-graded Lie superalgebras, where $G$ is a semigroup under some assumptions. 
\end{Remark}

Identities~\eqref{euler}, \eqref{denominator} below are called in~\cite[(1.5), (1.8)]{KangKim99} as the {\it denominator identities}.
This Euler characteristic was considered in~\cite[Th. 4.1]{Hanlon96},\cite{KangKim99}.
\begin{lemma} 
Let $L$ be an $\NO^k{\setminus} \{(0,\ldots,0)\}$-graded Lie algebra.
In terms of the generating functions in $k$ variables (or one variable), the latter are omitted for brevity,  we have equality, both sides we call the {\bf Euler characteristic} of $L$:
\begin{equation}\label{euler}
{\mathbf E}(L):= \sum_{n=0}^\infty (-1)^n \mathcal H(\Lambda^n(L))=\sum_{n=0}^\infty (-1)^n \mathcal H(H_n(L)).
\end{equation}
\end{lemma}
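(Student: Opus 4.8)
The plan is to refine the Chevalley--Eilenberg complex over the $\NO^k$-grading and then, in each fixed multidegree, invoke the elementary fact that a bounded complex of finite-dimensional vector spaces has the same Euler characteristic as its homology. First I would note that the induced grading on $\Lambda^\bullet(L)$ --- in which $x_1\wedge\cdots\wedge x_n$ with $x_i\in L_{m^{(i)}}$ has multidegree $m^{(1)}+\cdots+m^{(n)}$ --- is respected by the differentials: each summand $[x_s,x_t]\wedge x_1\wedge\cdots\widehat{x}_s\cdots\widehat{x}_t\cdots\wedge x_n$ of $d_n(x_1\wedge\cdots\wedge x_n)$ lies in the same multidegree, since $[x_s,x_t]\in L_{m^{(s)}+m^{(t)}}$. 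Hence the complex decomposes as the direct sum over $d\in\NO^k$ of subcomplexes $(\Lambda^\bullet(L)_d,\,d_\bullet)$, and accordingly $H_n(L)=\bigoplus_d H_n(L)_d$ with each $H_n(L)_d$ a subquotient of $\Lambda^n(L)_d$.

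Next I would fix $d=(n_1,\ldots,n_k)$ and record two finiteness facts about $\Lambda^\bullet(L)_d$. Since $L$ carries no component of degree $(0,\ldots,0)$, every homogeneous element of $L$ has total degree $\ge 1$, so a nonzero product of $n$ of them has total degree $\ge n$; thus $\Lambda^n(L)_d=0$ for $n>|d|:=n_1+\cdots+n_k$. Moreover $\Lambda^n(L)_d$ is spanned by wedges whose factors realize one of the finitely many decompositions of $d$ into $n$ nonzero parts of $\NO^k$, and each such contribution is a subquotient of a tensor product of finitely many of the finite-dimensional spaces $L_m$, so $\dim\Lambda^n(L)_d<\infty$. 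We therefore have a bounded complex $0\to\Lambda^{|d|}(L)_d\to\cdots\to\Lambda^1(L)_d\to\Lambda^0(L)_d\to 0$ of finite-dimensional spaces, and rank--nullity applied to each $d_n$ together with a telescoping sum yields the scalar identity
\[
\sum_{n=0}^{\infty}(-1)^n\dim\Lambda^n(L)_d=\sum_{n=0}^{\infty}(-1)^n\dim H_n(L)_d .
\]

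Finally I would multiply by $t_1^{n_1}\cdots t_k^{n_k}$ and sum over all $d\in\NO^k$. Because only finitely many $n$ contribute to any given $d$, the series $\sum_n(-1)^n\mathcal H(\Lambda^n(L))$ and $\sum_n(-1)^n\mathcal H(H_n(L))$ are well defined as formal power series (each coefficient being a finite sum), and the coefficient of $t_1^{n_1}\cdots t_k^{n_k}$ on each side is exactly the scalar identity above; setting $t_i=t$ gives the one-variable form. The argument is mostly bookkeeping; the only point requiring care --- and the one I would flag as the main, if mild, obstacle --- is the convergence and reorganization of these infinite alternating sums of generating functions, which is precisely what the per-multidegree truncation (valid because $L$ has no degree-zero part) secures.
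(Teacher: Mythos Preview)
Your proof is correct and follows essentially the same approach as the paper: rank--nullity plus an alternating telescope, with convergence guaranteed by the fact that $\Lambda^n(L)$ (and hence $H_n(L)$) contributes only in total degrees $\ge n$. The only cosmetic difference is that you carry out the telescoping component-by-component in the $\NO^k$-grading, whereas the paper performs the same cancellation directly at the level of generating functions via $\mathcal H(\Lambda^n(L))=\mathcal H(\Ker d_n)+\mathcal H(\Imm d_n)$.
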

\begin{proof}
Using equalities $\Imm d_{n}\cong \Lambda^n(L)/\Ker d_n$, $n\ge 0$, we consider respective generating functions
\begin{align*}
1=\mathcal H(\Lambda^0(L))&=\mathcal H(\Ker d_0),\\
\mathcal H(\Lambda^1(L))&=\mathcal H(\Ker d_1)+ \mathcal H(\Imm d_{1}),\\
&\cdots\\
\mathcal H(\Lambda^n(L))&=\mathcal H(\Ker d_n)+ \mathcal H(\Imm d_{n}),\\ 
&\cdots
\end{align*}
Since $H_n(L)=\Ker d_n/\Imm d_{n+1}$, alternating summation of equations above yields equality~\eqref{euler}.
Remark that both sides in~\eqref{euler} converge in topology of formal power series because both $\mathcal H(\Lambda^n(L))$ and $\mathcal H(H_n(L))$
consist of terms of total degree at least $n$.
\end{proof}

\begin{lemma} \label{Leuler2}
In notations above, the left hand side of~\eqref{euler} (i.e. the Euler characteristic)  is equal to
\begin{equation}\label{denominator}
\sum_{n=0}^\infty (-1)^n \mathcal H(\Lambda^n(L),t_1,\ldots,t_k)
=\prod_{n_1,\ldots,n_k\ge 0} (1-t_1^{n_1}\cdots t_k^{n_k})^{\dim L_{n_1\ldots n_k}}.
\end{equation}
\end{lemma}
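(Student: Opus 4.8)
The plan is to expand the exterior algebra $\Lambda^\bullet(L)$ using the grading. Since $L=\oplus L_{n_1\ldots n_k}$ with finite-dimensional components, I would first decompose $\Lambda(L)=\bigwedge\big(\bigoplus_{n_1,\ldots,n_k} L_{n_1\ldots n_k}\big)$ and use the standard fact that the exterior algebra of a direct sum is the (super)tensor product of the exterior algebras of the summands. Thus $\Lambda(L)\cong \bigotimes_{(n_1,\ldots,n_k)} \Lambda(L_{n_1\ldots n_k})$, where the tensor product is taken over all multidegrees and is understood in the completed sense appropriate for formal power series.

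Next I would compute, for a single homogeneous component $V:=L_{n_1\ldots n_k}$ of dimension $d:=\dim L_{n_1\ldots n_k}$, the contribution of $\Lambda(V)$ to the alternating sum. Each basis vector of $V$ carries multidegree $(n_1,\ldots,n_k)$, so an element of $\Lambda^j(V)$ has multidegree $j\cdot(n_1,\ldots,n_k)$ and contributes $t_1^{jn_1}\cdots t_k^{jn_k}$; the homological degree $j$ supplies the sign $(-1)^j$. Since $\dim\Lambda^j(V)=\binom{d}{j}$, the signed generating function of $\Lambda(V)$ is $\sum_{j=0}^d (-1)^j \binom{d}{j} (t_1^{n_1}\cdots t_k^{n_k})^j = (1-t_1^{n_1}\cdots t_k^{n_k})^{d}$ by the binomial theorem. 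Here it is essential that the sign in the alternating Euler sum matches the Koszul sign in the supertensor decomposition, so that the signed Hilbert series is multiplicative across the tensor factors: $\sum_n (-1)^n\mathcal H(\Lambda^n(L)) = \prod_{(n_1,\ldots,n_k)} \sum_{j}(-1)^j \mathcal H(\Lambda^j(L_{n_1\ldots n_k}))$.

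Combining the two steps, the left-hand side of~\eqref{euler} equals $\prod_{n_1,\ldots,n_k\ge 0}(1-t_1^{n_1}\cdots t_k^{n_k})^{\dim L_{n_1\ldots n_k}}$, which is~\eqref{denominator}. Setting all $t_i=t$ gives the one-variable version. The main point requiring care, rather than a genuine obstacle, is the convergence and rearrangement justification: the product is an infinite product of power series, and one must check it converges in the $(t_1,\ldots,t_k)$-adic (formal power series) topology and that the expansion of the product reproduces the alternating sum term by term. This follows because each factor $1-t_1^{n_1}\cdots t_k^{n_k}$ differs from $1$ by a monomial of total degree $n_1+\cdots+n_k\ge 1$, so only finitely many factors affect any fixed total degree, exactly as already noted for the convergence of~\eqref{euler}; the same finiteness lets one freely interchange the (finite, at each degree) sums and products.
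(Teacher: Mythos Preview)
Your proof is correct and follows essentially the same route as the paper's. The paper chooses a homogeneous basis $\{w_j\}$ with $\deg w_j=m_j$, expands $\prod_j(1-t^{m_j})$, and matches each term $(-t^{m_{i_1}})\cdots(-t^{m_{i_s}})$ bijectively with the basis wedge $w_{i_1}\wedge\cdots\wedge w_{i_s}\in\Lambda^s(L)$; your packaging via $\Lambda(L)\cong\bigotimes_{(n_1,\ldots,n_k)}\Lambda(L_{n_1\ldots n_k})$ and the binomial theorem on each factor is the same expansion, merely grouped by graded component rather than by individual basis vectors.
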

\begin{proof}
Let us prove the statement in case of $\N$-grading $L=\mathop{\oplus}\limits_{n=1}^\infty L_n$  and series in one variable, namely that
\begin{equation*}
\sum_{n=0}^\infty (-1)^n \mathcal H(\Lambda^n(L),t)=\prod_{n=1}^\infty (1-t^n)^{\dim L_{n}}.
\end{equation*} 
Let $\{w_j|j\in\N\}$ be a homogeneous basis of $L=\mathop{\oplus}\limits_{n\ge 1}L_n$.
Denote by $\{m_j| j\in\N\}$ indices of the respective components, i.e. $w_j\in L_{m_j}$, $j\ge 1$.
Then
\begin{equation}\label{three}
\sum_{n=0}^\infty (-1)^n \mathcal H(\Lambda^n(L),t)=\prod_{j= 1}^\infty (1-t^{m_j})=\prod_{n= 1}^\infty (1-t^{n})^{\dim L_n}.
\end{equation}
Indeed, let us prove the first equality~\eqref{three}.
Consider a product $(-t^{m_{i_1}})(-t^{m_{i_2}})\cdots (-t^{m_{i_s}}) $, where $i_1<\cdots< i_s$,
arising from the expansion of the middle part~\eqref{three}.
Then this term bijectively corresponds to the basis element
$w_{i_1}\wedge w_{i_2}\wedge\cdots \wedge  w_{i_s}\in  \Lambda^s(L)$, 
the latter yielding the term $(-1)^s t^{m_{i_1}+\cdots+ m_{i_s}}$ of $(-1)^s\mathcal H(\Lambda^s(L),t)$  in the left hand side of~\eqref{three}. 
The second equality~\eqref{three} is obtained by collecting $w_j$ with the same degree $n=m_j$.
The case of $k$ variables is similar.
\end{proof}

\subsection{Properties of Euler characteristic}
We introduce an $m$-{\it dilatation} of a formal power series $f(t_1,\ldots,t_k)$: 
\begin{equation*}
f^{[m]}(t_1,\ldots,t_k):=f(t_1^m,\ldots, t_k^m), \qquad m\in \N.
\end{equation*}

\begin{lemma}\label{Leuler3}
In notations above, the Euler characteristic equals
\begin{align*}
{\mathbf E}(L)&=\exp\bigg(-\sum_{m=1}^\infty \frac 1m \mathcal H^{[m]}(L)\bigg).
\end{align*}
\end{lemma}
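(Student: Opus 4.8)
The plan is to start from the product formula for the Euler characteristic established in Lemma~\ref{Leuler2}, namely
\[
{\mathbf E}(L)=\prod_{n_1,\ldots,n_k\ge 0}(1-t_1^{n_1}\cdots t_k^{n_k})^{\dim L_{n_1\ldots n_k}},
\]
and to take the formal logarithm. The key identity is the standard power-series expansion $\log(1-u)=-\sum_{m\ge 1}u^m/m$, applied with $u=t_1^{n_1}\cdots t_k^{n_k}$ for each multi-index. Since every factor with a nonzero exponent has $n_1+\cdots+n_k\ge 1$, the substituted monomial $u$ has positive total degree, so the logarithm is a well-defined element of the ring of formal power series and the double sum that arises converges in the $t$-adic topology.

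Concretely, I would write
\[
\log {\mathbf E}(L)=\sum_{n_1,\ldots,n_k\ge 0}\dim L_{n_1\ldots n_k}\,\log\bigl(1-t_1^{n_1}\cdots t_k^{n_k}\bigr)
=-\sum_{n_1,\ldots,n_k\ge 0}\dim L_{n_1\ldots n_k}\sum_{m\ge 1}\frac{1}{m}t_1^{mn_1}\cdots t_k^{mn_k}.
\]
Interchanging the two summations (justified because, after grouping by total degree, only finitely many terms contribute to each degree) gives
\[
\log {\mathbf E}(L)=-\sum_{m\ge 1}\frac{1}{m}\sum_{n_1,\ldots,n_k\ge 0}\dim L_{n_1\ldots n_k}\,t_1^{mn_1}\cdots t_k^{mn_k}
=-\sum_{m\ge 1}\frac{1}{m}\,\mathcal H^{[m]}(L),
\]
where the last equality is exactly the definition of the $m$-dilatation $\mathcal H^{[m]}(L)=\mathcal H(L,t_1^m,\ldots,t_k^m)$ applied to $\mathcal H(L,t_1,\ldots,t_k)=\sum \dim L_{n_1\ldots n_k}t_1^{n_1}\cdots t_k^{n_k}$. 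Exponentiating yields the claimed formula, and the one-variable case follows by setting $t_1=\cdots=t_k=t$.

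The only real point requiring care — the ``main obstacle,'' though it is a mild one — is the justification of the rearrangement of the infinite double sum and of the commuting of $\exp$ and $\log$ in the formal power series ring in (possibly) infinitely many... no: in $k$ variables with infinitely many contributing multi-indices. The clean way to handle this is to observe, as already noted in the proof of Lemma~\ref{Leuler2} and the lemma preceding it, that all manipulations take place in $K[[t_1,\ldots,t_k]]$ and that the coefficient of any fixed monomial $t_1^{a_1}\cdots t_k^{a_k}$ is affected by only finitely many factors/terms (those with $mn_i\le a_i$ for all $i$ and $m\ge 1$, hence $n_1+\cdots+n_k\le a_1+\cdots+a_k$), so every series converges $t$-adically and the formal $\exp$/$\log$ are mutually inverse on the maximal ideal. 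With that remark in place the computation is routine.
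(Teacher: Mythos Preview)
Your proof is correct and follows essentially the same route as the paper: start from the product formula of Lemma~\ref{Leuler2}, take the logarithm, expand $\log(1-u)=-\sum_{m\ge 1}u^m/m$, interchange sums, and recognize the inner sum as $\mathcal H^{[m]}(L)$. The only difference is that you carry out the $k$-variable computation directly (and include a more explicit justification of the formal-series rearrangement), whereas the paper does the one-variable case and leaves the general case as analogous.
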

\begin{proof}
Consider the case of one variable. By Lemma~\eqref{Leuler2}
\begin{align*}
{\mathbf E}(L,t)&= \prod_{n=1}^\infty (1-t^n)^{\dim L_{n}}=\exp\bigg(\sum_{n=1}^\infty \dim L_n\ln (1-t^n)\bigg)\\
&=\exp\bigg(- \sum_{n=1}^\infty \dim L_n \sum_{m=1}^\infty \frac{t^{mn}}m\bigg)
=\exp\bigg(- \sum_{m=1}^\infty \frac 1m \sum_{n=1}^\infty \dim L_n t^{mn}\bigg)\\
&=\exp\bigg(-\sum_{m=1}^\infty \frac 1m \mathcal H(L,t^m)\bigg)
=\exp\bigg(-\sum_{m=1}^\infty \frac 1m \mathcal H^{[m]}(L,t)\bigg).
\qedhere
\end{align*}
\end{proof}

Next, consider the operator ${\mathcal E}$ on power series with natural coefficients
(see~\cite{Pe99int,Pe00,Pe00Springer,Pe02}): 
\begin{equation}\label{seriesUL}
\begin{split}
{\mathcal E}\,:\ &\phi(t)=\sum_{n=1}^\infty b_n t^n\quad  \longmapsto\quad
   {\mathcal E}(\phi(t)): =\prod_{n=1}^\infty \frac 1{(1-t^n)^{b_n}},
   \\
{\mathcal E}\,:\ &\phi(t_1,\ldots,t_k)=\!\!\sum_{n_1,\ldots,n_k\ge 0}\!\! b_{n_1\ldots n_k} t_1^{n_1}\cdots t_k^{n_k}\quad \longmapsto\\
   &\qquad\qquad {\mathcal E}(\phi(t_1,\ldots,t_k)): =\prod_{n_1,\ldots,n_k\ge 0} \frac 1{(1-t_1^{n_1}\cdots t_k^{n_k})^{b_{n_1\ldots n_k}}}.
\end{split}
\end{equation}
\begin{lemma} [\cite{Pe99int,Pe02,Ufn}]\label{Leuler4}
Let $L$ be an $\NO^k{\setminus} \{(0,\ldots,0)\}$-graded Lie algebra  and  $U(L)$ its universal enveloping algebra with the induced grading.
Then the respective generating functions satisfy 
$${\mathcal H}(U(L))={\mathcal E}({\mathcal H}(L)).$$
\end{lemma}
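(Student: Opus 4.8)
The plan is to prove the Poincaré--Birkhoff--Witt identity $\mathcal H(U(L))=\mathcal E(\mathcal H(L))$ by passing to a homogeneous basis and expanding the PBW monomials component-by-component. First I would fix a homogeneous basis $\{w_j\mid j\in\N\}$ of $L=\oplus_{n\ge 1}L_n$, say with $w_j\in L_{m_j}$, together with a total order on the index set. By the PBW theorem, a basis of $U(L)$ is given by the ordered products $w_{j_1}^{a_1}w_{j_2}^{a_2}\cdots w_{j_s}^{a_s}$ with $j_1<\cdots<j_s$ and $a_i\ge 1$ (together with the empty product $1$). Such a monomial lives in the homogeneous component of degree $a_1 m_{j_1}+\cdots+a_s m_{j_s}$, so it contributes the term $t^{a_1 m_{j_1}+\cdots+a_s m_{j_s}}$ to $\mathcal H(U(L),t)$. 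The key combinatorial observation is that summing over all choices of exponents $a_j\ge 0$ (with only finitely many nonzero) factors as a product over $j$, where the factor for index $j$ is $\sum_{a\ge 0}t^{a m_j}=1/(1-t^{m_j})$.

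Carrying this out, I would write
\begin{equation*}
\mathcal H(U(L),t)=\prod_{j\ge 1}\frac{1}{1-t^{m_j}}=\prod_{n\ge 1}\frac{1}{(1-t^n)^{\dim L_n}},
\end{equation*}
where the second equality comes from collecting together all indices $j$ with $m_j=n$, of which there are exactly $\dim L_n$. By the definition~\eqref{seriesUL} of the operator $\mathcal E$, the right-hand side is precisely $\mathcal E(\mathcal H(L,t))$, which gives the claim in the one-variable case. The multivariable case is identical: a homogeneous basis element $w_j$ now lies in some $L_{n_1\ldots n_k}$ and contributes a monomial $t_1^{n_1}\cdots t_k^{n_k}$, the exponent sum over a PBW monomial is additive in the multidegrees, the geometric series factors index-by-index, and collecting indices with a common multidegree $(n_1,\ldots,n_k)$ produces the exponent $\dim L_{n_1\ldots n_k}$, matching the second displayed formula in~\eqref{seriesUL}.

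One point that deserves a remark is convergence in the topology of formal power series: since $L$ is $\NO^k\setminus\{0\}$-graded with finite-dimensional components, only finitely many basis elements $w_j$ have a given (bounded) degree, so the infinite product $\prod_j 1/(1-t^{m_j})$ is well-defined — each coefficient of $\mathcal H(U(L))$ is a finite sum — and the rearrangement of the sum over exponent tuples into a product is legitimate. This is essentially the only subtlety; I do not expect a genuine obstacle here, as the statement is the classical graded PBW counting identity and the argument mirrors the proof of Lemma~\ref{Leuler2} almost verbatim (indeed $\mathcal E$ is the multiplicative inverse of the operation appearing there, reflecting $\mathbf E(L)=1/\mathcal H(U(L))$ formally). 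The main care is simply bookkeeping: being explicit that the PBW basis is indexed by finitely-supported exponent functions on the chosen homogeneous basis, and that multidegree is additive along products, so that the generating-function product splits as a product of geometric series.
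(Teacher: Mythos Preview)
Your proof is correct; it is the standard graded PBW counting argument. Note that the paper does not supply its own proof of this lemma --- it is stated with citations to~\cite{Pe99int,Pe02,Ufn} and taken as known --- so there is nothing to compare against beyond observing that your argument is exactly the classical one, and indeed parallels the paper's proof of Lemma~\ref{Leuler2} as you remark.
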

\begin{lemma}[\cite{Pe99int,Pe00,Pe00Springer,Pe02}]\label{Leuler5}
In notations above, the operator $\mathcal E$ can be computed as
\begin{align*}
{\mathcal E}(\phi(t_1,\ldots,t_k))=\exp\bigg(\sum_{m=1}^\infty \frac 1m \mathcal \phi^{[m]}(t_1,\ldots,t_k)\bigg).
\end{align*}
\end{lemma}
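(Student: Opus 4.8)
The plan is to reduce the statement to the identity already essentially proved in Lemma~\ref{Leuler3}, by observing that $\mathcal E$ and the Euler characteristic operator $\mathbf E$ are formal inverses of each other: replacing $\dim L_{n_1\ldots n_k}$ by a generic coefficient $b_{n_1\ldots n_k}$ and $(1-t_1^{n_1}\cdots)$ by its reciprocal changes every sign in the computation of Lemma~\ref{Leuler3}. Concretely, I would start from the definition~\eqref{seriesUL},
\[
\mathcal E(\phi)=\prod_{n_1,\ldots,n_k\ge 0}\frac{1}{(1-t_1^{n_1}\cdots t_k^{n_k})^{b_{n_1\ldots n_k}}},
\]
take $\log$ of both sides, and expand $-\log(1-u)=\sum_{m\ge 1}u^m/m$ with $u=t_1^{n_1}\cdots t_k^{n_k}$. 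This is legitimate in the ring of formal power series because each factor with $(n_1,\ldots,n_k)\neq(0,\ldots,0)$ contributes only terms of total degree $\ge 1$, so for any fixed multidegree only finitely many factors matter; the factor at the origin is $1$ and may be ignored (or, in the one-variable case, the sum starts at $n=1$).

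The one-variable case is the model computation. Writing $\phi(t)=\sum_{n\ge 1}b_nt^n$, I would compute
\[
\log\mathcal E(\phi(t))
=-\sum_{n=1}^\infty b_n\log(1-t^n)
=\sum_{n=1}^\infty b_n\sum_{m=1}^\infty\frac{t^{mn}}{m}
=\sum_{m=1}^\infty\frac 1m\sum_{n=1}^\infty b_n t^{mn}
=\sum_{m=1}^\infty\frac 1m\,\phi^{[m]}(t),
\]
the interchange of the two summations being valid termwise in the formal-power-series topology for the same finiteness reason as above. Exponentiating gives the claimed formula. The $k$-variable case is identical after replacing $t^n$ by the monomial $t_1^{n_1}\cdots t_k^{n_k}$ and the single index $n$ by the multi-index $(n_1,\ldots,n_k)$; the convergence bookkeeping is unchanged because $\phi^{[m]}$ multiplies all exponents by $m$, hence raises total degree, so the outer sum over $m$ converges in the formal topology.

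There is no real obstacle here — the statement is essentially bookkeeping — so the only point that deserves care is the justification that all the manipulations (taking $\log$ of an infinite product, expanding each logarithm as a series, and swapping the order of summation) are valid in $K[[t_1,\ldots,t_k]]$. I would dispatch this once at the start by noting that every factor indexed by a nonzero multidegree has constant term $1$ and lowest-degree term of positive total degree, so the infinite product and all the resulting double sums are summable in the $\mathfrak{m}$-adic topology, where $\mathfrak{m}$ is the maximal ideal of power series with zero constant term; after that the computation is the displayed chain of equalities above. (Alternatively, one can cite that this is exactly the statement dual to Lemma~\ref{Leuler3} under $b\mapsto\dim L$ and reciprocation, together with Lemma~\ref{Leuler4}, but the direct computation is shorter and self-contained.)
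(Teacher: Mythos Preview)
Your proposal is correct and takes essentially the same approach as the paper: the paper's proof is the single line ``The computations are similar to that of Lemma~\ref{Leuler3} and using~\eqref{seriesUL},'' and what you have written is precisely that computation spelled out, with the sign flip coming from the reciprocal in~\eqref{seriesUL}. Your remarks on convergence in the formal power series topology are fine and more careful than the paper bothers to be.
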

\begin{proof} The computations are similar to that of Lemma~\ref{Leuler3} and using~\eqref{seriesUL}. 
\end{proof}

Now we get the following result, that we did not find in the literature.
\begin{theorem}\label{TE}
Let $L$ be an $\mathbb N_0^k{\setminus}\{(0,\ldots,0)\}$-graded Lie algebra. Then the Euler characteristic equals
\begin{equation}\label{formulaE}
{\mathbf E}(L,t_1,\ldots,t_k)=\frac 1{\mathcal E(\mathcal H(L,t_1,\ldots,t_k))}=\frac 1{\mathcal H(U(L),t_1,\ldots,t_k)}.
\end{equation}
\end{theorem}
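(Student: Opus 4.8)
The plan is to combine three facts already assembled in this section. First, Lemma~\ref{Leuler3} gives
\[
{\mathbf E}(L)=\exp\Bigl(-\sum_{m=1}^\infty \tfrac 1m \mathcal H^{[m]}(L)\Bigr),
\]
while Lemma~\ref{Leuler5} gives
\[
{\mathcal E}(\mathcal H(L))=\exp\Bigl(+\sum_{m=1}^\infty \tfrac 1m \mathcal H^{[m]}(L)\Bigr).
\]
These two expressions are reciprocals of one another in the ring of formal power series, so ${\mathbf E}(L)\cdot {\mathcal E}(\mathcal H(L))=1$, which is the first equality in~\eqref{formulaE}. Second, Lemma~\ref{Leuler4} identifies ${\mathcal E}(\mathcal H(L))$ with ${\mathcal H}(U(L))$, giving the second equality. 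So the theorem is essentially a one-line assembly once the three lemmas are in place.

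I would write it as follows. Both $\mathcal H(\Lambda^n(L))$ and the summands $\tfrac1m\mathcal H^{[m]}(L)$ consist of terms of total degree at least $m\ge 1$, so the infinite sums and products involved converge in the topology of formal power series and the manipulations are legitimate; I would note this once at the start (it is already observed after~\eqref{euler}). Then I would invoke Lemma~\ref{Leuler3} and Lemma~\ref{Leuler5} to see that ${\mathbf E}(L)$ and ${\mathcal E}(\mathcal H(L))$ are $\exp$ of the negative and positive of the same series, hence mutually inverse; then invoke Lemma~\ref{Leuler4} for the last identification. The $k$-variable case is identical with $t^m$ replaced by the dilatation $t_i\mapsto t_i^m$ throughout.

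An alternative, more self-contained route avoids the exponential form: by Lemma~\ref{Leuler2} the Euler characteristic is the product $\prod (1-t_1^{n_1}\cdots t_k^{n_k})^{\dim L_{n_1\ldots n_k}}$, whereas by the definition~\eqref{seriesUL} of $\mathcal E$ and Lemma~\ref{Leuler4} one has $\mathcal H(U(L))=\mathcal E(\mathcal H(L))=\prod (1-t_1^{n_1}\cdots t_k^{n_k})^{-\dim L_{n_1\ldots n_k}}$; these are visibly reciprocal term by term. I would probably present this product-form argument as the main proof since it is the most transparent, and mention the $\exp$-form as a remark.

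I do not expect a genuine obstacle here: all the work is in the preceding lemmas (the bijection between wedge monomials and product terms in Lemma~\ref{Leuler2}, and the log-expansion in Lemmas~\ref{Leuler3} and~\ref{Leuler5}). The only point requiring a word of care is convergence of the doubly-infinite product/sum in the formal power series topology, so that rearrangement and the reciprocal identity are valid; this is immediate from the grading being $\NO^k\setminus\{0\}$, i.e. every homogeneous component sits in positive total degree, so only finitely many factors affect any given coefficient. Hence the "hard part" is really just bookkeeping, and the theorem follows by stringing together Lemmas~\ref{Leuler2},~\ref{Leuler4} (or~\ref{Leuler3} and~\ref{Leuler5}).
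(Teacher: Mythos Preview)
Your proposal is correct and matches the paper's own proof, which simply reads ``Follows from Lemmas~\ref{Leuler3}, \ref{Leuler4}, and~\ref{Leuler5}.'' Your alternative product-form argument via Lemma~\ref{Leuler2} and the definition~\eqref{seriesUL} is also valid and arguably more transparent, but the paper does not spell it out.
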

\begin{proof}
Follows from Lemmas~\ref{Leuler3}, \ref{Leuler4}, and \ref{Leuler5}.
\end{proof}

We shall use the following particular case.
\begin{lemma}\label{Lsubsexp}
Let $L=\mathop{\oplus}\limits_{n=1}^N L_n$ be a finite dimensional $\N$-graded Lie algebra with the generating function $\mathcal H(L,t)=\sum\limits_{n=1}^N b_n t^n$, where $b_n=\dim L_n$.
Then its Euler characteristic is a polynomial of the form
$$ \mathbf E(L,t)=\prod_{n=1}^N (1-t^n)^{b_n}=(1-t)^{\dim L} q(t),$$ 
where $q(t)\in \Z[t]$,  $q(1)=\prod_{n=2}^N n^{b_n}\ne 0$. 
\end{lemma}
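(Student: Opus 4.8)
The plan is to compute $\mathbf E(L,t)$ directly from Lemma~\ref{Leuler2}, specialized to the $\N$-grading and a single variable. By Lemma~\ref{Leuler2} we have
$$
\mathbf E(L,t)=\sum_{n=0}^\infty(-1)^n\mathcal H(\Lambda^n(L),t)=\prod_{n=1}^N(1-t^n)^{b_n},
$$
where the product is finite because $L$ is finite dimensional (so $b_n=0$ for $n>N$). This is manifestly a polynomial in $\Z[t]$, which settles the first displayed equality. It remains to extract the factor $(1-t)^{\dim L}$ and identify the value $q(1)$.

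\textbf{Extracting the power of $(1-t)$.} For each $n\ge 1$ I would factor $1-t^n=(1-t)(1+t+\cdots+t^{n-1})$, so that
$$
\mathbf E(L,t)=\prod_{n=1}^N\big((1-t)(1+t+\cdots+t^{n-1})\big)^{b_n}
=(1-t)^{\sum_{n=1}^N b_n}\prod_{n=1}^N(1+t+\cdots+t^{n-1})^{b_n}.
$$
Since $\dim L=\sum_{n=1}^N b_n$, the first factor is $(1-t)^{\dim L}$, and I set $q(t):=\prod_{n=1}^N(1+t+\cdots+t^{n-1})^{b_n}$, which lies in $\Z[t]$ because each cyclotomic-type factor $1+t+\cdots+t^{n-1}$ does.

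\textbf{Evaluating $q(1)$.} Plugging in $t=1$ gives $1+1+\cdots+1=n$ for the factor indexed by $n$, hence
$$
q(1)=\prod_{n=1}^N n^{b_n}=\prod_{n=2}^N n^{b_n},
$$
the $n=1$ term contributing $1^{b_1}=1$. This is a product of positive integers, hence nonzero. (If one prefers, $b_1$ could be $0$ or positive; it is irrelevant to $q(1)$.) There is essentially no obstacle here: the only mild point is to notice that the product over $n$ really is finite, which is exactly the finite-dimensionality hypothesis, and that $q(1)\ne 0$ because every factor $n\ge 2$ is a positive integer and no factor vanishes. I would also remark that this is the polynomial shadow of the general formula $\mathbf E(L,t)=1/\mathcal H(U(L),t)$ of Theorem~\ref{TE}: in the finite dimensional case $\mathcal H(U(L),t)$ is a rational function whose numerator is $1$ and whose denominator, after the cancellations above, is exactly $(1-t)^{\dim L}q(t)$.
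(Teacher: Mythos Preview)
Your proof is correct and follows the same approach as the paper, which simply notes that the statement follows from Lemma~\ref{Leuler2} (and formula~\eqref{seriesUL}). You have spelled out the elementary factorization $1-t^n=(1-t)(1+t+\cdots+t^{n-1})$ and the evaluation at $t=1$ that the paper leaves implicit.
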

\begin{proof}
Follows from Lemma~\ref{Leuler2} and formula \eqref{seriesUL}.
\end{proof}

\subsection{Euler characteristic and homology of Lie algebras of subexponential growth}
A sequence $\{a_n\in\Z\mid n\ge 0 \}$ is said of {\it subexponential growth} iff $\limsup\limits_{n\to\infty}\sqrt[n]{|a_n|}=1$, 
the equivalent condition is  that the generating function $f(z):=\sum_{n=0}^\infty a_n z^n$, $z\in \C$, has the radius of convergence 1.

Now we generalize Lemma~\ref{Lsubsexp} to an infinite dimensional case.
The coefficients of a Euler series can be negative (see e.g. Fig.~\ref{Fig4}), nevertheless we have the following fact.
\begin{lemma}\label{LE}
Let $L$ be a finitely generated $\mathbb N_0^k{\setminus}\{(0,\ldots,0)\}$-graded Lie algebra of subexponential growth. Then
\begin{enumerate}
\item $1\ge \mathbf E(L,t_1,\ldots, t_k)>0$ for all\quad $0\le t_i<1$,\ $1\le i\le k$.
\item $\mathbf E(L,t_1,\ldots, t_k)$ is decreasing with respect to all variables $t_1,\ldots,t_k\in [0,1)$.
\item $\lim\limits_{t\to 1-0} \mathbf E(L,t)=0 $.
\item $\mathbf E(L,z_1,\ldots, z_k)$ converges absolutely at any point $(z_1,\ldots, z_k)$ where $z_i\in \C$, $|z_i|<1$, $1\le i\le k$.
\item $\mathbf E(L,z)$ converges absolutely at least in the open unit disc.
\end{enumerate}
\end{lemma}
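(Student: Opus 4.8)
The plan is to deduce everything from the product formula $\mathbf E(L,t_1,\ldots,t_k)=\prod_{n_1,\ldots,n_k\ge 0}(1-t_1^{n_1}\cdots t_k^{n_k})^{\dim L_{n_1\ldots n_k}}$ of Lemma~\ref{Leuler2}, combined with the reciprocal identity $\mathbf E(L)=1/\mathcal H(U(L))$ of Theorem~\ref{TE}. The key observation is that subexponential growth of $L$ forces subexponential growth of $U(L)$: by the PBW theorem the coefficients of $\mathcal H(U(L))$ are dominated by those of a product of geometric-type series built from $\mathcal H(L)$, so by Lemma~\ref{L1} (or directly, since $\mathcal E$ preserves subexponentiality — this is the content of~\cite{Pe96}) the series $\mathcal H(U(L),z)$ still has radius of convergence $1$. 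Hence $\mathcal H(U(L),z_1,\ldots,z_k)$ converges absolutely for $|z_i|<1$, and since all its coefficients are nonnegative integers (in fact $\ge 1$ in degree $0$), the sum is a real number $\ge 1$ on $[0,1)^k$, strictly increasing in each variable, and tending to $+\infty$ as any $t_i\to 1-0$ because $L$ is infinite dimensional. Taking reciprocals via~\eqref{formulaE} immediately yields claims (1), (2), (3), (4), (5).

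More concretely, here is the order I would carry out the steps. First I would record that $\mathcal H(L,t)$ has radius of convergence $1$ (this is the definition of subexponential growth applied to $\dim L_n$), and similarly the $k$-variable series converges absolutely on the open polydisc. Second, I would invoke Lemma~\ref{Leuler4}, $\mathcal H(U(L))=\mathcal E(\mathcal H(L))$, together with Lemma~\ref{Leuler5} writing $\mathcal E(\phi)=\exp(\sum_{m\ge 1}\tfrac1m\phi^{[m]})$; since $\phi^{[m]}(t)=\phi(t^m)$ and $\phi$ converges on $|t|<1$, the double series $\sum_{m\ge1}\tfrac1m\phi(t_1^m,\ldots,t_k^m)$ converges absolutely on the open polydisc, so $\mathcal H(U(L))$ does too. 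Third, on the real cube $[0,1)^k$: the coefficients $\dim U(L)_{n_1\ldots n_k}$ are nonnegative integers with the degree-$0$ term equal to $1$, so $\mathcal H(U(L),t_1,\ldots,t_k)\ge 1$, it is non-decreasing in each $t_i$, and as $t_i\to 1-0$ it diverges to $+\infty$ since $\sum \dim U(L)_{n_1\ldots n_k}=\infty$ (the enveloping algebra of an infinite dimensional Lie algebra is infinite dimensional). Fourth, applying $\mathbf E(L)=1/\mathcal H(U(L))$ from Theorem~\ref{TE} converts these into: $0<\mathbf E(L,t_1,\ldots,t_k)\le 1$ on $[0,1)^k$, $\mathbf E$ non-increasing in each variable, and $\lim_{t\to1-0}\mathbf E(L,t)=0$ — this is (1), (2), (3). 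Finally, (4) and (5) follow because $\mathbf E(L)=1/\mathcal H(U(L))$ is holomorphic wherever $\mathcal H(U(L))$ is holomorphic and nonzero, and on the open polydisc $\mathcal H(U(L))\ge 1>0$ in the real directions — but one must argue nonvanishing at complex points too.

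That last point is the part I expect to be the main obstacle: on the real interval $\mathcal H(U(L),t)\ge 1$ makes nonvanishing obvious, but at a complex point $z$ with $|z|<1$ one cannot a priori rule out $\mathcal H(U(L),z)=0$, which would create a pole of $\mathbf E$. The cleanest fix is \emph{not} to go through the reciprocal for the complex-analytic statements (4)–(5) but instead to prove absolute convergence of the Euler \emph{product} directly: from Lemma~\ref{Leuler2}, $\log|\mathbf E(L,z_1,\ldots,z_k)| = \sum \dim L_{n_1\ldots n_k}\,\mathrm{Re}\log(1-z_1^{n_1}\cdots z_k^{n_k})$, and since $|z_1^{n_1}\cdots z_k^{n_k}|\le \rho^{n_1+\cdots+n_k}$ for some $\rho<1$ on any compact subpolydisc, the estimate $|\log(1-w)|\le 2|w|$ for $|w|\le 1/2$ together with subexponential growth of $\sum_{n_1+\cdots+n_k=n}\dim L_{n_1\ldots n_k}$ makes this a convergent series; hence the product defines a holomorphic (in particular finite, nonzero) function on the open polydisc. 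This also re-proves convergence of the series form. So the structure is: claims (1)–(3) from the reciprocal identity and positivity/monotonicity of $\mathcal H(U(L))$ on reals; claims (4)–(5) from direct absolute convergence of the Euler product, with the subexponential growth hypothesis entering precisely to control $\sum_n (\sum_{|\alpha|=n}\dim L_\alpha)\rho^n$.
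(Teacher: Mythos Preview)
Your argument for claims (1)--(3) is essentially the paper's: both use the reciprocal identity $\mathbf E(L)=1/\mathcal H(U(L))$ of Theorem~\ref{TE}, note that subexponential growth of $L$ forces $\mathcal H(U(L),t)$ to have radius of convergence $1$, and then read off positivity, monotonicity, and the limit from nonnegativity of the coefficients of $\mathcal H(U(L))$. (One minor point: for claim~(3) you only need $L\ne 0$, since then $U(L)$ is already infinite dimensional; you need not assume $L$ infinite dimensional.)

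For claims (4)--(5) the two proofs genuinely diverge. The paper does not pass through the Euler product at all; instead it bounds the coefficients of $\mathbf E$ directly via the defining alternating sum~\eqref{euler}:
\[
\bigl|(\mathbf E(L))_{n_1\ldots n_k}\bigr|
\;\le\;\sum_{n\ge 0}\dim\Lambda^n_{n_1\ldots n_k}(L)
=\dim\Lambda_{n_1\ldots n_k}(L)
\;\le\;\dim U_{n_1\ldots n_k}(L),
\]
and then the absolute convergence of $\mathbf E$ on the open polydisc is immediate from that of $\mathcal H(U(L))$. This is shorter and avoids any discussion of complex zeros. Your route---showing that $\sum_\alpha \dim L_\alpha\,|z^\alpha|=\mathcal H(L,|z_1|,\ldots,|z_k|)<\infty$ makes the infinite product $\prod_\alpha(1-z^\alpha)^{\dim L_\alpha}$ converge absolutely to a nonvanishing holomorphic function, whose Taylor series is then $\mathbf E$---is correct and has the side benefit of giving $\mathbf E(L,z_1,\ldots,z_k)\ne 0$ on the whole open polydisc, which the paper's coefficient bound does not yield. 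The trade-off is that the paper's argument is a one-line inequality, while yours requires the extra step of identifying the product's Taylor expansion with the formal series $\mathbf E$ and invoking convergence of Taylor series of holomorphic functions on polydiscs.
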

\begin{proof} 1) Let $t:=\max\{t_i\mid 1\le i\le k\}$. Then $0\le t<1$.
The subexponential growth of $L$ implies a subexponential growth of its universal enveloping algebra $U(L)$,
which is equivalent to the fact that the radius of convergence of the series $\mathcal H(U(L),t)$ is equal to 1~\cite{Smi}. 
Since $\mathcal H(U(L),t_1,\ldots,t_k)$ has nonnegative coefficients, using Theorem~\ref{TE} we get
\begin{align*}
1= \mathcal H (U(L),0,\ldots, 0) &\le \mathcal H (U(L),t_1,\ldots, t_k)  \le \mathcal H (U(L),t,\ldots, t)=\mathcal H (U(L),t)<\infty;\\
1 \ge \mathbf E(L,t_1,\ldots,t_k) &=\frac {1}{\mathcal H (U(L),t_1,\ldots, t_k)}>0;\\
\lim\limits_{t\to 1-0} \mathbf E(L,t) &= \lim\limits_{t\to 1-0}  \frac 1{\mathcal H (U(L),t) }=\frac 1{\dim U(L)}=0;
\end{align*}
thus proving 1) and 3). Claim 2) follows from~\eqref{formulaE}.   

4) We compare coefficients at ${(n_1,\ldots,n_k)}$-components in~\eqref{euler}, where $n_1,\ldots,n_k\ge 0$, and get  inequalities
\begin{equation*}
|({\mathbf E}(L))_{n_1\ldots n_k}|\le \sum_{n=0}^\infty \dim \Lambda^n_{n_1\ldots n_k}(L) =\dim \Lambda_{n_1\ldots n_k}(L) \le \dim U_{n_1\ldots n_k}(L).
\end{equation*}
Consider $t:=\max\{|z_i|  \mid 1\le i\le k\}$, then $t<1$.
Then
\begin{equation*}
|\mathbf E(L,z_1,\ldots,z_k)|\le \mathcal H(U(L),|z_1|,\ldots,|z_k|)\le \mathcal H(U(L),t)<\infty.  
\end{equation*}
Thus we proved Claims 4,5).
\end{proof}

We establish the following infiniteness statement on homology.

\begin{theorem}\label{THsubexp}
Let $L$ be an infinite dimensional finitely generated $\mathbb N$-graded Lie algebra of subexponential growth. Then
\begin{enumerate}
\item the Euler characteristic $\mathbf E(L,z)$ satisfies estimates close to 1:
\begin{equation}
0<{\mathbf E}(L,t)\le \exp\bigg(\frac {-1/2}{1-t}\bigg),\qquad t\in (1-\epsilon,1). \label{up} 
\end{equation} 
\item the Euler characteristic $\mathbf E(L,z)$ has the radius of convergence one, where $z_0=1$ is a singularity;
\item the Euler characteristic $\mathbf E(L,z)$ is not a polynomial function;
\item
$\displaystyle \sum_{n=2}^\infty \dim H_n(L)=\infty$. 
\end{enumerate}
\end{theorem}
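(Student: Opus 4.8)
The plan is to deduce all four claims from Lemma~\ref{LE} and Theorem~\ref{TE}, exploiting the fact that for an infinite dimensional $L$ the enveloping algebra $U(L)$ is also infinite dimensional, so $\mathcal H(U(L),t)\to\infty$ as $t\to 1-0$. First I would establish the upper bound \eqref{up}. By Theorem~\ref{TE} we have $\mathbf E(L,t)=1/\mathcal H(U(L),t)$, so it suffices to produce a lower bound $\mathcal H(U(L),t)\ge \exp\bigl(\tfrac{1/2}{1-t}\bigr)$ for $t$ near $1$. Since $L$ is infinite dimensional and $\N$-graded, its universal enveloping algebra contains, by the PBW theorem, the polynomial algebra on infinitely many homogeneous generators; more precisely, for any $M$ there are at least $M$ linearly independent homogeneous elements of $L$, all of degree $\le m(M)$ for some $m(M)$, and the products of distinct subsets of these contribute a factor $\ge \prod (1+t^{d_i})\ge (1+t^{m(M)})$ raised to a power growing with $M$. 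Taking logarithms, $\ln\mathcal H(U(L),t)\ge \sum_{n} \dim L_n\,\ln\!\frac1{1-t^n}$; bounding $\ln\frac1{1-t^n}\ge \ln\frac1{1-t}$ for each of the (infinitely many, since $\dim L=\infty$) basis elements and using that the number of basis elements of degree $\le N$ tends to infinity, one gets $\ln\mathcal H(U(L),t)\to\infty$ faster than $\frac{1/2}{1-t}$. Actually the cleanest route is: fix $N$ with $\sum_{n=1}^N \dim L_n =: D_N$; then $\mathcal H(U(L),t)\ge \prod_{n=1}^N (1-t^n)^{-\dim L_n}\ge (1-t^N)^{-D_N}\ge (1-t)^{-D_N}$, hence $\ln \mathbf E(L,t)\le -D_N\ln\frac1{1-t}$, and since $D_N\to\infty$ this beats $\frac{1/2}{1-t}$ once $t$ is close enough to $1$; I would present the bound in the stated form by choosing $N$ depending on $t$. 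This is the step I expect to require the most care, since one must make the dependence of $N$ on $t$ uniform enough to produce the clean exponential bound; the honest statement might need $\epsilon$ depending on nothing, which follows because $-D_N\ln\frac1{1-t}\le -\frac{1/2}{1-t}$ whenever $1-t\le e^{-1/(2D_N)}$, i.e. one just takes $N$ large and $\epsilon$ correspondingly small.

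For Claim (2), the radius of convergence of $\mathbf E(L,z)$ is at least $1$ by Lemma~\ref{LE}(5). It cannot exceed $1$: if it were $>1$, then $\mathbf E(L,z)=1/\mathcal H(U(L),z)$ would be analytic and nonzero in a neighbourhood of $z=1$ (by Claim (1) $\mathbf E(L,t)>0$ on $(0,1)$, and $\mathbf E(L,1)=0$ by continuity from Claim (1) combined with Lemma~\ref{LE}(3)), contradicting $\mathbf E(L,1)=0$ unless $1$ is a zero of finite order; but \eqref{up} shows $\mathbf E(L,t)$ decays faster than any polynomial in $(1-t)$ as $t\to 1$, so $z=1$ is not a zero of finite order of an analytic function, forcing $z=1$ to be a singularity. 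Hence the radius of convergence is exactly $1$ and $z_0=1$ is a singular point.

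Claim (3) is then immediate: a polynomial is entire, but $\mathbf E(L,z)$ has a singularity at $z=1$ by Claim (2); equivalently, the bound \eqref{up} shows $\mathbf E(L,t)$ is positive on $(1-\epsilon,1)$ but tends to $0$ with all derivatives vanishing in a way incompatible with being a nonzero polynomial, while it is not the zero polynomial since $\mathbf E(L,0)=1$. Finally, for Claim (4), write the Euler characteristic via homology using \eqref{euler}: $\mathbf E(L,t)=\sum_{n\ge 0}(-1)^n\mathcal H(H_n(L),t)$, with $\mathcal H(H_0(L),t)=1$ and $\mathcal H(H_1(L),t)=\mathcal H(L/[L,L],t)$ a polynomial since $L$ is finitely generated (so $L/[L,L]$ is finite dimensional). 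If $\sum_{n\ge 2}\dim H_n(L)$ were finite, then every $H_n(L)$ would be finite dimensional and only finitely many nonzero, so $\sum_{n\ge 2}(-1)^n\mathcal H(H_n(L),t)$ would be a polynomial; adding the polynomial $1-\mathcal H(H_1(L),t)$ would make $\mathbf E(L,t)$ a polynomial, contradicting Claim (3). Therefore $\sum_{n=2}^\infty \dim H_n(L)=\infty$, completing the proof.
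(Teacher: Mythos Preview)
Your arguments for Claims~(2)--(4) are essentially the same as the paper's and are correct: once one knows $\mathbf E(L,t)$ decays at $t\to 1^-$ faster than any power of $(1-t)$, the singularity at $z_0=1$ follows, hence the radius of convergence is exactly~$1$, $\mathbf E$ is not a polynomial, and the homology sum diverges. The paper argues identically, assuming an analytic continuation $g(z)=(1-z)^m g_0(z)$ with $g_0(1)\ne 0$ and deriving a contradiction from the decay rate.

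The genuine gap is in Claim~(1). Your chain
\[
\mathcal H(U(L),t)\ge \prod_{n=1}^N (1-t^n)^{-\dim L_n}\ge (1-t^N)^{-D_N}\ge (1-t)^{-D_N}
\]
fails at the last step: for $0<t<1$ and $N\ge 2$ one has $1-t^N>1-t$, so the inequality points the other way. More importantly, even the correct bound $\mathcal H(U(L),t)\ge (1-t^N)^{-D_N}$ only yields $\ln\mathbf E(L,t)\le -D_N\ln\frac{1}{1-t^N}$, which is of order $D_N\ln\frac{1}{1-t}$ as $t\to 1$. This is \emph{logarithmic} in $\frac{1}{1-t}$, not linear, so no choice of $N$ (even depending on $t$) produces the bound $-\frac{1/2}{1-t}$: your asserted inequality $D_N\ln\frac{1}{1-t}\ge \frac{1/2}{1-t}$ fails for $t$ close to~$1$ for every fixed $D_N$, and letting $N$ grow does not help because the implicit constants deteriorate. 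In short, truncating the PBW product at degree~$N$ can never see exponential decay.

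The paper obtains \eqref{up} differently, via the exponential formula of Lemma~\ref{Leuler3}:
\[
\mathbf E(L,t)=\exp\Bigl(-\sum_{m\ge 1}\tfrac1m\,\mathcal H(L,t^m)\Bigr)\le \exp\bigl(-\mathcal H(L,t)\bigr),
\]
and then bounds $\mathcal H(L,t)\ge \sum_{n\ge 1} t^n=\frac{t}{1-t}\ge \frac{1/2}{1-t}$ for $t$ near~$1$, using that each graded piece $L_n$ is nonzero. The point is that one must keep the full infinite series $\mathcal H(L,t)$ (or equivalently the full infinite product for $U(L)$) to extract the $\frac{1}{1-t}$ behaviour; any finite truncation loses it. Note that your weaker conclusion ``$\mathbf E(L,t)=O((1-t)^D)$ for every $D$'' is already sufficient for Claims~(2)--(4), so those parts of your proof survive; only the specific exponential estimate requires the paper's approach.
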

\begin{proof} By Claim 5) of Lemma~\ref{LE}, $f(z):=\mathbf E(L,z)$ converges at least in the disk $D:=\{z\in\C\mid |z|<1\}$.
Consider the border $\partial D:=\{z\in \C\mid |z|=1\}$.
Recall that a point $z_0\in \partial D$ is called {\it regular} provided that there exists an {\it analytic continuation} in a vicinity of $z_0$,
namely, there exists $\epsilon>0$ and analytic function $g(z)$ in $D_1:=\{z\in \C\mid |z-z_0|<\epsilon\}$ such that $f(z)\equiv g(z)|_{D\cap D_1}$. 
Otherwise $z_0\in \partial D$ is called a {\it singularity}.
Let us show that the point $z_0=1$ is a singularity.
By  way of contradiction, assume that there exists an analytic function $g(z)$  in $D_1:=\{z\in \C\mid |z-1|<\epsilon\}$, $0<\epsilon<1/2$, such that $f(z)\equiv g(z)|_{D\cap D_1}$. 
Let $z_0=1$ be a zero of $g(z)$ of order $m$, where $m\ge0$. Thus, we have $g(z)=(1-z)^m g_0(z)$, where $g_0(z)$ is analytic in $D_1$ and $g_0(1)=C\ne 0$.
Consider both functions on the real segment $t\in (1-\epsilon,1)$.
By Claim 1) of Lemma~\ref{LE}, $(1-t)^m g_0(t)=f(t)>0$ for $t\in (1-\epsilon,1)$, hence $C=g_0(1)=\lim\limits_{t\to 1-0} f(t)(1-t)^{-m}>0$.
We use Lemma~\ref{Leuler3} and that all $\N$-components of $L$ are at least one dimensional
\begin{multline*}
f(t)={\mathbf E}(L,t)=\exp\bigg(-\sum_{n=1}^\infty \frac 1n \mathcal H(L, t^n)\bigg)\le \exp\big(-\mathcal H(L, t)\big)\\
\le \exp\bigg(-\sum_{n=1}^\infty t^n\bigg)= \exp\bigg(-\frac t{1-t}\bigg)\le \exp\bigg(\frac {-1/2}{1-t}\bigg),\qquad t\in (1-\epsilon,1). 
\end{multline*}
Claim 1 is proved.
Now we see that $f(z)$ tends to zero too fast while $t\to 1-0$. Formally, using estimate~\eqref{up} we have
\begin{align*}
0&<C=g_0(1)=\lim_{z\to 1}\frac {g(z)}{(1-z)^m} =\lim_{t\to 1-0}\frac {f(t)}{(1-t)^m} \\  
& \le \lim_{t\to 1-0} \frac 1{(1-t)^m}\exp\bigg(-\frac {1/2}{1-t}\bigg)= \Big|_{y:=1/(1-t)}
 =\lim_{y\to +\infty} \frac{y^m} {\exp(y/2)}=0,
\end{align*}
a contradiction. Hence, $z_0=1$ is a singularity on $\partial D$ and we conclude that $\mathbf E(L,z)$ has the radius of convergence one, Claim 2 is proved.
Since a polynomial function is an entire function, $\mathbf E(L,z)$ has infinitely many nonzero coefficients, thus proving Claim 3.

Let us prove Claim 4. We know that $\dim H_1(L)$ is equal to the number of generators, so $\dim H_1(L)<\infty$.
By way of contradiction, assume that $\sum_{n=2}^\infty \dim H_n(L)$ is a finite number. 
Looking at the right hand side of equation~\eqref{euler} we conclude that $\mathbf E(L,z)$ is a polynomial, a contradiction.
Theorem is proved.
\end{proof}

\begin{corollary}
Let $L$ be one of the simple infinite dimensional Cartan type Lie algebras $\mathbf W_n$, $\mathbf S_n$, $\mathbf H_n$, $\mathbf K_n$, where $n\ge 1$,
over a field of characteristic zero. Then
$$
\displaystyle \sum_{n=3}^\infty \dim H_n(L)=\infty. 
$$ 
\end{corollary}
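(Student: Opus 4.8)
The plan is to deduce the corollary from Claim~4 of Theorem~\ref{THsubexp}, after reducing the range of summation. First I would record the standard facts about the simple Lie algebras of polynomial vector fields $\mathbf W_n,\mathbf S_n,\mathbf H_n,\mathbf K_n$ over a field of characteristic zero: each is infinite dimensional, finitely generated as a Lie algebra, and carries the natural grading in which $x^{\alpha}\partial_i$ has degree $|\alpha|-1$ (degree $|\alpha|-2$ in the contact case); all homogeneous components are finite dimensional, the degree-$k$ component has dimension $O(k^{n-1})$, so the growth is polynomial, in particular subexponential. Since each such $L$ is simple it is perfect, hence $H_1(L)=L/[L,L]=0$; and it is classical that $\dim H_2(L)<\infty$ (equivalently, $L$ has only finitely many isomorphism classes of central extensions). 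Thus $\sum_{m\ge 3}\dim H_m(L)$ differs from $\sum_{m\ge 2}\dim H_m(L)$ by the finite number $\dim H_2(L)$, and it suffices to prove $\sum_{m\ge 2}\dim H_m(L)=\infty$.

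The delicate point is that Theorem~\ref{THsubexp} is stated for $\N$-graded algebras, whereas the natural grading above is $\Z$-valued, bounded below, but has a nonzero degree-zero component — a copy of $\mathfrak{gl}_n$ spanned by the fields $x_i\partial_j$ — and, for $\mathbf W_n$, also a degree $-1$ component. This cannot be avoided: the Euler fields $x_i\partial_i$ are forced into degree $0$ under any Lie grading, so there is no direction in which these algebras are positively graded. In particular the single-variable Euler product of Lemma~\ref{Leuler2} acquires the vanishing factor $1-t^{0}$ and the singularity argument of Theorem~\ref{THsubexp} at $t=1$ is not available verbatim. The plan is therefore to apply Theorem~\ref{THsubexp} not to $L$ but to its infinite-dimensional positively graded subalgebra $L_{+}=\bigoplus_{k\ge 1}L_k$, which is genuinely $\N$-graded, finitely generated (by $L_1\cup L_2$) and of polynomial growth; Claim~4 then gives $\sum_{m\ge 2}\dim H_m(L_{+})=\infty$.

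The main obstacle is the transfer of this infiniteness from $L_{+}$ back to $L$. Since $L$ is simple, $L_{+}$ is not an ideal of $L$, but it is an ideal of the subalgebra $L_{\ge 0}=\mathfrak{gl}_n\ltimes L_{+}$, and $L_{\le 0}=\mathbf W_n^{-}\oplus\mathfrak{gl}_n$ is a finite-dimensional subalgebra. I would run the Hochschild--Serre spectral sequences for the extension $0\to L_{+}\to L_{\ge 0}\to\mathfrak{gl}_n\to 0$ and for the pair $(L,L_{\ge 0})$ (equivalently, a Fuks-type spectral sequence relative to the finite-dimensional reductive part), keeping careful track of the grading, since the centre of $\mathfrak{gl}_n$ acts on $H_q(L_{+})$ through the degree operator; the task is to check that infinitely many of the homology classes produced by Theorem~\ref{THsubexp} for $L_{+}$ survive to $H_\ast(L)$ rather than being annihilated by passing to $\mathfrak{gl}_n$-invariants. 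I expect this bookkeeping — reconciling the infiniteness statement for $L_{+}$ with the fact that the Euler-characteristic machinery of Section~\ref{SHsubexp} degenerates for $L$ itself — to be the real content of the proof; granting it, one obtains $\sum_{m\ge 2}\dim H_m(L)=\infty$, and then $\sum_{m\ge 3}\dim H_m(L)=\infty$ after removing the finite contribution of $H_2(L)$.
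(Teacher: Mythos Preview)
Your high-level plan coincides with the paper's: invoke Theorem~\ref{THsubexp} to get $\sum_{m\ge 2}\dim H_m(L)=\infty$, then remove the finite contribution of $H_2$. The paper's entire proof is one sentence: ``We apply Theorem~\ref{THsubexp} and use that $\dim H_2(L)<\infty$, in particular these Lie algebras are finitely presented, see~\cite{LeiPol}.'' In particular the paper invokes finite presentation (not merely finiteness of central extensions) for $\dim H_2<\infty$, and it does \emph{not} address the grading issue you raise --- it applies Theorem~\ref{THsubexp} to $L$ directly.

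Your observation that Theorem~\ref{THsubexp} is stated for $\N$-graded algebras, while the standard grading on $\mathbf W_n,\mathbf S_n,\mathbf H_n,\mathbf K_n$ has a nontrivial degree-$0$ part (so the Euler product~\eqref{denominator} picks up the factor $(1-t^0)^{\dim L_0}=0$ and the analytic argument collapses), is correct. You have been more scrupulous than the paper on this point; the paper simply glosses over it.

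However, your proposed repair does not work as written. Applying Theorem~\ref{THsubexp} to $L_+$ is fine, but the Hochschild--Serre transfer breaks down rather than merely requiring ``bookkeeping''. For the ideal $L_+\triangleleft L_{\ge 0}$ with quotient $\mathfrak{gl}_n$, the Euler element acts on the degree-$k$ part of $H_q(L_+)$ by $k$; in characteristic zero this forces $H_p(\mathfrak{gl}_n,H_q(L_+)_k)=0$ for every $k\ne 0$, while $H_q(L_+)_0=0$ for $q\ge 1$ since $L_+$ sits in strictly positive degrees. Thus $E^2_{p,q}=0$ for $q\ge 1$ and one finds $H_\ast(L_{\ge 0})\cong H_\ast(\mathfrak{gl}_n)$, which is finite dimensional: all the classes you produced in $H_\ast(L_+)$ are annihilated, not just some. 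The second step is also problematic, since $L_{\ge 0}$ is not an ideal of the simple algebra $L$, so there is no Hochschild--Serre sequence for the pair $(L,L_{\ge 0})$. The phrase ``granting it'' hides a step that is in fact false, not just unchecked. If you want to rescue the corollary along these lines you would need a genuinely different argument connecting $H_\ast(L_+)$ and $H_\ast(L)$, or an extension of the Euler-characteristic machinery of Section~\ref{SHsubexp} to gradings with a finite-dimensional nonpositive part.
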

\begin{proof}
We apply Theorem~\ref{THsubexp} and use that $\dim H_2(L)< \infty$, in particular these Lie algebras are finitely presented, see~\cite{LeiPol}. 
\end{proof}

\subsection{Euler characteristic and Homology groups of Fibonacci Lie algebra and their positions on plane}
Now we return to the Fibonacci Lie algebra $\mathcal L=\Lie(v_1,v_2)$, where $p=2$.
Clearly,  $H_1(\mathcal L)\cong \langle v_1,v_2\rangle_K$.
Now we present results on homology groups of $\mathcal L$ following from our results above.

\begin{theorem}\label{Thomology}
Let $p=2$, $\mathcal L=\Lie(v_1,v_2)$ the Fibonacci Lie algebra, and $W$ its basis~\eqref{standard}.
Let $u=w_1\wedge \cdots \wedge w_n\in\Lambda^n(\mathcal L)=C_n(\mathcal L)$, where $w_i\in W$ (or consider that $u\in H_n(\mathcal L)$), $n\ge 1$. Then
\begin{enumerate}
\item
We have a natural extension of the weight functions onto the spaces of chains and homology groups:
$$\Wt(u):=\Wt w_1+\cdots +\Wt w_n\in\R^2.$$
\item One has a natural extension of the multidegree
$$ \Gr(u):=\Gr w_1+\cdots+ \Gr w_n\in\Z^2.$$
\item We get $\Z^2$-gradings  of the chain spaces and homology groups, e.g.
$$H_n(\mathcal L)=\mathop{\oplus}\limits_{(a,b)\in \Z^2} H_{n,(a,b)} (\mathcal L),\qquad n\ge 1,$$
where $H_{n,(a,b)} (\mathcal L)$ is spanned by elements of  $H_n(\mathcal L)$ of the multidegree $(a,b)\in\Z^2$.
\item Assume that $\{w_1,\ldots, w_n\}\subset W_{\le m}$ and at least one $w_i$ belongs to $W_m$, where $m\ge 1$. Then
$$ \lambda^{m-1} <\wt u\le n \lambda^m. $$
\item Consider the multidegree coordinates $\Gr(u)=(x,y)\in\Z^2$.
Then respective points for $H_n(\mathcal L)$ (or $C_n(\mathcal L)$) belong to the strip in terms of  the superweight or multidegree coordinates
\begin{align*}
-\lambda n <\eta&=\swt u< n;\\
-\lambda^3n +\lambda x& < y <\lambda x+\lambda^2n.
\end{align*}
\item Upper bounds on the growth  (defined via the $\N$-grading) of homology groups hold
$$\GKdim H_n(\mathcal L)\le n\log_\lambda 2.$$ 
\item We have infiniteness statement
$$
\sum_{n=2}^\infty \dim H_n(\mathcal L)=\infty.
$$
\end{enumerate}
\end{theorem}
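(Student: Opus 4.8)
The unifying observation behind all seven claims is that the four functions $\wt$, $\swt$, $\Wt$ and $\Gr$ are additive on Lie products of homogeneous elements (this is the content of the $\Z^2$-grading in Lemma~\ref{Lhomogeneous}), and that the Chevalley--Eilenberg differential $d_n$ is assembled entirely out of such products. Indeed $d_n$ sends a basis wedge $w_1\wedge\cdots\wedge w_n$ to a signed sum of terms $[w_s,w_t]\wedge w_1\wedge\cdots\widehat{w_s}\cdots\widehat{w_t}\cdots\wedge w_n$, and since $\Gr[w_s,w_t]=\Gr w_s+\Gr w_t$ (and likewise for $\Wt$), every such term carries the same total multidegree $\Gr w_1+\cdots+\Gr w_n$ as the original wedge. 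Thus extending $\Gr$ and $\Wt$ additively to $C_n(\mathcal L)=\Lambda^n(\mathcal L)$, as stated in Claims (1) and (2), is well defined on the homogeneous basis of wedges of standard monomials~\eqref{standard}, and each $d_n$ is homogeneous of degree zero for the resulting $\Z^2$-grading. Consequently $\Ker d_n$ and $\Imm d_{n+1}$ are graded subspaces, so $H_n(\mathcal L)=\Ker d_n/\Imm d_{n+1}$ inherits the $\Z^2$-grading, which is Claim (3); passing to the total degree $a+b$ yields the companion $\N$-grading used later.

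For Claims (4) and (5) the plan is to sum the single-monomial estimates already established. For the weight bounds, every factor $w_i$ lies in some $W_{\ell_i}$ with $\ell_i\le m$, so Lemma~\ref{LwtW} gives $0<\wt w_i\le\lambda^{\ell_i}\le\lambda^m$; hence $\wt u=\sum_i\wt w_i\le n\lambda^m$, while the factor $w_{i_0}\in W_m$ contributes $\wt w_{i_0}>\lambda^{m-1}$ and the remaining summands are positive, giving $\wt u>\lambda^{m-1}$. For the strip of Claim (5), Lemma~\ref{Leta} (whose bound covers all standard monomials) yields $-\lambda<\swt w_i<1$, and summing $n$ such terms gives $-\lambda n<\swt u<n$. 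Substituting the identity $\swt u=-x/\lambda+y/\lambda^2$ from~\eqref{newcoord} with $(x,y)=\Gr(u)$, multiplying through by $\lambda^2>0$ and rearranging produces $\lambda x-\lambda^3 n<y<\lambda x+\lambda^2 n$, exactly the claimed multidegree strip.

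For the growth bound of Claim (6) I would use that $H_n(\mathcal L)$ is a subquotient of the $\N$-graded space $C_n(\mathcal L)=\Lambda^n(\mathcal L)$, so its growth function by total degree is dominated termwise and its $\GKdim$ is bounded by that of $C_n(\mathcal L)$. Every basis wedge of total degree at most $N$ is a choice of $n$ distinct standard monomials each of length at most $N$, so $\dim\bigl(\Lambda^n\mathcal L\bigr)_{\le N}\le\binom{\gamma_{\mathcal L}(N)}{n}\le\gamma_{\mathcal L}(N)^n/n!$. By Theorem~\ref{TgrowthP} and Corollary~\ref{Cbounds} we have $\gamma_{\mathcal L}(N)=O\bigl(N^{\log_\lambda 2}\bigr)$, whence this dimension is $O\bigl(N^{\,n\log_\lambda 2}\bigr)$ and $\GKdim\Lambda^n(\mathcal L)\le n\log_\lambda 2$; monotonicity under subquotients then gives $\GKdim H_n(\mathcal L)\le n\log_\lambda 2$. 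Finally, Claim (7) is an immediate application of Theorem~\ref{THsubexp}(4): the Lie algebra $\mathcal L$ is infinite dimensional, finitely generated by $v_1,v_2$, $\N$-graded by total degree with generators in degree one, and of polynomial (hence subexponential) growth by Theorem~\ref{TgrowthP}, so $\sum_{n=2}^\infty\dim H_n(\mathcal L)=\infty$.

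The conceptual heart of the argument is the grading-compatibility of Claims (1)--(3), since everything downstream rests on it, but that step is essentially formal once additivity of $\Gr$ and $\Wt$ on brackets is invoked. The step that will need the most care is the growth estimate of Claim (6): one must justify the combinatorial count of wedges, the passage from $\gamma_{\mathcal L}$ to the $\binom{\cdot}{n}$ bound, and the monotonicity of $\GKdim$ under the subquotient $H_n=\Ker d_n/\Imm d_{n+1}$. The deepest input, the infiniteness of total homology in Claim (7), requires no new work here, as it is furnished directly by the already-proved Theorem~\ref{THsubexp}.
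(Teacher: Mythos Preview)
Your proposal is correct and follows essentially the same approach as the paper, which disposes of Claims (1)--(5) by invoking additivity of the weight functions, compatibility of the differentials with the grading, and the single-monomial estimates of Lemma~\ref{LwtW} and Lemma~\ref{Leta} (via Corollary~\ref{CstWn}) together with the coordinate change~\eqref{newcoord}, and obtains Claims (6) and (7) from Theorem~\ref{TgrowthP} and Theorem~\ref{THsubexp}. The paper's proof is only a few lines long; you have simply filled in the details it leaves implicit, including the combinatorial bound $\dim(\Lambda^n\mathcal L)_{\le N}\le \binom{\gamma_{\mathcal L}(N)}{n}$ for Claim~(6).
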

\begin{proof}
We use that the weight functions are additive and differentials keep them and proceed as above
using estimates of Lemma~\ref{LwtW}, Corollary~\ref{CstWn}, and the relation between two systems of coordinates~\eqref{newcoord}.
The last two claims  follow from Theorem~\ref{TgrowthP} and Theorem~\ref{THsubexp}.
\end{proof}

\begin{theorem}\label{Tparaboloid}
  Let $\mathcal L$ be the Fibonacci Lie algebra, $\ch K=2$. 
  The points for nonzero monomials of the Eulerian characteristic $\mathbf E(\mathcal L,x,y)$
  and nonzero monomials of the series for the homology groups $\{H_n(\mathcal L)\mid  n\ge 1\}$
  are bounded by a parabola-like curve on plane, which in terms of the weight coordinates $(\xi,\eta)$ is as follows (shown on Fig.~\ref{Fig4}):
\begin{equation}
|\eta|< C \xi^{\theta},\qquad \theta:=\log_{1+\sqrt 5}2\approx 0,5902.
\end{equation}
\end{theorem}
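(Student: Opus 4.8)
The plan is to reduce everything to a statement about the generating series $\mathcal H(\mathcal L,x,y)$ and its homogeneous components, then import the geometric ``strip'' information from Theorem~\ref{Tstrip} and Corollary~\ref{CstWn}. First I would note that, by Theorem~\ref{TE}, we have the identity $\mathbf E(\mathcal L,x,y)=1/\mathcal H(U(\mathcal L),x,y)=1/\mathcal E(\mathcal H(\mathcal L,x,y))$, and by~\eqref{euler} the same series equals $\sum_{n\ge 0}(-1)^n\mathcal H(H_n(\mathcal L),x,y)$. Consequently a monomial $x^ay^b$ occurs (with nonzero coefficient) in $\mathbf E(\mathcal L,x,y)$ or in any $\mathcal H(H_n(\mathcal L))$ only if the lattice point $(a,b)$ lies in the set generated additively by multidegrees of basis monomials of $\mathcal L$, i.e. in the $\mathbb N_0$-span $S:=\mathbb N_0\cdot\{\Gr(w)\mid w\in W\}$. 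So it suffices to show that $S$, placed on plane via the weight coordinates~\eqref{newcoord}, lies inside a region $|\eta|<C\xi^{\theta}$.

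Next I would describe the region occupied by a single homogeneous chain $u=w_1\wedge\cdots\wedge w_n$, which is exactly what is needed since every monomial of $\mathbf E$ or of $\mathcal H(H_n(\mathcal L))$ of homological degree $n$ has multidegree $\Gr(w_1)+\cdots+\Gr(w_n)$ for some $w_i\in W$ (with all $w_i$ distinct, but that only helps). By additivity of $\Wt$, if each $w_i\in W_{m_i}$ then $\wt u=\sum_i\wt w_i$ and $\swt u=\sum_i\swt w_i$. Using Corollary~\ref{CstWn} (equivalently Lemma~\ref{LwtW} and Lemma~\ref{Leta}) we have $\lambda^{m_i-1}<\wt w_i\le\lambda^{m_i}$ and $-\lambda<\swt w_i<1$, hence
\begin{equation*}
|\swt u|<\lambda n,\qquad \wt u>\tfrac1\lambda\sum_i\lambda^{m_i}\ge \tfrac1\lambda\,n\,\lambda^{m_{\min}},
\end{equation*}
but the bound I actually want is in terms of $n$ alone divided out against $\wt u$. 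The key combinatorial input is that the number of basis monomials of $\mathcal L$ of weight at most $\xi$ grows like $\xi^{\log_\lambda2}$ (Corollary~\ref{Cbounds}); therefore to accommodate $n$ distinct wedge factors $w_1,\dots,w_n$ one needs $\wt u\ge\sum_{i}\wt w_i$ at least of order $n\cdot (\text{smallest available weights})$, and since the $i$th smallest available weight is of order $\lambda^{i\log_2\lambda}$-ish — more precisely, if the $n$ lightest standard monomials are used, their total weight is $\Theta(n^{1+1/\log_\lambda 2})=\Theta(n^{1/\theta})$ because $|W_{\le m}|=\Theta(2^m)$ and $\wt(W_m)=\Theta(\lambda^m)$. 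Hence $\wt u\ge c\,n^{1/\theta}$, equivalently $n\le C\,(\wt u)^{\theta}$. Combining with $|\swt u|<\lambda n$ gives $|\eta|=|\swt u|<\lambda n\le \lambda C\,\xi^{\theta}$, which is the claimed parabola-like bound (after renaming constants).

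Finally I would assemble the two halves: every nonzero monomial of $\mathbf E(\mathcal L,x,y)$ arises, via~\eqref{euler}, as a monomial of some $\mathcal H(H_n(\mathcal L),x,y)$, hence of some $\mathcal H(\Lambda^n(\mathcal L),x,y)$, hence has the form $x^ay^b$ with $(a,b)=\Gr(u)$ for a homogeneous chain $u$ of homological degree $n$; the estimate just derived shows its weight coordinates satisfy $|\eta|<C\xi^{\theta}$. The same argument applied directly to $H_n(\mathcal L)\subset\Lambda^n(\mathcal L)/\Imm d_{n+1}$ (the differentials preserve $\Wt$ and $\Gr$, so homology classes inherit the multidegree of their representing chains) gives the statement for the homology series.

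\textbf{Main obstacle.} The delicate point is the inequality $\wt u\ge c\,n^{1/\theta}$, i.e. that $n$ \emph{distinct} basis monomials cannot all be light simultaneously. This is where the growth estimate $\tilde\gamma_{\mathcal L}(\xi)=\Theta(\xi^{\log_\lambda 2})$ of Theorem~\ref{TgrowthP} and Corollary~\ref{Cbounds} is essential: it forces that the $n$ lightest monomials already have total weight of order $n^{1+1/\log_\lambda 2}=n^{1/\theta}$, using $1/\theta=1+1/\log_\lambda2$ and $\theta=\lambda/(\lambda+1)=\log_{1+\sqrt5}2$. I expect the bookkeeping of constants, and the reconciliation of the one-variable growth function with the two-variable $\Gr$-coordinates via~\eqref{newcoord}, to be the part that needs care; the orthogonality of the $\xi,\eta$ axes and the fact that $\wt$ is (up to the linear change~\eqref{newcoord}) one of the coordinate functions keeps this manageable.
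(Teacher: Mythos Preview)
Your argument is correct, and the combinatorial core---that $n$ distinct standard monomials must have total weight at least $c\,n^{1/\theta}$, which together with the strip bound $|\swt u|<\lambda n$ yields $|\eta|<C\xi^{\theta}$---is exactly the right mechanism. Your computation $1+1/\log_\lambda 2=1/\theta$ is correct, and the use of the upper bound on $\tilde\gamma_{\mathcal L}$ from Corollary~\ref{Cbounds} to force the $k$th smallest weight to be at least of order $k^{1/\log_\lambda 2}$ is the right way to make the ``$n$ lightest monomials'' heuristic rigorous.

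The paper's proof, however, takes a much shorter route: it simply observes that the basis monomials $w_{i_1}\wedge\cdots\wedge w_{i_n}$ of $\Lambda^n(\mathcal L)$ (with $w_{i_j}\in W$ distinct) are, as multidegree-carrying objects, a subset of the PBW-monomials of the restricted enveloping algebra $u(\mathbf L)$ (whose basis $\tilde W\supset W$ is only slightly larger), and then invokes Theorem~\ref{TpropA}, Claim~5, which already asserts the parabola-like bound $|\eta|<C\xi^{\theta}$ for $u(\mathbf L)$. That result is quoted from \cite[Theorem~5.1]{PeSh09}, so the paper is outsourcing precisely the growth-and-distinctness estimate that you carry out by hand. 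What you have written is, in effect, a self-contained reproof of the relevant part of Theorem~\ref{TpropA}(5) specialized to the exterior algebra; the advantage of your route is that it stays inside the present paper and makes the role of $\theta=\log_{1+\sqrt5}2$ transparent, while the paper's route is a two-line reduction to an already-established black box.
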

\begin{proof}
Monomials for the exterior powers
$\Lambda^n(\mathcal L)$ are $w_{i_1}\wedge\cdots \wedge w_{i_n}$, where, $w_{i_1}<\cdots < w_{i_n}$, and
$w_{i_j}\in W$ are basis elements~\eqref{standard} of $\mathcal L$.
The basis of $\mathbf u(\mathbf L)$ is "a little bigger", because the ordered PBW-monomials are written
via a little bigger basis $\tilde W$~\eqref{basisLL} of the $\mathbf L$.
The result follows from Theorem~\ref{TpropA}, Claim 5.
\end{proof}

Using asymptotic of the universal enveloping algebra described by Theorem~\ref{TgrowthU} by applying results and methods of~\cite{Pe99int}, 
we expect to specify how the Euler characteristic tends to zero while approaching the singularity $z_0=1$, thus strengthening 
Claim 3) of Lemma~\ref{LE} and the upper bound~\eqref{up}.  
This result might have applications to the homology.  
\begin{conjecture} 
Let $\mathcal L$ be the Fibonacci Lie algebra. There exist $C_1,C_2>0$ such that  the following asymptotic for the Euler characteristic holds:
\begin{equation*}
\exp\bigg( -\frac{ C_1}{(1-t)^{\log_\lambda 2}} \bigg)\le \mathbf E(\mathcal L,t)\le \exp\bigg( -\frac{ C_2}{(1-t)^{\log_\lambda 2}} \bigg),\qquad t\to 1-0.
\end{equation*} 
\end{conjecture}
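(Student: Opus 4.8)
The plan is to reduce the claimed estimate for $\mathbf E(\mathcal L,t)$, as $t\to 1-0$, to the quantitative polynomial bounds on the growth function of $\mathcal L$ that are already available. By Theorem~\ref{TE} we have the exact identity $\mathbf E(\mathcal L,t)=1/\mathcal H(U(\mathcal L),t)$, and by Lemma~\ref{Leuler3} (equivalently, by expanding the $\mathcal E$-operator of Lemma~\ref{Leuler4} via Lemma~\ref{Leuler5}) one has $\log\mathbf E(\mathcal L,t)=-\sum_{m\ge 1}\frac 1m\mathcal H(\mathcal L,t^m)$ for $0\le t<1$. Hence it suffices to produce constants $0<c_1\le c_2$ with $c_1(1-t)^{-\log_\lambda 2}\le \sum_{m\ge 1}\frac 1m\mathcal H(\mathcal L,t^m)\le c_2(1-t)^{-\log_\lambda 2}$ for $t$ near $1$; applying $-\exp(\cdot)$ to this two-sided bound yields the statement with $C_1=c_2$ and $C_2=c_1$.

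The first, and main, step is to pin down $\mathcal H(\mathcal L,t)$ as $t\to 1-0$. Since the partial sums of $\dim\mathcal L_n$ are exactly the regular growth function, $\sum_{n\le N}\dim\mathcal L_n=\gamma_{\mathcal L}(N)$, Abel summation gives $\mathcal H(\mathcal L,t)=(1-t)\sum_{N\ge 1}\gamma_{\mathcal L}(N)t^N$. I would then insert the explicit two-sided bounds $\tfrac14 N^{\log_\lambda 2}\le\gamma_{\mathcal L}(N)\le 1+2N^{\log_\lambda 2}$ of Corollary~\ref{Cbounds} (the quantitative form of $\GKdim\mathcal L=\log_\lambda 2$ from Theorem~\ref{TgrowthP}), together with the elementary Tauberian fact that $\sum_{N\ge1}N^{\alpha}t^N\asymp(1-t)^{-\alpha-1}$ (two-sided, as $t\to1-0$, for any $\alpha>-1$). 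This yields $c_1'(1-t)^{-\log_\lambda 2}\le\mathcal H(\mathcal L,t)\le c_2'(1-t)^{-\log_\lambda 2}$ for $t$ close to $1$, with $c_1',c_2'$ read off from $\tfrac14\Gamma(\log_\lambda 2+1)$ and $2\Gamma(\log_\lambda 2+1)$.

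The second step is the summation over $m$. The lower bound on $\sum_{m\ge1}\frac1m\mathcal H(\mathcal L,t^m)$ is immediate from the single term $m=1$, which in particular reproves the upper estimate on $\mathbf E$ in the sharp form $\mathbf E(\mathcal L,t)\le\exp(-\mathcal H(\mathcal L,t))\le\exp(-c_1'(1-t)^{-\log_\lambda 2})$ already implicit in the proof of Theorem~\ref{THsubexp} and in~\eqref{up}. For the upper bound on the sum I would split at $m_0:=\lceil 1/(1-t)\rceil$: for $m\le m_0$ one uses $1-t^m\asymp\min\{1,\,m(1-t)\}$, so $\sum_{m\le m_0}\frac1m\mathcal H(\mathcal L,t^m)\asymp(1-t)^{-\log_\lambda 2}\sum_{m\ge1}m^{-1-\log_\lambda 2}=O\big((1-t)^{-\log_\lambda 2}\big)$ since that series converges; for $m>m_0$ one has $t^m\le t^{m_0}$ bounded away from $1$, hence $\mathcal H(\mathcal L,t^m)=O(t^m)$ and $\sum_{m>m_0}\frac1m\mathcal H(\mathcal L,t^m)=O\big(\sum_{m>m_0}t^m\big)=O\big((1-t)^{-1}\big)$, which is of strictly lower order because $\log_\lambda 2>1$. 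Combining the two ranges gives the desired two-sided bound on $\sum_{m\ge1}\frac1m\mathcal H(\mathcal L,t^m)$, and hence the statement. (Equivalently, one may run the same estimate on $\log\mathcal H(U(\mathcal L),t)=\sum_{m\ge1}\frac1m\mathcal H(\mathcal L,t^m)$ and invert via Theorem~\ref{TE}; this is consistent with the intermediate growth of $U(\mathcal L)$ in Theorem~\ref{TgrowthU}.)

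The main obstacle is conceptual rather than computational: one cannot replace $C_1,C_2$ by a single constant. The non-uniformity of the polynomial growth of $\mathcal L$ --- the fact that $\gamma_{\mathcal L}(n)/n^{\log_\lambda 2}$ has no limit, cf. Theorem~\ref{Lnonuniform2} and the conjecture following it --- survives the Abel smoothing, so $\mathcal H(\mathcal L,t)\,(1-t)^{\log_\lambda 2}$ is expected to oscillate (periodically in $\log(1-t)$) between two positive values $c_1'<c_2'$ rather than to converge, which forces the two-constant form of the estimate. Making these constants explicit, or identifying the precise oscillating prefactor in terms of the distribution of the monomials inside the strips $W_n$ of Section~\ref{Sgeometry} and the $\mathcal E$-operator methods of~\cite{Pe99int}, is the delicate point and is what any sharpening of the statement would require.
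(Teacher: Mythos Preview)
The statement you are proving is listed in the paper as a \emph{Conjecture}; the paper offers no proof, only the remark that the estimate should follow from the growth of $U(\mathcal L)$ (Theorem~\ref{TgrowthU}) together with the analytic techniques of~\cite{Pe99int}. Your argument is therefore not to be compared with a proof in the paper but assessed on its own, and it is essentially correct. You carry out precisely the strategy the paper sketches: starting from $\log\mathbf E(\mathcal L,t)=-\sum_{m\ge 1}\tfrac1m\mathcal H(\mathcal L,t^m)$ (Lemma~\ref{Leuler3}, equivalently $\mathbf E=1/\mathcal H(U)$ from Theorem~\ref{TE}), you convert the two-sided polynomial bounds of Corollary~\ref{Cbounds} into two-sided bounds $\mathcal H(\mathcal L,t)\asymp(1-t)^{-\log_\lambda 2}$ via Abel summation, then control the $m$-sum by the $m=1$ term from below and by the split at $m_0=\lceil(1-t)^{-1}\rceil$ from above. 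The convergence of $\sum m^{-1-\log_\lambda 2}$ and the fact that $\log_\lambda 2>1$ (so the tail contributes only $O((1-t)^{-1})$) are exactly what makes the upper bound close.

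Two minor points to tighten. First, when you invoke $\mathcal H(\mathcal L,s)\le c_2'(1-s)^{-\log_\lambda 2}$ for $s=t^m$ with $m$ up to $m_0$, the argument $s$ ranges down to roughly $e^{-1}$, not only near~$1$; you should remark that the global inequality follows because Corollary~\ref{Cbounds} holds for all $N\ge 1$, so the Abelian estimate is uniform on $[0,1)$ after possibly enlarging $c_2'$. Second, your claim $\mathcal H(\mathcal L,t^m)=O(t^m)$ for $m>m_0$ deserves one line of justification: for $s\le s_0:=t^{m_0}<1$ one has $\mathcal H(\mathcal L,s)=\sum_{n\ge1}\dim\mathcal L_n\,s^n\le s\cdot s_0^{-1}\mathcal H(\mathcal L,s_0)$, and $s_0$ stays bounded away from $0$ and $1$. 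With these two remarks the proof is complete, and your closing observation that the oscillation of $\gamma_{\mathcal L}(n)/n^{\log_\lambda 2}$ (Theorem~\ref{Lnonuniform2}) obstructs collapsing $C_1,C_2$ to a single constant is well taken and matches the spirit of the paper's discussion.
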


Denote the multihomogeneous coordinates as $\Gr(w)=(\Gr_1(w),\Gr_2(w))$, where $w$ is a monomial of the Fibonacci Lie algebra $\mathcal L$.
\begin{lemma} The Euler characteristic of the Fibonacci Lie algebra $\mathcal L=\Lie(v_1,v_2)$ 
with respect to its $\N_0^2\setminus \{(0,0)\}$-grading by the multidegree in the generators  is equal to
$$ \mathbf E(\mathcal L,x,y)=\lim_{n\to \infty} \prod_{w\in W_{\le n}}(1-x^{\Gr_1(w)}y^{\Gr_2(w)}), $$
where the limit is in terms of the formal power series topology.
\end{lemma}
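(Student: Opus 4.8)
The plan is to derive the formula from the denominator identity of Lemma~\ref{Leuler2}, applied to the $\N_0^2{\setminus}\{(0,0)\}$-grading $\mathcal L=\oplus_{a,b\ge 0}\mathcal L_{a,b}$ of Lemma~\ref{Lhomogeneous}. The one hypothesis to check is that all components $\mathcal L_{a,b}$ are finite dimensional; this holds because $\mathcal L=\Lie(v_1,v_2)$ is a finitely generated Lie algebra graded by the (positive) multidegree in $v_1,v_2$, so each total-degree component $\mathcal L_n=\oplus_{a+b=n}\mathcal L_{a,b}$ is finite dimensional (cf.\ also the polynomial growth, Theorem~\ref{TgrowthP}). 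Lemma~\ref{Leuler2} then gives, in the formal power series topology,
$$\mathbf E(\mathcal L,x,y)=\prod_{(a,b)\in\N_0^2{\setminus}\{(0,0)\}}\bigl(1-x^{a}y^{b}\bigr)^{\dim\mathcal L_{a,b}}.$$

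Next I would rewrite the right hand side as a product over the standard monomial basis. Since $W$ is a basis of $\mathcal L$ (established above) and, by Lemma~\ref{Lhomogeneous}, each $w\in W$ is homogeneous, the elements of $W$ of a fixed multidegree $(a,b)$ form a basis of $\mathcal L_{a,b}$; hence there are exactly $\dim\mathcal L_{a,b}$ of them, namely those $w$ with $\Gr(w)=(\Gr_1(w),\Gr_2(w))=(a,b)$. Grouping the factors of the product above according to the multidegree of basis vectors --- the same regrouping used in the proof of Lemma~\ref{Leuler2} --- yields
$$\mathbf E(\mathcal L,x,y)=\prod_{w\in W}\bigl(1-x^{\Gr_1(w)}y^{\Gr_2(w)}\bigr).$$

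It then remains to identify this infinite product with $\lim_{n\to\infty}\prod_{w\in W_{\le n}}(1-x^{\Gr_1(w)}y^{\Gr_2(w)})$. Each factor $1-x^{\Gr_1(w)}y^{\Gr_2(w)}$ equals $1$ modulo terms of total degree $\Gr_1(w)+\Gr_2(w)\ge 1$, and by Corollary~\ref{CW} every $w\in W_n$ is a left-normed commutator of $n-2$ pivot elements (for $n\ge 3$), each of total degree at least $1$, so $\Gr_1(w)+\Gr_2(w)\ge n-2$; in particular, for each fixed $N$ only the finitely many $w$ lying in $W_{\le N+2}$ contribute terms of total degree $\le N$. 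Consequently the finite subproducts over the increasing finite sets $W_{\le n}$, whose union is $W$, are eventually congruent to the full product modulo any prescribed total degree, so they converge to it in the formal power series topology, which is exactly the asserted limit.

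The computation itself is bookkeeping; the only step needing care is the convergence and rearrangement of the infinite product, for which the essential input is the finite dimensionality of the graded pieces of $\mathcal L$ (equivalently, its polynomial growth, Theorem~\ref{TgrowthP}), ensuring both that passing from $W_{\le n}$ to $W$ perturbs the product only in arbitrarily high total degree and that the resulting limit does not depend on the chosen exhaustion of $W$.
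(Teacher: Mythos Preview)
Your proof is correct and follows the same route as the paper: apply the denominator identity~\eqref{denominator} (Lemma~\ref{Leuler2}) to the $\N_0^2{\setminus}\{(0,0)\}$-grading, then rewrite the product over basis monomials $W$ and pass to the limit over the exhaustion $W_{\le n}$. The paper's proof is a one-line reference to~\eqref{euler},~\eqref{denominator} and Corollary~\ref{CHrecursive}; you have simply unpacked the convergence argument (finite dimensionality of the graded pieces and the fact that the total degree of $w\in W_n$ tends to infinity) that the paper leaves implicit.
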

\begin{proof} 
Follows from ~\eqref{euler}, \eqref{denominator} and Lemma~\ref{CHrecursive}.
\end{proof}
Using the recursive construction of the basis $W$ given by Claim 1 of Corollary~\ref{CW}, we compute the initial part of the Euler characteristic depicted on plane on Fig.~\ref{Fig4}.
\begin{Remark}
The behaviour of the coefficients of the Euler characteristic $\mathbf E(\mathcal L,x,y)$  seen on Fig.~\ref{Fig4} is really chaotic. 
\end{Remark}

\section{Infinite presentation of $\mathcal L$}
\subsection{Finitely presented Lie algebras}
It is well known that finite dimensional simple Lie algebras
over an algebraically closed field of characteristic zero are finitely presented,
namely, they are determined by now classical Serre's relations.
More generally,  the Kac-Moody algebras are presented
by similar finitely many relations in terms of the Cartan matrix~\cite{Kac}.

I.~Stewart proved that the simple infinite dimensional Witt Lie algebra $W_1$ in case of characteristic zero is finitely presented~\cite{Stewart75}.
V.~Ufnarovskiy remarked that the positive part of $W_1$ is also finitely presented~\cite{Ufn80}.
More generally, Leites and Poletaeva showed that the simple infinite dimensional Cartan type Lie algebras $W_n,S_n,K_n,H_n$ in characteristic zero
are finitely presented~\cite{LeiPol}.
Presentations of Lie algebras and close objects are extensively studied in terms of the Gröbner-Shirshov bases,
in particular, this approach was applied to simple finite dimensional Lie algebras and Kac-Moody algebras,
see e.g.~\cite{BokChen07,BokChen14,BokChen18,BokKle96} and references therein.
Finitely presented Lie algebras and intermediate growth of their universal enveloping algebras were studied in~\cite{Kos15, Kos17}.
Futorny, Kochloukova and Sidki constructed in \cite[Thm. D]{F-K-S} examples
of finitely presented, infinite dimensional, metabelian, self-similar Lie algebras. 
The question when metabelian Lie algebras are finitely presented was solved by Bryant and Groves~\cite{B-G1,B-G2}.

\subsection{Infinite presentation of the Fibonacci Lie algebra $\mathcal L$ and homology groups}
The fact that the Grigorchuk group is infinitely presented was proved by Grigorchuk~\cite{Grig99}.
The Gupta-Sidki group is also infinitely presented~\cite{Sidki87}.
\medskip

The following conjecture was initially suggested in~\cite{PeSh13fib}.
\begin{conjecture}
Let $p=2$. The Fibonacci restricted Lie algebra  $\mathbf L=\Lie_p(v_1,v_2)$ is infinitely presented.  
\end{conjecture}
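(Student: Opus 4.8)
The plan is to reduce the conjecture to an infiniteness statement for the second homology of $\mathbf L$ in the category of restricted Lie algebras, and then to extract this statement from the self-similar structure established above together with the Euler characteristic machinery of Section~\ref{SHsubexp}. Since $\mathbf L$ is $\mathbb N$-graded and generated in degree one ($\mathbf L_1=\langle v_1,v_2\rangle_K$, see Corollary~\ref{Chomogeneous} and Corollary~\ref{Cgrading}), it admits a minimal graded free resolution of the trivial module $K$ over the restricted enveloping algebra $u(\mathbf L)$, whose $n$th module of generators is $H_n^{\mathrm{res}}(\mathbf L,K)$; here $\dim H_1^{\mathrm{res}}(\mathbf L)_m$ is the number of generators of degree $m$ and $\dim H_2^{\mathrm{res}}(\mathbf L)_m$ is the number of minimal defining relations of degree $m$ (relations allowed to involve the $p$th power). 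Thus $\mathbf L$ is finitely presented as a restricted Lie algebra if and only if $H_2^{\mathrm{res}}(\mathbf L,K)$ is finite dimensional, so it suffices to prove $\dim_K H_2^{\mathrm{res}}(\mathbf L,K)=\infty$.

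One reduction is already at hand. Repeating the proof of Theorem~\ref{THsubexp} with $U(\mathbf L)$ replaced by $u(\mathbf L)$ and the Chevalley--Eilenberg complex replaced by its restricted analogue, and using that $u(\mathbf L)$ has intermediate growth with radius of convergence one (Theorem~\ref{TgrowthU}), one obtains $\mathbf E^{\mathrm{res}}(\mathbf L,z)=1/\mathcal H(u(\mathbf L),z)=\sum_{n\ge0}(-1)^n\mathcal H(H_n^{\mathrm{res}}(\mathbf L),z)$, that $z_0=1$ is a singularity, and hence $\sum_{n\ge 2}\dim H_n^{\mathrm{res}}(\mathbf L)=\infty$. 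So the \emph{only} remaining gap is to show that this blow-up occurs already in homological degree $2$, and not only in degrees $\ge 3$.

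To attack this I would combine two tools. First, the self-similarity of subsection~\ref{SSpivot}: the shift $\tau$ induces an injective, non-surjective endomorphism $\sigma$ of the free restricted Lie algebra $\Lie_p(x_1,x_2)$, $x_1\mapsto x_2$, $x_2\mapsto[x_1,x_2]$, which preserves the ideal $I$ of defining relations of $\mathbf L$ (because $\tau(\mathbf L)=L_{(2)}\hookrightarrow\mathbf L$) and multiplies the degree of a relation by $\approx\lambda$; combined with the semidirect decomposition of Theorem~\ref{T_L_decomp} and the recursion for standard monomials (Lemma~\ref{LWrecc}, Corollary~\ref{CW}) this should yield an explicit infinite family $\{r_i\}_{i\ge1}$ of relations with $\deg r_i\to\infty$, a recursive ($L$-)presentation analogous to that of the Grigorchuk group. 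Second, Euler characteristic bookkeeping: the restricted analogue of Theorem~\ref{TreccLLL} makes $\mathcal H(\mathbf L,x,y)$, hence $\mathbf E^{\mathrm{res}}(\mathbf L,x,y)$, explicitly computable coefficient by coefficient, and the identity of the previous paragraph gives
$$\mathcal H(H_2^{\mathrm{res}}(\mathbf L),x,y)=1+x+y-\mathbf E^{\mathrm{res}}(\mathbf L,x,y)+\sum_{n\ge 3}(-1)^{n+1}\mathcal H(H_n^{\mathrm{res}}(\mathbf L),x,y),$$
so one is left to control the higher syzygies.

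The main obstacle is precisely this control: by Claim 6 of Theorem~\ref{Thomology} the higher homology grows \emph{faster} than $H_2$, so the alternating sum above does not expose $H_2^{\mathrm{res}}$ directly. The route I would try is to show that in the minimal resolution the degree-$m$ part of $H_n^{\mathrm{res}}(\mathbf L)$, $n\ge3$, is confined to an $n$-fold dilation of the strip of Theorem~\ref{Tstrip} — a cone about the diagonal which, sufficiently far from the diagonal, leaves the coefficients of $\mathbf E^{\mathrm{res}}(\mathbf L,x,y)$ uncancelled, forcing $\mathcal H(H_2^{\mathrm{res}})$ to inherit infinitely many nonzero coefficients; failing a clean geometric separation, one would instead build $\mathbb F_2$-independent $2$-cycles degree by degree from the branch structure, as in Grigorchuk's original argument. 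I expect deciding exactly which monomials of $\mathbf E^{\mathrm{res}}(\mathbf L,x,y)$ survive to $H_2^{\mathrm{res}}$ — equivalently, a Gr\"obner--Shirshov description of the relations of $u(\mathbf L)$ — to be where the real difficulty lies. The same scheme, with ordinary Lie homology in place of $H_\ast^{\mathrm{res}}$, would settle the (also open) statement that $\mathcal L=\Lie(v_1,v_2)$ is infinitely presented.
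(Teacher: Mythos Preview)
The statement you are attempting to prove is labelled a \emph{Conjecture} in the paper and is not proved there; the paper offers no argument to compare your proposal against. What the paper does record is that the analogous statement for the \emph{ordinary} Fibonacci Lie algebra $\mathcal L=\Lie(v_1,v_2)$ has recently been established in~\cite{KochPe} via homological algebra: there $\dim H_2(\mathcal L)=\infty$, hence $\mathcal L$ is infinitely presented. So your closing remark that the ordinary case is ``also open'' is out of date; it is the \emph{restricted} case $\mathbf L=\Lie_p(v_1,v_2)$ that remains conjectural.

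Your proposal is an honest research outline rather than a proof, and you have correctly located the genuine gap. The reduction to $\dim_K H_2^{\mathrm{res}}(\mathbf L,K)=\infty$ is standard, and the restricted analogue of Theorem~\ref{THsubexp} does give $\sum_{n\ge 2}\dim H_n^{\mathrm{res}}(\mathbf L)=\infty$ for the same reason (subexponential growth of $u(\mathbf L)$, Theorem~\ref{TgrowthU}). But, as you say, this alternating-sum information does not isolate $H_2^{\mathrm{res}}$, and the strip bounds of Theorem~\ref{Thomology}(5) widen linearly in $n$, so the ``geometric separation'' you propose --- hoping that coefficients of $\mathbf E^{\mathrm{res}}(\mathbf L,x,y)$ far from the diagonal are contributed only by $H_2^{\mathrm{res}}$ --- would require a much sharper localisation of the higher syzygies than anything available in the paper. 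The alternative you mention, building explicit independent $2$-cocycles from the branch structure, is closer in spirit to what~\cite{KochPe} does for $\mathcal L$; adapting that argument to the restricted setting (where the $p$-map introduces extra relations, Lemma~\ref{Lrelations}(4)) is the substantive step still missing. In short: the plan is reasonable, the obstacles are real, and the conjecture remains open in the paper.
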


Recently, using methods of homological algebra a similar result was established which first claim is stronger than the last claim of Theorem~\ref{THsubexp}.
\begin{theorem}[\cite{KochPe}]
Let $p=2$, consider the Fibonacci Lie algebra $\mathcal L=\Lie(v_1,v_2)$. Then
\begin{enumerate}
\item $\dim H_2(\mathcal L)=\infty$;
\item $\mathcal L$ is infinitely presented; 
\item the related just infinite self-similar Lie algebra $\tilde L=\mathcal L/A\cong \Lie(\dd_1,v_2)$ 
(see Theorem~\ref{T_L_decomp}, Lemma~\ref{Lself}, and Theorem~\ref{Tjust}) is also infinitely presented.
\end{enumerate}
\end{theorem}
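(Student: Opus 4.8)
The plan is to reduce all three assertions to a single homological statement and then attack it via the self-similar recursion. For the reduction I would use the standard fact that a finitely generated $\mathbb N$-graded Lie algebra $L$ over a field is finitely presented if and only if $\dim_K H_2(L)<\infty$: in a minimal graded presentation the number of defining relations of degree $n$ is $\dim_K H_{2,n}(L)$, a graded Betti number of the trivial $U(L)$-module, while the number of generators $\dim_K H_1(L)$ is already finite. So Claim (1) is exactly $\dim H_2(\mathcal L)=\infty$, and Claim (2) is equivalent to it. Next, by Theorem~\ref{T_L_decomp}(2) the algebra $\tilde{\mathcal L}=\mathcal L/A\cong\Lie(\dd_1,v_2)$ equals $\langle v_1\rangle_K\rightthreetimes L_{(2)}$ with $L_{(2)}=\Lie(v_2,v_3)$ an ideal of codimension one and $L_{(2)}\cong\mathcal L$; the two-column Hochschild--Serre spectral sequence of $0\to L_{(2)}\to\tilde{\mathcal L}\to\langle v_1\rangle\to 0$ (only the columns $p=0,1$ are nonzero since $\dim\langle v_1\rangle=1$, and it degenerates at $E^2$) gives a short exact sequence $0\to H_2(L_{(2)})_{\delta}\to H_2(\tilde{\mathcal L})\to H_1(L_{(2)})^{\delta}\to 0$ with $\delta=\operatorname{ad}v_1$, whose last term is finite-dimensional because $\dim H_1(L_{(2)})=\dim H_1(\mathcal L)=2$. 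Hence $\dim H_2(\tilde{\mathcal L})=\infty$ forces $\dim H_2(\mathcal L)=\infty$, so Claim (3) implies Claims (1)--(2); equivalently, it suffices to establish $\dim H_2(\tilde{\mathcal L})=\infty$, and in parallel $\dim H_2(\mathcal L)=\infty$ can be pursued directly.

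A concrete handle for $\mathcal L$ is the following. Write $\mathcal L=F/R$ with $F=\Lie(x_1,x_2)$ free and $\pi\colon F\twoheadrightarrow\mathcal L$, $x_i\mapsto v_i$; since $\pi$ is injective in degree one, $R\subseteq[F,F]$ and Hopf's formula reads $H_2(\mathcal L)\cong R/[F,R]$, a space inheriting the $\mathbb Z^2$-grading by multidegree. The shift $\tau$ lifts to the endomorphism $\sigma$ of $F$ with $\sigma(x_1)=x_2$, $\sigma(x_2)=[x_1,x_2]$: one has $\pi\sigma=\tau\pi$ (because $\tau(v_1)=v_2$ and $\tau(v_2)=v_3=[v_1,v_2]$), hence $\sigma(R)\subseteq R$; moreover $\sigma$ is injective since $x_2,[x_1,x_2]$ freely generate their subalgebra of the free Lie algebra (Shirshov--Witt), and on multidegrees $\sigma$ acts by the Fibonacci companion matrix $\bigl(\begin{smallmatrix}0&1\\1&1\end{smallmatrix}\bigr)$, whose iterates carry any nonzero lattice point to pairwise distinct points, all lying in the homology strip of Theorem~\ref{Thomology}. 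An explicit nonzero class comes from Lemmas~\ref{Lsosed},~\ref{Lcommutators},~\ref{Laction}: $[v_1,[v_1,[v_1,v_2]]]=[v_1,t_0v_4]=t_0^2t_1v_5=0$ in $\Der R$ because $t_0^2=0$ in the truncated ring, whereas $r:=[x_1,[x_1,[x_1,x_2]]]\in F$ is nonzero of multidegree $(3,1)$; since $R$ vanishes in all multidegrees of total degree $\le 3$ and $[F,R]$ vanishes in total degree $4$, this gives $0\ne\bar r\in H_{2,(3,1)}(\mathcal L)$. The plan is then to prove $\overline{\sigma^{k}(r)}\ne 0$ in $H_{2,\sigma^{k}(3,1)}(\mathcal L)$ for all $k\ge 0$; as these multidegrees are distinct this yields $\dim H_2(\mathcal L)=\infty$.

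The point that the $\sigma$-iterates of an essential relation stay essential, i.e.\ $\sigma^{k}(r)\notin[F,R]$, is where I expect the real work to be. The obstruction is that $\sigma$ is not surjective, so a hypothetical identity $\sigma^{k}(r)=\sum_j[f_j,\rho_j]$ with $\rho_j\in R$ cannot simply be ``divided by $\sigma$'' to descend in degree. I would organise the induction through the Hochschild--Serre spectral sequence of the abelian extension $0\to A\to\mathcal L\to\tilde{\mathcal L}\to 0$, namely $E^2_{p,q}=H_p\bigl(\tilde{\mathcal L};\Lambda^{q}A\bigr)\Rightarrow H_{p+q}(\mathcal L)$, in which the terms $H_2(\tilde{\mathcal L})$, $H_1(\tilde{\mathcal L};A)$ and $(\Lambda^2A)_{\tilde{\mathcal L}}$ assemble $H_2(\mathcal L)$; here $A$ is identified with $[\mathcal L,\mathcal L]$ carrying the adjoint action transported along $\tau\colon\mathcal L\xrightarrow{\ \sim\ }L_{(2)}$, and the self-similar structure of $\tilde{\mathcal L}$ (Lemma~\ref{Lself}, the embedding into $(X\otimes\tilde{\mathcal L})\leftthreetimes\Der X$ with $X=K[t]/(t^2)$) turns this into a self-referential estimate on the multigraded pieces of $H_2$. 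The just-infiniteness of $\tilde{\mathcal L}$ (Theorem~\ref{Tjust}) is the structural input that prevents the relation module from being finitely generated, and the Fibonacci scaling then propagates the class $\bar r$ into infinitely many distinct multidegrees. The delicate part is controlling the module-homology groups $H_{*}(\tilde{\mathcal L};\Lambda^{*}A)$ precisely enough to see that the relevant classes survive to $E^{\infty}$ and are not killed by differentials; carrying out this bookkeeping, rather than any single identity, is the heart of the argument.
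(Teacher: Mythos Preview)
The paper does not prove this theorem; it is quoted verbatim from the companion paper~\cite{KochPe} and no argument is reproduced here. So there is no ``paper's own proof'' to compare against, and the question is whether your outline stands on its own.

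Your reductions are sound. The equivalence of (1) and (2) via graded Betti numbers is standard, and your Hochschild--Serre argument for the extension $0\to L_{(2)}\to\tilde{\mathcal L}\to\langle v_1\rangle\to 0$ correctly shows that $\dim H_2(\tilde{\mathcal L})=\infty$ implies $\dim H_2(\mathcal L)=\infty$. The seed relation $r=[x_1,[x_1,[x_1,x_2]]]$ is exactly the relation $[v_2,v_1^3]=0$ of Lemma~\ref{Lrelations}, and your degree count showing $0\ne\bar r\in H_{2,(3,1)}(\mathcal L)$ is correct since $R$ is zero in total degree $\le 3$.

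There are, however, two genuine gaps. First, the crux---that $\sigma^k(r)\notin[F,R]$ for all $k$---is only named, not proved. You correctly flag that $\sigma$ is not surjective, so one cannot simply cancel it, and your proposed route through the spectral sequence of $0\to A\to\mathcal L\to\tilde{\mathcal L}\to 0$ is plausible in spirit but all of the content is hidden in ``controlling $H_*(\tilde{\mathcal L};\Lambda^*A)$ precisely enough''; nothing concrete is offered for this. The appeal to just-infiniteness of $\tilde{\mathcal L}$ as ``the structural input that prevents the relation module from being finitely generated'' is a heuristic, not an argument: just-infiniteness concerns ideals of $\tilde{\mathcal L}$, not finite generation of $R/[F,R]$ as a module, and no mechanism is given. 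Second, there is a strategic mismatch: your direct attack via $\sigma^k(r)$ targets $H_2(\mathcal L)$, which yields (1) and (2) but not (3). Your spectral-sequence reduction goes in the direction $(3)\Rightarrow(1)$, not the reverse; to obtain (3) you would need either to run the analogous argument inside $\tilde{\mathcal L}$ itself, or to extract information about $H_2(\tilde{\mathcal L})$ from the five-term sequence of $0\to A\to\mathcal L\to\tilde{\mathcal L}\to 0$, which requires showing that infinitely many of your classes do not come from $H_1(\tilde{\mathcal L};A)$. Neither is addressed.

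In short: the scaffolding is right and matches the shape of the result, but the proposal stops exactly where the proof begins.
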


Looking at Fig.~\ref{Fig4}, it seems that the nontrivial monomials of the Euler characteristic $\mathbf E(\mathcal L, x,y)$
essentially fill the paraboloid-shaped area specified in Theorem~\ref{Tparaboloid}.
On the other hand, points that depict any fixed homology group $H_n(\mathcal L)$, $n\ge 1$, belong only to the respective strip like the shaded one on Fig.~\ref{Fig4} 
(Claim 5 of Theorem~\ref{Thomology}). 
This motivates us to formulate the following conjecture. 

\begin{conjecture}
Let $p=2$, consider the Fibonacci Lie algebra $\mathcal L=\Lie(v_1,v_2)$. Then also
$$\dim H_n(\mathcal L)=\infty,\qquad n\ge 3.$$
\end{conjecture}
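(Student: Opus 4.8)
The plan is to prove the stronger statement that $\dim H_n(\mathcal{L})=\infty$ for \emph{all} $n\ge 2$, by induction on $n$. The base case $n=2$ is the theorem of~\cite{KochPe} quoted just above, and it is indispensable: $H_1(\mathcal{L})$ is two-dimensional, so the induction cannot start lower. The engine of the induction is the strong self-similar decomposition of Theorem~\ref{T_L_decomp}, namely $\mathcal{L}=\langle v_1\rangle_K\rightthreetimes\mathcal{M}$ with $\mathcal{M}:=L_{(2)}\rightthreetimes A$, where $A$ is an abelian ideal, $L_{(2)}=\Lie(v_2,v_3)\cong\mathcal{L}$ is a Lie-subalgebra complement to $A$, and $\langle v_1\rangle_K$ is a one-dimensional subalgebra; all of these are graded subalgebras for the $\Z^2$-grading by multidegree in the generators.

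First I would push homology from $\mathcal{L}$ down to the codimension-one ideal $\mathcal{M}$. Because $\mathcal{M}=L_{(2)}\rightthreetimes A$ is a \emph{split} extension of Lie algebras $0\to A\to\mathcal{M}\to L_{(2)}\to 0$, the inclusion $L_{(2)}\hookrightarrow\mathcal{M}$ splits the projection on homology, so $H_m(\mathcal{M})\twoheadrightarrow H_m(L_{(2)})\cong H_m(\mathcal{L})$ is surjective for every $m$; in particular the inductive hypothesis $\dim H_{n-1}(\mathcal{L})=\infty$ forces $\dim H_{n-1}(\mathcal{M})=\infty$. Next I would use the geometry of the strip: by Lemma~\ref{Leta} and Corollary~\ref{CstWn} every basis element of $\mathcal{L}$, hence of $\mathcal{M}$, has superweight in $(-\lambda,1)$, so every basis $(n-1)$-chain of $\mathcal{M}$ has superweight in $\bigl(-\lambda(n-1),\,n-1\bigr)$; therefore $H_{n-1}(\mathcal{M})$, which is $\Z^2$-graded with finite-dimensional homogeneous components, is supported in that strip. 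Consequently, for each fixed value $b$ of the second multidegree the slice $\bigoplus_a H_{n-1,(a,b)}(\mathcal{M})$ is finite-dimensional (the strip confines $a$ to a bounded interval), and since $H_{n-1}(\mathcal{M})$ is infinite-dimensional, infinitely many such slices are nonzero.

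Then I would run the Hochschild--Serre spectral sequence of $0\to\mathcal{M}\to\mathcal{L}\to\langle v_1\rangle_K\to 0$. As $\langle v_1\rangle_K$ is one-dimensional, this spectral sequence has only the columns $p=0,1$, degenerates at $E^2$, and yields the Wang exact sequence, hence a surjection
$$H_n(\mathcal{L})\ \twoheadrightarrow\ \ker\bigl(\operatorname{ad}v_1\colon\ H_{n-1}(\mathcal{M})\to H_{n-1}(\mathcal{M})\bigr),$$
the target being the edge term $E^\infty_{1,n-1}=E^2_{1,n-1}=H_1(\langle v_1\rangle_K;H_{n-1}(\mathcal{M}))$. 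Now $\operatorname{ad}v_1$ has multidegree $\Gr(v_1)=(1,0)$, so it preserves every $b$-slice and strictly raises the first multidegree $a$; being a degree-shifting endomorphism of a finite-dimensional space confined to a bounded interval of $a$-values, its restriction to each slice is nilpotent, hence has nonzero kernel on every nonzero slice. Summing over the infinitely many nonzero $b$-slices gives $\dim\ker(\operatorname{ad}v_1)=\infty$, whence $\dim H_n(\mathcal{L})=\infty$ and the induction closes. (Equivalently, the same slice argument applied to the cokernel shows that $\dim H_n(\mathcal{L})=\infty$ as soon as $\dim H_n(\mathcal{M})=\infty$ or $\dim H_{n-1}(\mathcal{M})=\infty$.)

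The points that need care are the validity and $\Z^2$-equivariance of the two spectral sequences in this infinite-dimensional characteristic-$2$ setting (granted because every homogeneous component is finite-dimensional), the identification of the edge term $E^2_{1,n-1}$ with $\ker(\operatorname{ad}v_1)$, and the coherence of the various gradings used. The point where the argument is delicate rather than formal is the verification that the slice decomposition really makes $\operatorname{ad}v_1$ nilpotent --- that is, that $H_{n-1}(\mathcal{M})$ genuinely stays inside the strip of Theorem~\ref{Thomology}, whose width grows linearly in $n$ but is finite for each fixed $n$ --- together with checking that $L_{(2)}$ is an honest Lie-algebra complement to $A$ in Theorem~\ref{T_L_decomp}, so that the homology projection really does split. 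As a fallback that avoids the Wang sequence, one could instead try to extract a functional recursion for the bigraded Poincar\'e series $\sum_{n\ge 1}\mathcal{H}(H_n(\mathcal{L}),x,y)$ from the self-similar recursion~\eqref{decomp} for the standard monomials, in the spirit of~\eqref{wn_recursion}, and argue that no solution of such a recursion can have all but finitely many $H_n$ finite-dimensional.
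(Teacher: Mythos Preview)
The statement is labelled as a \emph{Conjecture} in the paper: the paper offers no proof, only the heuristic motivation coming from the parabola-versus-strip picture of Theorem~\ref{Tparaboloid} and Claim~5 of Theorem~\ref{Thomology}. So there is no paper argument to compare against; you are attempting to resolve an open problem.

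That said, your inductive argument appears to be sound. Each of the steps checks out against the paper's structural results. First, Theorem~\ref{T_L_decomp} really does give $\mathcal{M}:=L_{(2)}\rightthreetimes A$ as a split extension of Lie algebras with $A\triangleleft\mathcal{M}$ and $L_{(2)}$ a genuine Lie subalgebra, so the projection $\mathcal{M}\to L_{(2)}$ is a Lie homomorphism with Lie section, and the induced map $H_m(\mathcal{M})\to H_m(L_{(2)})\cong H_m(\mathcal{L})$ is split surjective; thus the inductive hypothesis pushes up to $\dim H_{n-1}(\mathcal{M})=\infty$. Second, $\mathcal{M}$ is the codimension-one $\Z^2$-graded ideal of $\mathcal{L}$ spanned by $W\setminus\{v_1\}$, so the Hochschild--Serre spectral sequence for $\mathcal{M}\triangleleft\mathcal{L}$ has only the columns $p=0,1$, collapses at $E^2$, and yields the surjection $H_n(\mathcal{L})\twoheadrightarrow E^2_{1,n-1}=\ker\bigl(\mathrm{ad}\,v_1\colon H_{n-1}(\mathcal{M})\to H_{n-1}(\mathcal{M})\bigr)$; this is purely formal and independent of characteristic. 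Third, by~\eqref{newcoord} the strip $-\lambda(n-1)<\eta<n-1$ from Theorem~\ref{Thomology}(5) translates, for fixed second multidegree $b$, to an interval for $a$ of length $\lambda^3(n-1)$; since each homogeneous component $H_{n-1,(a,b)}(\mathcal{M})$ is finite dimensional, each $b$-slice is finite dimensional, and $\mathrm{ad}\,v_1$ acts on it as a nilpotent operator of multidegree $(1,0)$, hence has nonzero kernel on every nonzero slice. As $H_{n-1}(\mathcal{M})$ is infinite dimensional with finite $b$-slices, infinitely many slices are nonzero, and the kernel is infinite dimensional. The induction, anchored at $n=2$ by the cited theorem of~\cite{KochPe}, closes.

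The only points where one must write carefully are the ones you already flagged: the $\Z^2$-equivariance of the spectral sequence (clear, since the filtration of the Chevalley--Eilenberg complex of $\mathcal{L}$ by $\Lambda^p(\mathcal{L}/\mathcal{M})\wedge\Lambda^*(\mathcal{M})$ is homogeneous) and the identification of $H_1(\langle v_1\rangle_K;M)$ with the invariants $M^{v_1}$. None of this is problematic. If the argument holds up under full scrutiny it would settle the conjecture; it is worth writing out in detail.
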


\subsection{Relations of $\mathcal L=\Lie(v_1,v_2)$ and $\mathbf L=\Lie_p(v_1,v_2)$}
Initially, a countable set of relations of the Grigorchuk group was found by Lysionok~\cite{Lys85},
it is obtained from a finite set by applying iterations of an embedding endomorphism.
Later, such presentations were called finite {\it endomorphic presentations} or $L$-{\it presentations} by Bartholdi~\cite{Bartholdi03}.
We conjecture that a similar situation holds for Fibonacci Lie algebras.
We suggest the following  list of relations.

\begin{lemma}\label{Lrelations}
Let $p=2$, $\mathcal L=\Lie(v_1,v_2)$ and
$\tau:\mathcal L\to \mathcal L$ the shift endomorphism.
Then $\mathcal L$ satisfies the following relations and all their shifts by $\tau^k$, $k\ge 1$:
\begin{enumerate}
\item  $[v_2,v_1^3]=0$;
\item $[v_2,v_1^2,v_2^2]=0$;
\item $[v_1,v_2^4]=0$, which is a $\tau$-image of the first relation;
\item the Fibonacci restricted Lie algebra $\mathbf L=\Lie_p(v_1,v_2)$ satisfies $v_1^4=0$.
\end{enumerate}
\end{lemma}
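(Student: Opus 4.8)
The plan is to verify each of the four relations by direct computation using the commutator formulas established earlier, exploiting the recursive structure $v_i = \dd_i + t_{i-1}v_{i+1}$ from~\eqref{vii} together with Lemma~\ref{Lsosed}, Lemma~\ref{Lcommutators}, Lemma~\ref{Laction}, and Lemma~\ref{Lsquares}. Since each relation has a specific small length in the generators, it suffices to compute the left-hand side as an element of $\Der R$ and check that it vanishes; the assertion about all shifts $\tau^k$ is then automatic because $\tau$ is an algebra endomorphism (it is a monomorphism of $\mathcal L$ by the lemma following Lemma~\ref{Laction}, and it sends $v_i \mapsto v_{i+1}$), so $\tau^k$ applied to any relation among the $v_i$ yields a valid relation.

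First I would handle~(1): $[v_2,v_1^3] = [v_2,v_1,v_1,v_1]$. Using Lemma~\ref{Lsosed} we have $[v_1,v_2] = v_3$, hence $[v_2,v_1] = -v_3 = v_3$ in characteristic $2$; then $[v_3,v_1]$ must be computed — note this is a commutator of non-neighbours with the larger index first, so I would use $[v_1,v_3] = t_0 v_4$ from Lemma~\ref{Lcommutators} (with $i=1$, $j=3$), giving $[v_3,v_1] = t_0 v_4$ up to sign. Then $[t_0 v_4, v_1] = t_0[v_4,v_1] + v_1(t_0)v_4 = t_0[v_4,v_1]$ since $v_1(t_0)=0$ by Lemma~\ref{Laction}; and $[v_1,v_4] = t_0 t_1 v_5$, so $[t_0 v_4, v_1] = t_0^2 t_1 v_5 = 0$ because $t_0^2 = 0$ in the truncated ring $R$. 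This $t_0^2 = 0$ mechanism is the essential point. For~(4), $v_1^4 = (v_1^2)^2$: by Lemma~\ref{Lsquares}, $v_1^2 = t_0 v_3$, so $v_1^4 = (t_0 v_3)^2 = t_0^2 v_3^2 = 0$, again because $t_0^2 = 0$ (using that $t_0$ is central and $v_3(t_0)=0$, or more carefully the $p=2$ squaring identity~\eqref{square2} combined with $t_0^2=0$).

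Next I would do~(3), which the statement already identifies as $\tau$ applied to~(1): since $\tau(v_1)=v_2$, $\tau(v_2)=v_3$, applying $\tau$ to $[v_2,v_1^3]=0$ gives $[v_3,v_2^3]=0$; but the claim is $[v_1,v_2^4]=0$, so I should instead recognize $[v_1,v_2^4]$ as the $\tau$-image of $[v_{?},v_1^4]$ — actually the cleaner route is to observe $[v_1,v_2^4] = [v_1,v_2,v_2,v_2,v_2]$ and compute directly: $[v_1,v_2]=v_3$, $[v_3,v_2]$ via Lemma~\ref{Lcommutators} with the larger index second gives $[v_2,v_3]=t_1 v_4$ so $[v_3,v_2]=t_1v_4$; then $[t_1v_4,v_2] = t_1[v_4,v_2] + v_2(t_1)v_4 = t_1[v_4,v_2]$ since $v_2(t_1)=1$... here I must be careful: $v_2(t_1) = 1$ by Lemma~\ref{Laction} (case $n=j$), so actually $[t_1 v_4, v_2] = t_1 t_2 v_5 \cdot(\pm1) + 1\cdot v_4$. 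I would then carry one more bracket and find the accumulated $t_1^2$ factor kills the surviving term; the alignment of which square of which $t_i$ appears is the bookkeeping I would track most carefully. Relation~(2), $[v_2,v_1^2,v_2^2]=0$, I would approach by first computing $[v_2,v_1,v_1] = [v_3,v_1] = t_0 v_4$ (up to sign) as in~(1), then bracketing with $v_2$ twice: $[t_0 v_4, v_2] = t_0[v_4,v_2]$ (since $v_2(t_0)=0$), and $[v_4,v_2] = [v_2,v_4] = t_1 t_2 v_5$ by Lemma~\ref{Lcommutators}, then one more bracket with $v_2$, and check the result vanishes — likely again because an offending variable appears squared, or because $v_2(t_1 t_2) t_0 v_5$ combines with the next term.

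The main obstacle I expect is purely the sign and index bookkeeping: tracking exactly which tail variable $t_i$ gets repeated (and hence annihilated by truncation) through each successive bracket, and making sure the Leibniz terms $v_n(\text{tail})$ produced via Lemma~\ref{Laction} are correctly accounted for rather than dropped. None of this is conceptually deep — it is all forced by~\eqref{vii}, Lemma~\ref{Lcommutators}, Lemma~\ref{Laction}, Lemma~\ref{Lsquares}, and $t_i^2 = 0$ — but it is the kind of computation where an off-by-one in an index silently breaks the vanishing, so I would organize each of the four computations as an explicit short chain of equalities in $\Der R$ with every intermediate term displayed, exactly in the style of the proofs of Lemma~\ref{Lsosed} and Lemma~\ref{Lsquares} above.
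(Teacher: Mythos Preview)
Your plan --- direct verification in $\Der R$ via Lemmas~\ref{Lsosed}, \ref{Lcommutators}, \ref{Laction}, \ref{Lsquares} and $t_i^2=0$ --- is the paper's approach and would succeed. The paper, however, shortens every computation by using the identity $(\mathrm{ad}\,x)^2=\mathrm{ad}(x^{[2]})$ in characteristic~$2$ (the convention $[u,x^p]=[u,x^{[p]}]$ recorded at the start of Section~2): instead of bracketing twice with $v_i$ it brackets once with $v_i^2=t_{i-1}v_{i+2}$. Thus (1) collapses to $[v_2,v_1^3]=[[v_2,v_1],v_1^2]=[v_3,t_0v_3]=0$, and (2) to $[v_2,v_1^2,v_2^2]=[v_2,t_0v_3,t_1v_4]=[t_0v_4,t_1v_4]=0$. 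For (3) the paper avoids computation entirely, noting $\tau([v_2,v_1^3])=[v_3,v_2^3]=[[v_1,v_2],v_2^3]=[v_1,v_2^4]$, which is precisely the identification the statement asserts. Your step-by-step route also reaches zero, but your draft already contains the index slips you warned yourself about: $[v_2,v_3]=v_4$ (not $t_1v_4$; the product $t_1\cdots t_0$ is empty), $[v_2,v_4]=t_1v_5$ (not $t_1t_2v_5$), and $v_2(t_1)=0$ (not $1$, since $n=2>j=1$ in Lemma~\ref{Laction}).
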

\begin{proof}
1) $[v_2,v_1^3]=[[v_2,v_1],v_1^2]=[v_3,t_0v_3]=0$.
 
2) $[v_2,v_1^2,v_2^2]=[v_2,t_0v_3,t_1v_4]=[t_0v_4,t_1v_4]=0$.

3) $\tau([v_2,v_1^3])=[v_3,v_2^3]=[[v_1,v_2],v_2^3]=[v_1,v_2^4]$.

4) $v_1^4=(t_0v_3)^2=0$. 
\end{proof}

\begin{conjecture}
Let $p=2$.
The Fibonacci Lie algebra $\mathcal L$ and the Fibonacci restricted Lie algebra $\mathbf L$ 
are defined by relations given in Lemma~\ref{Lrelations}.
\end{conjecture}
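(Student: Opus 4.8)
The plan is to read the claimed relations as an endomorphic ($L$-)presentation and prove its completeness by a graded dimension count against the known basis $W$. Let $F$ be the free Lie algebra on two generators $x_1,x_2$ (and, for the restricted assertion, the free restricted Lie algebra over a field of characteristic $2$), and let $\hat\tau\colon F\to F$ be the endomorphism $x_1\mapsto x_2$, $x_2\mapsto[x_1,x_2]$; put $\bar v_n:=\hat\tau^{\,n-1}(x_1)$, so that $\bar v_3=[x_1,x_2]$ and $[\bar v_i,\bar v_{i+1}]=\bar v_{i+2}$ already hold in $F$. Let $J\trianglelefteq F$ be the ideal generated by $r_1:=[x_2,x_1^3]$ and $r_2:=[x_2,x_1^2,x_2^2]$ --- together with the restricted relator $r_4:=x_1^{[4]}$ in the restricted case --- and by all their $\hat\tau$-shifts $\hat\tau^k(r_i)$, $k\ge1$; the relator $[x_1,x_2^4]$ of Lemma~\ref{Lrelations}(3) equals $\hat\tau(r_1)$, hence is already present. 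Since $\hat\tau$ maps each multidegree component of $F$ into a single component (it acts on multidegrees through the matrix $\bigl(\begin{smallmatrix}0&1\\1&1\end{smallmatrix}\bigr)$) and the relators are multidegree-homogeneous, $J$ is a homogeneous ideal; set $\bar{\mathcal L}:=F/J$, an $\N_0^2$-graded algebra carrying the induced endomorphism $\bar\tau$. By Lemma~\ref{Lrelations}, $x_i\mapsto v_i$ induces a surjective, $\N_0^2$-graded homomorphism $\phi\colon\bar{\mathcal L}\twoheadrightarrow\mathcal L$ (resp. $\bar{\mathbf L}\twoheadrightarrow\mathbf L$) with $\phi\bar\tau=\tau\phi$. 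Everything thus reduces to the reverse inequality $\dim(\bar{\mathcal L})_{(a,b)}\le\dim\mathcal L_{(a,b)}$ for every $(a,b)\in\N_0^2$.

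To bound $\dim(\bar{\mathcal L})_{(a,b)}$ from above I would exhibit a spanning set of $\bar{\mathcal L}$ mapping bijectively, and multidegree-preservingly, onto the standard basis $W$ of $\mathcal L$. The candidate is the abstract analogue of the right-normed description of $W$ in Corollary~\ref{CW}(2): for each $\varepsilon\in\{0,1\}^{\,n-3}$, the element of $\bar{\mathcal L}$ given by the right-normed bracket $[\bar v_{\ast},\ldots,\bar v_{\ast},\bar v_3]$ with the indicated choices $\bar v_{i+1}$ or $\bar v_i$ at each slot. One proves by induction on total degree that these elements $\bar W$ span $\bar{\mathcal L}$. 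The inductive step is a straightening: a bracket of two standard monomials of strictly smaller degree is rewritten, using $[\bar v_i,\bar v_{i+1}]=\bar v_{i+2}$, the Jacobi identity and the relations, into a linear combination of standard monomials, mimicking the product formula~\eqref{prod-stand} and the inductive construction~\eqref{descrW} of $W$. The role of $r_1$, $r_2$ and their $\hat\tau$-shifts is exactly to force this collapse: $\hat\tau^{i-1}(r_1)$ is the element which in the realization equals $t_{i-1}^{2}t_i\,v_{i+4}$, the Lie-theoretic shadow of the truncation $t_{i-1}^{2}=0$, so modulo $J$ a repeated tail variable may be deleted, while $\hat\tau^{i-1}(r_2)$ is the analogous ``square'' relation one step up, needed precisely where two pivot contributions point in the same direction (as in $[t_0v_4,t_1v_4]=0$). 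Granting the straightening, $\dim(\bar{\mathcal L})_{(a,b)}\le\#\{\bar w\in\bar W:\deg\bar w=(a,b)\}=\#\{w\in W:\Gr w=(a,b)\}=\dim\mathcal L_{(a,b)}$, so $\phi$ is an isomorphism. For the restricted statement one repeats the argument with the larger basis $\tilde W$ of~\eqref{basisLL}, the extra monomials $t_{n-3}v_n=v_{n-2}^{[2]}$ being produced by the restricted generators $\bar v_{n-2}^{[2]}$ and killed higher up by $r_4$ and its shifts $\bar v_n^{[4]}=0$.

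The main obstacle is the straightening claim itself: that the collapse really succeeds \emph{using only these relators}, i.e.\ that the $\hat\tau$-orbit of $\{r_1,r_2\}$ (and $r_4$) generates all relations, so no hidden relations are missed. Proving completeness of an $L$-presentation is the genuinely hard point, the analogue of the difficulty in confirming Lysionok's presentation of the Grigorchuk group. I see two routes. The first is a Gröbner--Shirshov computation in $F$: complete the relator set to a Gröbner--Shirshov basis of $J$; because the relators come in $\hat\tau$-orbits, every nontrivial composition of two shifted relators is an $\hat\tau$-shift of one of finitely many ``seed'' compositions, so one must verify only finitely many composition--diamond conditions (the bookkeeping is delicate: it mixes left-normed Lie monomials, the square-killing relators, and, for $\mathbf L$, the $[p]$-operator), after which the $\hat\tau$-irreducible monomials biject with $W$ and the dimension bound follows. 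The second route is homological: we already know $\dim H_2(\mathcal L)=\infty$ by~\cite{KochPe}, so one would instead determine $H_2(\mathcal L)$ together with its $\bar\tau$-action and check, through the Hopf-type five-term exact sequence relating $H_2$ to the relation module $J/[F,J]$, that the classes of $r_1,r_2$ generate it as a $\bar\tau$-module. I expect the first route to be the more self-contained, but in either case the finiteness/termination of the relevant orbit computation is the crux.

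Finally, I would use the almost self-similar structure of Theorem~\ref{T_L_decomp} both as a consistency check and as a possible inductive engine: killing in $\bar{\mathcal L}$ the ideal corresponding to the abelian ideal $A$ should turn the presentation into one of $\tilde{\mathcal L}=\mathcal L/A\cong\langle v_1\rangle_K\rightthreetimes\tau(\mathcal L)$, and since $\tau(\mathcal L)\cong\mathcal L$ this exhibits the relator set, up to finitely much, as its own $\hat\tau$-shift --- a recursion on which one could try to induct once $A$ is understood as a module over $\tilde{\mathcal L}$. One should also check numerically that iterating~\eqref{wn_recursion} reproduces, in low multidegrees, the Hilbert series forced by the relators, as a first confirmation that nothing has been omitted.
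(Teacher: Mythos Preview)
The statement you address is labeled a \emph{Conjecture} in the paper; the paper does not prove it. The only evidence offered is the lemma immediately following, which verifies by direct computation that all relations of $\mathcal L$ in total degree at most~$7$ follow from the three listed relators. There is therefore no paper proof to compare your proposal against.

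Your write-up is a coherent plan of attack, not a proof, and you say so yourself: the entire argument hinges on the straightening claim that the abstract monomials $\bar W$ span $\bar{\mathcal L}=F/J$, and you explicitly flag this as ``the genuinely hard point'' without carrying it out. Neither of the two routes you sketch is executed. For the Gr\"obner--Shirshov route you would have to list the finitely many seed compositions arising from pairs of $\hat\tau$-shifted relators and verify that each reduces to zero modulo $J$; you have not done this, and experience with $L$-presentations (Lysionok, Sidki) suggests this is where the real work lies. For the homological route you would need $H_2(\mathcal L)$ as a $\bar\tau$-module, but \cite{KochPe} supplies only $\dim H_2(\mathcal L)=\infty$, not the module structure. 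The self-similarity recursion you propose at the end is promising heuristically but does not by itself close the gap, since it presupposes control of the abelian ideal $A$ as a $\tilde{\mathcal L}$-module inside the abstract quotient. In short, the conjecture remains open after your proposal just as it does in the paper.
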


Confirming this conjecture we have the following observation.
\begin{lemma}
The relations of $\mathcal L$ of degree at most 7 follow from the relations
$$[v_2,v_1^3]=0, \quad [v_2,v_1^2,v_2^2]=0,\quad [v_1,v_2^4]=0.$$
\end{lemma}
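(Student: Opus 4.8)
The plan is to reduce the statement to a bounded linear-algebra computation in the free Lie algebra. Let $F=\Lie(x_1,x_2)$ be the free Lie algebra over $K$, $\ch K=2$, graded by total degree, $F=\bigoplus_{d\ge1}F_d$, and let $\phi\colon F\twoheadrightarrow\mathcal L$ be the graded epimorphism $x_i\mapsto v_i$ (it is onto since $\mathcal L=\Lie(v_1,v_2)$ and graded by Corollary~\ref{Chomogeneous}). Write $R=\ker\phi=\bigoplus_{d\ge1}R_d$ for the graded ideal of all relations of $\mathcal L$, and let $J\subseteq R$ be the ideal of $F$ generated by $r_1:=[x_2,x_1^3]$, $r_2:=[x_2,x_1^2,x_2^2]$, $r_3:=[x_1,x_2^4]$; that $r_1,r_2,r_3\in R$ is Lemma~\ref{Lrelations}. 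We must show $J_d=R_d$ for $1\le d\le7$. Since $J_d\subseteq R_d$ holds automatically and $\dim R_d=\dim F_d-\dim\mathcal L_d$ ($\phi$ being a graded epimorphism), it suffices to prove $\dim_K J_d\ge\dim_K F_d-\dim_K\mathcal L_d$ for $d=4,5,6,7$; for $d\le3$ one has $\dim F_d=\dim\mathcal L_d$, so $R_d=0$ and there is nothing to prove. Finally, the $\tau$-shifts of $r_1,r_2,r_3$ all have degree $\ge8$ ($\tau(r_1)=r_3$ already appears, while $\tau(r_2),\tau(r_3),\tau^2(r_1),\dots$ have degrees $8,9,9,\dots$), so only $r_1,r_2,r_3$ themselves matter in degrees $\le7$ and it is legitimate to take $J$ generated by these three.

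First I would pin down the two dimension sequences. By Witt's formula $\dim F_d=\frac1d\sum_{e\mid d}\mu(e)2^{d/e}$, so $(\dim F_d)_{d=1}^{7}=(2,1,2,3,6,9,18)$. For $\dim\mathcal L_d$ I use the monomial basis $W$ of~\eqref{standard}: from $v_{i+2}=[v_i,v_{i+1}]$ (Lemma~\ref{Lsosed}) and $v_i^2=t_{i-1}v_{i+2}$ (Lemma~\ref{Lsquares}) one reads off $\deg v_n=F_n$ and $\deg t_j=-F_j$, where $F_j$ denote Fibonacci numbers with $F_0=0$; hence a standard monomial $\bigl(\prod_{j\in S}t_j\bigr)v_n$ has total degree $F_n-\sum_{j\in S}F_j$. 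Only $v_1,\dots,v_6$ and their tails reach total degree $\le7$, and a short enumeration gives $(\dim\mathcal L_d)_{d=1}^{7}=(2,1,2,2,2,2,4)$, in agreement with Lemma~\ref{Lnonuniform}. Therefore $(\dim R_d)_{d=1}^{7}=(0,0,0,1,4,7,14)$, and the lemma comes down to exhibiting $1$, $4$, $7$, $14$ linearly independent elements of $J$ in degrees $4$, $5$, $6$, $7$ respectively.

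Then I would produce those elements. In degree $4$, $J_4=Kr_1$ and $r_1\ne0$ in $F$ (in characteristic $2$ it equals the Hall monomial $[x_1,x_2,x_1,x_1]$), so $\dim J_4=1$. For $5\le d\le7$, the ideal $J$ is spanned in degree $d$ by the left-normed brackets $[r_l,h_1,\ldots,h_k]$ with $l\in\{1,2,3\}$, the $h_i$ ranging over a fixed Hall (Lyndon) basis of $F$ and $\deg r_l+\sum_i\deg h_i=d$, so that $\sum_i\deg h_i\le3$ — a finite explicit list. Expanding each such bracket in a Hall basis of $F_d$ by repeated use of anticommutativity and the Jacobi identity, and row-reducing the resulting coordinate matrices, one verifies $\dim J_5=4$, $\dim J_6\ge7$, $\dim J_7\ge14$. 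Together with $J_d\subseteq R_d$ and the counts above this forces $J_d=R_d$ for $d\le7$, which is the claim.

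The computational heart, and the only genuine obstacle, is the last step in degrees $6$ and $7$: in degree $7$ one must extract $14$ independent vectors inside the $18$-dimensional space $F_7$ from a sizeable, highly dependent list of iterated brackets of $r_1,r_2,r_3$. This is safest to carry out by fixing once and for all a Hall/Lyndon basis of $F_{\le7}$, rewriting every relevant bracket in that basis (a mechanical but lengthy application of the Jacobi identity, readily delegated to a computer algebra system) and then performing the rank computations; the one spot demanding care is the degree bookkeeping for the monomial basis, in particular the identities $\deg t_j=-F_j$, where an indexing slip would propagate into the dimension counts. Everything else is routine.
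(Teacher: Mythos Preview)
Your proposal is correct and is precisely the ``direct check'' that the paper alludes to but omits; you have simply written out the plan in full. The dimension counts $(\dim F_d)_{d=1}^{7}=(2,1,2,3,6,9,18)$ and $(\dim\mathcal L_d)_{d=1}^{7}=(2,1,2,2,2,2,4)$ are correct, the reduction to $\dim J_d\ge\dim R_d$ for $d=4,5,6,7$ is the right shape, and your spanning set for $J_d$ (left-normed brackets $[r_l,h_1,\ldots,h_k]$, equivalently $\mathrm{ad}(h_k)\cdots\mathrm{ad}(h_1)(r_l)$) is legitimate because $U(F)$ acts on $J$ via the adjoint representation and is spanned by ordered monomials in a basis of $F$; in fact taking $h_i\in\{x_1,x_2\}$ already suffices, since $U(F)$ is the free associative algebra on $x_1,x_2$, and this slightly shortens the list you have to row-reduce. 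Your handling of the $\tau$-shifts is also fine: $\tau(r_1)=r_3$ in $F$ (via $v_3=[v_1,v_2]$), while $\tau(r_2)$ and $\tau(r_3)$ land in degrees $8$ and $9$, so only $r_1,r_2,r_3$ are relevant through degree $7$.
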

\begin{proof} A direct check that we omit.
\end{proof}

\section*{Statements and Declarations}
\begin{itemize}
\item {\bf Funding:} partially supported by grant 
Funda\c{c}\~ao de Amparo \`a Pesquisa do Estado de S\~ao Paulo, FAPESP 2022/10889-4.  
\item
the author is grateful to the State University of Campinas
and prof. Dessislava Kochloukova 
for hospitality when this research was done.
\end{itemize}

\begin{figure}[h]
\centering
\caption{
Standard monomials $W$ of the Fibonacci Lie algebra $\mathcal L=\Lie(v_1,v_2)$, $p=2$. Point $(a,b)$ is green --- all $W_n\cap  \mathcal L_{(a,b)}$ are of form $[v_{n-2},W_{n-1}]$, 
blue --- all are of form $[v_{n-3},W_{n-1}]$, proportionally mixed color --- monomials of both types. The pivot elements are red.
The area of circles is proportional to the number of monomials, indicated also by numbers.
The red lines pass through the pivot elements.
Monomials of the abelian ideal $A$ are below axis $\xi$.}
\label{Fig1} 
\includegraphics [width=0.95\textwidth]{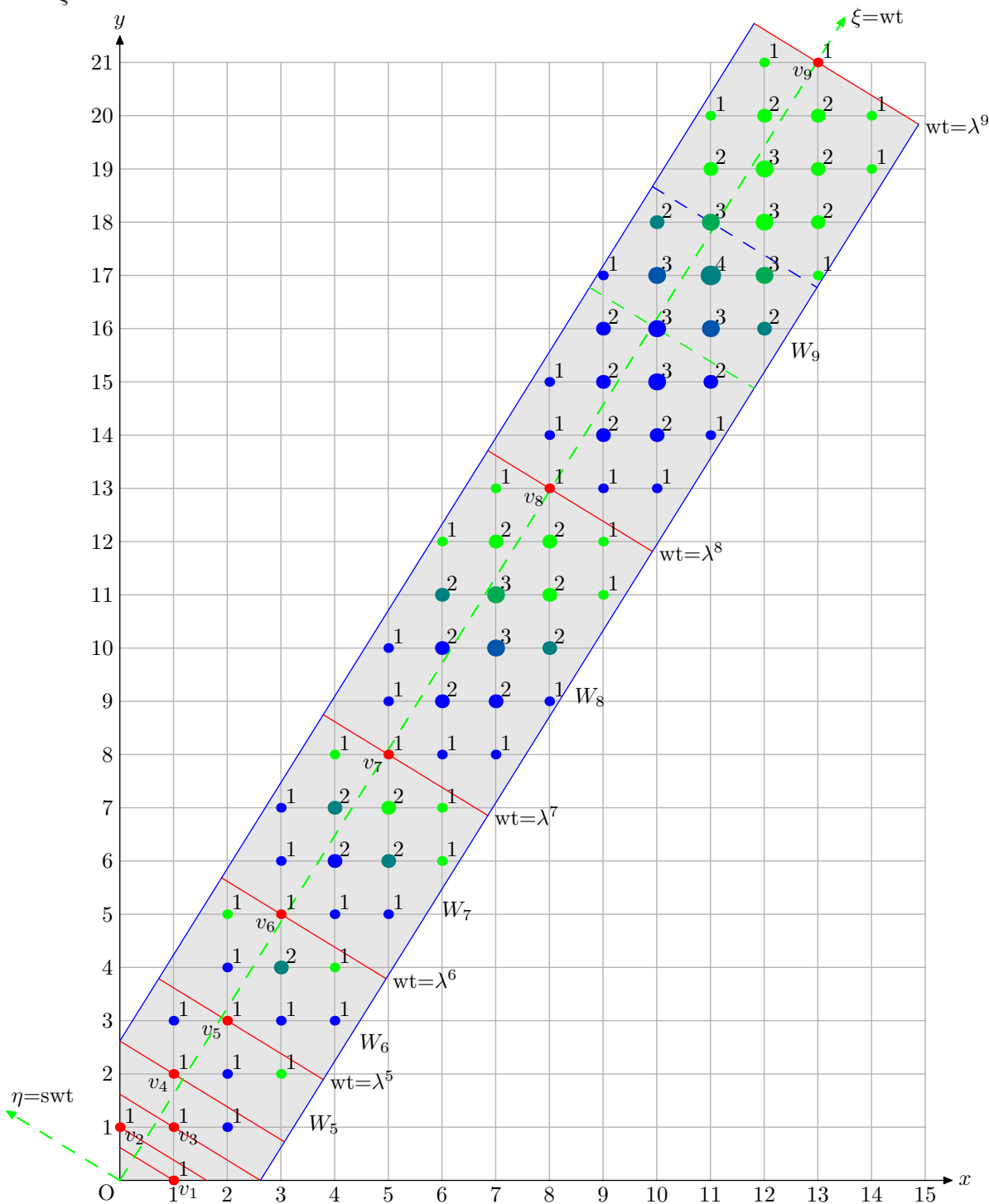}
\end{figure}
\begin{figure}[h]
\centering
\caption{Standard monomials $W_N$, where $N=15$, are shown in respective rectangle
$\{(\xi,\eta) \mid \lambda^{N-1} < \xi \le\lambda^N, \, -\lambda <  \eta< 1\}$.
It seems that points belong to {\it two families of parallel lines} (Conjecture~\ref{ConWN})}
\label{Fig2} 
\includegraphics [width=0.95\textwidth]{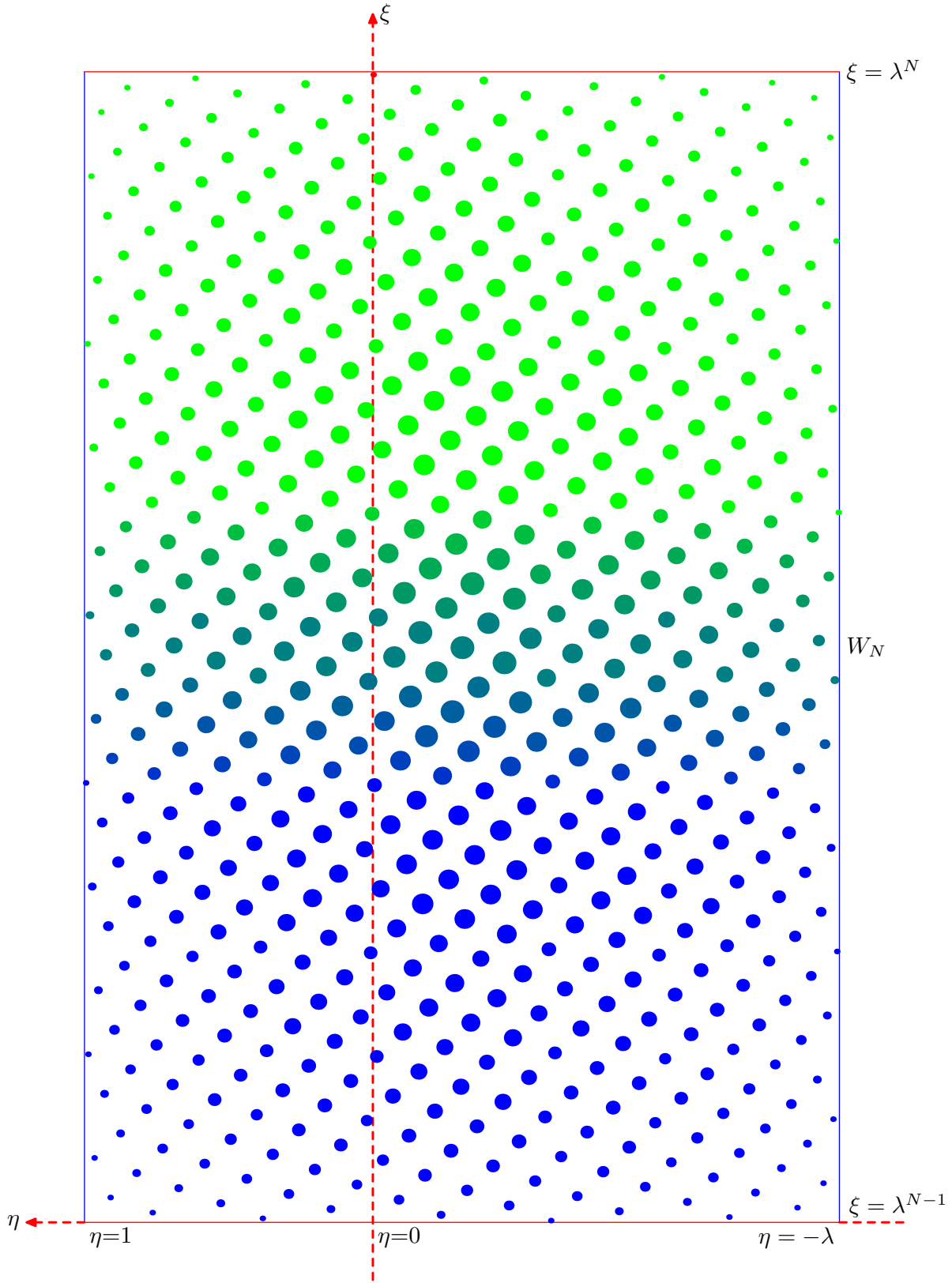}
\end{figure}
\begin{figure}[h]
\centering
\caption{Standard monomials of $W_{\le 15}$ in their rectangles are shown together in one rectangle.
The pattern of two families of parallel lines observed for a fixed  $W_N$ (Fig.~\ref{Fig2}) disappears}
\label{Fig3} 
\includegraphics [width=0.95\textwidth]{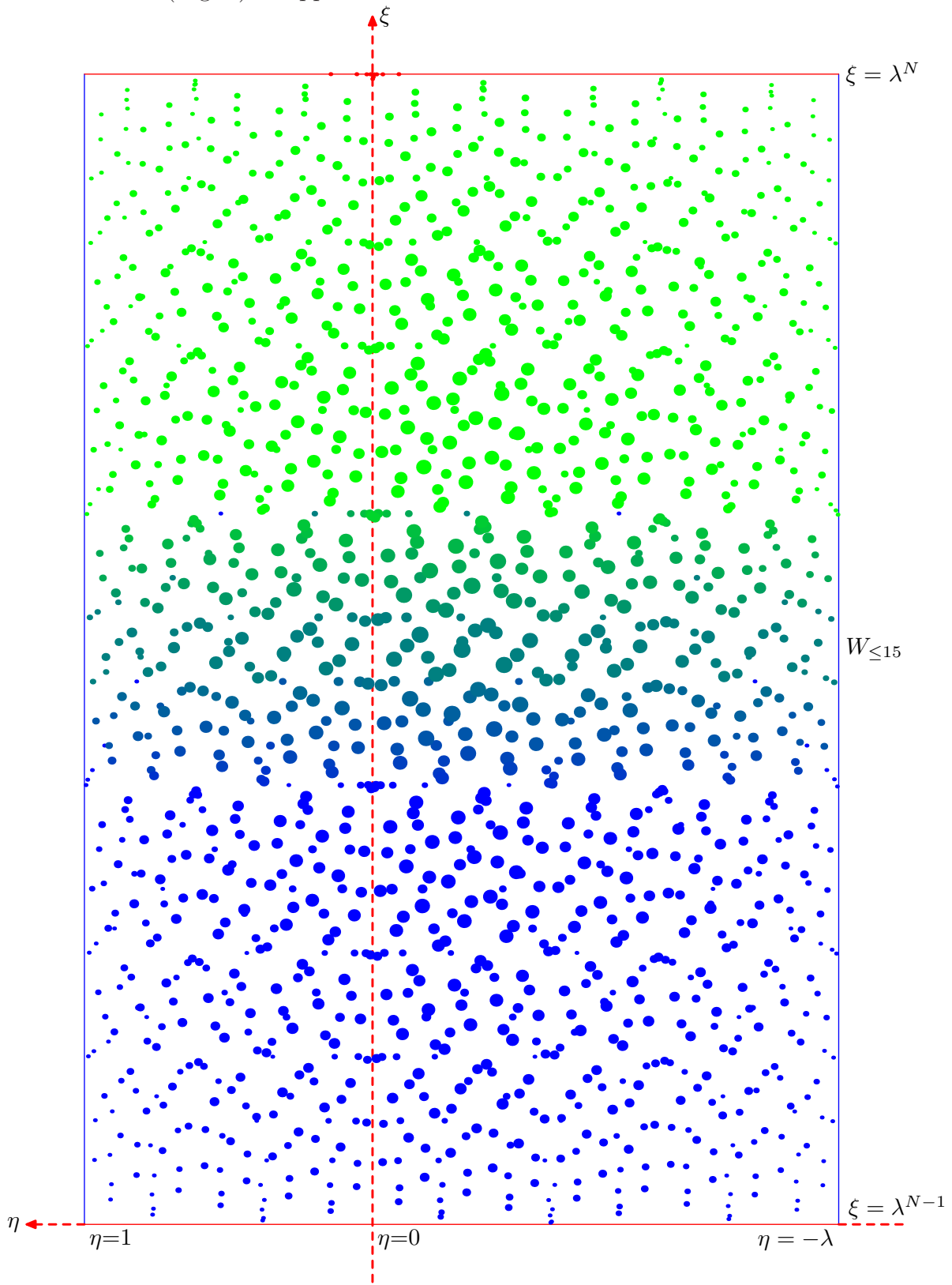}
\end{figure}
\begin{figure}[h]
\centering
\caption{Euler characteristic $\mathbf E({\mathcal L},x,y)$.
Circle at $(a,b)\in \Z^2$ is green if the respective coefficient is positive, blue --- otherwise.
The area of circles is proportional to the coefficients, indicated also by numbers.
The shaded region is the strip for monomials of $\mathcal L$.
The red lines pass through the pivot elements.
The last red line passes through $v_{10}$.}
\label{Fig4} 
\includegraphics [width=0.95\textwidth]{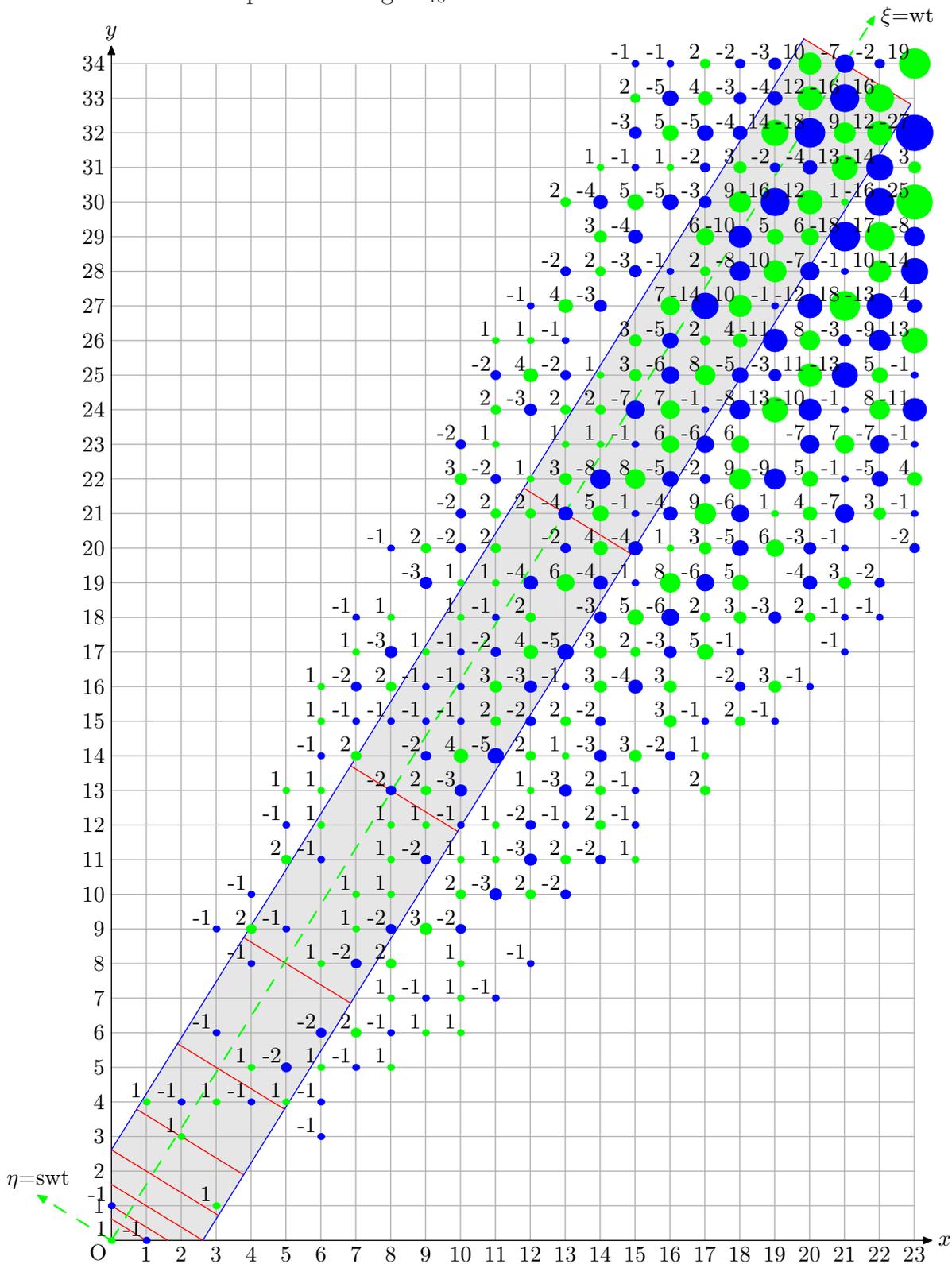}
\end{figure}


\begin{thebibliography}{99}
\bibitem{Al-Pe}
   Allouche J.P.,  Petrogradsky V.,
   A conjecture of Dekking on the dimensions of the lower central series factors of a certain just infinite Lie algebra,
   {\it J. Algebra},  {\bf 639}, (2024), 708--719.
\bibitem{Ba}
   Bahturin Yu.A.,
   {Identical relations in Lie algebras}, 
   2nd edition. De Gruyter Expositions in Mathematics 68. Berlin: De Gruyter (2021).    
\bibitem{BMPZ}
    Bahturin, Yu.A., A.A. Mikhalev, V.M. Petrogradsky, and M.V. Zaicev,
    Infinite dimensional Lie superalgebras,
    {de Gruyter Exp. Math.} vol.~{7},
    de Gruyter, 1992.
\bibitem{BaoYeZhang17}
   Bao, Y., Ye Y., Zhang, J.,
   Restricted Poisson algebras.
   {\it Pac. J. Math.} {\bf 289}, No. 1, 1-34 (2017).
\bibitem{Bartholdi06}
    Bartholdi, L.,
    {Branch rings, thinned rings, tree enveloping rings},
    {\it Israel J. Math}. {\bf 154} (2006), 93--139.
\bibitem{Bartholdi10}
    Bartholdi, L.,
    {Self-similar Lie algebras},  {\it J. Eur. Math. Soc. (JEMS)} {\bf 17} (2015), no. 12, 3113--3151.
\bibitem{Bartholdi03}
     Bartholdi, L.,
     Endomorphic presentations of branch groups.
    {\it J. Algebra} {\bf 268} (2003), no. 2, 419--443.
\bibitem{BezKal07}
  Bezrukavnikov, R.; Kaledin, D.
  Fedosov quantization in positive characteristic.
  {\it J. Am. Math. Soc.} {\bf 21}  (2008). No. 2, 409--438.
\bibitem{BokChen07}
   Bokut, L.A.; Chen, Y., Gr\"obner-Shirshov bases for Lie algebras: after A. I. Shirshov.
   {\it Southeast Asian Bull. Math}. {\bf 31} (2007), no. 6, 1057--1076.
\bibitem{BokChen14}
   Bokut, L. A.; Chen., Y. Gr\"obner-Shirshov bases and their calculation.
   {\it Bull. Math. Sci.} {\bf 4} (2014), no. 3, 325--395.
\bibitem{BokChen18}
   Bokut, L. A.; Chen, Yuqun; and Obul, Abdukadir,
  Some new results on Gröbner-Shirshov bases for Lie algebras and around. {\it Internat. J. Algebra Comput.} {\bf 28} (2018), no. 8, 1403--1423.
\bibitem{BokKle96}
   Bokut, L.A.; Klein, A.A. Serre relations and Gr\"obner-Shirshov bases for simple Lie algebras. I, II.
   {\it Internat. J. Algebra Comput}. {\bf 6} (1996), no. 4, 389--400, 401--412.
\bibitem{BorKra}
    Borho W.  and Kraft H.,
    {\"Uber die Gelfand-Kirillov-Dimension},
    {\it Math. Ann.} \textbf{220} no~1, (1976),  1--24.
\bibitem{B-G1}
   Bryant, R. M.; Groves, J. R. J.; Finite presentation of abelian-by-finite-dimensional Lie algebras,
   {\it J. London Math. Soc.} (2) {\bf 60} (1999), no. 1, 45--57.
\bibitem{B-G2}
   Bryant, R. M.; Groves, J. R. J.;  Finitely presented Lie algebras,
   {\it J. Algebra} {\bf 218} (1999), no. 1, 1--25.    
\bibitem{Dixmier}
  Dixmier J., Enveloping algebras. AMS, Rhode Island, 1996.
\bibitem{F-K-S} Futorny V., Kochloukova D., Sidki S., On self-similar Lie algebras and virtual endomorphisms,  Math. Z. 292 (2019), 3-4, 1123--1156.
\bibitem{Hanlon96}
  Hanlon, P., 
  A survey of combinatorial problems in Lie algebra homology, 
  DIMACS Series in Discrete Math and Theoretical Computer Science. DIMACS Series in Discrete Math and Theoretical Computer Science, Contemporary Math, 24 (1996).
\bibitem{PeOtto}
   de Morais Costa O.A., Petrogradsky\,V.,
   Fractal just infinite nil Lie superalgebra of finite width, {\it J. Algebra}, {\bf 504}, (2018), 291--335.
\bibitem{Grigorchuk80}
    Grigorchuk,~R.I.,
    On the Burnside problem for periodic groups.,
    {\it Funktsional. Anal. i Prilozhen}. {\bf 14} (1980), no. 1, 53--54.
\bibitem{Grig99}
    Grigorchuk, R.I.,
    On the system of defining relations and the Schur multiplier of periodic groups generated by finite automata.
    in: Groups St. Andrews 1997 in Bath, I, 290--317,
    London Math. Soc. Lecture Note Ser., 260, Cambridge, 1999.
\bibitem{GuptaSidki83}
    Gupta~N., and Sidki~S.,
    On the Burnside problem for periodic groups.,
    {\it Math. Z}. {\bf 182} (1983), no. 3, 385--388.
\bibitem{JacLie}
    Jacobson~N.,
    {Lie algebras},
    Interscience, New York. 1962.
\bibitem{Kac}
    Kac, V.G.
    Infinite-dimensional Lie algebras. Cambridge University Press, Cambridge, 1990.
\bibitem{KangKim99}
   Kang, S.J.; Kim, M.H.
   Dimension formula for graded Lie algebras and its applications.
   {\it Trans. Amer. Math. Soc.} {\bf 351} (1999), no.11, 4281--4336.
\bibitem{Kos15}
  Ko\c{c}ak D. , Finitely presented quadratic algebras of intermediate growth,
  {\it Algebra Discrete Math}. {\bf 20} (1) (2015) 69--88.
\bibitem{Kos17}
 Ko\c{c}ak D. , Intermediate growth in finitely presented algebras.
 {\it Internat. J. Algebra Comput.} {\bf 27} (2017), no. 4, 391--401.
\bibitem{KochPe}
   Kochloukova D.H., Petrogradsky V.,
   The Fibonacci  Lie algebra is not finitely presented, {\it Proc. Amer. Math. Soc.}, to appear,
   arXiv:2402.16098.
\bibitem{LeiPol}
    Leites, D.; Poletaeva, E.
    Defining relations for classical Lie algebras of polynomial vector fields.
    {\it Math. Scand.}  {\bf 81} (1997), no. 1, 5--19. (1998).
\bibitem{Licht84}
   Lichtman A. I.,
   {Growth in enveloping algebras}, {\it Israel J. Math.} \textbf{47} no~4, (1984), 297--304.
\bibitem{Lys85}
   Lysenok, I.G.,
   A system of defining relations for a Grigorchuk group.
   {\it Math. Notes} {\bf 38} (1985), 784--792 ;
   translation from {\it Mat. Zametki} {\bf 38} (1985), No.4, 503--516.
\bibitem{Mill22}
  Millionshchikov, D. V.
  Homology and cohomology of the lamplighter Lie algebra.
 {\it Proc. Steklov Inst. Math.} {\bf 318}, 150--160 (2022); translation from {\it Tr. Mat. Inst. Steklova} {\bf 318}, 166--176 (2022). 
\bibitem{Pe93}
   Petrogradsky V.M.,
   {On some types of intermediate growth in Lie algebras},
   {\it Uspekhi Mat. Nauk} \textbf{48} no~5, (1993),  181--182;
   translation in:
   {\it Russian Math. Surveys} \textbf{48} no~5, (1993), 181--182.
\bibitem{Pe96}
   Petrogradsky V.M.,
   {Intermediate growth in Lie algebras and their enveloping algebras},
   {\it J.~Algebra} {\bf 179}, (1996), 459--482.
\bibitem{Pe99int}
   Petrogradsky  V.M.,
   Growth of finitely generated polynilpotent Lie algebras and groups, generalized partitions, and functions analytic in the unit circle,
   {\it Internat. J. Algebra Comput.}, {\bf 9} (1999), no 2, 179--212.
\bibitem{Pe00}
  Petrogradsky  V.M.,
  On invariants of the action of a finite group on a free Lie algebra.
  {\it Sibirsk. Mat. Zh.} {\bf 41} (2000), no.\,4, 915--925;
   translation in
  {\it Sibirian Math. J.} {\bf 41} (2000), no.\,4, 763--770.
\bibitem{Pe00Springer}
  Petrogradsky  V.M.,
  On Witt's formula and invariants for free Lie superalgebras.
  Formal power series and algebraic combinatorics (Moscow 2000).
  543--551, Springer, 2000. 
\bibitem{Pe02}
  Petrogradsky  V.M.,
  Characters and invariants for free Lie superalgebras.
  {\it Algebra i Analiz}, {\bf 13} (2001), no.\,1, 158--181,
   translation in
  {\it St. Petersburg Math. J.}, {\bf 13} (2002), no.\,1, 107--122.
\bibitem{Pe06}
   Petrogradsky~V.M.,
   Examples of self-iterating Lie algebras,
   {\it J. Algebra}, {\bf 302} (2006), no.\,2, 881--886.
\bibitem{Pe16}
   Petrogradsky~V.,
   Fractal nil graded Lie superalgebras,
   {\it J. Algebra}, {\bf 466} (2016), 229--283.
\bibitem{Pe17}
   Petrogradsky V.,
   Nil Lie $p$-algebras of slow growth,
   {\it Comm. Algebra.} {\bf 45}, (2017), no.~7, 2912--2941.
\bibitem{Pe20flies}
   Petrogradsky V.,
   Nil restricted Lie algebras of oscillating intermediate growth, {\it J. Algebra},  {\bf 588}, (2021), 349--407.
\bibitem{Pe20clover}
   Petrogradsky V.,
   Clover nil restricted Lie algebras of quasi-linear growth,
   {\it J. Algebra Appl.}, {\bf 21}, (2022), no.~3, 2250057.
\bibitem{PeRaSh}
   Petrogradsky~V. M., Yu. P.~Razmyslov, and E. O.~Shishkin,
   Wreath products and Kaluzhnin-Krasner embedding for Lie algebras,
   {\it Proc. Amer. Math. Soc.}, {\bf 135}, (2007), 625--636.
\bibitem{PeSh09}
   Petrogradsky\,V.M. and Shestakov\,I.P.
   Examples of self-iterating Lie algebras, 2,
   {\it J. Lie Theory}, {\bf 19} (2009), no.\,4, 697--724.
\bibitem{PeSh13fib}
   Petrogradsky\,V.M. and Shestakov\,I.P.
   On properties of Fibonacci restricted Lie algebra, {\it J. Lie Theory}, {\bf 23} (2013), no. 2, 407--431.
\bibitem{PeSh18FracPJ}
   Petrogradsky\,V., and Shestakov I.P.,
   Fractal nil graded Lie, associative, Poisson, and Jordan superalgebras, {\it J. Algebra},  {\bf 574}, (2021), 453--513.
\bibitem{PeShZe10}
   Petrogradsky\,V.M., Shestakov\,I.P., and Zelmanov E.,
   Nil graded self-similar algebras,
   {\it Groups Geom. Dyn.}, {\bf 4} (2010), no.\,4, 873--900.
\bibitem{Rad86}
   Radford~D. E.,
   Divided power structures on Hopf algebras and embedding
   Lie algebras into special-derivation algebras,
   {\it J. Algebra}, {\bf 98} (1986), 143--170.
\bibitem{Razmyslov}
   Razmyslov Yu.P., {\it Identities of algebras and their representations}.
   AMS, Providence RI 1994.
\bibitem{Shestakov93}
  Shestakov I.P.,
  Quantization of Poisson superalgebras and speciality of Jordan Poisson superalgebras.
  {\it Algebra i Logika} {\bf 32} (1993), no.~5, 571--584;
  English translation:
  {\it Algebra and Logic} {\bf 32} (1993), no.~5, 309--317.
\bibitem{ShZe08}
   Shestakov I.P. and Zelmanov E.,
   Some examples of nil Lie algebras.
   {\it J. Eur. Math. Soc. (JEMS)}  {\bf 10} (2008),  no. 2, 391--398.
\bibitem{Sidki87}
  Sidki S.,
  On a 2-generated infinite 3-group: the presentation problem.
  {\it J. Algebra} {\bf 110} (1987), no. 1, 13--23.
\bibitem{Smi}
   Smith M.K.,
   Universal enveloping algebras with subexponential but not polynomially bounded growth,  
   {\it Proc. Amer. Math. Soc.} {\bf 60}, (1976), 22--24.
\bibitem{Stewart75}
   Stewart, I.,
   Finitely presented infinite-dimensional simple Lie algebras.
   {\it Arch. Math. (Basel)} {\bf 26} (1975), no. 5, 504--507.
\bibitem{StrFar}
   Strade, H. and  R. Farnsteiner,
    Modular Lie algebras and their representations,
    Marcel Dekker, 1988.
\bibitem{Ufn80}
  Ufnarovskii V. A.,
  Poincare series of graded algebras,
  {\it Mat. Zametki} {\bf 27}(1) (1980) 21--32.
\bibitem{Ufn}
   Ufnarovskiy V.A.,
   {Combinatorial and asymptotic methods in algebra},
   Itogi Nauki i Tekhniki, Sovrem. Probl. Mat. Fund. Naprav.
   vol.~{57}, Moscow, 1989;
   Engl. transl.,
   Encyclopaedia Math. Sci., vol.~{57},
   \newcounter{r6}\setcounter{r6}{6}  Algebra~\Roman{r6},
   Springer, Berlin, 1995.
\end{thebibliography}

\end{document}